\theoremstyle{plain}
\newtheorem{theorem}{Theorem}[section]
\newtheorem{remark}[theorem]{Remark}
\newtheorem{proposition}[theorem]{Proposition}
\newtheorem{lemma}[theorem]{Lemma}
\newtheorem{corollary}[theorem]{Corollary}
\newtheorem{sublemma}[theorem]{Sublemma}
\newtheorem*{claim}{Claim}
\theoremstyle{definition}
\newfont\bbf{msbm10 at 12pt}
\def\eps{\varepsilon}
\def\R{{\mathbb R}}
\def\N{{\mathbb N}}
\def\B{{\mathcal B}}
\def\P{{\mathcal P}}
\def\D{{\mathcal D}}
\def\Q{{\mathcal Q}}
\def\sm{\setminus}
\def\diam{\mbox{\rm diam} }
\def\le{\leqslant}
\def\ge{\geqslant}
\newcommand{\I}{\mathring{I}}
\newcommand{\hI}{\mathring{I}}
\newcommand{\f}{\mathring{f}}
\newcommand{\hDelta}{\mathring{\Delta}}
\newcommand{\tH}{\tilde{H}}
\newcommand{\Lp}{\mathcal{L}}
\newcommand{\Np}{\mathcal{N}}
\newcommand{\bm}{\overline{m}}
\newcommand{\blambda}{\overline{\lambda}}
\newcommand{\vf}{\varphi}
\newcommand{\ve}{\varepsilon}
\newcommand{\tm}{\tilde{m}}
\newcommand{\tg}{\tilde{g}}
\newcommand{\tpsi}{\tilde{\psi}}
\newcommand{\hLp}{\mathring{\Lp}}
\newcommand{\hNp}{\mathring{\Np}}
\newcommand{\hF}{\mathring{F}}
\newcommand{\hf}{\mathring{f}}
\newcommand{\hY}{\mathring{Y}}
\newcommand{\hX}{\mathring{X}}
\newcommand{\dlj}{\Delta_{\ell,j}}
\newcommand{\beq}{\begin{equation}}
\newcommand{\eeq}{\end{equation}}
\def\M{\mathcal{M}}
\numberwithin{equation}{section}
\begin{document}

\title[Slow and fast escape for open intermittent maps]{Slow and fast escape for open intermittent maps}
\author[M.F. Demers]{Mark F. Demers}
\address{Mark F. Demers\\ Department of Mathematics \\
Fairfield University\\
Fairfield, CT 06824 \\
USA}\email{\href{mailto:mdemers@fairfield.edu}{mdemers@fairfield.edu}}
\urladdr{\url{http://faculty.fairfield.edu/mdemers}}

\author[M. Todd]{Mike Todd}
\address{Mike Todd\\ Mathematical Institute\\
University of St Andrews\\
North Haugh\\
St Andrews\\
KY16 9SS\\
Scotland} \email{\href{mailto:m.todd@st-andrews.ac.uk}{m.todd@st-andrews.ac.uk}}
\urladdr{\url{http://www.mcs.st-and.ac.uk/~miket/}}

\date{\today}

\begin{abstract} 
If a system mixes too slowly, putting a hole in it can completely destroy the richness of the dynamics.  Here we study this instability for a class of intermittent maps with a family of slowly mixing measures.  We show that there are three regimes: 1) standard hyperbolic-like behavior where the rate of mixing is faster than the rate of escape through the hole, there is a unique limiting absolutely continuous conditionally invariant measure (accim) and there is a complete thermodynamic description of the dynamics on the survivor set; 2) an intermediate regime, where the rate of mixing and escape through the hole coincide, limiting accims exist, but much of the thermodynamic picture breaks down; 3) a subexponentially mixing regime where the slow mixing means that mass simply accumulates on the parabolic fixed point.  We give a complete picture of the transitions and stability properties (in 
the size of the hole and as we move through the family) in this class of open systems.
In particular we are able to recover a form of stability in the third regime above 
via the dynamics on the survivor set, even when
no limiting accim exists.
\end{abstract}

\thanks{MD was partially supported by NSF grant DMS 1362420.  This project was started 
as part of an RiGs grant through ICMS, Scotland.  The authors would like to thank ICMS
for its generous hospitality.  They would also like to thank AIM (workshop on Stochastic Methods for Non-Equilibrium Dynamical Systems) and the ICERM Semester Program on Dimension 
and Dynamics where some of this work was carried out.  We thank the referees for useful suggestions.}

\maketitle

\section{Introduction and Statement of Results}
\label{sec:intro}

Dynamical systems with holes are examples of systems in which the domain is not
invariant under the dynamics.  Such systems arise in a variety of contexts:   For example, in the
study of non-attracting invariant sets, as well as in
non-equilibrium dynamical systems, in which mass or energy is allowed to enter or escape. 
In 
this latter context, a system with a hole can be viewed as a component of a much larger system
of interacting components.  
Examples of such studies include metastable states
\cite{KL09, GHW11, BV12, DW12}, coherent sets in nonautonomous systems \cite{FrP},
and diffusion in extended systems \cite{DGKK}.

To date, systems with holes have been studied principally
in situations in which the rate of mixing of the closed system (before the
introduction of the hole) is exponential and therefore the rate of escape from the system 
is also exponential.  Such systems include expanding maps 
\cite{pianigiani yorke, collet ms1, chernov bedem, liverani maume}, Smale horseshoes 
\cite{cencova},
Anosov diffeomorphisms \cite{chernov mark1, chernov mt1}, certain unimodal maps \cite{BruDemMel10},
and dispersing billiards \cite{DWY1, demers billiards, demers infinite}, to name but a few.

In all these papers, the main focus is the existence and physical properties
of \emph{conditionally invariant measures}, which describe the limiting distribution of mass
conditioned on non-escape.  
Given a dynamical system, $(T,X, \mathscr{B})$,
one identifies a measurable set $H \in \mathscr{B}$ and studies the open system,
$\mathring{T}: \mathring{X} \to X$, where $\mathring{X} = X \setminus H$.
The $n$-step survivor sets are defined by $\mathring{X}^n = \bigcap_{i=0}^n T^{-i}(\mathring{X})$,
which correspond to the non-invariant domains of
the iterates of the map, $\mathring{T}^n = T^n|_{\mathring{X}^n}$.

A measure $\mu$ on $X$ is called conditionally invariant if 
\[
\frac{\mathring{T}_*\mu(A)}{\mathring{T}_*\mu(X)} := \frac{\mu(\mathring{X}^1 \cap T^{-1}(A))}{\mu(\mathring{X}^1)} = \mu(A) \qquad \mbox{for all $A \in \mathscr{B}$.}
\] 
If we set $\mu(\mathring{X}^1) = \lambda$, the relation above can be iterated to obtain
$\mathring{T}^n_*\mu(A) = \lambda^n \mu(A)$, so that a conditionally invariant measure
necessarily predicts an exponential rate of decay of mass from the open system.
Unfortunately, under quite general conditions, uncountably many such measures exist for 
any eigenvalue $\lambda \in (0,1)$, even if one restricts to measures absolutely continuous
with respect to a given reference measure \cite{DemYoung06}, so existence questions
are meaningless.  

In order to obtain a physically relevant measure, one fixes a reference measure $m$
and focuses on the existence and properties of limiting
distributions obtained by pushing forward $m$ and conditioning
on non-escape, i.e. studying limit points of the sequence
$\frac{\mathring{T}^n_*m}{m(\mathring{X}^n)}$.  For systems with exponential
rates of escape, such limiting distributions are often conditionally invariant measures which
describe the limiting dynamics with respect to a large class of reference measures, and enjoy 
many of the properties that equilibrium measures enjoy in closed systems.  Moreover,
in many cases the eigenvalue $\lambda$ associated with such measures 
describes the exponential {\em rate of escape} from the open system with respect to $m$,
\begin{equation}
\label{eq:esc def}
-\log \lambda = -\lim_{n \to \infty} \frac 1n \log m(\mathring{X}^n).
\end{equation}
Such limiting distributions have been constructed for all the specific systems listed above, 
under some assumptions on the size or geometry of the holes.

Recently, there has been interest in open systems exhibiting subexponential rates of escape 
\cite{dett1, altmann, dett2, FMS, KM16}, 
and in particular their relation to slowly mixing systems 
from non-equilibrium statistical mechanics \cite{yarmola}.  Such open systems exhibit
qualitatively different behavior from systems with exponential escape rates.  
For example, conditionally invariant measures no longer have a physical interpretation
as limiting distributions (although arbitrarily many still exist) and limit points
of $\frac{\mathring{T}^n_*m}{m(\mathring{X}^n)}$ are typically singular (with respect
to $m$) invariant measures
supported on $\mathring{X}^\infty := \bigcap_{i=0}^\infty T^{-i}(X \setminus H)$, the set
of points which never enter the hole \cite{DemFer13}.  From the point of view
of limiting distributions, systems with subexponential rates of escape are {\em unstable}
with respect to leaks in the system.

In the present paper, we introduce holes into a class of Manneville-Pomeau maps $f=f_\gamma$ 
of the unit interval with intermittent behavior.
We consider the dynamics of the open system from the point of view of
the family of geometric potentials, $t\phi = - t \log |Df|$, $t \in [0,1]$.  
When $t=1$, the conformal measure with respect to $\phi = -\log |Df|$
is Lebesgue measure, with respect to which these maps have polynomial rates of mixing. 
As such, the open system has no physically relevant conditionally invariant measure absolutely
continuous with respect to Lebesgue \cite{DemFer13}.
But for $t<1$, the maps admit conformal measures $m_t$ with respect to $t\phi$
that are exponentially mixing and
so have exponential rates of escape, where the mixing rate converges to zero as $t\to 1$, 
yielding an excellent test bed for the study of slow mixing with holes.  

Fixing a hole $H$ as described in Section~\ref{intro hole}, we are able to precisely characterize
the dynamics of the open system in terms of the parameter
$t$ in 3 distinct regimes:  $t \in [0,t^H)$, $t \in [t^H, 1)$, and $t=1$, where $t^H$ is the Hausdorff
dimension of the survivor set.  
\begin{itemize}
  \item  When $t \in [0, t^H)$, the escape rate \eqref{eq:esc def}
  with respect to $m_t$ is slower than the rate of 
  mixing of the closed system, and so the transfer operator associated with the open system
  has a spectral gap.  In this setting, the classical results proved for strongly hyperbolic open systems
  mentioned above are recovered (Theorem~\ref{thm:rate gap}).
  \item When $t \in [t^H,1)$, the escape rate with respect to $m_t$ equals the rate of mixing of the
  closed system and the associated transfer operator for the open system has no spectral gap; 
  however, averaged limit points of the form $\frac{\hf_*^n m_t}{|\hf_*^n m_t|}$,
  yield conditionally invariant measures, absolutely continuous with respect to $m_t$
  (Theorem~\ref{thm:limit point}).
  \item When $t=1$, the rate of escape with respect to Lebesgue is polynomial and 
  the sequence $\frac{\hf_*^n m_1}{|\hf_*^n m_1|}$ converges to the point
  mass at the neutral fixed point \cite{DemFer13}.
\end{itemize}

In order to recover a form of stability for the open system when $t=1$, we use an induced map
to construct an invariant measure on the survivor set.  When $t \in [0, t^H]$, these measures
maximize the pressure on the survivor set and satisfy an {\em escape rate formula}
(see Theorem~\ref{thm:rate gap}).  When $t \in (t^H,1]$, 
these measures do not maximize the pressure on the survivor set, but they do  
converge to the absolutely continuous equilibrium state for the closed system as the hole
shrinks to a point (Theorem~\ref{thm:stable}).  
Thus, although the system is unstable with respect to 
leaks from the point of view of the physical limit $\displaystyle \lim_{n \to \infty} \tfrac{\hf_*^n m_1}{|\hf_*^n m_1|}$
when $t=1$,
we are able to recover a type of stability from the point of view of these invariant measures
supported on the survivor set.

One of the principal tools we use is a Young tower,
which is a type of Markov extension for the open system.  It is of independent interest
that in order to obtain the sharp division between the regimes listed above, we significantly
strengthen previous results on Young towers with holes.  Specifically, we prove the
existence of a spectral gap for the associated transfer operator under a weak
{\em asymptotic} condition:  The escape rate from the tower is strictly less than the rate of decay in the
levels of the tower.  
Previous results \cite{demers tower, BruDemMel10, DWY1} assumed strong control on the amount of
mass lost at each step, while we are able to prove comparable results under this much weaker
and more natural condition.  Our results are in some sense optimal:  When the escape rate
equals the rate of decay in the levels of the tower, the essential spectral radius
and spectral radius of the associated transfer operator on the relevant function space coincide.  
This optimality suggests that these results provide a new paradigm for open non-uniformly 
hyperbolic systems in general.

The paper is organized as follows.  In the remainder of Section~\ref{sec:intro}, we state
our assumptions precisely, define the relevant terminology and state our main results.
In Section~\ref{sec:basic press} we provide some background and initial results on pressure,
while in Section~\ref{sec:volume} we prove an essential inequality relating the escape rate
to the difference in pressures between the open and closed systems. 
In Sections~\ref{sec:rate gap}, \ref{sec:large t} and \ref{stable proof}, 
we prove our main theorems in the three regimes
outlined above.  Section~\ref{sec:swallowing} contains some examples of large holes
that do not satisfy our conditions and some analysis of the dynamics in such cases.


\subsection{Class of maps}
\label{ssec:basic}

For $\gamma \in (0,1)$, we will study the class of Manneville-Pomeau maps defined by

\begin{equation*}
f=f_\gamma:x\mapsto \begin{cases} x(1+2^\gamma x^\gamma) & \text{ if } x\in [0, 1/2),\\
2x-1 & \text{ if } x\in [1/2, 1].\end{cases}
\end{equation*}
Such maps exhibit intermittent behavior due to the neutral fixed point 0 and
have been well-studied, most commonly from the point of view of Lebesgue
measure \cite{young poly, LSV1}, which is the conformal measure with respect to
the potential $\phi := - \log |Df|$.   We will be interested in the related family of 
geometric potentials $t\phi$, $t \in [0,1]$, and their associated pressures.

For a dynamical system $(T, X, \mathscr{B})$ with some measurable and metric structure and a measurable potential  $\psi:X\to [-\infty, \infty]$, we define the \emph{pressure} of this system to be
$$
P(\psi)=P_T(\psi):=\sup\left\{h_T(\mu) + \int \psi ~d\mu : \mu \in \M_T \text{ and } -\int \psi ~d\mu < \infty		\right\}.
$$
Here, $\M_T$ is the set of ergodic, $T$-invariant probability measures on $X$ and
$h_\mu(T)$ is the metric entropy of $\mu$.  Note that the restriction on the integral is to deal with cases where the system has infinite entropy, so that the sum defining the pressure may not make sense.

For our class of maps and potentials, we set $p(t):=P(t\phi)$.
Note that $p(t)=0$ for $t\ge 1$.  It is well-known that for $t\le 1$ there exists a unique $(t\phi-p(t))$-conformal measure $m_t$ (note that $m_1$ is Lebesgue measure).  Moreover,

\begin{itemize}
\item For $\gamma \in (0,1)$ and $t=1$, there is an equilibrium state $\mu_1$ for $\phi$ that
is absolutely continuous with respect to $m_1$.  The system is 
subexponentially mixing with respect to $\mu_1$.
\item For $t<1$, there exists a unique equilibrium state $\mu_t$ for $t\phi$, which is exponentially mixing and furthermore equivalent to $m_t$.
\end{itemize}

(These facts can be derived from \cite[Proposition 1]{Sar01a}; for an alternative perspective on parabolic systems see \cite[Chapter 8]{MauUrb03}.)
We will study the dynamics of the related open system with respect to this family of potentials,
taking their associated conformal measures $m_t$ as our reference measures.


\subsection{Introduction of holes}
\label{intro hole}

We next introduce a hole $H$ into the system, which in this paper will be 
a finite union of intervals. 
The sets $\hI^n = \cap_{i=0}^n f^{-i}(I \setminus H)$, $n \ge 0$, denote the set of points
that have not entered $H$ by time $n$. 
Define $\hf = f |_{\hI^1}$ to be the map with the hole and its iterates, $\hf^n := f^n|_{\hI^n}$.
The dynamics of this map define the open system. 

A particularly convenient form of hole is defined as follows. Let $\P_1$ be the standard renewal partition,  
 i.e., $Z\in \P_1$ implies $Z$ is an interval for which either $f(Z)\in \P_1$, or $Z=[1/2,1)$.  
We then let $\P_n:=\P_1 \vee\left(\bigvee_{k=0}^{n-1}f^{-k}\{[0,1/2), [1/2, 1)\}\right)$.  
We fix $N_0 > 0$ and then define a hole to be some collection of elements of $\P_{N_0}$: 
we call such a set a \emph{Markov hole}.  

Before formulating a condition on the hole, we introduce an induced map $\hF$, defined
as the first return map to $Y = [1/2,1]$ under $\hf$.  Let $\tau : Y \to \N$
denote the inducing time, so that either $\hF(x) = \hf^{\tau(x)}(x) \in Y$ or 
$\hF(x) = f^{\tau(x)}(x) \in H$.  
In the absence of a hole,
$F:Y \to Y$ would be a full-branched map; however, once a hole has been introduced,
$\hF$ is no longer full branched.  
Let $\mathcal{Q}$ denote the coarsest partition of $Y$ by
images of first returns of elements of $\P_{N_0}$.  Note that $\mathcal{Q}$ is a finite partition of 
$Y\setminus H$
due to our definition of $\P_{N_0}$.

The classical definitions of transitivity via open sets no longer make sense for the open
system\footnote{The open system can be decomposed into a disjoint union
of intervals $\hI = \bigcup_{n=1}^\infty \hI^{n-1} \setminus \hI^n$ (mod 0) such that
$f(\hI^{n-1} \setminus \hI^n) = \hI^{n-2} \setminus \hI^{n-1}$ and $f^n(\hI^{n-1} \setminus \hI^n)
\subset H$.}
 (since everything except the hole would be transient), so we adopt the
following combinatorial definition in terms of the Markov partition.

We say that $H$ is \emph{non-swallowing} if: 
\begin{enumerate}
\item  $\mathcal{Q}$ is {\em  transitive on elements}:  For each pair
$Q_1, Q_2 \in \Q$, $\exists n\in \N$ such that 
$\mathring{f}^n(Q_1) \cap Q_2 \neq \emptyset$;
\item  For all $\delta>0$,  $m_1(\hF(1/2, 1/2+\delta))>0$.
\end{enumerate}
Otherwise, we call the hole \emph{swallowing}. 
Define $X := Y \setminus H = \cup_{Q \in \Q} Q$.  Note that 
our definition of non-swallowing implies that  $\hF|_{X}$ is transitive on elements, but not
necessarily aperiodic; however, in Lemma~\ref{lem:nonswallow} we will show that $\hf$ is 
aperiodic on elements with this definition. 

Condition (2) ensures that the system has
repeated passes through a neighborhood of the neutral fixed point: 
Were this condition violated, much of the non-expansive behavior present in the 
closed system would be lost or trivialized.
Examples of swallowing holes and a brief description of the dynamics in these 
nontransitive cases are given in Section~\ref{sec:swallowing}.  For our main results, we will assume
that our hole is non-swallowing.

Below we record two important facts about the open system that follow from the
definition of non-swallowing.

\begin{lemma}
Let $X_i$, $i \in \N$, denote the maximal intervals in $[1/2, 1]$
on which $\hF$ is smooth and injective.
Then there exists $K_1>0$ so that 
$|\{\tau=n\}|\le K_1 n^{-(1+\frac1\gamma)}$ and $\#\{i : \tau(X_i)=n\} \le C_{N_0}$ for some 
$C_{N_0} \ge 1$ depending only on $N_0$.  Moreover, if $H$ is non-swallowing then there exists $K_2>0$ and $N\in \N$ such that for all $n\ge N$, $|\{\tau=n\}|\ge K_2 n^{-(1+\frac1\gamma)}$. 
\label{lem:basic tail}
\end{lemma}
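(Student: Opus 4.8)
The plan is to compare $\hF$ with the full-branched first-return map $F$ of the \emph{closed} system to $Y=[1/2,1]$, for which the analogue of the lemma is classical, and to control the extra branching created by $H$ by exploiting the fact that a Markov hole only ``sees'' the first $N_0$ symbols of an orbit. Set $a_0=\tfrac12$ and, for $k\ge1$, let $a_k\in(0,\tfrac12)$ be the point with $f(a_k)=a_{k-1}$; put $c_k:=(a_k,a_{k-1})$, and for $m\ge2$ let $J_m:=(f|_{[1/2,1)})^{-1}(c_{m-1})$, $J_1:=[3/4,1)$. These are the full branches of $F$: on $J_m$ one has $F=f^m$ with return time $m$, the orbit of $J_m$ passes through $c_{m-1},\dots,c_1$ at times $1,\dots,m-1$, and $f^j|_{J_m}$ is a diffeomorphism onto $c_{m-j}$. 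The standard Manneville--Pomeau estimates give $a_k\asymp k^{-1/\gamma}$, hence $|J_m|\asymp m^{-(1+\frac1\gamma)}$, together with uniformly bounded distortion of $f^m|_{J_m}$ (see e.g.\ \cite{young poly, LSV1}). The decisive structural point is that for $\ell\ge N_0$ one has $f^k(c_\ell)=c_{\ell-k}\subseteq[0,\tfrac12)$ for $k=0,\dots,N_0-1$, so $c_\ell$ carries a constant $\{[0,\tfrac12),[1/2,1)\}$-itinerary of length $N_0$ and is therefore a single atom of $\P_{N_0}$; since $H$ is a finite union of intervals it meets only finitely many of the $c_\ell$ with $\ell\ge N_0$ (a Markov hole meeting infinitely many would contain a neighbourhood of $0$, so a right-neighbourhood of $\tfrac12$ in $Y$ would be mapped in one step of $\hf$ into $H$ and absorbed, contradicting the transitivity built into the definition of non-swallowing). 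Hence $S:=\{\ell\ge N_0:c_\ell\subseteq H\}$ is finite and, for each $\ell\ge N_0$, $c_\ell$ is either contained in $H$ or disjoint from $H$.

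For the upper bound and the branch count I would analyse the branches of $\hF$ of a given inducing time $n$. Such a branch $X_i$ carries $\tau\equiv n$, its orbit avoids $Y$ up to time $n-1$, and its points have closed return time at least $n$, so $X_i$ lies in at most two consecutive branches $J_m$ with $m\ge n$ and thus $|X_i|\le 2\,|J_m|\le C n^{-(1+\frac1\gamma)}$. Moreover, for $x\in J_m$ the orbit occupies the cells $c_{m-1},\dots,c_{N_0}$ at times $1,\dots,m-N_0$, each of which is $\subseteq H$ or disjoint from $H$; if some of them lies in $H$ the orbit escapes through the first such cell along the orbit, namely $c_s$ with $s=\max\bigl(S\cap\{N_0,\dots,m-1\}\bigr)$, at time $m-s$, so $\tau=m-s$; otherwise it reaches the window of the $N_0-1$ shallow cells $c_{N_0-1},\dots,c_1$ followed by $Y$, within which it escapes into $H$ or returns within $N_0-1$ more steps. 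Since $S$ is finite, once $n>\max S$ the first alternative forces $m=n+\max S$ and $X_i=J_{n+\max S}$, the unique branch of that kind with $\tau=n$; in the second alternative $m\in\{n,\dots,n+N_0-1\}$ and $X_i$ is one of the intervals into which $J_m$ is cut by the $\hf$-preimages of $H$ across the $N_0$ shallow cells and $Y$, and since the number of $\P_{N_0}$-atoms inside a shallow cell or inside $Y$ is bounded in terms of $N_0$, there are at most $C_{N_0}$ of these. This gives $\#\{i:\tau(X_i)=n\}\le C_{N_0}$, and summing the at most $C_{N_0}$ branches, each of measure $\le C n^{-(1+\frac1\gamma)}$, yields $|\{\tau=n\}|\le K_1 n^{-(1+\frac1\gamma)}$ (the finitely many small $n$ being treated directly).

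For the lower bound, assume $H$ non-swallowing, so $S$ is finite. If $S\neq\es$, set $\ell_0=\max S$: for $n>\ell_0$ the whole branch $J_{n+\ell_0}$ has $\tau\equiv n$, because along its orbit the cells visited before time $n$ all have index $>\ell_0$, hence lie outside $H$, while at time $n$ the orbit enters $c_{\ell_0}\subseteq H$; thus $|\{\tau=n\}|\ge|J_{n+\ell_0}|\ge K_2 n^{-(1+\frac1\gamma)}$. If $S=\es$, every $y\in c_{N_0-1}$ has a well-defined first exit step $\sigma(y)\in\{0,\dots,N_0-1\}$ (the orbit $y,f(y),\dots,f^{N_0-1}(y)$ runs through $c_{N_0-1},\dots,c_1,Y$ and either first meets $H$ in this window or reaches $Y$), and $\tau(x)=(m-N_0+1)+\sigma\bigl(f^{m-N_0+1}(x)\bigr)$ for every $x\in J_m$ with $m\ge N_0$. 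Since $f^{m-N_0+1}|_{J_m}$ maps $J_m$ diffeomorphically onto $c_{N_0-1}$ with uniformly bounded distortion, $\{\tau=n\}$ is, modulo a null set, the disjoint union over $i\in\{0,\dots,N_0-1\}$ of the preimages $(f^{m-N_0+1}|_{J_m})^{-1}\{\sigma=i\}$ with $m=n+N_0-1-i$, whence
\[
|\{\tau=n\}|\;\asymp\;\sum_{i=0}^{N_0-1}\frac{\bigl|\{\sigma=i\}\bigr|}{|c_{N_0-1}|}\,|J_{n+N_0-1-i}|\;\asymp\;|J_n|\;\asymp\;n^{-(1+\frac1\gamma)},
\]
using $\sum_i|\{\sigma=i\}|=|c_{N_0-1}|$ and $|J_{n+N_0-1-i}|\asymp|J_n|$ uniformly in $0\le i\le N_0-1$; in particular $|\{\tau=n\}|\ge K_2 n^{-(1+\frac1\gamma)}$ for $n\ge N:=N_0$.

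The step I expect to be the main obstacle is the combinatorial bookkeeping of the second paragraph: one must check that all of the extra branching produced by $H$, and all of the mass that $H$ could a priori carve out of $J_n$ in an $n$-dependent way, is confined either to the $N_0-1$ shallow renewal cells --- where only boundedly many (in $N_0$) atoms of $\P_{N_0}$ live --- or to the single asymptotic escape branch $J_{n+\max S}$. This confinement is exactly what the all-or-nothing dichotomy for deep cells and the finiteness of $S$ provide. The remaining ingredients, namely the tail and distortion estimates for the closed return map $F$, are classical and I would simply cite them.
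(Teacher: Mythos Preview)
Your argument is substantially more detailed than the paper's, which simply cites standard Manneville--Pomeau tail and distortion estimates and, for the lower bound, appeals to the fact that non-swallowing guarantees inducing domains arbitrarily close to $1/2$. Your explicit decomposition into ``deep'' cells $c_\ell$ with $\ell\ge N_0$ (each a single $\P_{N_0}$ atom, hence all-or-nothing with respect to $H$) versus the ``shallow'' window $c_{N_0-1},\dots,c_1,Y$ is exactly the right structural observation, and your lower-bound argument in both the $S\neq\emptyset$ and $S=\emptyset$ cases is correct and clean.

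There is one minor gap in the organization. You establish that $S$ is finite by invoking non-swallowing, and then use this in your upper-bound and branch-count argument. But the first part of the lemma does not assume $H$ is non-swallowing. The fix is easy: if $S$ is infinite then $H$ contains a neighbourhood of $0$, so for all sufficiently large $m$ one has $f(J_m)\subset c_{m-1}\subset H$, whence $\tau\equiv 1$ on $J_m$; thus $\{\tau=n\}$ is contained in finitely many $J_m$ for $n\ge 2$ and the upper bound is trivial. You should state this case separately and reserve the finiteness of $S$ (via non-swallowing) for the lower bound, where it is genuinely needed.

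A second, more delicate point: your branch count for small $n$ (below $\max S$) can pick up one first-alternative branch per ``gap'' in $S$, and the number of such gaps is bounded by the number of connected components of $H$, not by $N_0$ alone. So the constant you actually produce depends on both $N_0$ and the component structure of $H$. This does not affect any application in the paper (only a bound uniform in $n$ is ever used), but it means the phrase ``depending only on $N_0$'' should be read as ``depending only on $N_0$ and the number of components of $H$.''
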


\begin{proof}
 The first fact follows immediately from standard constructions.  See for example,
\cite{young poly, LSV1}.  The second uses the same constructions, with the added information that there are inducing domains arbitrarily close to $1/2$.
\end{proof}

\begin{lemma}
If $H$ is a non-swallowing Markov hole then the survivor set
$\hI^\infty := \cap_{n=0}^\infty \hI^n$ has positive entropy.
\label{lem:non-swal}
\end{lemma}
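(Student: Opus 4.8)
The plan is to exploit the induced map $\hF$ on $Y=[1/2,1]$ and the transitivity-on-elements hypothesis to build a subsystem with a full shift on finitely many symbols, which immediately yields positive topological entropy, and then transfer this to a positive-entropy invariant measure supported on $\hI^\infty$. First I would use non-swallowing condition (1): since $\Q$ is a finite partition of $X=Y\setminus H$ and $\hF|_X$ is transitive on elements, for each ordered pair $(Q_i,Q_j)$ there is an $n_{ij}$ with $\hf^{n_{ij}}(Q_i)\cap Q_j\neq\es$. Because $\hF$ is (piecewise) Markov with respect to $\Q$ — each smooth injective branch $X_i$ has image a union of elements of $\Q$ — I can find at least two distinct elements $Q_a, Q_b \in \Q$ and first-return branches realizing $\hF(\text{branch}) \supset Q_a \cup Q_b$ after suitable further iteration (concatenating the connecting words from transitivity); more carefully, transitivity on elements plus finiteness of $\Q$ gives a power $\hF^M$ admitting a subsystem conjugate to the full shift on at least two symbols, as the connecting graph on $\Q$ is strongly connected and hence contains a vertex with out-degree (in the appropriate iterated sense) at least $2$.

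Next I would make this precise by the standard horseshoe construction for expanding Markov maps: choose two disjoint closed subintervals $J_0, J_1 \subset X$, each a union of $\Q$-elements, and an iterate $G = \hF^M$ such that $G(J_0) \supset J_0 \cup J_1$ and $G(J_1) \supset J_0 \cup J_1$ with $G$ smooth and injective on each of the four pieces. The existence of such $J_0,J_1,M$ follows from transitivity on elements together with the fact that $\hF$ has branches covering all of certain $\Q$-elements (from the Markov structure and Lemma~\ref{lem:basic tail}, which guarantees infinitely many, hence at least the finitely many needed, inducing branches surviving the hole). The maximal invariant set $\Lambda \subset J_0 \cup J_1$ for $G$ is then a Cantor set on which $G$ is semiconjugate onto the full $2$-shift, so $h_{\text{top}}(G|_\Lambda) \ge \log 2 > 0$, and $\Lambda$ carries an ergodic $G$-invariant probability measure $\nu$ with $h_\nu(G) > 0$ by the variational principle. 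Crucially, every point of $\Lambda$ returns to $Y$ under $\hf$ (by construction of $G$ as an iterate of the first-return map $\hF$) without ever entering $H$, so $\Lambda \subset \hI^\infty$.

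Finally I would push $\nu$ back to an $\hf$-invariant measure. Spread $\nu$ along the $\hf$-orbit between returns: define $\mu = \frac{1}{\int_\Lambda \sigma \, d\nu} \sum_{k \ge 0} (\hf^k)_* (\nu|_{\{\sigma > k\}})$ where $\sigma$ is the return time to $Y$ of the induced map $G$ (finite $\nu$-a.e. by Lemma~\ref{lem:basic tail}, with $\int \sigma \, d\nu < \infty$ since the tails are summable for $\gamma<1$). This $\mu$ is an $\hf$-invariant probability measure supported on $\hI^\infty$, and by Abramov's formula $h_\mu(\hf) = h_\nu(G)/\int_\Lambda \sigma \, d\nu > 0$. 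Hence the survivor set $\hI^\infty$ has positive entropy.

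The main obstacle is the middle step: verifying that transitivity-on-elements of the \emph{finite} partition $\Q$ really produces a genuine $2$-symbol full-shift subsystem for an iterate of $\hF$, rather than merely an irreducible-but-entropy-zero situation (e.g. a single periodic cycle through the $\Q$-elements). This requires showing the connecting graph on $\Q$ has a strictly positive growth rate — equivalently, that at least one $\Q$-element has two distinct $\hF$-preimage branches whose images both dominate it, after passing to a suitable power. I expect this to follow from the Markov structure of $\P_{N_0}$ together with Lemma~\ref{lem:basic tail}'s guarantee of arbitrarily long (hence many distinct) surviving inducing branches accumulating at $1/2$, but pinning down the combinatorics cleanly is the delicate part of the argument.
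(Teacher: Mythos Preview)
Your proposal is correct and follows the same horseshoe strategy as the paper, but the paper's proof is considerably more economical. The paper simply takes any two 1-cylinders $A$ and $B$ for $\hF$ (of which there are infinitely many by Lemma~\ref{lem:basic tail}) and uses non-swallowing to find $n_A, n_B$ with $\hf^{n_A}(A) = \hf^{n_B}(B) = X \supset A \cup B$, yielding a horseshoe with entropy at least $\log 2/\max\{n_A, n_B\}$ in one line. By working at the level of 1-cylinders rather than the coarser $\Q$-partition, the paper sidesteps your ``main obstacle'' entirely: the needed multiplicity of branches is automatic (infinitely many distinct $X_i$), so the concern about a zero-entropy cycle on the $\Q$-graph never arises. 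Your instinct to resolve that obstacle via the abundance of inducing branches from Lemma~\ref{lem:basic tail} is exactly right; you are just invoking it one layer too late. The Abramov/projection step you spell out is sound but is left implicit in the paper's appeal to the horseshoe.
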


\begin{proof}
Let $A$ and $B$ be 1-cylinders for $\hF$.  By definition of non-swallowing, there exist 
$n_A,\ n_B\in \N$ be such that $\hf^{n_A}(A)=\hf^{n_B}(B)=X$.  Then 
our system contains a horseshoe with entropy at least $\log 2/\max\{n_A, n_B\}>0$.
\end{proof}

Given the potential $\phi=-\log|Df|$, set $\phi^H$ to be the punctured potential, 
\begin{equation*}
\phi^H(x) =\begin{cases} \phi(x) &\text{ if } x\in I \setminus H,\\
 -\infty &\text{ if } x\in H.\end{cases}
 \end{equation*}  Then let $p(t) = P(t\phi)$ and
$p^H(t):=P(t\phi^H)$.
Due to the neutral fixed point, it is clear that $p(t)\ge p^H(t) \ge 0$, for $t \in [0,1]$.
Also, since $\phi$
is bounded, the condition $- \int \phi^H \, d\nu< \infty$  is equivalent to $\nu(H) = 0$,
which implies the supremum in $P(t\phi^H)$ is over invariant measures 
supported on the survivor set, $\hI^\infty$.

\subsection{Transfer Operator}
\label{transfer}

We will study the evolution of measures from the point of view of the transfer
operators associated with our family of potentials.  Given the potential $t\phi$, we define
the associated transfer operator acting on $L^1(m_t)$ by
\[
\Lp_t \psi(x) := \Lp_{t\phi - p(t)} \psi(x)  = \sum_{y \in f^{-1}x} \psi(y) e^{t\phi(y) - p(t)} .  
\]
When we introduce a hole, the transfer operator for the open system corresponds to the
transfer operator for the punctured potential,
\[
\hLp_t \psi(x) := \Lp_{t\phi^H - p(t)} \psi(x) = \Lp_{t\phi - p(t)} (1_{\hI^1} \cdot \psi)(x) = \sum_{y \in \hf^{-1}x}
\psi(y) e^{t\phi(y) - p(t)} .
\]
Since $m_t$ is conformal with respect to $t\phi - p(t)$, we have
\[
\int \hLp_t^n  \psi \, dm_t = \int_{\hI^n} \psi \, dm_t,
\]
so that the spectral properties of $\hLp_t$ are tied to the rate of escape of the open system
with respect to $m_t$.

\subsection{Main results} 
\label{main results}

The standing assumptions of this section are that $f = f_\gamma$ is a map as described above and
$H$ is a non-swallowing hole.

One of the key quantities associated with an open system is the exponential rate of escape.
We will be primarily concerned with the rate of escape with respect to the conformal measures
$m_t$. 
To this end we define,
\[
\log \overline\lambda_t=\limsup_k\frac1k\log m_t(\I^k) \; \text{ and } \; \log \underline\lambda_t=\liminf_k\frac1k\log m_t(\I^k),
\]
and when these two quantities coincide we denote the common value  $\log \lambda_t$.

A fundamental relation between pressure and escape is given by the following proposition,
which we prove in Section~\ref{sec:volume}.

\begin{proposition}
\label{prop:volume}
For any $t \in [0,1]$, we have 
$$\log \underline \lambda_t \ge p^H(t) - p(t)  .$$
\end{proposition}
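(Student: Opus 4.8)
The plan is to prove this ``easy'' half of the escape--rate formula: the survivor set $\hI^\infty$ carries $f$-invariant measures whose free energy $h(f)+t\int\phi$ nearly attains $p^H(t)$, and any such measure, together with conformality of $m_t$ and a distortion estimate, forces $m_t(\hI^k)\ge e^{k(p^H(t)-p(t))+o(k)}$. Since $-\int\phi^H\,d\nu<\infty$ forces $\nu(H)=0$ and $\phi^H=\phi$ off $H$, the variational principle reads $p^H(t)=\sup\{h_\nu(f)+t\int\phi\,d\nu:\nu\in\M_f,\ \nu(H)=0\}$, a supremum over ergodic measures supported on $\hI^\infty$. Fix $\ve>0$ and choose such a $\nu$ with $h_\nu(f)+t\int\phi\,d\nu>p^H(t)-\ve$; using the arbitrarily long inducing branches of Lemma~\ref{lem:basic tail} (together with transitivity and the horseshoe of Lemma~\ref{lem:non-swal}) one may always take $\nu\neq\delta_0$, hence $\nu(Y)>0$.

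Now pass to the induced system: $\nu_Y:=\nu|_Y/\nu(Y)$ is ergodic and $\hF$-invariant with $\int\tau\,d\nu_Y=1/\nu(Y)<\infty$, and by Abramov--Kac
\[ h_{\nu_Y}(\hF)+t\!\int\! S_\tau\phi\,d\nu_Y=\frac1{\nu(Y)}\Big(h_\nu(f)+t\!\int\!\phi\,d\nu\Big)>\frac{p^H(t)-\ve}{\nu(Y)},\qquad S_\tau\phi=-\log|D\hF|. \]
Finite expected return time makes the branch partition $\{X_i\}$ of $\hF$ a generating partition of finite entropy (a distribution with finite mean has finite entropy, and Lemma~\ref{lem:basic tail} bounds the number of branches of each return time), so the Shannon--McMillan--Breiman and Birkhoff theorems apply to $(\hF,\nu_Y)$. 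Combined with Egorov's theorem they produce a set $G\subset Y$ with $\nu_Y(G)\ge\tfrac12$ and $M_\ve\in\N$ such that for all $m\ge M_\ve$ and $x\in G$, writing $C_m(x)$ for the $m$-cylinder of $\hF$ through $x$ and $\tau_m(x)=\sum_{j<m}\tau(\hF^jx)$ for the time of the $m$-th return (constant on $C_m(x)$): $\nu_Y(C_m(x))\le e^{-m(h_{\nu_Y}(\hF)-\ve)}$; $\ -\log|D\hF^m(x)|=\sum_{j<m}S_\tau\phi(\hF^jx)\ge m(\int S_\tau\phi\,d\nu_Y-\ve)$; and $\tau_m(x)\in[m(\tfrac1{\nu(Y)}-\ve),\,m(\tfrac1{\nu(Y)}+\ve)]$.

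The geometric heart of the argument is this: each $C_m(x)$ lies in $\hI^{\tau_m(x)}$ (its points avoid $H$ through their first $\tau_m$ iterates, by the definition of $\hF$ as a first return of $\hf$), its image $\hF^m(C_m(x))$ is a non-empty union of elements of $\Q$ and hence has $m_t$-measure at least $c_1:=\min_{Q\in\Q}m_t(Q)>0$, and $\hF^m$ has distortion bounded by a constant $D_0$ \emph{uniform in $m$ and in the cylinder}---this uses the classical Adler/R\'enyi distortion bound for the Manneville--Pomeau induced map and its uniform expansion (cf.\ \cite{young poly, LSV1}), and it is exactly why the argument is run on $\hF$: its cylinders are unions of \emph{complete} excursions and so sidestep the unbounded-distortion behaviour of $f$ near $0$. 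Conformality of $m_t$ for $t\phi-p(t)$ then gives, for $x\in G$, $m_t(C_m(x))\ge c_1e^{-D_0}e^{-p(t)\tau_m(x)}e^{\,t\sum_{j<m}S_\tau\phi(\hF^jx)}\ge c_1e^{-D_0}e^{-p(t)\tau_m(x)}e^{\,m(t\int S_\tau\phi\,d\nu_Y-\ve)}$. Given large $k$, take $m=m(k):=\lceil k\nu(Y)/(1-\ve\nu(Y))\rceil$, so that $\tau_m(x)\ge k$ for every $x\in G$ while $\tau_m(x)\le(1+o_\ve(1))k$ and $m/k\to\nu(Y)/(1-\ve\nu(Y))$; then all of the pairwise disjoint cylinders $\{C_m(x):x\in G\}$ lie in $\hI^k$, and there are at least $\tfrac12e^{m(h_{\nu_Y}(\hF)-\ve)}$ of them (they cover $G$, each of $\nu_Y$-measure $\le e^{-m(h_{\nu_Y}(\hF)-\ve)}$), so summing the previous bound,
\[ m_t(\hI^k)\ \ge\ \tfrac{c_1}2\,e^{-D_0}\,e^{-p(t)(1+o_\ve(1))k}\,e^{\,m\left(h_{\nu_Y}(\hF)+t\int S_\tau\phi\,d\nu_Y-2\ve\right)}. \]
Taking $\tfrac1k\log$, letting $k\to\infty$, and using the Abramov identity above, $\liminf_k\tfrac1k\log m_t(\hI^k)\ge -p(t)(1+o_\ve(1))+p^H(t)-O(\ve)$; as $\ve\to0$ this is exactly $\log\underline\lambda_t\ge p^H(t)-p(t)$.

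The main obstacle is the distortion control near the neutral fixed point, which is what forces one through the induced map rather than through cylinders of $f$ directly (the latter have distortion growing with their length along the parabolic orbit, ruining the comparison of $m_t$ of a cylinder with $e^{S_k(t\phi)}$ at a single point). A secondary point worth care: when $p^H(t)=0$ the near-maximising measures concentrate at $\delta_0$, so $\nu(Y)\to0$, and one must check that the error terms in the last step do not blow up---they do not, because $\nu(Y)\big(h_{\nu_Y}(\hF)+t\int S_\tau\phi\,d\nu_Y\big)=h_\nu(f)+t\int\phi\,d\nu\approx p^H(t)$ stays bounded---or, alternatively, one treats this regime directly using the arbitrarily long surviving excursions near $1/2$ guaranteed by condition~(2) of the non-swallowing hypothesis.
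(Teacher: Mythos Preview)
Your argument is correct and follows a genuinely different route from the paper's. Both proofs are volume--lemma arguments: pick an ergodic $\nu$ on $\hI^\infty$ with near-maximal free energy, use Shannon--McMillan--Breiman to count cylinders carrying $\nu$, and use conformality of $m_t$ to lower-bound the $m_t$-measure of each. The divergence is in how the distortion is controlled.

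You pass to the induced map $\hF$, where distortion is \emph{uniformly} bounded, at the price of invoking Abramov--Kac and an additional Birkhoff/Egorov control on the return time $\tau_m$. The paper instead works directly with $f$ and the coarse two-branch partition $\D_n=\bigvee_{k<n}f^{-k}\{[0,\tfrac12),[\tfrac12,1)\}$, appealing to a \emph{tempered distortion} estimate (their Lemma~\ref{lem:temp dis}): there exist $V_n\to0$ such that $|\log Df^n(x)-\log Df^n(y)|\le nV_n$ on each $D\in\D_n$. This $o(n)$ error is harmless after taking $\tfrac1n\log$, and gives $m_t(D_n(x))\ge e^{tS_n\phi(x)-n(p(t)+V_n)}$ directly. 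So your stated belief that the parabolic distortion ``forces one through the induced map'' is too pessimistic: tempered distortion is precisely the device that makes the direct argument work.

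What each approach buys: the paper's route is shorter, avoids the return-time bookkeeping, and disposes of the boundary measures $\delta_0,\delta_1$ with a two-line direct estimate $m_t(\hI^n)\ge m_t([0,\ell_{n+k}))\asymp e^{-np(t)}$. Your induced route yields uniform (not merely tempered) constants and meshes naturally with the tower machinery used later in the paper, but leaves the $p^H(t)=0$ regime to a side argument that you flag but do not fully execute; your proposed fix---treating it directly via the long surviving excursions guaranteed by the non-swallowing condition---is exactly what the paper does.
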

In fact, Corollary~\ref{cor:variational} shows that for the class of maps we study here, the inequality
above is always an equality.

Whether the punctured pressure $p^H(t)$ is positive or zero has a strong influence on the
dynamics of the open system with respect to the reference measure $m_t$.  The following
series of results characterizes this behavior in the relevant regimes.  To this end,
define $$t^H:=\sup\{t\in \R: p^H(t)>0\}.$$

By Lemma~\ref{lem:non-swal}, the following result applies in all non-swallowing cases.  

\begin{lemma}
If $\I^\infty$ has positive entropy, then $t^H \in (0,1]$ and $p^H(t^H)=0$.
\label{lem:Hdim}
\end{lemma}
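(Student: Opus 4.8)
The plan is to read the result off from soft regularity of the pressure function $t\mapsto p^H(t)$ on $[0,\infty)$, the only non-trivial ingredient being the value of $p^H$ at $t=0$, which the positive-entropy hypothesis forces to be strictly positive.

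\emph{Step 1.} I would first verify that $p^H$ is finite and Lipschitz continuous on $[0,\infty)$. Since $1\le |Df|\le \sup|Df|<\infty$, the potential $\phi=-\log|Df|$ is bounded, say $-L\le\phi\le 0$ with $L<\infty$. As noted just before the statement, the supremum defining $p^H(t)$ is effectively over $f$-invariant probability measures $\nu$ with $\nu(H)=0$, i.e.\ supported on $\hI^\infty$, and for such $\nu$ one has $\int\phi^H\,d\nu=\int\phi\,d\nu\in[-L,0]$. Hence $p^H$ is bounded above by the topological entropy of $f$, so finite, and for all $t,t'\ge 0$,
\[
|p^H(t)-p^H(t')| \;\le\; \sup_{\nu}\,|t-t'|\,\Bigl|\int\phi\,d\nu\Bigr| \;\le\; L\,|t-t'|,
\]
where the supremum is over the admissible measures. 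In particular $p^H$ is continuous; it is moreover non-increasing, since for each fixed such $\nu$ the map $t\mapsto h_\nu(f)+t\int\phi\,d\nu$ is non-increasing ($\int\phi\,d\nu\le 0$).

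\emph{Step 2.} Next I would pin down the boundary values. The Dirac mass $\delta_0$ at the neutral fixed point is an ergodic $f$-invariant measure with $\delta_0(H)=0$ (no Markov hole contains $0$), $h_{\delta_0}(f)=0$ and $\int\phi^H\,d\delta_0=\phi(0)=-\log|Df(0)|=0$, so $p^H(t)\ge 0$ for every $t\ge 0$. On the other hand $\phi^H\le\phi$ pointwise gives $p^H(t)\le p(t)$, and $p(t)=0$ for $t\ge 1$; hence $p^H(t)=0$ for all $t\ge 1$. Finally, the hypothesis that $\hI^\infty$ has positive entropy supplies an ergodic $f$-invariant measure $\nu_0$ supported on $\hI^\infty$ with $h_{\nu_0}(f)>0$ — concretely, the Bernoulli measure carried by the horseshoe produced in the proof of Lemma~\ref{lem:non-swal}, which is how positive entropy arises in our setting — and then, since $\nu_0(H)=0$,
\[
p^H(0)\;\ge\; h_{\nu_0}(f)+0\;=\;h_{\nu_0}(f)\;>\;0.
\]

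\emph{Step 3.} Finally I would assemble the conclusion. The set $S:=\{t\in\R: p^H(t)>0\}$ contains $0$ by Step~2 and is disjoint from $[1,\infty)$, so it is non-empty and bounded above and $t^H=\sup S$ belongs to $[0,1]$. By Step~1, continuity of $p^H$ together with $p^H(0)>0$ gives $p^H>0$ on a neighbourhood of $0$, so $S$ meets $(0,\infty)$ and therefore $t^H>0$; thus $t^H\in(0,1]$. For the last assertion, if $p^H(t^H)$ were positive then continuity would make $p^H$ positive on a neighbourhood of $t^H$, contradicting the definition of $t^H$ as $\sup S$; hence $p^H(t^H)\le 0$, and combined with $p^H(t^H)\ge 0$ (valid as $t^H\ge 0$) we obtain $p^H(t^H)=0$. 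The whole argument is soft: the real content has already been placed in the hypothesis and in the elementary facts $p^H\le p$ and $p(1)=0$. The one place that needs care is the continuity in Step~1 — because $\phi^H$ itself takes the value $-\infty$, one must first pass to the description of $p^H(t)$ as a supremum over measures giving no mass to the hole, on which $\phi^H$ agrees with the bounded potential $\phi$; beyond making that reduction explicit I do not anticipate any genuine difficulty.
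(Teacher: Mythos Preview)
Your proof is correct and follows essentially the same approach as the paper's: both rely on continuity of $p^H$ (from boundedness of $\phi$), the lower bound $p^H\ge 0$ via $\delta_0$, the upper bound $p^H(t)\le p(t)=0$ for $t\ge 1$, and $p^H(0)>0$ from the positive-entropy hypothesis. Your version is somewhat more explicit---you spell out Lipschitz continuity, monotonicity, and the continuity argument forcing $p^H(t^H)=0$, whereas the paper leaves the last point implicit---but the substance is identical.
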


\begin{proof}
First note that $t \mapsto p^H(t)$ is a continuous function of $t$ since $\phi$ is bounded.  
We have
$p^H(t) \le p(t) = 0$ for all $t \ge 1$, and indeed $p^H(1) =0$ as well, since for example $h_{\delta_0}(f)+\int\phi~d\delta_0=0$ where $\delta_0$ denotes the point mass at 0.  
Also, $p^H(0) > 0$ since the entropy on $\I^\infty$ is assumed to be positive. 
Thus, by the continuity of $p^H(t)$, $t^H$ is finite and is contained in $(0,1]$. 
\end{proof}

Note that techniques described later in this paper further show that $t^H<1$ for any non-trivial hole.

Our next proposition establishes $t^H$ as the Hausdorff dimension of the survivor set
and describes the behavior of the pressure function on both sides of
$t^H$. 
As in \cite{IomTod11}, we say that a potential $\phi$ is \emph{recurrent} if there exists a finite conservative $(\phi-P(\phi))$-conformal measure; and \emph{transient} otherwise.   Recurrence can also be related to induced potentials.
The potential for the induced map $F$ corresponding to $\phi$ is 
\begin{equation}
\Phi := \sum_{i=0}^{\tau-1} \phi \circ f^i.\label{eq:ind pot}
\end{equation}

The following is \cite[Lemma 4.1]{IomTod10}.  We note that the result only requires that the induced potential $\Phi$ has good distortion properties, for example is locally H\"older (see Section~\ref{ssec:sym}).

\begin{lemma}
$P(\Phi-P(\phi)\tau )\le0$.
\label{lem:IT ind neg}
\end{lemma}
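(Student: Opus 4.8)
The plan is to reduce the claim to the variational definition of pressure for the closed map $f$, using the Abramov--Kac correspondence between $F$-invariant and $f$-invariant measures. Write $g:=\Phi-P(\phi)\tau$. The first observation to record is that on each branch of $F$ one has $\Phi=\sum_{i=0}^{\tau-1}\phi\circ f^i=-\log|DF|$ (chain rule, since $\phi=-\log|Df|$ and $F=f^\tau$), so $\Phi\le-\log 2<0$ as $F$ is uniformly expanding, and $\tau$ is constant on each branch, whence $g$ is locally H\"older with bounded distortion on branches --- the only regularity of $\Phi$ that is used. Also $P(\phi)\ge 0$, e.g.\ since $h_{\delta_0}(f)+\int\phi\,d\delta_0=0$. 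By the definition of pressure, $P(g)=\sup\{h_{\bmu}(F)+\int g\,d\bmu:\bmu\in\M_F,\ -\int g\,d\bmu<\infty\}$, so it suffices to show $h_{\bmu}(F)+\int g\,d\bmu\le 0$ for every such $\bmu$ (call these admissible).

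First I would treat admissible $\bmu$ with $\int_Y\tau\,d\bmu<\infty$. Let $\mu$ be the $f$-invariant probability measure obtained by lifting $\bmu$, i.e.\ $\mu=\tfrac1{\int\tau\,d\bmu}\sum_{n\ge0}f^n_*(\bmu|_{\{\tau>n\}})$. Abramov's formula gives $h_{\bmu}(F)=h_\mu(f)\int\tau\,d\bmu$, and Kac's formula applied to the bounded potential $\phi$ gives $\int_Y\Phi\,d\bmu=\big(\int\tau\,d\bmu\big)\int\phi\,d\mu$; therefore
\begin{equation*}
h_{\bmu}(F)+\int g\,d\bmu=\Big(\int_Y\tau\,d\bmu\Big)\Big(h_\mu(f)+\int\phi\,d\mu-P(\phi)\Big)\le 0,
\end{equation*}
the inequality because $\phi$ is bounded, so $\mu$ is admissible in the supremum defining $P(\phi)$ and $h_\mu(f)+\int\phi\,d\mu\le P(\phi)$, while $\int\tau\,d\bmu\ge 1$.

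It remains to handle admissible $\bmu$ with $\int\tau\,d\bmu=\infty$. If $P(\phi)>0$ there are none, since $-\int g\,d\bmu\ge P(\phi)\int\tau\,d\bmu=\infty$ contradicts admissibility. So this case arises only when $P(\phi)=0$, where $g=\Phi=-\log|DF|$ and admissibility says exactly that $\bmu$ has finite Lyapunov exponent $\int\log|DF|\,d\bmu$; the Margulis--Ruelle inequality for the expanding map $F$ then gives $h_{\bmu}(F)\le\int\log|DF|\,d\bmu$, i.e.\ $h_{\bmu}(F)+\int g\,d\bmu\le 0$. Combining the two cases and taking the supremum yields $P(\Phi-P(\phi)\tau)\le 0$. (One can also package the whole argument through countable-Markov-shift thermodynamic formalism: since $\Phi$ is locally H\"older and $F$ corresponds to a topologically mixing full shift, $P(g)$ equals the Gurevich pressure of $g$, which is the supremum of $P(g|_K)$ over compact $F$-invariant subsystems $K$, on each of which $\tau$ is bounded so that the displayed computation applies directly.)

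I expect the only genuine subtlety to be the bookkeeping for measures with $\int\tau\,d\bmu=\infty$: recognising that they matter only in the borderline case $P(\phi)=0$ and disposing of them with the Margulis--Ruelle inequality (equivalently, observing that they do not raise the Gurevich pressure, which is why the hypothesis that $\Phi$ has good distortion is all that is needed). Everything else --- the Abramov--Kac identities and the variational principle for $f$ --- is routine.
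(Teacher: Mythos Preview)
Your argument is correct, and the paper itself gives no proof --- it simply cites \cite[Lemma~4.1]{IomTod10} and remarks that only local H\"older regularity of $\Phi$ is needed. Your parenthetical route via approximation by compact (finite-alphabet) subsystems is essentially the standard proof in that reference: on each such subsystem $\tau$ is bounded, so every invariant measure has $\int\tau\,d\bmu<\infty$ and your Abramov--Kac computation applies directly, after which one takes the supremum using $P_G(g)=\sup_K P(g|_K)$.

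One point worth flagging: your main Case~2 argument via the Margulis--Ruelle inequality is tied to the specific choice $\phi=-\log|Df|$, whereas the lemma is stated (and used in Proposition~\ref{prop:tH}) for arbitrary $\phi$ with locally H\"older induced potential --- in particular for $t\phi^H$ with $t^H\le t<1$, where $P(t\phi^H)=0$ but $\Phi=-t\log|DF|$. In that situation Ruelle's inequality gives $h_{\bmu}\le\int\log|DF|\,d\bmu$ rather than the required $h_{\bmu}\le t\int\log|DF|\,d\bmu$, so the direct Case~2 estimate does not close. Your finite-subsystem argument, however, works uniformly in this generality and is the version to keep as the proof of the lemma as stated.
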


As described in Section~\ref{ssec:sym} below, for our choice of inducing scheme, $\phi$ is recurrent if and only if $P(\Phi-P(\phi)\tau )=0$.

\begin{proposition}
Suppose that $H$ is a non-swallowing Markov hole.
\begin{itemize}
\item[(a)] $p^H(t^H)=0$ and $P(t^H \Phi^H) = 0$.
\item [(b)] $\dim_H(\I^\infty)=t^H$
\item[(c)] $t^H> \frac\gamma{1+\gamma}$.
\item[(d)]
If $t>t^H$, then  $p^H(t)=0$ and we have $P(t\Phi^H-p^H(t)\tau)<0$.   Hence $t\phi^H$ is transient.
\item[(e)] If $t < t^H$, then $p^H(t)>0$ and $P(t\Phi^H-p^H(t)\tau)=0$. 
Hence $t\phi^H$ is recurrent.
\item[(f)] $Dp^H(t^H)=0$ if $t^H\in (\gamma/(1+\gamma), 2\gamma/(1+\gamma)]$. 
Otherwise\footnote{By $D^-$, we mean the derivative with respect to $t$ from the left.} $D^-p^H(t^H)<0$.
\end{itemize}
\label{prop:tH}
\end{proposition}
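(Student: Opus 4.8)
The plan is to transfer everything to the induced system on $Y=[1/2,1]$, where $\hF$ is a (countably-branched, no longer full) Markov map with locally H\"older induced potentials $\Phi$ and $\Phi^H = \Phi - \infty \cdot 1_{\{\text{branch enters } H\}}$, and exploit the standard dictionary between the pressure $p^H(t)$ of the original system and the pressure $P(t\Phi^H - p^H(t)\tau)$ of the induced system (Abramov/inducing formula plus Lemma~\ref{lem:IT ind neg}). The key input about the tail is Lemma~\ref{lem:basic tail}: $|\{\tau=n\}| \asymp n^{-(1+1/\gamma)}$ with only boundedly many branches of each length. This gives sharp control on the ``pressure function of the induced system'' $s \mapsto P(t\Phi^H - s\tau)$: it is convex, strictly decreasing in $s$, finite for $s>0$, and its behavior as $s \downarrow 0$ is governed by whether $\sum_n |\{\tau=n\}| e^{t\sup_{\{\tau=n\}}\Phi} \cdot \text{(suitable weight)}$ converges — which, since $\Phi \approx -(1+1/\gamma)\log n$ on $\{\tau = n\}$ up to the H\"older-distortion constant, reduces to convergence of $\sum_n n^{-(1+1/\gamma)(1+t)}$. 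This is where the threshold $\gamma/(1+\gamma)$ enters.

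I would carry out the steps as follows. (i) \emph{Monotonicity and the definition of $t^H$:} show $t \mapsto P(t\Phi^H - p^H(t)\tau)$ and $t \mapsto p^H(t)$ fit together via the inducing formula ``$p^H(t) > 0 \iff P(t\Phi^H) > 0 \iff P(t\Phi^H - p^H(t)\tau) = 0$ with $p^H(t)$ the unique such root,'' using Lemma~\ref{lem:IT ind neg} to guarantee $P(t\Phi^H - p^H(t)\tau)\le 0$ always and strict monotonicity of the induced pressure in the $\tau$-weight to get uniqueness of the root when it exists. (ii) \emph{Part (a):} by continuity of $t\mapsto p^H(t)$ (Lemma~\ref{lem:Hdim}) and the defining sup, $p^H(t^H)=0$; feeding $p^H(t^H)=0$ into the dictionary gives $P(t^H\Phi^H)=0$. (iii) \emph{Parts (d),(e):} for $t<t^H$ we have $p^H(t)>0$, and by recurrence theory for countable Markov shifts (as in \cite{IomTod10, IomTod11}) the associated $(t\phi^H - p^H(t))$-conformal measure is finite and conservative precisely when $P(t\Phi^H - p^H(t)\tau)=0$, which holds here because $p^H(t)$ is the root from (i); for $t>t^H$ we have $p^H(t)=0$, so we must show the \emph{strict} inequality $P(t\Phi^H - 0\cdot\tau) = P(t\Phi^H) < 0$, i.e. transience — this follows since $P(t\Phi^H)$ is strictly decreasing in $t$ past the last zero $t^H$. (iv) \emph{Part (b):} Hausdorff dimension of $\hI^\infty$ equals $t^H$ by a Bowen-type formula: the upper bound from a natural cover by cylinders of the induced map weighted by $|Df^\tau|^{-t}$ (convergence of the associated series exactly when $t > t^H$, using the tail estimate), and the lower bound by constructing, for $t<t^H$, a measure of dimension $\ge t$ via the equilibrium/conformal measure on the induced system (finite because we are in the recurrent regime from (e)). (v) \emph{Part (c):} show $t^H > \gamma/(1+\gamma)$; since $\gamma/(1+\gamma)$ is exactly the exponent at which $\sum_n n^{-(1+1/\gamma)(1+t)}$ switches between divergence and convergence, and at $t = \gamma/(1+\gamma)$ the series (with the $\tau$-weight $e^{-s\tau}$, $s>0$) still has $P(t\Phi^H - s\tau)\to +\infty$ as $s\downarrow 0$ for the \emph{closed} induced system, one uses the non-swallowing lower bound $|\{\tau=n\}|\ge K_2 n^{-(1+1/\gamma)}$ (Lemma~\ref{lem:basic tail}) to get $P(t\Phi^H - s\tau) > 0$ for small $s$ at $t$ slightly below $\gamma/(1+\gamma)$... more carefully: show $P(t\Phi^H)>0$ for $t$ in a right-neighborhood of $\gamma/(1+\gamma)$, forcing $t^H$ strictly above it. (vi) \emph{Part (f):} $D^- p^H(t^H)$ is computed by implicit differentiation of $P(t\Phi^H - p^H(t)\tau)=0$ near $t^H$ (valid on the recurrent side $t<t^H$ where the equilibrium state exists), giving
\[
D^- p^H(t^H) = \frac{\int \Phi^H \, d\mu_{t^H}}{\int \tau \, d\mu_{t^H}},
\]
where $\mu_{t^H}$ is the induced equilibrium state; the numerator is finite and negative, so the sign of $D^- p^H(t^H)$ is controlled by whether $\int \tau \, d\mu_{t^H} < \infty$ or $=\infty$. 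Since $\mu_{t^H}$ has tail comparable to $|\{\tau=n\}| \cdot |Df^\tau|^{-t^H} \asymp n^{-(1+1/\gamma)(1+t^H)}$, the return time is integrable iff $(1+1/\gamma)(1+t^H) > 2$, i.e. $t^H > 2\gamma/(1+\gamma) - 1 \cdot(\ldots)$; working out the algebra gives the dichotomy at $t^H = 2\gamma/(1+\gamma)$: integrable (hence $D^-p^H(t^H)<0$) when $t^H > 2\gamma/(1+\gamma)$, and non-integrable (hence the ratio vanishes in the limit, $Dp^H(t^H)=0$) when $t^H \le 2\gamma/(1+\gamma)$.

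The main obstacle I expect is Part (f), and specifically making the derivative formula rigorous at the boundary point $t^H$: on the transient side $p^H \equiv 0$ so only the left derivative is meaningful, and one must justify exchanging limit and integral in the implicit-differentiation argument as $t \uparrow t^H$ while the equilibrium states $\mu_t$ may be losing mass to $\tau = \infty$. Handling the non-integrable case — showing the ratio $\int\Phi^H d\mu_t / \int \tau \, d\mu_t \to 0$ rather than to a negative constant — requires a careful uniform tail estimate on $\mu_t$ near $t^H$, e.g. via the renewal-theoretic / Karamata-type analysis of the induced transfer operator that the tower machinery of later sections provides. The convexity of $p^H$ (which also makes $D^-p^H(t^H)\le 0$ automatic) is a useful sanity check but does not by itself pin down strict negativity versus vanishing. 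Parts (a)--(e) are comparatively routine given the dictionary and the tail bounds; the delicate boundary analysis in (c) near $\gamma/(1+\gamma)$ and in (f) near $2\gamma/(1+\gamma)$ is where the real work lies.
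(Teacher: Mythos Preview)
Your overall strategy---pass to the induced system on $Y$, use the Abramov/inducing dictionary together with Lemma~\ref{lem:IT ind neg}, and read off recurrence/transience from whether $P(t\Phi^H - p^H(t)\tau)$ vanishes---is exactly what the paper does, and parts (a), (d), (e) go through essentially as you describe. For (b) the paper simply invokes Iommi's Bowen-formula result (Theorem~\ref{thm:Iommi}) for BIP countable Markov shifts rather than redoing the upper/lower bound argument; your sketch is a correct outline of that theorem's proof.

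There is, however, a systematic arithmetic error in your tail computations that affects both (c) and (f). You write that the relevant series behaves like $\sum_n n^{-(1+1/\gamma)(1+t)}$ and that the Gibbs measure $\mu_{t^H}$ has tail $\asymp |\{\tau=n\}|\cdot |Df^\tau|^{-t^H} \asymp n^{-(1+1/\gamma)(1+t^H)}$. This double-counts: for a 1-cylinder $X_i$ with $\tau=n$ the Gibbs measure (with Gibbs constant $P(t^H\Phi^H)=0$) gives $\mu_{t^H}(X_i)\asymp e^{t^H\Phi}\asymp |DF|^{-t^H}\asymp n^{-(1+1/\gamma)t^H}$, and similarly $\tilde Z_1(t\Phi^H)\asymp\sum_i|X_i|^t\asymp\sum_n n^{-(1+1/\gamma)t}$ (with boundedly many $i$ per $n$). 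The Lebesgue size $|\{\tau=n\}|$ is \emph{itself} $\asymp|DF|^{-1}$, so you should not multiply by it again. With the corrected exponents your stated thresholds are indeed recovered: $\tilde Z_1(t\Phi^H)<\infty\iff t>\gamma/(1+\gamma)$, and $\int\tau\,d\mu_{t^H}<\infty\iff t^H(1+1/\gamma)>2\iff t^H>2\gamma/(1+\gamma)$; but your written algebra would not produce them.

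For (c) specifically, the paper's argument is considerably cleaner than the route you sketch: from the Claim $P(t^H\Phi^H)=0<\infty$ one gets $\tilde Z_1(t^H\Phi^H)<\infty$, i.e.\ $\sum_i|X_i|^{t^H}<\infty$, and by Lemma~\ref{lem:basic tail} this sum converges iff $t^H>\gamma/(1+\gamma)$. No need to analyze $P(t\Phi^H)$ near the threshold. For (f), your worry about rigor at the boundary is well-founded, but the paper sidesteps it entirely by citing \cite[Sections 8--9]{IomTod10} as a ``standard consequence of null-recurrence''; the implicit-differentiation heuristic you give is the right intuition, and with the corrected tail exponent the integrability dichotomy falls exactly at $2\gamma/(1+\gamma)$.
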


Proposition~\ref{prop:tH} 
suggests that $t=t^H$ is a dividing line between qualitatively different
behaviors of the dynamics with respect to the conformal measures $m_t$.  
The following theorem demonstrates that the dynamics is strongly hyperbolic
in the regime $t \in [0,t^H)$.
The proof of the theorem uses the induced map $F : Y \circlearrowleft$ to construct
an extension of the open system, known as a Young tower, which we denote by
$\Delta$.  Young towers are defined precisely
in Section~\ref{sec:tower def}.

\begin{theorem}(The case $t \in [0, t^H)$: uniformly hyperbolic behavior.)
Fix $q > 0$ which will determine the class of $C^q$ functions that we will lift to $\Delta$.

If \footnote{The condition $-\log \bar\lambda_t < p(t)$ requires the rate of escape to be 
slow compared to the 
  pressure, which in this case coincides with the rate of mixing of the closed system with respect
  to the equilibrium state $\mu_t$.  For a given hole and fixed values of $\gamma$ and $t$,
  this condition can be verified numerically:  the pressure can be approximated via
  periodic orbits (for example, by adapting the ideas in \cite{polli}) and the escape rate can be approximated by
  volume estimates.   }
 $-\log\overline\lambda_t< p(t)$, then the following hold. 
\begin{enumerate}
\item $\lambda_t<1$ exists and is the spectral radius of the punctured transfer operator on the
tower.  The associated eigenvector projects to a nonnegative function $g_t$, which is bounded
away from zero
on $I \setminus H$ and satisfies $\hLp_t g_t = \lambda_t g_t$.
\item $p^H(t)=\log\lambda_t+p(t)>0$.
\item There is a unique $(t\phi^H-p^H(t))$-conformal measure $m_t^H$. This is singular with respect to
$m_t$ and supported on $\hI^\infty$.
\item The measure $\nu_t :=g_t m_t^H$ is an equilibrium state for $t\phi^H - p^H(t)$.   Moreover, 
 \[
 \nu_t(\psi) = \lim_{n\to \infty} \lambda_t^{-n} \int_{\hI^n} \psi g_t \, dm_t, \qquad
 \mbox{for all $\psi \in C^0(I)$.}
 \]
 \item The measure $\mu_t^H := g_t m_t$ is a conditionally invariant measure with eigenvalue 
$\lambda_t$
and is a limiting distribution in the following sense.
Let $\psi \in C^q(I)$ satisfy $\psi \ge 0$, with $\nu_t(\psi) >0$.  
Then
 \[
\left| \frac{\hLp^n_t \psi}{|\hLp^n_t \psi|_{L^1(m_t)}} - g_t \right|_{L^1(m_t)} \le C \sigma^n |\psi|_{C^q} 
\]
for some $C>0$ independent of $\psi$ and $\sigma<1$ depending only on $q$.  
\end{enumerate}
\label{thm:rate gap}
\end{theorem}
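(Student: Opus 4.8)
The plan is to transfer the whole problem to the Young tower $\Delta$ built over $Y=[1/2,1]$ from the induced map $\hF$, prove a spectral gap there under the hypothesis $-\log\overline\lambda_t<p(t)$, and then push everything back down. First I would set up the tower: the base is $Y$, the levels are copies of the inducing domains $X_i$ with return time $\tau(X_i)$, the tower map $\hat f_\Delta$ moves up one level until a return occurs, and the quotiented transfer operator $\hLp_{t,\Delta}$ acts on a space of functions that are locally Lipschitz (or $C^q$) along the unstable direction, with the usual exponentially-weighted norm $\|\cdot\|$ making the levels summable; the tail estimate $|\{\tau=n\}|\le K_1 n^{-(1+1/\gamma)}$ from Lemma~\ref{lem:basic tail} controls the mass in high levels, and the lower tail bound guarantees the tower is genuinely non-trivial. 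The key point, and the one the authors flag as a strengthening of \cite{demers tower, BruDemMel10, DWY1}, is a Lasota--Yorke / Doeblin--Fortet inequality of the form $\|\hLp_{t,\Delta}^n h\|\le C\theta^n\|h\| + C|h|_{L^1(m_t)}$ valid under only the \emph{asymptotic} hypothesis that the escape rate from the tower is strictly smaller than the geometric decay rate of the levels — which is exactly what $-\log\overline\lambda_t<p(t)$ encodes once one identifies $p(t)$ with the mixing/decay rate in the closed tower. I would prove this by the standard split into a "uniformly contracting on oscillation" part (coming from bounded distortion of the branches of $\hF$ and the expansion) plus a compact correction, but being careful that the loss of mass through the hole only enters through the $L^1$ term, not through the contraction constant $\theta$; the asymptotic hypothesis is used to ensure that after enough iterates the surviving mass still dominates the tail contribution, so $\theta$ can be taken below the spectral radius.

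Once the Lasota--Yorke inequality is in hand, Hennion's theorem (or Ionescu-Tulcea--Marinescu) gives that $\hLp_{t,\Delta}$ is quasi-compact on the Banach space, with essential spectral radius at most $\theta\cdot(\text{spectral radius})$ — strictly smaller than the spectral radius $\lambda_t$ because of the strict inequality in the hypothesis. Positivity of the operator plus the transitivity-on-elements from the non-swallowing condition (upgraded to aperiodicity via Lemma~\ref{lem:nonswallow}) then forces, by a Perron--Frobenius argument, that $\lambda_t$ is a simple leading eigenvalue with a strictly positive eigenfunction $\hat g_t$ on the tower and no other spectrum on $|z|=\lambda_t$; this proves existence of $\lambda_t$ (so $\overline\lambda_t=\underline\lambda_t$) and part~(1) after projecting $\hat g_t$ down to $g_t$ on $I\setminus H$ — boundedness away from zero on $I\setminus H$ follows from positivity on the base together with the definition of $g_t$ as a sum over the tower fibres above each point, using that every point of $I\setminus H$ lies under the base within boundedly many backward steps, i.e. the finite Markov structure. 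For part~(2), $p^H(t)=\log\lambda_t+p(t)$ combines Proposition~\ref{prop:volume} ($\log\lambda_t\ge p^H(t)-p(t)$) with the reverse inequality obtained from the tower: the eigenfunction relation $\hLp_t g_t=\lambda_t g_t$ together with $\int\hLp_t^n\psi\,dm_t=\int_{\hI^n}\psi\,dm_t$ gives $m_t(\hI^n)\asymp\lambda_t^n$, hence the escape rate equals $-\log\lambda_t$, and then the variational/thermodynamic identification (Corollary~\ref{cor:variational}, or directly constructing the measure below) yields $p^H(t)=p(t)+\log\lambda_t$; positivity ($>0$) is then equivalent to $t<t^H$ by Proposition~\ref{prop:tH}(e).

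For parts~(3)--(5) I would run the standard dual construction. The dual operator $\hLp_{t,\Delta}^*$ has a leading eigenfunctional which, pushed down, gives a $(t\phi^H-p^H(t))$-conformal measure $m_t^H$ on $I$; uniqueness follows from the simplicity of the eigenvalue together with the ergodicity coming from aperiodicity, and singularity with respect to $m_t$ plus support on $\hI^\infty$ follows because $\hLp_t^{*n}$ concentrates mass on deeper and deeper survivor sets (this is where one sees the qualitative contrast with the closed system). Then $\nu_t:=g_t m_t^H$ is $\hf$-invariant after normalisation; that it is the equilibrium state for $t\phi^H-p^H(t)$ I would get by lifting to the tower, where $\nu_{t,\Delta}=\hat g_t\,\hat m_t^H$ is the unique acip for $\hat f_\Delta$ with exponential decay of correlations, and projecting down while checking the entropy formula via Abramov and Rokhlin — the integrability $\int\tau\,d\nu_{t,\Delta}<\infty$ needed here is exactly the finite-tail estimate, and the bound $t^H>\gamma/(1+\gamma)$ from Proposition~\ref{prop:tH}(c) is what makes $\tau$ integrable against the relevant measure. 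The limiting-distribution formula in~(4) and the convergence in~(5) both come from the spectral decomposition $\hLp_{t,\Delta}^n=\lambda_t^n\,(\hat g_t\otimes\hat m_t^H)+R^n$ with $\|R^n\|\le C\sigma^n$, $\sigma<\lambda_t$: apply to a lifted test function, project, renormalise, and for $C^q$ observables use that lifting to the tower costs a factor $|\psi|_{C^q}$ while the lifted function still lies in the Banach space. The main obstacle is unquestionably the Lasota--Yorke inequality under only the asymptotic hypothesis — in the prior literature one assumes step-by-step control on the mass lost (so the contraction constant is uniformly good), whereas here one must absorb the loss purely into the $L^1$ error term and argue that the \emph{limsup} escape rate being below $p(t)$ suffices; handling the potentially non-uniform mass loss at individual steps, and showing the essential spectral radius genuinely drops below $\lambda_t$ rather than merely being bounded by $1$, is the technical heart of the argument and presumably where the "optimality" remark (coincidence of essential and ordinary spectral radius when the rates are equal) gets proved as a converse.
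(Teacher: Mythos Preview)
Your overall architecture is correct and matches the paper: build the Young tower, prove quasi-compactness via a Lasota--Yorke inequality under only the asymptotic escape-rate hypothesis, upgrade to a spectral gap, and project. But two of your steps gloss over what the paper identifies as the genuinely new content, and in one case your sketch would not go through as written.

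First, the Lasota--Yorke inequality (Lemma~\ref{lem:lasota yorke} in the paper). You describe the ``standard split into a uniformly contracting on oscillation part plus a compact correction,'' with the hole loss absorbed into the $L^1$ term. That is not how the paper proceeds, and the difficulty is that under only the asymptotic hypothesis there is no uniform single-step contraction constant to work with. The paper's device is to split preimages in $\hf_\Delta^{-n}(\Delta_0)$ according to whether they make at least $n/T$ returns to the base (set $A_{n,T}$) or fewer (set $A_{n,T}^c$). On $A_{n,T}$ the separation time has increased by at least $n/T$, giving honest contraction $e^{-\delta n/T}$ in the Lipschitz seminorm; on $A_{n,T}^c$ one bounds the measure combinatorially by counting the number of $s$-tuples of return times summing to $n+\ell$ with $s\le n/T$, which gives $\bm(A_{n,T}^c\cap\hDelta_\ell)\le C(1+\eta_T)^{n+\ell}e^{-(n+\ell)\alpha}$, and then chooses $T$ large so that this beats $\lambda_\beta^n$. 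This combinatorial counting is the mechanism by which the asymptotic tail rate $\alpha=p(t)$ enters, and it is not captured by your description.

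Second, and more seriously, your passage from quasi-compactness to spectral gap is too optimistic. You write that positivity plus transitivity plus aperiodicity ``forces, by a Perron--Frobenius argument,'' simplicity and absence of other peripheral spectrum. The paper explicitly flags (in the proof of Proposition~\ref{prop:spectral decomp}) that the standard route---showing the normalised operator preserves a log-Lipschitz cone $\B_{\log}(M)$ and contracts it---fails under the weak hypotheses here, because one cannot prove $\hNp_\Delta(\B_{\log}(M))\subset\B_{\log}(M)$. Instead the paper runs a bespoke argument: given a putative peripheral eigenfunction $g$ with eigenvalue $\lambda_\beta e^{i\omega}$, form $g_1=(\mathrm{Re}(g)+K\psi_*)/C>0$ and the interpolants $g_s=sg_1+(1-s)\psi_*$; a refined log-Lipschitz estimate (Lemma~\ref{lem:log contract}, which again uses the $A_{n,T}$ splitting) shows $\|g_s\|_{\log}\le C_d$ uniformly for $s$ in the positivity interval $J$, forcing $J=\mathbb R^+$ and hence $g_1=\psi_*$, $\omega=0$. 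This is the technical heart of the spectral-gap claim, and your proposal does not contain it.

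A smaller point you omit: the separation-time metric $d_\delta(x,y)=e^{-\delta s(x,y)}$ on the tower counts \emph{returns to the base}, not iterates. This is forced by the merely polynomial growth of $Df^n$ (property (D1)), and is what makes both (P3) and the lifting of $C^q$ functions (Lemma~\ref{lem:smooth lift}, Proposition~\ref{prop:lift project}) work; the ``usual'' step-based separation metric from \cite{BruDemMel10} would not give bounded distortion here.
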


We remark that the convergence to $\mu_t^H$ described in (5)
above holds for a larger class of functions than $C^q(I)$.  In particular, it holds for
$g^0_t$, where $g^0_t$ is the invariant density defining the equilibrium state $\mu_t$
for the closed system with potential $t\phi - p(t)$; 
this also implies that the escape rates with respect
to both $m_t$ and $\mu_t$ are the same. 

The following variational principle relating the escape rate to the pressure for $t \in [0,1]$ is a simple
consequence of Proposition~\ref{prop:volume} and Theorem~\ref{thm:rate gap}.
 Notice that it also justifies identifying the condition
$- \log \overline \lambda_t < p(t)$ with the condition $t \in [0, t^H)$ in the statement
of Theorem~\ref{thm:rate gap}.

\begin{corollary}
\label{cor:variational}
For all $t \in [0,1]$, 
\[
\mbox{$-\log\overline\lambda_t< p(t)$ if and only if $p^H(t)>0$ if and only if $t<t^H$. }
\] 
Moreover, for all $t \in [0,1]$, $\lambda_t$ exists and $\log\lambda_t=p^H(t)-p(t)$.
\end{corollary}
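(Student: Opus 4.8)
The plan is to deduce Corollary~\ref{cor:variational} by combining the two main inputs already available: Proposition~\ref{prop:volume}, which gives $\log\underline\lambda_t \ge p^H(t)-p(t)$, and Theorem~\ref{thm:rate gap}, which in the regime $-\log\overline\lambda_t<p(t)$ gives the reverse inequality $\log\lambda_t = p^H(t)-p(t)$ together with $p^H(t)>0$. The three equivalences to establish are: (i) $-\log\overline\lambda_t<p(t)$; (ii) $p^H(t)>0$; (iii) $t<t^H$. The equivalence of (ii) and (iii) is essentially the definition $t^H=\sup\{t: p^H(t)>0\}$ combined with Proposition~\ref{prop:tH}(d),(e): by (e), $t<t^H$ forces $p^H(t)>0$, and by (d), $t>t^H$ forces $p^H(t)=0$; the borderline case $t=t^H$ is covered by Proposition~\ref{prop:tH}(a) (or Lemma~\ref{lem:Hdim}), which gives $p^H(t^H)=0$. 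So (ii)$\iff$(iii) is immediate from results proved earlier.

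For (i)$\Rightarrow$(ii): if $-\log\overline\lambda_t<p(t)$, then Theorem~\ref{thm:rate gap}(2) directly yields $p^H(t)=\log\lambda_t+p(t)>0$. For (ii)$\Rightarrow$(i): suppose $p^H(t)>0$. From Proposition~\ref{prop:volume}, $\log\underline\lambda_t\ge p^H(t)-p(t) > -p(t)$, i.e. $-\log\underline\lambda_t<p(t)$. But I need the statement for $\overline\lambda_t$, which requires ruling out $\log\overline\lambda_t < \log\underline\lambda_t$ in a way that would spoil the inequality — actually the concern is the opposite: $\overline\lambda_t\ge\underline\lambda_t$, so $-\log\overline\lambda_t \le -\log\underline\lambda_t < p(t)$ only if $\log\overline\lambda_t \ge \log\underline\lambda_t$, which always holds. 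Wait — $-\log\overline\lambda_t \le -\log\underline\lambda_t$ since $\overline\lambda_t \ge \underline\lambda_t$. And $-\log\underline\lambda_t < p(t)$ is what we derived. Hence $-\log\overline\lambda_t < p(t)$, giving (i). So the chain $(ii)\Rightarrow(i)$ goes purely through Proposition~\ref{prop:volume} and the trivial ordering $\underline\lambda_t\le\overline\lambda_t$.

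For the final sentence — that $\lambda_t$ exists and $\log\lambda_t=p^H(t)-p(t)$ for all $t\in[0,1]$ — I would split into cases. When $t<t^H$ (equivalently, by the equivalences just proved, $-\log\overline\lambda_t<p(t)$), Theorem~\ref{thm:rate gap}(1)–(2) gives existence of $\lambda_t$ and the identity $\log\lambda_t = p^H(t)-p(t)$. When $t\ge t^H$, we have $p^H(t)=0$ (by Proposition~\ref{prop:tH}(a),(d) and Lemma~\ref{lem:Hdim}), so the claimed value is $-p(t)$; I must show $\log\overline\lambda_t=\log\underline\lambda_t=-p(t)$. The lower bound $\log\underline\lambda_t\ge -p(t)$ is Proposition~\ref{prop:volume} with $p^H(t)=0$. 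For the upper bound $\log\overline\lambda_t\le -p(t)$, I would use the conformality relation $\int\hLp_t^n\psi\,dm_t=\int_{\hI^n}\psi\,dm_t$ with $\psi\equiv 1$, giving $m_t(\hI^n)=\int\hLp_t^n 1\,dm_t$; since $\hLp_t$ has $\hLp_t\le\Lp_t$ pointwise on nonnegative functions and $\Lp_t$ (the closed-system operator normalized by $p(t)$) has spectral radius $1$ on $L^1(m_t)$, the escape from the open system is at least as fast as the (zero) escape from the closed system normalized appropriately — more precisely, $m_t(\hI^n)\le m_t(I)=1$ for all $n$ already gives $\limsup\frac1n\log m_t(\hI^n)\le 0 = -p(t)$ when $p(t)=0$, i.e. for $t\ge 1$; for $t^H\le t<1$ one has $p(t)>0$ and the bound $m_t(\hI^n)\le 1$ only gives $\log\overline\lambda_t\le 0$, which is weaker than $-p(t)<0$.

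The main obstacle, then, is precisely this upper bound $\log\overline\lambda_t\le -p(t)$ in the range $t^H\le t<1$, where $p(t)>0$: the crude estimate $m_t(\hI^n)\le 1$ is not sharp enough, and Theorem~\ref{thm:rate gap} does not apply since its hypothesis fails. Here I would instead invoke the results of Section~\ref{sec:large t} (the regime $t\in[t^H,1)$): the averaged limit points $\hf^n_* m_t/|\hf^n_* m_t|$ being conditionally invariant accims with eigenvalue computed there, together with the strengthened Young tower spectral results announced in the introduction (escape rate equals the rate of decay in the levels), pin down $\log\overline\lambda_t = -p(t)$ exactly. Concretely, the closed system's induced tower has tails of order $n^{-(1+1/\gamma)}$ (Lemma~\ref{lem:basic tail}), and at $t=1$ this translates into $m_1(\hI^n)$ decaying polynomially — consistent with $-p(1)=0$ — while for $t<1$ the exponential weight $e^{-p(t)\tau}$ on the tower forces $m_t(\hI^n)\asymp e^{-n p(t)}$ up to subexponential corrections, since mass concentrates near the neutral fixed point at exactly the rate the closed-system tower predicts. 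I would therefore defer this case to the quantitative tower analysis of Sections~\ref{sec:rate gap}–\ref{sec:large t}, citing that the escape rate from the tower equals the exponential rate of decay of its levels, namely $p(t)$, so $\log\lambda_t=-p(t)=p^H(t)-p(t)$, completing the proof.
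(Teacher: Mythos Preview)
Your argument for the equivalences (i)$\iff$(ii)$\iff$(iii) is correct and matches the paper's exactly: Proposition~\ref{prop:volume} gives (ii)$\Rightarrow$(i) via $\log\overline\lambda_t\ge\log\underline\lambda_t\ge p^H(t)-p(t)>-p(t)$, and Theorem~\ref{thm:rate gap}(2) gives (i)$\Rightarrow$(ii); the link to (iii) is just Proposition~\ref{prop:tH}. So far so good.

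The gap is in the last paragraph, where you try to establish the upper bound $\log\overline\lambda_t\le -p(t)$ in the regime $t\ge t^H$ by appealing to Section~\ref{sec:large t}. This is both unnecessary and circular: the proof of Theorem~\ref{thm:limit point} explicitly uses $\lambda_t=e^{-p(t)}$ (see the normalization of the averages there), which is precisely what you are trying to prove. More importantly, you are overlooking that the upper bound is an immediate consequence of the equivalence you have \emph{already} established. Since $p^H(t)=0$, Proposition~\ref{prop:volume} gives $\log\overline\lambda_t\ge\log\underline\lambda_t\ge -p(t)$. If either inequality were strict, in particular if $\log\overline\lambda_t>-p(t)$, then $-\log\overline\lambda_t<p(t)$, i.e.\ (i) holds, hence by (i)$\Rightarrow$(ii) we get $p^H(t)>0$, contradicting $p^H(t)=0$. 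Therefore all three quantities coincide, $\lambda_t$ exists, and $\log\lambda_t=-p(t)=p^H(t)-p(t)$. This contrapositive is the paper's argument, and it closes the case in two lines with no further machinery.
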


Our next result shows that in the regime $t \in [0, t^H)$, the conditionally invariant measures
$\mu^H_t$ we construct vary continuously as the hole shrinks to a point. 

\begin{theorem}
\label{thm:limit}
Fix $z \in (0,1]$ and let $( H_i )_{i \in \N}$ be a nested sequence
of intervals which
are non-swallowing Markov holes for $f$ and for which $\cap_{i >0} H_i  = \{ z \}$.  
Then letting $- \log \lambda_t^{H_i}$ denote the associated escape rate and $g^{H_i}_t$
denote the (normalized) eigenvector associated to $\lambda^{H_i}_t$ from Theorem~\ref{thm:rate gap},
 we have
$\lambda_t^{H_i} \to 1$ and
$g^{H_i}_t \to g^0_t$ in $L^1(m_t)$ as $i \to \infty$, where $\mu_t = g^0_t m_t$ is the unique 
equilibrium measure for $t \phi - p(t)$.
\end{theorem}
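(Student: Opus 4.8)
The plan is to obtain both assertions from the stability of the leading spectral data of the punctured transfer operator as the hole degenerates, using the Young‑tower machinery developed for Theorem~\ref{thm:rate gap}. I would first dispose of the trivial halves: since the holes are nested, the survivor sets $\hI^\infty_{H_i}$ increase with $i$, so every $f$‑invariant measure carried by $\hI^\infty_{H_i}$ is carried by $\hI^\infty$; hence $p^{H_i}(t)\le p(t)$ and, by Corollary~\ref{cor:variational}, $\log\lambda^{H_i}_t=p^{H_i}(t)-p(t)\le0$. Throughout I take $t\in[0,1)$ fixed (so $p(t)>0$); once $\lambda^{H_i}_t\to1$ is known, equivalently $p^{H_i}(t)\to p(t)>0$, it will follow that $p^{H_i}(t)>0$ and hence $t<t^{H_i}$ for all large $i$ (Proposition~\ref{prop:tH}), so Theorem~\ref{thm:rate gap} applies and furnishes $\lambda^{H_i}_t$ and the normalized eigenfunction $g^{H_i}_t$ for those $i$.

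To prove $\lambda^{H_i}_t\to1$ I would use a direct escape estimate. For $i$ large, $H_i\subseteq[z/2,1]$, and, since $m_t$ is $(t\phi-p(t))$‑conformal, for each $k\ge0$
\[
m_t(f^{-k}H_i)=\int(1_{H_i}\circ f^k)\,dm_t=\int_{H_i}\Lp^k_t 1\,dm_t\le C_z\,m_t(H_i),
\]
where $C_z:=\sup_{k\ge0}\|\Lp^k_t 1\|_{L^\infty([z/2,1])}<\infty$ (the iterates $\Lp^k_t 1$ stay uniformly bounded away from the neutral fixed point, by the exponential mixing of the closed system). Since $m_t$ is non‑atomic, $m_t(H_i)\to0$, so the mass lost from the open system by time $n$ obeys $m_t(I\setminus\hI^n_{H_i})\le\sum_{k=0}^n m_t(f^{-k}H_i)\le(n+1)C_z\,m_t(H_i)$, whence $m_t(\hI^n_{H_i})\ge1-(n+1)C_z\,m_t(H_i)$. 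On the other hand $\int_{\hI^n_{H_i}}g^{H_i}_t\,dm_t=\int \hLp^n_t g^{H_i}_t\,dm_t=(\lambda^{H_i}_t)^n$, and Theorem~\ref{thm:rate gap}(1) gives $g^{H_i}_t\ge c_0$ on $I\setminus H_i$ with $c_0>0$ independent of $i$ for $i$ large (the tower estimates depend on a Markov hole only through $N_0$ and the hole‑uniform tail bounds of Lemma~\ref{lem:basic tail}); hence
\[
(\lambda^{H_i}_t)^n\ge c_0\,m_t(\hI^n_{H_i})\ge c_0\bigl(1-(n+1)C_z\,m_t(H_i)\bigr).
\]
Taking logarithms, dividing by $n$, letting $i\to\infty$ for fixed $n$ and then $n\to\infty$ gives $\liminf_i\log\lambda^{H_i}_t\ge0$; with $\log\lambda^{H_i}_t\le0$ this yields $\lambda^{H_i}_t\to1$, hence $p^{H_i}(t)\to p(t)$ by Corollary~\ref{cor:variational}.

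For the $L^1$‑convergence I would argue by compactness. The lifts of the $g^{H_i}_t$ are uniformly bounded in the function space on the tower used in Theorem~\ref{thm:rate gap} (uniform Lasota--Yorke bound, again via Lemma~\ref{lem:basic tail}), so $\{g^{H_i}_t\}$ is uniformly bounded and precompact in $L^1(m_t)$. Let $g_\infty$ be any subsequential limit; then $g_\infty\ge0$ and $\int g_\infty\,dm_t=1$. Using $\hLp^{H_i}_t\psi=\Lp_t(1_{\hI^1_{H_i}}\psi)$ and the fact that $1_{\hI^1_{H_i}}=1-1_{H_i\cup f^{-1}H_i}$ increases to $1$ off the $m_t$‑null set $\{z\}\cup f^{-1}\{z\}$, one has along the convergent subsequence $1_{\hI^1_{H_i}}g^{H_i}_t\to g_\infty$ in $L^1(m_t)$ (uniform $L^\infty$ bound and dominated convergence), so $\hLp^{H_i}_t g^{H_i}_t=\Lp_t(1_{\hI^1_{H_i}}g^{H_i}_t)\to\Lp_t g_\infty$ in $L^1(m_t)$, $\Lp_t$ being a contraction of $L^1(m_t)$; but the left side equals $\lambda^{H_i}_t g^{H_i}_t\to g_\infty$. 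Hence $\Lp_t g_\infty=g_\infty$, and since the closed system is exponentially mixing---so $1$ is a simple eigenvalue of $\Lp_t$ and $\mu_t=g^0_t m_t$ is the unique $f$‑invariant probability absolutely continuous with respect to $m_t$---we conclude $g_\infty=g^0_t$. As every subsequence has a further subsequence converging to $g^0_t$, the whole sequence converges in $L^1(m_t)$.

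I expect the main difficulty to be the \emph{uniformity} of these estimates: that the Lasota--Yorke constants for $\hLp^{H_i}_t$ on the tower, and the resulting upper and lower bounds on $g^{H_i}_t$, may be taken independent of $i$. This is precisely the point at which the degenerating Markov holes must be fitted into the framework of Theorem~\ref{thm:rate gap}: the tails $|\{\tau=n\}|\le K_1 n^{-(1+1/\gamma)}$ and the multiplicity bound $\#\{i:\tau(X_i)=n\}\le C_{N_0}$ of Lemma~\ref{lem:basic tail} hold uniformly over non‑swallowing Markov holes, and a Markov hole only adds $O_{N_0}(1)$ discontinuities to the relevant densities. With this in hand one could alternatively run a single Keller--Liverani argument on the tower---using also that $\hLp^{H_i}_t\to\Lp_t$ in the norm from the tower function space to $L^1(m_t)$, which holds because the cut set $H_i\cup f^{-1}H_i$ has $m_t$‑measure tending to $0$---to obtain continuity of the leading eigenvalue and eigenprojection of $\hLp^{H_i}_t$ at $\Lp_t$ simultaneously, yielding $\lambda^{H_i}_t\to1$ (hence $p^{H_i}(t)\to p(t)$ and $t<t^{H_i}$ for large $i$, via Corollary~\ref{cor:variational} and Proposition~\ref{prop:tH}) and $g^{H_i}_t\to g^0_t$ in that function space, a fortiori in $L^1(m_t)$.
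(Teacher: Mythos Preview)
Your overall architecture---establish compactness of $\{g^{H_i}_t\}$ in $L^1(m_t)$, then show any limit point is a fixed density for $\Lp_t$---matches the paper's, and your final paragraph identifying the limit with $g^0_t$ is essentially the paper's argument. The gap is in the uniformity you flag but do not resolve. As $i\to\infty$ the Markov holes $H_i$ lie in $\mathcal P_{N_0(i)}$ with $N_0(i)\to\infty$; the tower, its base partition $\{X_j\}$, the constant $C_{N_0}$ of Lemma~\ref{lem:basic tail}, and hence the Lasota--Yorke constants and the lower bound on the tower eigenfunction obtained in Section~\ref{abstract tower}, all depend on $N_0(i)$. So neither the claimed uniform lower bound $g^{H_i}_t\ge c_0$ (on which your proof of $\lambda^{H_i}_t\to1$ hinges) nor the ``uniform Lasota--Yorke bound on the tower'' is available as stated, and the Keller--Liverani alternative suffers the same defect since the operators live on $i$-dependent towers.

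The paper sidesteps the tower entirely at this point and works directly in $BV(I)$. The key step (Lemma~\ref{lem:var g}) is a Lasota--Yorke inequality for $\hLp_t$ on $BV(I)$:
\[
\bigvee_I \lambda_\ve^{-n}\hLp_t^n\psi \;\le\; 3e^{-np^{H_\ve}(t)}\bigvee_I\psi \;+\; \frac{3}{\min_j m_t(K^n_{j,\ve})}\,\lambda_\ve^{-n}\!\int_{\hI^n_\ve}|\psi|\,dm_t,
\]
where the $K^n_{j,\ve}$ are the monotonicity intervals of $\hf^n$. Both constants are monotone in the hole for nested holes once $n$ is fixed ($p^{H_\ve}(t)$ increases and $\min_j m_t(K^n_{j,\ve})$ cannot decrease as $\ve\downarrow0$), so the bound is genuinely uniform in small $\ve$ with no reference to $N_0$. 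This yields $\bigvee_I g^{H_\ve}_t\le C_t$ for all small $\ve$, hence (i) compactness of $\{g^{H_\ve}_t\}$ in $L^1(m_t)$ and (ii) a uniform \emph{upper} bound $|g^{H_\ve}_t|_\infty\le C_t+1$. From (ii) and the identity $\lambda^{H_\ve}_t=1-\int_{\hI\setminus\hI^1}g^{H_\ve}_t\,dm_t$ one reads off $\lambda^{H_\ve}_t\to1$ immediately---no lower bound on $g^{H_\ve}_t$ is needed. The missing ingredient in your argument is precisely this $i$-uniform $BV$ estimate on $I$; once you have it, the rest of your proof goes through.
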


It follows from Corollary~\ref{cor:variational} and Theorem~\ref{thm:limit} that 
$p^{H_i}(t) \to p(t)$ and $t^{H_i} \to 1$ as $i \to \infty$ (see Lemma~\ref{lem: eps 0 press 0}).  
Thus each fixed $t <1$ eventually
satisfies $t < t^{H_i}$ for all $H_i$ sufficiently small and so Theorem~\ref{thm:limit} 
implies $\mu_t$ is stable with
respect to small leaks in the system.

\begin{remark}
A natural question in light of the continuity of $\lambda_t^H$ proved in Theorem~\ref{thm:limit}
is whether
 $\lambda_t^H$ is differentiable as well (as a function of $H$).  While on a global scale the graph of the escape rate function
 forms a devil's staircase \cite{dem wright}, the derivative of the escape rate may still
 exist as the hole shrinks to a point, as in \cite{buni, KL09}.  Although
 this result is likely to hold in the present setting, a sequence of holes requires a sequence
 of increasingly refined Markov partitions.  Thus proving such a result would require either adapting
 the approach of \cite{FerPol12} to the countable state setting, or constructing a uniform sequence of towers over
 a single base as in \cite{DemTodd}.  Since the present paper is already of
 considerable length, we do not include this result here.
 \end{remark}

Finally, we fix $H$ and address the regime $t \ge t^H$, where we obtain weaker results 
than Theorem~\ref{thm:rate gap}
due to the
absence of a spectral gap.

\begin{theorem}
\label{thm:limit point}
For each $t \in [0,1)$, all limit points of the sequence
$\left\{ \frac{\hLp_t^n 1 }{|\hLp_t^n 1|_{L^1(m_t)}} \right\}_{n \in \N}$ are 
absolutely continuous with respect to $m_t$ with log-Holder continuous densities on
elements of $\mathcal{P}_{N_0}$.

Moreover, setting $a_j = j^{t(1+\frac{1}{\gamma}) -1} \lambda_t^{-j}$ and 
$Z_n = \sum_{j=1}^{n} a_j |\hLp_t^j 1|_{L^1(m_t)}$, all limit points of the averages
$\frac{1}{Z_n} \sum_{i=1}^{n} a_i \hLp_t^i 1$ are absolutely continuous
conditionally invariant measures with eigenvalue $\lambda_t$, and the averages converge
in $L^1(m_t)$.
\end{theorem}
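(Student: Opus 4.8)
The plan is to extract both conclusions from a single compactness mechanism. First I would work with a convex cone $\mathcal{C}$ of nonnegative functions on $I$ that are log-Hölder on the elements of $\P_{N_0}$ with a uniformly bounded constant, that is preserved by $\hLp_t$ and contains the constant function $1$. Because $f$ is not uniformly expanding near the neutral fixed point, the invariance of such a cone is transparent not on $I$ but on the Young tower $\Delta$ (Section~\ref{sec:tower def}): one lifts $1$ to $\Delta$, uses that the tower transfer operator preserves the standard cone of functions Hölder on level pieces with a fixed constant, and pushes down under the projection $\pi\colon\Delta\to I$, which intertwines the tower operator with $\hLp_t$ and carries the tower cone into $\mathcal{C}$. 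Thus $\hLp_t^n 1$, and any convex combination of the $\hLp_t^n 1$, lie in $\mathcal{C}$.

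The technical heart is that $\{h\in\mathcal{C}:|h|_{L^1(m_t)}=1\}$ is relatively compact in $L^1(m_t)$. On any fixed finite union of elements of $\P_{N_0}$ the uniform log-Hölder bound gives equicontinuity of the densities, so after a diagonal extraction the only issue is escape of mass into the cusp, equivalently to the high levels of $\Delta$. This is where $t<1$ is used decisively. For the function $\psi_n:=\hLp_\Delta^n\tilde1/|\hLp_\Delta^n\tilde1|_{L^1}$, the mass on level $\ell$ is, by the bounded-distortion property of the cone and a decomposition by last return to the base, $\lesssim\tilde m_t(\{\tau>\ell\})\cdot |\hLp_\Delta^{n-\ell}\tilde 1|_{L^1}/|\hLp_\Delta^{n}\tilde 1|_{L^1}$. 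Since $t<1$ one has $e^{t\phi-p(t)}\le e^{-p(t)}<1$ near $0$, so $\tilde m_t(\{\tau>\ell\})\asymp m_t(Z_\ell)\asymp e^{-\ell p(t)}\ell^{-t(1+\frac1\gamma)}$, while the exponential escape asymptotics $m_t(\hI^n)\asymp\lambda_t^n$ (also a consequence of $t<1$) give $|\hLp_\Delta^{n-\ell}\tilde 1|_{L^1}/|\hLp_\Delta^{n}\tilde 1|_{L^1}\lesssim\lambda_t^{-\ell}$; hence the level‑$\ell$ mass of $\psi_n$ is $\lesssim (e^{-p(t)}/\lambda_t)^\ell\,\ell^{-t(1+\frac1\gamma)}=e^{-\ell p^H(t)}\ell^{-t(1+\frac1\gamma)}$, which is summable in $\ell$ uniformly in $n$ (for $t=1$ the factor $e^{-\ell p(1)}=1$ and $m_1(\hI^n)$ is only polynomial, so this fails — this is exactly the $t=1$ instability). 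This gives relative compactness, so every limit point of $\{\hLp_t^n1/|\hLp_t^n1|_{L^1(m_t)}\}$ is absolutely continuous with a log-Hölder density on elements of $\P_{N_0}$, which is the first assertion.

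For the averages, put $g_i=\hLp_t^i1\ge0$, so $|g_i|_{L^1(m_t)}=m_t(\hI^i)$ and $\hLp_tg_i=g_{i+1}$; then $S_n:=Z_n^{-1}\sum_{i=1}^n a_ig_i$ satisfies $|S_n|_{L^1(m_t)}=1$ and lies in $\mathcal{C}$, so $\{S_n\}$ is precompact in $L^1(m_t)$. Now
\[
\hLp_tS_n-\lambda_tS_n=\frac{1}{Z_n}\Big(\sum_{j=2}^{n}(a_{j-1}-\lambda_ta_j)\,g_j+a_ng_{n+1}-\lambda_ta_1g_1\Big).
\]
Since $a_{j-1}/a_j=\lambda_t(1-1/j)^{t(1+\frac1\gamma)-1}=\lambda_t\big(1+O(1/j)\big)$ we have $|a_{j-1}-\lambda_ta_j|\lesssim a_j/j$; and the polynomial escape asymptotics — $m_t(\hI^j)\asymp\lambda_t^j$ times a regularly varying factor, established from the strengthened tower estimates and operator renewal theory, with the surviving cusp orbits (present by non-swallowing) already forcing $m_t(\hI^j)\gtrsim\lambda_t^j j^{-t(1+\frac1\gamma)}$ and hence $Z_n\to\infty$ — make the $L^1(m_t)$-norm of the right-hand side bounded by $Z_n^{-1}\big(\sum_j a_jm_t(\hI^j)/j+a_nm_t(\hI^{n+1})+1\big)\to0$. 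By continuity of $\hLp_t$ on $L^1(m_t)$, every $L^1$-limit point $h$ of $\{S_n\}$ satisfies $\hLp_th=\lambda_th$, so $h\,m_t$ is an absolutely continuous conditionally invariant measure with eigenvalue $\lambda_t$, with density log-Hölder on elements of $\P_{N_0}$ by the cone.

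The remaining point — uniqueness of the limit point, so that $S_n$ genuinely converges — is the main obstacle, since for $t\ge t^H$ there is no spectral gap for $\hLp_t$ and conditionally invariant measures with a prescribed eigenvalue are generically non-unique. For $t<t^H$ the whole statement already follows from Theorem~\ref{thm:rate gap}(5). For $t\ge t^H$ I would argue via the induced map $\hF$, which by Lemma~\ref{lem:basic tail} is uniformly expanding ($|D\hF|\asymp n^{1+\frac1\gamma}$ on $\{\tau=n\}$) with bounded distortion on branches and is transitive and aperiodic on elements (Lemma~\ref{lem:nonswallow}), so its transfer operator $\hLp_{\hF}$ for the potential $t\Phi^H$ has a spectral gap. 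One first checks that a nonzero nice $h$ with $\hLp_th=\lambda_th$ cannot vanish on $Y=[1/2,1]$ (the eigenequation would then force $h\equiv0$ on $Z_1,Z_2,\dots$ in turn); here the cancellation $\lambda_t=e^{-p(t)}$ (Corollary~\ref{cor:variational} with $p^H(t)=0$) is what makes $h|_Y$ relevant to $\hLp_{\hF}$. In the borderline recurrent case $t=t^H$, where $e^{P(t^H\Phi^H)}=1$ by Proposition~\ref{prop:tH}(a), $h|_Y$ is the unique positive fixed point of $\hLp_{\hF}$; in the transient case $t>t^H$, where $e^{P(t\Phi^H)}<1$ by Proposition~\ref{prop:tH}(d) so $I-\hLp_{\hF}$ is invertible, $h|_Y$ is pinned down as $(I-\hLp_{\hF})^{-1}$ applied to the forced source term coming from the cusp. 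In either case $h|_Y$, and then $h$ itself by un-inducing (a series convergent by the tail bounds of Lemma~\ref{lem:basic tail}), is determined up to a scalar fixed by $|h|_{L^1(m_t)}=1$. Hence $\{S_n\}$ has a unique $L^1(m_t)$-limit point and converges.
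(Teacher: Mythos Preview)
Your approach diverges from the paper's in the crucial step, and the divergence introduces a gap.

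For the first conclusion (absolute continuity of limit points), you route everything through the tower and argue tightness by controlling the mass on high levels. The key estimate you need is
\[
\frac{|\hLp_\Delta^{n-\ell}\tilde 1|_{L^1}}{|\hLp_\Delta^{n}\tilde 1|_{L^1}}\lesssim \lambda_t^{-\ell}
\]
uniformly in $n$, which you justify by ``exponential escape asymptotics $m_t(\hI^n)\asymp\lambda_t^n$''. But for $t\ge t^H$ only $\tfrac1n\log m_t(\hI^n)\to\log\lambda_t$ is known (Corollary~\ref{cor:variational}); the paper never establishes two-sided asymptotics $m_t(\hI^n)\asymp\lambda_t^n$ up to polynomial factors, and you do not indicate how to prove them. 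Without that, your level-$\ell$ bound and hence your tightness argument collapse. Also, the detour through $\Delta$ is unnecessary: the log-H\"older invariance you want is proved directly on $I$ in Lemma~\ref{lem:reg} from the distortion estimate (D2), with the one-line bound $\|\hLp_t^n 1\|_{q,\log}\le tC_d$.

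The paper's mechanism for ruling out mass escape to the cusp is entirely different and avoids any precise escape asymptotics. It uses Lemma~\ref{lem:decay}: since $H$ is a union of $N_0$-cylinders, every $x\in I$ has a preimage in $H$ after $N_0$ steps, which forces $|\hLp_t^n 1|_\infty\le C(1-\eta)^n$ for some $\eta>0$. Combined with the uniform log-H\"older bound, \cite[Lemma~3.6]{DemFer13} says any limit point has the form $s\delta_0+(1-s)\psi_* m_t$; then $s>0$ would force $|\hLp_t^{n_j+k}1|_1/|\hLp_t^{n_j}1|_1\ge s/2$ for large $n_j$, while the same ratio is $\le\sup\hLp_t^k1\le C(1-\eta)^k$, a contradiction for large $k$. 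This is the idea you are missing.

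For the averaged sequence, your computation of $\hLp_tS_n-\lambda_tS_n$ and the use of $Z_n\to\infty$ match the paper's \eqref{eq:conditional}. Your additional uniqueness argument via the induced operator goes beyond what the paper proves: the paper establishes only that any weak limit of $(\psi_n)$ is an accim with density $\psi_*$, and then upgrades subsequential weak convergence to $L^1$ via the uniform log-H\"older regularity on cylinders. Your sketch for uniqueness is plausible in outline but the transient case $t>t^H$ is not worked out; in particular, the eigenequation $\hLp_t h=\lambda_t h$ does not directly reduce to an equation of the form $(I-\hLp_{\hF})h|_Y=(\text{source})$ for the first-return operator, and identifying the ``forced source term from the cusp'' requires a careful renewal decomposition that you have not supplied.
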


Although Theorem~\ref{thm:limit point} applies to all $t \in [0,1)$, in light of Theorem~\ref{thm:rate gap},
it only gives
new information for $t \in [t^H,1)$.
It may be of independent interest that the proof of absolute continuity for all limit points
of $\left\{ \frac{\hLp_t^n 1 }{|\hLp_t^n 1|_{L^1(m_t)}} \right\}_{n \in \N}$ holds 
independently of the proof of
Theorem~\ref{thm:rate gap}.  
This is in sharp contrast to the case $t=1$
for which $\frac{\hf^n_* m_1}{|\hf^n_* m_1|}$ converges weakly
to the point mass at the neutral fixed point \cite{DemFer13}.

Next we turn our attention to invariant measures on the survivor set.  By 
Theorem~\ref{thm:rate gap}, we have $\nu_t$, an equilibrium measure for $t\phi^H - p^H(t)$
for $t < t^H$ that is supported on $\hI^\infty$.  
Our next result shows that in fact, one can construct physically meaningful
invariant measures on $\hI^\infty$ even for $t \ge t^H$, including for $t = 1$, which are
not simply the point mass at the neutral fixed point and indeed contain no atoms.
For $t > t^H$, these measures do not maximize pressure on the survivor set (in this regime
the point mass at 0 does this), but they do converge to the equilibrium measure for
the unpunctured potential $t\phi - p(t)$ as the hole shrinks to a point.

In order to obtain sufficient expansion for our map,
we will consider the induced map $\hF$ and work with $\hF^2$ rather than $\hF$.  
The induced hole for $F$ is defined by 
$\tH = H$ if $H \subset [1/2,1]$ and $\tH = F^{-1}(f^\tau(H))$ if $H \subset [0,1/2]$,
where $\tau$ is the first hitting time to $Y = [1/2, 1]$. 
Since we will be working with $\hF^2$, the hole will effectively be $\tH \cup F^{-1}(\tH)$, which
always has countably many connected components in $Y$. 
Let $Y_n \subset Y$ be such that $f(Y_n) = J_n = [\ell_n, \ell_{n-1}) \in \mathcal{P}_1$, 
i.e. these are 1-cylinders for $F$ before
the introduction of the hole.
Set $Y_{i,j} = Y_i \cap F^{-1}(Y_j)$ and note that this is the maximal
partition on which the return time $\tau^2 = \tau + \tau \circ F$ 
is constant.  

Fix $z \in (0,1]$ and 
let $( H_\ve )_{\ve \in [0, \ve_0]}$ be a nested family of intervals 
(not necessarily elements of a Markov partition) containing
$z$ and such that $H_\ve$ has length $\ve$.  Let $\hF_\ve$ denote the map corresponding
to $H_\ve$ and let $\hY^n_\ve = \cap_{i=0}^n F^{-i}(Y \setminus \tH_\ve)$.
Let $\{ Z_{k, \ve}\}_{k \in \N}$ denote the countable collection of maximal intervals on which
$\hF_\ve^2$ is smooth.  Note that each interval $Z_{k,\ve}$ is contained in some
$Y_{i,j}$.
We shall need the following condition on the family $( H_\ve )_{\ve \le \ve_0}$.

\begin{enumerate}
\item[{\bf (H)}]  $\quad$ Let $( H_\ve )_{\ve \le \ve_0}$ be as above.  Assume
$\displaystyle \inf_{\ve \in [0, \ve_0]} \inf_{k} |\hF^2_\ve ( Z_{k, \ve}) | > 0$.
\end{enumerate}

\begin{remark}
Assumption {\bf (H)} is generically satisfied:
if $z$ is not an endpoint of one of the intervals $Y_n \subset (1/2,1]$ or one of the
intervals $J_n \subset (0,1/2]$, then {\bf (H)} is satisfied for $\ve_0$ sufficiently small.

If one is interested only in the case $t=1$, then one can work with $F$ rather than $F^2$
and condition {\bf (H)} can be stated in terms of $\hF_\ve(Y_n)$.  In that case,
only the points $z=0$ and $z=1/2$ would be excluded by {\bf (H)}.
\end{remark}

For $t \in [0,1]$, recalling the definition of induced potential in \eqref{eq:ind pot}, let $P_{t,\ve}  = P(t\Phi - \tau p^{H_\ve}(t))$.  In Lemma~\ref{lem:conformal}
we will prove that there exists a $(t\Phi - \tau p^H(t) - P_{t,\ve})$-conformal measure 
$\tm_{t,H}$ for $F$
on $Y$, which has no atoms.

\begin{theorem}
\label{thm:stable}
Let $( H_\ve )_{\ve \le \ve_0}$ be a nested family of intervals (not necessarily elements of a 
Markov partition) satisfying {\bf (H)}.

If $\ve_0$ is sufficiently small, for $t \in [0,1]$,  $\hF_\ve$ admits a physical  conditionally invariant
measure $\mu_{Y,\ve}$, absolutely continuous with respect to $\tm_{t,H}$, 
with eigenvalue $\Lambda_{t, \ve}$.  The limit
\begin{equation}
\label{eq:nu def}
\nu_{Y,\ve} (\psi) = \lim_{n \to \infty} \Lambda_{t, \ve}^{-n} \int_{\hY_\ve^n} \psi \, d\mu_{Y,\ve}
\end{equation}
exists for each $\psi \in C^0(Y)$ and defines an ergodic invariant 
probability measure $\nu_{Y, \ve}$
supported on the survivor set $\hY_\ve^\infty = \hI_\ve^\infty \cap Y$.  Moreover, $$\log \Lambda_{t,\ve} = P(t\Phi^{H_\ve} - \tau p^{H_\ve}(t)) - P(t\Phi - \tau p^{H_\ve}(t))$$ and 
$\nu_{Y,\ve}$ is an equilibrium measure for the potential $t\Phi^{H_\ve} - \tau p^{H_\ve}(t) -  P(t\Phi^{H_\ve} - \tau p^{H_\ve}(t))$.

The measure $\nu_{Y, \ve}$ projects to a probability measure $\nu_{H_\ve}$ with the following properties: 

\begin{enumerate}
\item $\nu_{H_\ve}$ is an invariant measure for $f$ supported on the survivor set $\hI_\ve^\infty$;
\item $\nu_{H_\ve}$ is an equilibrium state for $t\phi^{H_\ve} - p^{H_\ve}(t) - P(t\Phi^{H_\ve} - \tau p^{H_\ve}(t)) \cdot 1_Y$;
so if $t \le t^{H_\ve}$ then $\nu_{H_\ve}$ is an equilibrium state for $t\phi^{H_\ve} -p^{H_\eps}(t)$,
i.e.~it coincides with the measure $\nu_t$ from Theorem~\ref{thm:rate gap};
\item for fixed $t$, the free energy $h_{\nu_{H_\ve}}(f) + \int t \phi^{H_\ve} \, d\nu_{H_\ve}$ is continuous
in $\ve$ for $\ve\ge 0$ close enough to 0;
\item for fixed $\ve$, the free energy of $\nu_{H_\ve} = \nu_{H_\ve, t}$ is analytic for
$t \in (t^H,1)$ and continuous on the closure, $[t^{H_\ve}, 1]$;
\item $\nu_{H_\ve}$ converges weakly (when integrated against both continuous functions and
functions of bounded variation) to the equilibrium measure $\mu_t$ for the closed system
as $\ve \to 0$.
\end{enumerate}
\end{theorem}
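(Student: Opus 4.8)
The plan is to construct $\nu_{Y,\ve}$ via a thermodynamic analysis of the induced map $\hF_\ve^2$, which is a countable-state Markov map, and then to transfer the conclusions down to $f$ and take the limit $\ve \to 0$. Condition {\bf (H)} is precisely what guarantees uniform expansion and bounded distortion for $\hF_\ve^2$, uniformly in $\ve$, so the inducing scheme behaves like a (subshift of a) full shift on countably many symbols with good geometry. First I would establish the existence of the conformal measure $\tm_{t,H}$ for $F$ (this is Lemma~\ref{lem:conformal}, assumed) and then apply the Ruelle--Perron--Frobenius / Sarig-type machinery for the induced open system: the operator $\hLp^2$ associated with $t\Phi^{H_\ve} - \tau p^{H_\ve}(t)$ has a spectral gap on an appropriate space of locally H\"older functions (or functions of bounded variation on each cylinder), because the induced system escapes at a strictly exponential rate $\Lambda_{t,\ve}<1$ which, by {\bf (H)} and the tail estimate of Lemma~\ref{lem:basic tail}, is bounded away from the rate of decay of the return-time partition. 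This yields the eigenvalue $\Lambda_{t,\ve}$, a positive bounded-away-from-zero eigendensity defining $\mu_{Y,\ve}$, and the eigenmeasure giving $\nu_{Y,\ve}$ through the limit \eqref{eq:nu def}; ergodicity follows from transitivity on elements (the non-swallowing hypothesis, via Lemma~\ref{lem:nonswallow}), and the absence of atoms of $\tm_{t,H}$ passes to $\nu_{Y,\ve}$. The pressure identity $\log\Lambda_{t,\ve} = P(t\Phi^{H_\ve} - \tau p^{H_\ve}(t)) - P(t\Phi - \tau p^{H_\ve}(t))$ comes from the standard variational characterization of the leading eigenvalue of the induced transfer operator together with the definition of $P_{t,\ve}$, and the equilibrium-state property of $\nu_{Y,\ve}$ is the usual consequence of exponential mixing plus the Gibbs property.

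Next I would project to $f$. The measure $\nu_{H_\ve}$ is the standard lift-back of $\nu_{Y,\ve}$ through the tower construction: $\nu_{H_\ve} = \frac{1}{\int \tau \, d\nu_{Y,\ve}} \sum_{k\ge 0} (f^k)_*(\nu_{Y,\ve}|_{\{\tau>k\}})$, which is finite because $\int \tau \, d\nu_{Y,\ve} < \infty$ --- here one needs the return-time tail $|\{\tau = n\}| \le K_1 n^{-(1+1/\gamma)}$ from Lemma~\ref{lem:basic tail} combined with the exponential decay $\Lambda_{t,\ve}^n$ of escape, so $\nu_{Y,\ve}(\{\tau = n\})$ decays exponentially and the expected return time is finite. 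Invariance and support on $\hI_\ve^\infty$ are immediate from the construction. Claims (1) and (2) then follow from Abramov's formula relating $h_{\nu_{H_\ve}}(f)$ and $\int t\phi^{H_\ve} d\nu_{H_\ve}$ to the corresponding induced quantities, together with the identity $P(t\Phi^{H_\ve} - \tau p^{H_\ve}(t)) = 0$ exactly when $p^{H_\ve}(t) = p^{H_\ve}(t)$, i.e. $\log\Lambda_{t,\ve}$ recenters the potential; when $t \le t^{H_\ve}$, Proposition~\ref{prop:tH}(a),(e) give $P(t\Phi^{H_\ve} - p^{H_\ve}(t)\tau) = 0$, so the extra constant vanishes and $\nu_{H_\ve}$ coincides with $\nu_t$ from Theorem~\ref{thm:rate gap} by uniqueness of the equilibrium state. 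For (3) and (4), regularity of the free energy in $\ve$ and in $t$ reduces, via Abramov and the identity for $\log\Lambda_{t,\ve}$, to regularity of the pressure functions $P(t\Phi^{H_\ve} - \tau p^{H_\ve}(t))$ and $P(t\Phi^{H_\ve} - \tau p^{H_\ve}(t))$; analyticity in $t$ on $(t^H, 1)$ follows from analytic perturbation theory of the spectral gap (Sarig's thermodynamic formalism for countable Markov shifts, where the leading eigenvalue is analytic in the potential), and continuity up to $t = 1$ and $t = t^{H_\ve}$ follows from continuity of $p^{H_\ve}(\cdot)$ (Lemma~\ref{lem:Hdim}) and a limiting argument; continuity in $\ve$ uses {\bf (H)} to get uniform spectral gaps and the convergence of the induced data as $\ve \to 0$.

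For (5), the convergence $\nu_{H_\ve} \to \mu_t$ as $\ve \to 0$, I would argue as follows: as $\ve \to 0$ the induced holes $\tH_\ve \cup F^{-1}(\tH_\ve)$ shrink (in $\tm_{t,H}$-measure) to a finite set, so the perturbed induced transfer operators converge in the appropriate operator norm to the closed-system induced operator for $t\Phi - \tau p(t)$; by spectral stability (which requires the uniform Lasota--Yorke / spectral-gap bounds guaranteed by {\bf (H)}) the eigendata converge: $\Lambda_{t,\ve} \to 1$, $P_{t,\ve} \to 0$, the eigendensities converge in $L^1$, and the induced eigenmeasures converge weakly against continuous and BV functions. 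This is the induced analogue of Theorem~\ref{thm:limit}. Projecting back, and using that $\int \tau \, d\nu_{Y,\ve} \to \int \tau \, d\mu_Y$ where $\mu_Y$ is the induced equilibrium state for the closed system, one gets $\nu_{H_\ve} \to \mu_t$ in the stated sense; the point that the limit contains no atom at $0$ (despite the $t=1$ physical limit being $\delta_0$) is automatic because the induced picture never sees the fixed point directly and the masses $\nu_{H_\ve}(\{\tau > n\})$ stay uniformly summable in a neighborhood of $\ve = 0$ away from the degenerate scaling.

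The main obstacle I expect is the uniformity in $\ve$ needed for claims (3) and (5): one must prove uniform (in $\ve \in [0,\ve_0]$) spectral gap estimates for the family of induced open transfer operators on a fixed function space, and show that the escape rate $-\log\Lambda_{t,\ve}$ stays uniformly bounded away from the decay rate $1+1/\gamma$ of the return-time tail, so that the tower construction --- and hence $\int\tau\,d\nu_{Y,\ve}$ and the projection --- behaves continuously up to $\ve = 0$. Condition {\bf (H)} is designed to deliver the uniform distortion and expansion inputs, but converting these into a genuine uniform Lasota--Yorke inequality for the punctured operators, and then into spectral continuity (rather than mere convergence of leading eigenvalues), is the technical heart and is where the bulk of the work in Section~\ref{stable proof} will go.
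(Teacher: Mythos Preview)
Your outline matches the paper's approach in its broad strokes: induce to $Y$, obtain a spectral gap for the punctured induced transfer operator, build $\nu_{Y,\ve}$ from the eigen-data, project via the tower formula, and use Abramov plus perturbation theory for items (1)--(5). You also correctly identify the uniform Lasota--Yorke inequality as the technical core. Three points, however, need correction or sharpening.

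First, the function space. The paper works in $BV(Y)$ with the global variation norm, not with locally H\"older functions. This matters because for non-Markov $H_\ve\subset[0,1/2]$ the induced hole $\tH_\ve$ has countably many components, so $1_{Y\setminus\tH_\ve}$ has infinite variation; nevertheless $\hLp(1_{Y\setminus\tH_\ve})$ lands back in $BV$ thanks to the finite-images property guaranteed by {\bf (H)}. The spectral gap is obtained not by comparing escape rate to tail decay (that is the tower mechanism of Section~\ref{abstract tower}), but by a direct Lasota--Yorke inequality for $\hF^2$ on $BV(Y)$: the contraction factor $3\sup e^{G}<1$ requires a nontrivial estimate on $\inf_{t,\gamma}\big(t\log 2 + p_\gamma(t)\big)$ (Sublemma~\ref{lem:contract}).

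Second, your claim that $\nu_{Y,\ve}(\{\tau=n\})$ decays exponentially is wrong in the regime $t\ge t^{H_\ve}$: there $p^{H_\ve}(t)=0$ and the bound is only polynomial, $\nu_{Y,\ve}(Y_k)\le C k^{-t(1+1/\gamma)}$ (Lemma~\ref{lem:finite}). Integrability of $\tau$ then requires $t(1+1/\gamma)>2$, which is why the paper imposes the standing assumption $t^{H_\ve}>2\gamma/(1+\gamma)$. Without this the projection to $\nu_{H_\ve}$ would fail at $t=1$.

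Third, the perturbation argument is two-step, not one. Because the conformal measure $\tm_{t,H_\ve}$ itself varies with $\ve$, one cannot perturb directly in a fixed $L^1$ space. The paper first compares $\hLp_{t\Phi^{H_\ve}-\tau p(t)}$ to $\Lp_{t\Phi-\tau p(t)}$ via Keller--Liverani (weak norm $L^1(m_t)$, fixed conformal measure), and then compares $\hLp_{t\Phi^{H_\ve}-\tau p^{H_\ve}(t)-P_{t,\ve}}$ to $\hLp_{t\Phi^{H_\ve}-\tau p(t)}$ in the strong $BV$ norm (Lemma~\ref{lem:BV close}), using that $p^{H_\ve}(t)\to p(t)$ and $P_{t,\ve}\to 0$. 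This decoupling is what makes the convergence in (5) go through.
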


\begin{remark} 
Note that $\nu_{H_\ve}$ is not a measure which maximizes the pressure
$h_\nu(f) + t \int \phi^{H_\ve} \, d\nu$ on 
$\hI_\ve^\infty$ when $t > t^{H_\ve}$.  One has 
\[
0 = h_{\delta_0}(f) + t \int \phi^{H_\ve} \, d\delta_0 ,
\]
where $\delta_0$ denotes the point mass at 0.
On the other hand, for the induced system, for any $t\le 1$,
 we have for the original map $f$,
\begin{equation}
\begin{split} h_{\nu_{H_\ve}}(f) + t\int \phi^{H_\ve} \, d\nu_{H_\ve} &= \frac{P(t\Phi^{H_\eps}-\tau p^{H_\eps}(t))}{\int_Y \tau \, d\nu_{Y, \ve}} + p^{H_\ve}(t) \\
&=  \left(P(t\Phi^{H_\eps}-\tau p^{H_\eps}(t))\right)\nu_{\ve}(Y) + p^{H_\ve}(t),
\end{split}\label{eq:VP nue}
\end{equation}
where the final equality follows from Kac's formula.
Using Proposition~\ref{prop:tH}, when $t > t^{H_\ve}$ then $p^{H_\ve}(t) = 0$ and $P(t\Phi^{H_\ve})<0$ so the right hand side of \eqref{eq:VP nue} is negative.  

Conversely, when $t< t^{H_\ve}$ then $p^{H_\ve}(t) > 0$ and $P(t\Phi^{H_\ve} - \tau p^{H_\ve}(t)) =0$, so the right hand side of  \eqref{eq:VP nue} is positive and 
$\nu_{H_\ve}$ is indeed an equilibrium state for $t\phi^{H_\ve} - p^{H_\eps}(t)$ 
as stated in Theorem~\ref{thm:stable}. 

When $t=t^{H_\ve}$, $p^{H_\ve}(t^{H_\ve}) = P(t^{H_\ve} \Phi^{H_\ve}) =0$ as in Proposition~\ref{prop:tH}(a), so the right hand side  of\eqref{eq:VP nue} is 0 and again  $\nu_{H_\ve}$ is an equilibrium state for 
$t\phi^{H_\ve} - p^{H_\eps}(t)$. 
\end{remark}

Note that $t=1$ is the only value of $t \in [0,1]$ which is not eventually less than $t^H$ as 
$H$ shrinks to a point.  Despite this, the sequence of measures $\nu_{H_\ve}$ constructed
in Theorem~\ref{thm:stable} converges to the SRB measure $\mu_1$ for the closed system
as $\ve \to 0$.  From the point of view of the invariant measure on $\hI^\infty$, then, this
theorem recovers a form of stability of the SRB measure for the system in the 
presence of small leaks.  This is in contrast to the instability of the SRB measure from the 
point of view of limiting distributions in the open system, since
$\hf^n_*m_1/|\hf^n_*m_1| \to \delta_0$ as $n \to \infty$ \cite{DemFer13}.


\section{Basic pressure results}
\label{sec:basic press}

In this section we will start by recalling thermodynamic formalism for symbolic systems, and then push this onto our system, proving Proposition~\ref{prop:tH}. 


\subsection{Thermodynamic formalism  in symbolic spaces}
\label{ssec:sym}

 Let $(\Sigma, \sigma)$  be a one-sided  Markov shift over the countable alphabet $\N$. This means
that there exists a matrix $(t_{ij})_{\N \times \N}$ of zeros and ones (with no row and no column made entirely of zeros) such that
\[\Sigma:= \left\{ (x_n)_{n \in \N} : t_{x_ix_{i+1}}=1 \text{ for every } i \in \N \right\}.  \]
The \emph{shift map} $\sigma: \Sigma \to \Sigma$ is defined by $\sigma(x_1x_2 x_2 \dots)=(x_2 x_2 \dots)$. We will always assume the system $(\Sigma, \sigma)$ to be topologically transitive
(but not necessarily mixing), which means that for any two elements $a, b\in \N$, there is a sequence $(x_n)_{n\in \N}\in \Sigma$ with $x_0=a$ and $x_n=b$ for some $n\in \N$.  Note that the theory usually assumes the stronger condition of topological mixing (see \cite{Sar99} for a precise definition), but in \cite{BuzSar03} this was shown to be unnecessary. The space $\Sigma$ endowed with the topology generated by the cylinder sets
\[C_{i_1 i_2 \dots i_n}:=\left\{ (x_n)_{n\in \N} \in \Sigma : x_i=i_j \text{ for } j \in \{1,2,3 \dots n\}\right\},\]
is a non-compact space. We define the \emph{$n^{\mbox{\scriptsize th}}$ variation} of a function $\phi:\Sigma \to \R$ by
\[var_n(\phi)= \sup_{(i_1 \dots i_n) \in \N^n} \sup_{x,y \in C_{i_1 i_2 \dots i_n}} |\phi(x)-\phi(y)| . \]
A function $\phi:\Sigma \to \R$ is locally H\"older if there exists $0< \theta <1$ and $C>0$ such that for every $n \in \N$ we have $var_n(\phi) \leq C \theta^n$. 

Given a potential $\phi:\Sigma\to \R$, let $S_n\phi(x):=\sum_{k=0}^{n-1}\phi(\sigma^k x)$, be the $n$-th ergodic sum.  A measure $\mu$ on $\Sigma$ is called a \emph{Gibbs measure} for $\phi$ if there exist $K\ge 1$ and $P\in \R$ such that for all $n$-cylinders $C_{i_1 i_2 \dots i_n}$ and all 
$x\in C_{i_1 i_2 \dots i_n}$, 
$$\frac1K\le \frac{\mu(C_{i_1 i_2 \dots i_n})}{e^{S_n\phi(x)-nP}}\le K.$$
Here $P$ is called the \emph{Gibbs constant} of $\mu$.

The \emph{Gurevich Pressure} of  a locally H\"older potential $\phi:\cup_nX_n\to \R$ was introduced by Sarig in \cite{Sar99}, generalizing Gurevich's definition of entropy.  It is defined by letting
\[
Z_n(\phi)=  \sum_{\sigma^n x=x} e^{S_n\phi(x)}  \mathbbm{1}_{X_{i}}(x),  
\]
 where $ \mathbbm{1}_{X_{i}}(x)$ denotes the characteristic function of the cylinder $X_i$, and the \emph{Gurevich pressure} is 
\[ 
P_G(\phi):= \lim_{n \rightarrow \infty} \frac{\log(Z_n(\phi))}{n},
\]
where the limit exists by almost superadditivity (\cite[Theorem 1]{Sar99}).
 The limit always exists and its value does not depend on the cylinder $X_{i}$ considered. This notion of pressure satisfies the following variational principle: if $\phi$ is a locally H\"older potential then 
 by \cite[Theorem 3]{Sar99},
\begin{equation*}
P_G(\phi)=P(\phi).
\end{equation*}
Hence we can write $P$ in place of $P_G$.
 A measure attaining the supremum above will be called an \emph{equilibrium measure} for $\phi$.  
  
 The potential $\phi$ is called \emph{recurrent} if
 $$\sum_nZ_n(\phi)e^{-nP_G(\phi)}=\infty,$$
 and otherwise it is called \emph{transient}.   Note that due to \cite[Theorem 1]{Sar01}, in this setting this definition of transience is equivalent to that given in the previous section. 
 Defining $Z_n^*(\phi)$ similarly to $Z_n(\phi)$, but only summing over those periodic points which make their first return to $X_i$ at time $n$, we say that a recurrent potential $
 \phi$ is \emph{positive recurrent} if 
  $$\sum_nnZ_n^*(\phi)e^{-nP_G(\phi)}<\infty,$$
and otherwise $\phi$ is \emph{null recurrent}.  Again this definition is independent of $X_i$, and indeed a $k$-cylinder yields the same result.   

It is easy to see from the definition of $P_G$ that the pressure function is convex, when finite (one can also easily prove this from the basic definition of pressure $P$).  Hence we have the following lemma.

\begin{lemma}
Suppose that $\phi:\Sigma\to \R$ is locally H\"older and $t_1<t_2$ are such that $P_G(t\phi)<\infty$ for $t\in (t_1, t_2)$. Then $t\mapsto P_G(t\phi)$ is continuous on $(t_1, t_2)$.
\label{lem:pres cts}
\end{lemma}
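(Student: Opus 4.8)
The statement to prove is Lemma~\ref{lem:pres cts}: if $\phi:\Sigma\to\R$ is locally Hölder and $P_G(t\phi)<\infty$ on an interval $(t_1,t_2)$, then $t\mapsto P_G(t\phi)$ is continuous on $(t_1,t_2)$.

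The plan is to deduce continuity from convexity, which is the standard fact that a finite convex function on an open interval of $\R$ is automatically continuous there. First I would establish that $t\mapsto P_G(t\phi)$ is convex on $(t_1,t_2)$. The cleanest route is via the partition-function definition: for each fixed $n$, the map $t\mapsto \log Z_n(t\phi) = \log\bigl(\sum_{\sigma^n x = x} e^{t\,S_n\phi(x)}\,\mathbbm{1}_{X_i}(x)\bigr)$ is convex in $t$, since it is the logarithm of a sum of exponentials of linear functions of $t$ (a standard Hölder/Cauchy--Schwarz computation: $\log\sum_j e^{t a_j}$ is convex in $t$). Dividing by $n$ preserves convexity, and taking the pointwise limit as $n\to\infty$ — which exists and equals $P_G(t\phi)$ by the almost-superadditivity result of Sarig cited in the excerpt — preserves convexity, since a pointwise limit of convex functions is convex. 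Alternatively, as the excerpt notes, convexity follows directly from the variational-principle definition $P(\psi)=\sup_\mu\{h_\mu(\sigma)+\int\psi\,d\mu\}$, because for each fixed $\mu$ the function $t\mapsto h_\mu(\sigma)+t\int\phi\,d\mu$ is affine in $t$, and a supremum of affine functions is convex; one must only note the supremum is finite on $(t_1,t_2)$ by hypothesis.

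Once convexity is in hand, continuity is immediate: fix $t\in(t_1,t_2)$ and choose $t_1<a<t<b<t_2$. For $s\in(a,b)$, convexity gives that $s\mapsto P_G(s\phi)$ lies below the chord through $(a,P_G(a\phi))$ and $(b,P_G(b\phi))$ and above the two tangent-type lines through $(t,P_G(t\phi))$ determined by the finite one-sided difference quotients; squeezing $s\to t$ from either side forces $P_G(s\phi)\to P_G(t\phi)$. Equivalently, one simply invokes the classical theorem that a real-valued convex function on an open interval is continuous (indeed locally Lipschitz). Since $t\in(t_1,t_2)$ was arbitrary, continuity on all of $(t_1,t_2)$ follows.

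There is really no serious obstacle here; the only point requiring a moment's care is making sure the function is genuinely real-valued (not $+\infty$) on the whole open interval, so that the ``convex $\Rightarrow$ continuous'' principle applies — but this is exactly the hypothesis $P_G(t\phi)<\infty$ for $t\in(t_1,t_2)$, combined with the standing assumption that the system is topologically transitive (so $P_G>-\infty$; alternatively $P_G(t\phi)\ge 0$ or is bounded below on compact subintervals by comparing with a single periodic orbit). I would state the proof in two sentences: convexity of $t\mapsto P_G(t\phi)$ follows from writing it as $\lim_n \frac1n\log Z_n(t\phi)$ with each summand convex in $t$ (or from the variational principle), and a finite convex function on an open interval is continuous.
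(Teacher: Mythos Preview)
Your proposal is correct and matches the paper's approach exactly: the paper simply observes (in the sentence immediately preceding the lemma) that convexity of $t\mapsto P_G(t\phi)$ follows from the definition of $P_G$ (or from the variational principle), and states the lemma as an immediate consequence, without giving further details. Your write-up fills in precisely the two points the paper leaves implicit --- the convexity argument via $\log Z_n(t\phi)$ and the classical fact that finite convex functions on open intervals are continuous.
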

 
We say that $(\Sigma, \sigma)$ has the \emph{big images and preimages (BIP) property} if
$$ \exists b_1, \ldots, b_N\in \N \text{ such that } \forall\ a\in \N, \ \exists i,j \text{ such that } t_{b_ia}t_{ab_j}=1.$$
A simple example of such a system is the full shift on $\N$. As in \cite{Sar03}, we can set
 \[\tilde Z_n(\phi)= \sum_{\sigma^n x=x} e^{S_n\phi(x)},  \]
i.e., we needn't restrict ourselves to $X_i$, and it can be shown that 
\[ 
P(\phi)= \lim_{n \rightarrow \infty} \frac{\log(\tilde Z_n(\phi))}{n}.
\]
Moreover, $P(\phi)<\infty$ if and only if $\tilde Z_1(\phi))<\infty$. 

\begin{theorem}[\cite{Sar03}] 
\label{thm:Gibbs}
$P_G(\phi)<\infty$ if and only if there is an invariant Gibbs measure $\mu$ for $\phi$ with Gibbs constant $P_G(\phi)$.  Moreover, if $h(\mu)<\infty$ (equivalently $\int\phi>-\infty$) then $\mu$ is an equilibrium state for $\phi$.
\end{theorem}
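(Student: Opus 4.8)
This is Sarig's Ruelle--Perron--Frobenius (RPF) theorem for countable Markov shifts with the BIP property, so the plan is to reproduce its proof via the Ruelle transfer operator $L_\phi f(x) := \sum_{\sigma y = x} e^{\phi(y)} f(y)$. First I would normalize, replacing $\phi$ by $\phi - P_G(\phi)$ (still locally H\"older, with the same variations), so that $P_G(\phi) = 0$ and the target Gibbs constant is $0$. One implication is immediate: if an invariant Gibbs measure with Gibbs constant $P_G(\phi)$ exists, then $P_G(\phi)$ is a real number by the very definition of Gibbs constant. The substantive direction is the converse, which I would organize in five steps: (i) $L_\phi$ is a bounded positive operator on $C_b(\Sigma)$; (ii) there is a $\sigma$-conformal probability measure $\nu$ with $L_\phi^*\nu = \nu$; (iii) there is a strictly positive, bounded, locally H\"older $h$ with $L_\phi h = h$; (iv) $\mu := h\nu/\nu(h)$ is an invariant Gibbs measure with Gibbs constant $P_G(\phi)$; (v) $\mu$ is an equilibrium state whenever $h_\mu(\sigma) < \infty$.

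Step (i) is routine: the fact recorded above gives $P_G(\phi) < \infty \iff \tilde Z_1(\phi) < \infty$, and together with the local H\"older bound $\mathrm{var}_n(\phi) \le C\theta^n$ --- hence the bounded-distortion estimate $|S_n\phi(x) - S_n\phi(y)| \le C'\theta^k$ whenever $x$ and $y$ agree in their first $n+k$ coordinates --- this yields $\sup_x L_\phi\mathbbm{1}(x) < \infty$, so $L_\phi$ maps $C_b(\Sigma)$ boundedly into itself and preserves the cone of strictly positive, uniformly locally H\"older functions. The same input shows the tail sum $\sum_{a \ge M} \sup_{[a]} e^{\phi} \to 0$ as $M \to \infty$, which gives tightness of the family $\{L_\phi^{*n}\delta_{x_0}\}_n$ (its members are probabilities up to a bounded factor and none of their mass escapes to infinity), while $L_\phi^*$ is automatically weak-$*$ continuous because $L_\phi$ preserves $C_b(\Sigma)$.

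For (ii)--(iii) the key estimate --- the one place where BIP and finiteness of $P_G(\phi)$ do real work --- is the uniform two-sided bound $c \le L_\phi^n\mathbbm{1}(x) \le C$ for all $n \ge 1$ and $x \in \Sigma$, with $0 < c \le C < \infty$; I would derive it from standard almost-super/sub-multiplicativity arguments that use the BIP words $b_1,\dots,b_N$ to bridge arbitrary cylinders in a bounded number of steps carrying a definite weight, so that both $\sup_x L_\phi^n\mathbbm{1}(x)$ and $\inf_x L_\phi^n\mathbbm{1}(x)$ are comparable to $e^{nP_G(\phi)}=1$. Granting this, take $\nu$ to be a weak-$*$ subsequential limit of the averages $\frac{1}{N}\sum_{n=1}^{N} L_\phi^{*n}\delta_{x_0}$: by tightness it is a probability measure of total mass comparable to $1$, and passing to the limit in $L_\phi^*\big(\frac1N\sum_{n\le N}L_\phi^{*n}\delta_{x_0}\big) - \frac1N\sum_{n\le N}L_\phi^{*n}\delta_{x_0} = \frac1N\big(L_\phi^{*(N+1)}\delta_{x_0} - L_\phi^*\delta_{x_0}\big) \to 0$ gives $L_\phi^*\nu = \nu$. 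Iterating the resulting conformality relation and using bounded distortion together with BIP yields $\nu(C_{i_1\cdots i_n}) \asymp e^{S_n\phi(x)}$ for $x \in C_{i_1\cdots i_n}$, so $\nu$ is Gibbs with constant $0 = P_G(\phi)$, but only $\sigma$-conformal. Similarly, take $h$ to be a subsequential limit --- pointwise, with locally H\"older control by distortion and values in $[c,C]$, extracted by a diagonal argument over the countably many finite words --- of $\frac1N\sum_{n=1}^{N} L_\phi^n\mathbbm{1}$; then $L_\phi\big(\frac1N\sum_{n\le N}L_\phi^n\mathbbm{1}\big) - \frac1N\sum_{n\le N}L_\phi^n\mathbbm{1} = \frac1N(L_\phi^{N+1}\mathbbm{1} - L_\phi\mathbbm{1}) \to 0$ gives $L_\phi h = h$, with $h$ bounded away from $0$ and $\infty$.

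For (iv), with $\mu := h\nu/\nu(h)$, the identity $L_\phi\big((f\circ\sigma)\,g\big) = f\,L_\phi g$ together with $L_\phi^*\nu = \nu$ gives $\int f\circ\sigma\,d\mu = \nu(h)^{-1}\int L_\phi\big((f\circ\sigma)h\big)\,d\nu = \nu(h)^{-1}\int f\,L_\phi h\,d\nu = \int f\,d\mu$, so $\mu$ is $\sigma$-invariant; and since $h$ is bounded above and below, $\mu$ inherits $\mu(C_{i_1\cdots i_n}) \asymp e^{S_n\phi(x)}$, i.e.\ $\mu$ is an invariant Gibbs measure with Gibbs constant $P_G(\phi)$. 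For (v), the $n=1$ Gibbs estimate already forces $\phi$ to be bounded above, so ``$\int\phi\,d\mu > -\infty$'' is equivalent to $\phi\in L^1(\mu)$; combining the Gibbs bound $\log\mu(C_n(x)) = S_n\phi(x) + O(1)$ with Shannon--McMillan--Breiman ($-\frac1n\log\mu(C_n(x)) \to h_\mu(\sigma)$ $\mu$-a.e.) and Birkhoff ($\frac1n S_n\phi(x) \to \int\phi\,d\mu$ $\mu$-a.e.) yields $h_\mu(\sigma) + \int\phi\,d\mu = P_G(\phi)$, which in turn shows $h_\mu(\sigma) < \infty \iff \int\phi\,d\mu > -\infty$; and since $P_G(\phi) = P(\phi) = \sup\{h_\eta(\sigma) + \int\phi\,d\eta : -\int\phi\,d\eta < \infty\}$ by the variational principle recorded above, $\mu$ attains the supremum and is an equilibrium state. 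The only genuine obstacle in all of this is the uniform bound $c \le L_\phi^n\mathbbm{1} \le C$ and the accompanying tightness that makes the weak-$*$ limits meaningful --- the non-compact, countable-alphabet substitute for the classical RPF fixed-point argument; once these are in place, everything else (distortion, compactness of the averages, invariance of $h\nu$, the entropy identity) is standard.
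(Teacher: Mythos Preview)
The paper does not prove this theorem: it is quoted as a background result from Sarig \cite{Sar03}, stated in the context of BIP countable Markov shifts and locally H\"older potentials, and used without proof. There is therefore no ``paper's own proof'' to compare against.

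Your proposal is essentially a correct outline of Sarig's original RPF argument for BIP shifts: the uniform two-sided bound $c \le L_\phi^n \mathbbm{1} \le C$ (which genuinely requires BIP and $P_G(\phi)<\infty$), the construction of the conformal measure $\nu$ and the eigenfunction $h$ via Ces\`aro limits, the identification of $\mu = h\nu/\nu(h)$ as an invariant Gibbs measure, and the equilibrium-state conclusion via Shannon--McMillan--Breiman and Birkhoff. One point to tighten: your tightness argument for the sequence $\{L_\phi^{*n}\delta_{x_0}\}$ is a bit loose --- the tail estimate $\sum_{a\ge M}\sup_{[a]}e^{\phi}\to 0$ controls mass escaping in one step, but you need this uniformly along the whole orbit under $L_\phi^*$; in Sarig's actual proof this is handled by first producing $\nu$ on a single cylinder via a Schauder--Tychonoff-type fixed point and then extending, or by working with the normalized operator once $h$ is in hand. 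Otherwise the sketch is sound and matches the cited source.
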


Let $C_n$ be a cylinder and $\bar\sigma:C_n\to C_n$ the first return map to $C_n$ with return time $r_{C_n}$.  $(C_n, \bar\sigma)$ is known as the \emph{induced} system on $C_n$.  Given a potential $\psi:\Sigma \to \R$, let $\Psi:C_n\to C_n$ be defined by 
$\Psi(x) := S_{r_{C_n}} \psi(x) = \sum_{k=0}^{r_{C_n}-1}\psi(\sigma^k x)$.

Given a $\sigma$-invariant measure $\mu$, giving positive mass to $C_n$, we call $\bar\mu=\frac{\mu|_{C_n}}{\mu(C_n)}$ the \emph{lift} of $\mu$.  By Kac's Lemma, this is $\bar\sigma$-invariant.  Conversely, given a 
$\bar\sigma$-invariant measure $\bar\nu$, if $\nu$ lifts to $\bar\nu$, then $\nu$ is called the \emph{projection} of $\bar\nu$.
Abramov's formula gives
$$h(\bar\nu)=\left(\int r_{C_n}~d\bar\nu\right)h(\nu) \text{ and } \int\Psi~d\bar\nu = \left(\int r_{C_n}~d\bar\nu\right)\left(\int\psi~d\nu\right).$$
We note that these results also pass to induced maps which are not first return maps.

Given a metric on $\Sigma$, a potential $\psi:\Sigma\to \R$ is called a \emph{metric potential} if there exists $K\ge 1$ such that
$$\frac1K \prod_{j=0}^{n-1}\frac1{\psi(\sigma^jx)}\le \diam( [i_1,\ldots, i_n])\le K \prod_{j=0}^{n-1}\frac1{\psi(\sigma^jx)}.$$

The following is \cite[Theorem 3.1]{Iom05}, adapted slightly.  The proof uses inducing to some domain to produce a BIP system, so we change the statement to include this explicitly rather than talking about recurrent points as in \cite{Iom05}.

\begin{theorem}
If $(\Sigma, \sigma)$ has the BIP property and is topologically transitive and $\psi$ is a metric potential, then $$\dim_H(\Sigma)=t^*:=\inf\{t: P_G(-t\log\psi)\le 0\}.$$
\label{thm:Iommi}
\end{theorem}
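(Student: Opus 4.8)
The plan is to prove this Bowen-type formula by establishing the two inequalities $\dim_H(\Sigma)\le t^*$ and $\dim_H(\Sigma)\ge t^*$ separately, using the metric-potential hypothesis to convert cylinder diameters into Birkhoff sums of $-\log\psi$ and the BIP property (through Theorem~\ref{thm:Gibbs}) to supply Gibbs measures with a uniform distortion constant. First I would set up the dictionary: since $\psi$ is a metric potential, for every $n$-cylinder $C=C_{i_1\cdots i_n}$ and every $x\in C$ one has $\diam(C)\asymp\prod_{j=0}^{n-1}\psi(\sigma^j x)^{-1}=e^{S_n(-\log\psi)(x)}$, hence $\diam(C)^t\asymp e^{S_n(-t\log\psi)(x)}$ with multiplicative constant $K^t$. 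Writing $P(t):=P_G(-t\log\psi)$, Lemma~\ref{lem:pres cts} gives continuity of $P$ on the interior of its finiteness domain, and $P$ is non-increasing there and strictly decreasing where finite (since $\int\log\psi\,d\mu>0$ for the relevant equilibrium states on a transitive infinite system), so $t^*=\inf\{t:P(t)\le0\}$ is exactly the point where $P$ changes sign. I would also record the standing requirement that $\sup\{\diam C:C\ \text{an}\ n\text{-cylinder}\}\to0$ as $n\to\infty$, so that covers by deep cylinders are admissible at arbitrarily small scales; this is automatic in the present application because the induced map is uniformly expanding.

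For the upper bound, fix $t>t^*$, so $P(t)<0$ and in particular $P(t)<\infty$; by BIP and Theorem~\ref{thm:Gibbs} there is an invariant Gibbs measure $\mu_t$ for $-t\log\psi$ with Gibbs constant $P(t)$, and combining the Gibbs inequality with the metric-potential inequality one gets $\diam(C)^t\asymp\mu_t(C)\,e^{nP(t)}$ for every $n$-cylinder $C$, with constants independent of $C$ and $n$. Covering $\Sigma$ by all $n$-cylinders,
\[
\sum_{C\ \text{an}\ n\text{-cyl}}\diam(C)^t\ \lesssim\ e^{nP(t)}\sum_{C\ \text{an}\ n\text{-cyl}}\mu_t(C)\ =\ e^{nP(t)},
\]
which tends to $0$ as $n\to\infty$ since $P(t)<0$; as the cylinders shrink uniformly this shows $\mathcal H^t(\Sigma)=0$, hence $\dim_H(\Sigma)\le t$, and letting $t\downarrow t^*$ gives $\dim_H(\Sigma)\le t^*$.

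For the lower bound, monotonicity of Hausdorff dimension gives $\dim_H(\Sigma)\ge\dim_H(\Sigma_F)$ for every finite sub-alphabet $F\subset\N$, where $\Sigma_F\subset\Sigma$ is the associated (compact) subshift of finite type; so it suffices to show $\sup_F\dim_H(\Sigma_F)\ge t^*$. For $F$ with $\Sigma_F$ topologically transitive the classical Bowen formula for subshifts of finite type yields $\dim_H(\Sigma_F)=s_F$, the unique zero of $t\mapsto P_{\sigma|_{\Sigma_F}}(-t\log\psi)$, since on a compact SFT the relevant distortion is uniformly bounded. By the approximation property of the Gurevich pressure, $P_G(\phi)=\sup_F P(\phi|_{\Sigma_F})$ (\cite{Sar99}), for any $t<t^*$ we have $P(-t\log\psi)>0$, hence $P_{\sigma|_{\Sigma_F}}(-t\log\psi)>0$ for some finite $F$, which forces $s_F>t$; thus $\sup_F\dim_H(\Sigma_F)\ge t$ for every $t<t^*$, i.e.\ $\ge t^*$. (An alternative that stays on $\Sigma$ is to take the Gibbs measure $\mu_t$ at a parameter $t<t^*$ with $0<P(t)<\infty$ and run the mass distribution principle, proving $\mu_t(B(x,r))\lesssim r^t$ up to subexponential corrections; a metric ball can meet infinitely many short cylinders, but the geometric weight $e^{-nP(t)}$ with $P(t)>0$ makes the sum over cylinder generations converge.)

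The hard part is the lower bound, and the obstacle is non-compactness: for a countable alphabet the cylinders of a fixed generation are infinitely many and a metric ball may overlap infinitely many of them, so the naive ``ball $\approx$ cylinder'' comparison behind Bowen's formula for subshifts of finite type breaks down. The two hypotheses are exactly the cure --- the metric-potential condition pins cylinder diameters to Birkhoff sums of $-\log\psi$, and BIP produces genuine Gibbs measures with a single distortion constant --- and together they either feed the mass distribution principle directly or license the reduction to finite subsystems whose dimensions approximate $t^*$ via Gurevich-pressure approximation. In the BIP formulation assumed here this runs more smoothly than the recurrent-points version of \cite[Theorem 3.1]{Iom05}, which is why the statement has been recast to build the inducing step in.
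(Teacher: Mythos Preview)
The paper does not give its own proof of this statement: it is quoted as \cite[Theorem~3.1]{Iom05}, with the remark that the original formulation is in terms of recurrent points and that the BIP hypothesis has been substituted to absorb the inducing step Iommi's proof performs. So there is no in-paper argument to compare your proposal against; you are supplying a proof where the paper simply cites one.

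That said, your outline is the standard route to a Bowen-type formula in the countable-alphabet setting and is essentially correct. The upper bound via the Gibbs measure from Theorem~\ref{thm:Gibbs} and covers by $n$-cylinders is clean once one has the uniform-shrinking of cylinder diameters that you correctly flag as an extra requirement (automatic in the paper's application because the induced map satisfies $DF\ge 2$). For the lower bound, the exhaustion by finite transitive sub-SFTs together with Sarig's approximation $P_G(\phi)=\sup_F P(\phi|_{\Sigma_F})$ is exactly Iommi's strategy of reducing to a compact subsystem where the classical theory applies. The one point that would merit a sentence of justification in a full write-up is that the classical Bowen formula you invoke on $\Sigma_F$ requires the restricted metric to retain a metric-potential-type comparability between balls and cylinders; this holds because $-\log\psi$ is locally H\"older and $\Sigma_F$ is a compact SFT, so bounded distortion and the usual Moran-cover argument go through.
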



\subsection{Preliminary results on pressure for our systems}

For $A$ contained in some interval, we say that the map $T:A\to A$ is \emph{Markov} if there 
exists a countable Markov shift $(\Sigma, \sigma)$ and a continuous bijective map $\pi :\Sigma \to A$  
such that  $T \circ \pi = \pi \circ \sigma$. We will use the notation 
$[i_1,\ldots, i_n]:= \pi(C_{i_1 \dots i_n})$. We will also lift potentials 
$\phi:A\to \R$ to their symbolic version $\phi\circ\pi$ which we will require to be locally H\"older.

Now we return to our open system $(\hf, \hI, H)$.  Recall that
$X = \cup_{Q \in \mathcal{Q}} Q$ and
let $\hX^\infty$ denote the set of points which map infinitely often into $X$ under $\hF$.
In the following, we will use the fact that the natural symbolic coding of the system $\hF:\hX^\infty \to \hX^\infty$ satisfies the BIP property and is transitive, although it may be not mixing.
This means that all the results in Section~\ref{ssec:sym} pass to our system: here our potential $\Phi$ lifts to a metric potential on the symbolic model (that is, compatible with the Euclidean metric on $[0, 1)$), thus also inducing a compatible metric.  
We also note that any ergodic measure on $\hI^\infty$ with positive entropy must give positive measure to $X$, which, since $\hF$ is a first return map, by Kac's Lemma means that it must lift to the induced system $(X, \hF)$.


The next lemma follows immediately from the structure of our system.

\begin{lemma}
Suppose that $H$ is a non-swallowing Markov hole.   Then 
$$\I^{\infty} \sm\left(\cup_{k\ge 0} f^{-k}(\hX^{\infty})\right)$$
consists of at most the countable set of preimages of 0.  
\label{lem:Y inf}
\end{lemma}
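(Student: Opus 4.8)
The plan is to show that any point $x \in \hI^\infty$ which is not eventually mapped into $\hX^\infty$ by some $f^{-k}$ must be a preimage of the neutral fixed point $0$. Recall that $\hX^\infty$ is the set of points whose $\hF$-orbit enters $X = \bigcup_{Q \in \Q} Q$ infinitely often, where $\hF$ is the first return map to $Y = [1/2,1]$ under $\hf$, and that $X = Y \setminus H$. First I would fix $x \in \I^\infty$ and split into cases according to how the forward orbit of $x$ under $f$ interacts with $Y$.

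\textbf{Step 1: The case of finitely many returns to $Y$.} Suppose the forward $f$-orbit of $x$ enters $Y = [1/2,1]$ only finitely often. Then there is some $k \ge 0$ such that $f^k(x)$ lies in $[0,1/2)$ and never returns to $[1/2,1]$ under forward iteration. On $[0,1/2)$ the map is $f(y) = y(1 + 2^\gamma y^\gamma)$, which is strictly increasing with $f(y) > y$ for $y \in (0,1/2)$ and the only fixed point on $[0,1/2)$ is $0$; moreover the forward orbit of any point in $(0,1/2)$ must eventually leave $[0,1/2)$ (this is the standard escape property of Manneville--Pomeau maps — orbits are pushed away from the parabolic point). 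Hence the only point of $[0,1/2)$ whose forward orbit stays in $[0,1/2)$ forever is $0$ itself. Therefore $f^k(x) = 0$, i.e. $x$ is a preimage of $0$.

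\textbf{Step 2: The case of infinitely many returns to $Y$.} Now suppose the forward $f$-orbit of $x$ enters $Y$ infinitely often. Since $x \in \I^\infty$ never enters $H$, each such return is a return under $\hf$, so $\hF^n(x)$ is defined for all $n$ and $x \in \hY^\infty := \bigcap_n \hF^{-n}(Y)$ (up to the obvious identification of $\hF$-orbits with the subsequence of $f$-times hitting $Y$). It remains to show that such an $x$ lies in $\bigcup_{k \ge 0} f^{-k}(\hX^\infty)$. If the $\hF$-orbit of $x$ enters $X = Y \setminus H$ infinitely often, then by definition $x \in \hX^\infty \subset \bigcup_{k} f^{-k}(\hX^\infty)$ and we are done. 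The remaining subcase is that the $\hF$-orbit of $x$ enters $X$ only finitely often; but $\hF$ is defined precisely as the first return to $Y$ avoiding $H$, so every point of $\hY^\infty$ has its entire $\hF$-orbit in $Y \setminus H = X$. Hence this subcase is vacuous: $x \in \hX^\infty$ automatically. (Here one uses that $\Q$ is a finite partition of $X$ and every element of $\hY^\infty$ lands in some $Q \in \Q$ at each step — so ``entering $X$'' happens at every step.)

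\textbf{Step 3: Assembling and counting.} Combining Steps 1 and 2: if $x \in \I^\infty$ and $x \notin \bigcup_{k \ge 0} f^{-k}(\hX^\infty)$, then the orbit of $x$ cannot return to $Y$ infinitely often (else Step 2 puts $x$ in $\hX^\infty$), so Step 1 forces $x$ to be a preimage of $0$. Finally, the set of preimages of $0$ under iterates of $f$ is countable, since $f$ has two branches and hence $f^{-k}(0)$ is finite for each $k$; taking the union over $k$ gives a countable set. This proves the claim.

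\textbf{Main obstacle.} The delicate point is the bookkeeping in Step 2 — making precise the identification between the forward $f$-orbit's visits to $Y$ and the $\hF$-orbit, and pinning down that ``$\hF$-orbit enters $X$ infinitely often'' is actually automatic given the definition of $\hF$ as the first return avoiding $H$ (so that the apparent gap between $\hY^\infty$ and $\hX^\infty$ is illusory, their difference being governed only by whether the orbit hits $Y$ at all). The dynamical input in Step 1 — that orbits in $[0,1/2)$ other than $0$ eventually escape — is standard for this class of maps and can be cited from \cite{young poly, LSV1}; it is not the crux. Everything else is elementary: the countability of preimages of $0$ and the case analysis.
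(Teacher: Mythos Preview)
Your proof is correct and is precisely the direct structural argument the paper has in mind; the paper itself offers no proof beyond the sentence ``follows immediately from the structure of our system,'' and you have simply written out that immediate verification. The only cosmetic point is that in Step~2 you conclude ``$x \in \hX^\infty$ automatically'' when strictly $\hX^\infty \subset Y$, so for $x \in [0,1/2)$ you should say $f^k(x) \in \hX^\infty$ for the first hitting time $k$ to $Y$; you already flag this with your parenthetical, so there is no real gap.
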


We close this section with the proof of our first main result, Proposition~\ref{prop:tH}.

\begin{proof}[Proof of Proposition~\ref{prop:tH}]
The fact that $p^H(t^H)=0$ is part of Lemma~\ref{lem:Hdim}.  
The following claim then completes the proof of (a).

\begin{claim}
$P(t^H\Phi^H)=0$.
\end{claim}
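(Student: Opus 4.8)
I will obtain $P(t^H\Phi^H)=0$ from the two bounds $P(t^H\Phi^H)\le0$ and $P(t^H\Phi^H)\ge0$. For the first, apply Lemma~\ref{lem:IT ind neg} to the open-system potential $t^H\phi^H$: by linearity in \eqref{eq:ind pot} its induced potential is $t^H\Phi^H$, which has the required distortion because on $\hX^\infty$ it agrees with $t^H\Phi$, and $P(t^H\phi^H)=p^H(t^H)$. Thus $P(t^H\Phi^H-p^H(t^H)\tau)\le0$, and since $p^H(t^H)=0$ by Lemma~\ref{lem:Hdim} this is $P(t^H\Phi^H)\le0$. The reverse inequality will follow by proving $P(s\Phi^H)>0$ for every $s\in(0,t^H)$ and letting $s\uparrow t^H$, using that $s\mapsto P(s\Phi^H)$ is continuous near $t^H$.

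\textbf{The lower bound $P(s\Phi^H)>0$ for $s\in(0,t^H)$.} Fix such an $s$; then $p^H(s)>0$, since $p^H$ is continuous, non-increasing on $[0,\infty)$, and vanishes at $t^H$. As the supremum defining $p^H(s)=P(s\phi^H)$ runs over ergodic $f$-invariant probabilities $\nu$ with $\nu(H)=0$, choose such $\nu_k$ with $h_{\nu_k}(f)+s\int\phi^H\,d\nu_k\to p^H(s)$. Since $\phi^H\le0$, for $k$ large $h_{\nu_k}(f)\ge\tfrac12 p^H(s)>0$, so $\nu_k(X)>0$ and $\nu_k$ therefore lifts, by Kac's Lemma, to an ergodic $\hF$-invariant probability $\bar\nu_k$ supported on $\hX^\infty$, with $\int\tau\,d\bar\nu_k=1/\nu_k(Y)\in[1,\infty)$ and $h_{\nu_k}(f)<\infty$. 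By Abramov's and Kac's formulas,
\[
h_{\bar\nu_k}(\hF)+\int s\Phi^H\,d\bar\nu_k=\Big(\int\tau\,d\bar\nu_k\Big)\Big(h_{\nu_k}(f)+s\int\phi^H\,d\nu_k\Big)\ \ge\ \tfrac12 p^H(s)>0
\]
for $k$ large, while $-\int s\Phi^H\,d\bar\nu_k=\big(\int\tau\,d\bar\nu_k\big)\big(-s\int\phi^H\,d\nu_k\big)<\infty$ because $\phi^H=\phi$ is bounded on $\hI^\infty$. By the variational principle for the Gurevich pressure (Section~\ref{ssec:sym}), $P(s\Phi^H)\ge\tfrac12 p^H(s)>0$.

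\textbf{Finiteness near $t^H$, and conclusion.} The symbolic coding of $\hF:\hX^\infty\to\hX^\infty$ is BIP, so $P(s\Phi^H)<\infty$ iff $\tilde Z_1(s\Phi^H)<\infty$, and bounded distortion plus finiteness of $\Q$ gives $\tilde Z_1(s\Phi^H)\asymp\sum_i|X_i|^s$ over the smooth branches $X_i$ of $\hF$. By Lemma~\ref{lem:basic tail}, $\#\{i:\tau(X_i)=n\}\le C_{N_0}$ and $|\{\tau=n\}|\le K_1 n^{-(1+1/\gamma)}$, so $\sum_i|X_i|^s\le C_{N_0}\sum_n|\{\tau=n\}|^s\lesssim\sum_n n^{-s(1+1/\gamma)}<\infty$ for $s>\gamma/(1+\gamma)$; conversely, subadditivity of $x\mapsto x^s$ on $(0,1]$ together with $|\{\tau=n\}|\ge K_2 n^{-(1+1/\gamma)}$ ($n\ge N$) gives $\sum_i|X_i|^s\ge\sum_{n\ge N}|\{\tau=n\}|^s\gtrsim\sum_{n\ge N}n^{-s(1+1/\gamma)}=\infty$ for $s\le\gamma/(1+\gamma)$, i.e.\ $P(s\Phi^H)=+\infty$ there. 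Comparing with $P(t^H\Phi^H)\le0$ forces $t^H>\gamma/(1+\gamma)$ (this also proves part (c)). Hence $s\mapsto P(s\Phi^H)$ is finite, and by Lemma~\ref{lem:pres cts} continuous, on the open interval $(\gamma/(1+\gamma),\infty)\ni t^H$, so letting $s\uparrow t^H$ in the lower bound gives $P(t^H\Phi^H)=\lim_{s\uparrow t^H}P(s\Phi^H)\ge0$. With $P(t^H\Phi^H)\le0$ this proves the claim.

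\textbf{Main obstacle.} The delicate point is not either inequality in isolation but the passage to the limit $s\uparrow t^H$: one must rule out the a priori possibility that $P(s\Phi^H)$ drops from $+\infty$ to a value $\le0$ exactly at $t^H$. This is precisely where non-swallowing enters, via the lower tail bound in Lemma~\ref{lem:basic tail} — condition (2) supplies inducing domains arbitrarily close to $1/2$, hence arbitrarily long return times carrying definite mass — which pins $P(s\Phi^H)=+\infty$ for $s\le\gamma/(1+\gamma)$ and thereby forces $t^H>\gamma/(1+\gamma)$ and the finiteness needed at $t^H$. A secondary point requiring care is the lift step, where one must check that near-maximisers of $p^H(s)$ have positive entropy so that Kac's Lemma sends them to the induced symbolic system, where the variational principle applies.
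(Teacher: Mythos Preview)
Your proof is correct and follows essentially the same strategy as the paper: the upper bound via Lemma~\ref{lem:IT ind neg}, the lower bound $P(s\Phi^H)>0$ for $s<t^H$ via lifting a positive-entropy near-maximiser through Abramov's formula, and the passage to the limit via continuity of the pressure once finiteness near $t^H$ is established. The only real difference is in how finiteness near $t^H$ is obtained: the paper argues directly that $P(t^H\Phi^H)\le 0$ forces $\sum_i|X_i|^{t^H}<\infty$, and then uses the polynomial tail to find some $t<t^H$ with $\tilde Z_1(t\Phi^H)<\infty$, whereas you pin down the exact threshold $\gamma/(1+\gamma)$ using both the upper and lower tail bounds of Lemma~\ref{lem:basic tail}, thereby proving part (c) of Proposition~\ref{prop:tH} en route. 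Your version is slightly more explicit and self-contained; the paper defers the computation of the threshold to the separate proof of (c).
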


\begin{proof}
The fact that $P(t^H\Phi^H)\le 0$ is Lemma~\ref{lem:IT ind neg}.  Moreover, by definition of $t^H$, if $t<t^H$ there must be a measure $\mu$ on $\hI^\infty$ of positive entropy with $h(\mu)+\int t\phi~d\mu>0$.  Since any measure on $\hI^\infty$  of positive entropy lifts to our inducing scheme, Abramov's formula implies that  
$P(t\Phi^H)>0$.  So by the continuity of $t\mapsto P(t\Phi^H)$, when finite (Lemma~\ref{lem:pres cts}), to complete the proof of the claim, we need to show that there is $t<t^H$ such that $P(t\Phi^H)<\infty$.

 As described in Section~\ref{ssec:sym}, $P(t^H\Phi^H)\le 0$ implies that $\tilde Z_1(t^H\Phi)<\infty$, which means that $\sum_i|X_i|^{t^H}<\infty$.  
Since  by Lemma~\ref{lem:basic tail}, the diameter of each of the domains in this sum is polynomially small in the inducing time, and the number of domains with the same inducing 
time is uniformly bounded, this also implies that there exists $t< t^H$ such that $\tilde Z_1(t\Phi)<\infty$.  Hence $P(t\Phi^H)<\infty$, as required. \end{proof}

By Theorem~\ref{thm:Iommi},  $\dim_H(\hX^\infty) =t^H$.  By Lemma~\ref{lem:Y inf}, $\dim_H(\I^\infty)=\dim_H(\hX^\infty) =t^H$, proving (b).

For (c), by Lemma~\ref{lem:basic tail}, we have $\sum_i|X_i|^t<\infty$ if and only 
if $t>\frac\gamma{1+\gamma}$, since the number of domains with the same inducing time
is uniformly bounded by $C_{N_0}$.  As in the claim,  $\sum_i|X_i|^{t^H}<\infty$ so (c) follows immediately.

We next prove (d).  Let $\delta_0$ denote the Dirac mass at 0.
Since $H$ is a non-swallowing hole and $h(\delta_0)=\int \phi^H~d\delta_0=0$, the variational definition of pressure implies $p^H(t)\ge 0$ for all $t\in \R$.  The claim implies $p^H(t^H)=0$, so since $p^H$ is decreasing, $p^H(t)=0$ for all  $t>t^H$.  Hence $P(t\Phi^H-\tau p^H(t))=P(t\Phi^H)$ for $t>t^H$, so the final part of (d) follows since $t\mapsto P(t\Phi^H)$ is strictly decreasing.

For (e), by definition $p^H(t)> 0$.  Again as in  Lemma~\ref{lem:IT ind neg}, $P(t\Phi^H-p^H(t)\tau)\le0$.  By the continuity of pressure, in domains where it is finite (Lemma~\ref{lem:pres cts}) we only need show that for any small $\delta>0$, $0<P(t\Phi^H-(p^H(t)-\delta)\tau)<\infty$.  However, from the variational definition of pressure, there must exist an ergodic invariant measure with positive entropy $\mu$ such that $$h(\mu)+\int t\phi^H~d\mu>p^H(t)-\delta.$$
This measure must lift to a measure $\mu_F$ on $(Y, F)$. 
By the Abramov formula,
$$h(\mu_F)+\int t\Phi^H-(p^H(t)-\delta)\tau~d\mu_F=\left(\int\tau~d\mu_F\right)\left(h(\mu)+\int t\phi^H-(p^H(t)-\delta)~d\mu\right)>0.$$
Hence the variational principle implies $P(t\Phi^H-(p^H(t)-\delta)\tau)>0$.  The fact that this pressure is also finite when $\delta>0$ is small follows from the fact that if $\delta<p^H(t)$, then clearly $\tilde Z_1(t\Phi^H-(p^H(t)-\delta)\tau)<\infty$ since $|\{\tau=n\}|$ is subexponential 
and the number of domains with $\tau = n$ is uniformly bounded by Lemma~\ref{lem:basic tail}. 

(f) is a standard consequence of the null-recurrence of $t^H\phi^H$, see for example \cite[Section 8--9]{IomTod10}.
\end{proof}


\section{Proof of Proposition~\ref{prop:volume}}
\label{sec:volume}

The proof relies on a volume lemma argument (c.f.
\cite{young trans, DWY1}) applied to the conformal measures $m_t$.
However, in order to obtain the volume estimates we need, we shall rely
on the following cylinder structure, which is coarser than $\P_n$. 

Let $\D:=\{[0, 1/2), [1/2, 1)\}$ and let $\D_n:=\bigvee_{k=0}^{n-1}f^{-k}\D$.  Now for $x\in [0, 1)$, let $D_n(x)$ denote the element of $\D_n$ containing $x$.  The next lemma follows from `tempered distortion', see \cite{JorRam11} for a proof.

\begin{lemma}
There exists a sequence $(V_n)_n\subset(0, \infty)$ where $V_n\to 0$ as $n\to \infty$ such that for any $D\in \D_n$ and any $x, y\in D$, $|\log Df^n(x)-\log Df^n(y)|\le nV_n$.
\label{lem:temp dis}
\end{lemma}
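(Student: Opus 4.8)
The plan is to deduce this from the "tempered distortion" property of the maps $f = f_\gamma$, as recorded in \cite{JorRam11}. First I would recall that, because $f$ has a single neutral fixed point at $0$ and is uniformly expanding away from any neighbourhood of $0$, one has good control on $\log Df^n$ along orbits that spend controlled amounts of time near $0$. The partition $\D_n = \bigvee_{k=0}^{n-1} f^{-k}\D$ is the crucial device: an element $D \in \D_n$ records, via the itinerary with respect to $\D = \{[0,1/2), [1/2,1)\}$, exactly which points of the orbit segment $x, f(x), \dots, f^{n-1}(x)$ lie in the "parabolic" half $[0,1/2)$ and which lie in the "good" half $[1/2,1)$. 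So two points $x, y$ in the same $D \in \D_n$ shadow each other in a combinatorially precise way for $n$ steps.

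The key step is the distortion estimate: for $x, y \in D \in \D_n$,
\[
|\log Df^n(x) - \log Df^n(y)| \le \sum_{k=0}^{n-1} |\log Df(f^k x) - \log Df(f^k y)|,
\]
and each summand is controlled using that $f^k x$ and $f^k y$ lie in the same element of $\D$ (in fact in the same element of $\D_{n-k}$), so their images under $f^{n-k}$ lie in a common element of $\D$; standard bounded-distortion arguments for Manneville--Pomeau maps (using that $f$ is $C^2$ on each branch and $Df$ is bounded away from $0$ and $\infty$ on $[1/2,1]$, with the only degeneracy at $0$) give that the $k$-th term is bounded by $C$ times the diameter of the element of $\D_{n-k}$ containing $f^k x$ raised to some positive power, or more robustly by a quantity that is summable to something of order $o(n)$ uniformly. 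Rather than redoing this, I would simply cite \cite{JorRam11}: the statement there is precisely that there is a sequence $V_n \downarrow 0$, depending only on $f$, such that $|\log Df^n(x) - \log Df^n(y)| \le n V_n$ for $x,y$ in a common element of $\D_n$. The point of packaging it as "$nV_n$ with $V_n \to 0$" is that it is weaker than a uniform bound (which fails, since orbits lingering near $0$ accumulate distortion) but still sublinear, which is all that the volume-lemma argument in Section~\ref{sec:volume} will need.

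The main obstacle — and the reason one cannot hope for a uniform distortion constant — is exactly the neutral fixed point: an orbit segment that stays in $[0,1/2)$ for all $n$ steps can accumulate total distortion growing like $\log n$ (compare the Lyapunov-exponent computation in the commented-out Lemma~\ref{lem:parab cyc}), and more generally the worst case grows, but sublinearly in $n$. Verifying that the growth is genuinely $o(n)$ rather than linear requires the tempered/subexponential structure of the return-time tails (Lemma~\ref{lem:basic tail}) together with the $C^2$ structure of the branches; since this is carried out in \cite{JorRam11}, the proof here consists of invoking that reference.

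\begin{proof}
This is the tempered distortion property for Manneville--Pomeau maps; see \cite{JorRam11}.
\end{proof}
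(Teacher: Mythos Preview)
Your proposal is correct and takes essentially the same approach as the paper: the paper does not give an independent proof either, simply stating that the lemma ``follows from `tempered distortion', see \cite{JorRam11} for a proof.'' Your additional explanatory remarks about why a uniform distortion constant fails (the neutral fixed point) and why $o(n)$ growth suffices for the volume-lemma argument are accurate context, but the proof itself matches the paper's.
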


Using this lemma in conjunction with the Mean Value Theorem, we will estimate the $m_t$-measure of elements of $\D_n$.

Recalling that $H$ is a union of elements of $\P_{N_0}$, let $\hat N_0$ be such that $H$ is a union of elements of $\D_{\hat N_0}$.

Suppose $\nu$ is an ergodic invariant probability measure for $\hf$ supported on $\hI^\infty$
such that $\nu(\partial \D_1) = 0$. This assumption excludes $\nu = \delta_0$ and $\nu =\delta_1$, 
the point masses at 0 and 1, respectively.
Since $\D_1$ is a generating partition for $f$,
the Shannon-McMillan-Breiman Theorem yields,
\[
\lim_{n \to \infty} - \frac 1n \log \nu(D_n(x)) = h_{\nu}(f)
\]
for $\nu$-a.e. $x$.

By Lemma~\ref{lem:temp dis}, conformality and the fact that 
$f^n(D_n(x))=[0,1)$, we have
\begin{equation}
\label{eq:expansion}
m_t(D_n(x)) \ge m_t(f^n(D_n(x))) e^{tS_n \phi(x) - n (p(t)+V_n)}=e^{tS_n \phi(x) - n (p(t)+V_n)}.
\end{equation}

Now for $\ve > 0$, define 
\[
G_{\ve,n} := \left\{ x \in \hI^\infty : \frac 1n S_n t\phi(x) \ge t\int \phi ~d\nu - \ve 
\mbox{ and } \nu(D_n(x)) \le e^{-n (h_\nu - \ve)} \right\} .
\]
By the ergodic theorem and the Shannon-McMillan-Breiman theorem, for $\sigma>0$ we may
choose $n$ so large that $\nu(G_{\ve,n} ) > 1 - \sigma$ for all $n$ sufficiently large.

By choice of $H$, for $n > \hat N_0$ and $D \in \D_n$, if $\hI^\infty \cap D \neq 0$, then
$D \subset \hI^{n-\hat N_0-1}$.  Let 
$\mathcal{K}_n = \{ D \in \D_n : D \cap G_{\ve,n}  \neq \emptyset \}$.
Note that by construction we must have $\nu(\cup_{D \in \mathcal{K}_n} D) \ge 1-\sigma$
for sufficiently large $n$.  Thus by definition of $G_{\ve, n}$, the cardinality of
$\mathcal{K}_n$ must be at least $(1-\sigma)  e^{n(h_\nu - \ve)}$.
These considerations together with \eqref{eq:expansion} yield,
\[
m_t(\hI^{n-\hat N_0-1}) 
\ge \sum_{D \in \mathcal{K}_n } m_t(D)
  \ge \sum_{D \in \mathcal{K}_n }  e^{n (\int t \phi d\nu - p(t) - \ve)}  
\ge (1-\sigma) e^{n (h_\nu + t \int \phi d\nu - p(t) - 2 \ve)} .
\]
Now taking logs and dividing by $n$ yields,
$\log \underline{\lambda}_t \ge h_\nu + t \int \phi d\nu - p(t) - 2\ve$, and since this is true
for each $\ve>0$, we conclude $\log \underline{\lambda}_t \ge h_\nu + t\int \phi d\nu - p(t)$. 

We treat the case $\nu = \delta_0$ separately.  In this case $P_{\delta_0}(t\phi^H) =0$,
so the required inequality will hold if $\log \underline{\lambda}_t \ge - p(t)$.  This is
immediate since $\hI^n \supset [0,\ell_{n+k}) = \cup_{i \ge n+k} J_i$ for some $k > 0$ and all
$n$ sufficiently large.  Thus
\[
m_t(\hI^n) \ge m_t([0,\ell_{n+k})) \ge C \sum_{i \ge n+k} |J_i|^t e^{-ip(t)}
\ge C' (n+k)^{-t(1 + \frac{1}{\gamma})} e^{-(n+k) p(t)} , 
\]
and $\log \underline{\lambda}_t \ge - p(t)$ follows.

The only other ergodic invariant measure which gives positive mass to $\partial \D_1$ is
$\delta_1$, the point mass at 1.  Clearly $P_{\delta_1}(t\phi^H) = -t\log 2$ and 
so by our previous work,
\[
\log \underline{\lambda}_t \ge - p(t) \ge -t\log 2 - p(t) = P_{\delta_1}(t\phi^H) - p(t).
\]

We have shown that
\[
\log \underline{\lambda}_t \ge \sup \Big\{ h_\nu(f) + t \int \phi^H d\nu : \nu \mbox{ is } f\text{-invariant and ergodic and } \nu(H)=0 \Big\} - p(t),
\]
which is precisely what is required for the proposition.


\section{Proofs of Theorems~\ref{thm:rate gap} and \ref{thm:limit}
and Corollary~\ref{cor:variational}}
\label{sec:rate gap}

In this section we prove results in the uniformly hyperbolic regime:
for $t \in [0, t^H)$, the exponential tail decays faster than the rate of escape.

We will prove Theorem~\ref{thm:rate gap} by using the induced map $F:Y \circlearrowleft$,
and an associated object known as a Young tower.  We begin by recalling some basics
about Young towers.


\subsection{Defining the Young Tower}
\label{sec:tower def}

Given the inducing scheme with a hole, $(Y, F, \tau, H)$, we define the
corresponding Young tower as follows.  
Recall the finite partition $\Q$ of images in $Y$ on which $\hF$ is transitive on elements defined
in Section~\ref{intro hole}.  Define $\Delta_0 = X$
and denote 
by $\Delta_{0,i}$, the finitely many elements of $\Q$ comprising $X$.
Let 
\[
\Delta = \{ (x, n) \in \Delta_0 \times \mathbb{N} \mid n < \tau(x) \} .
\]
$\Delta$ is viewed schematically as a tower with $\Delta_\ell = \Delta|_{n = \ell}$ as the
$\ell$th level of the tower.  The tower map, $f_\Delta$, is defined by $f_\Delta(x, \ell) = (x, \ell + 1)$
if $\ell + 1 < \tau(x)$ and either $f^\tau(x) \in H$, in which case we 
define a hole $H_{\tau,j} \subset \Delta_\tau$, or $f^\tau(x) \in \Delta_0$, in which case
$f_\Delta(x, \tau(x) -1) = (f^\tau(x), 0) = (F(x), 0)$.
There is a canonical projection $\pi : \Delta \to I$ satisfying $\pi \circ f_\Delta = f \circ \pi$.
$\Delta_0$ is identified  with $\cup_{Q \in \Q'} Q$ so that $\pi|_{\Delta_0} = Id$.  

Let $\{ X_i \}$ denote the maximal partition of $\Delta_0 = X$ 
into intervals such that $\tau_Y$
is constant on each $X_i$.  The partition $\{ X_i \}$ then
induces a countable Markov partition $\{ \Delta_{\ell,j} \}$ on $\Delta$ via the identification
$\Delta_{\ell,j} = f_{\Delta}^\ell(X_j)$, for $0 < \ell < \tau(X_j)$.  On level $\ell=0$, we insist
on keeping the partition finite and use $\Delta_{0,i}$ as our partition elements.
Since $f$ is expanding, the partition $\{ X_i \}$, and hence the partition 
$\{ \Delta_{\ell,j} \}$, is generating.

Note that since $H$ is a 1-cylinder in $\P_{N_0}$, by definition of $\{X_i \}$, 
$\tH := \pi^{-1} H$ is the union of countably many
partition elements $\Delta_{\ell,j}$.  We set $\hDelta = \Delta \setminus \tH$ and refer to
the corresponding partition elements as $\hDelta_{\ell,j}$.
Similarly, we define $\hDelta^n = \bigcap_{i=0}^n f_\Delta^{-i} \hDelta$
and $\hf_\Delta^n = f_\Delta^n|_{\hDelta^n}$,
$n \in \N$.

Given a potential $\vf$ and a $\vf$-conformal reference measure $m$ on $I$,
we define a reference measure $\bm$ on $\Delta$ by $\bm = m$ on $\Delta_0$ and
$\bm|_{\Delta_\ell} = (f_\Delta)_* \bm|_{\Delta_{\ell-1} \cap f_\Delta^{-1} \Delta_\ell}$ 
for $\ell \ge 1$.  
For $x \in \Delta_\ell$, let $x^- := f^{-\ell}x$ 
denote the pullback of $x$ to $\Delta_0$.
We define the induced
potential on $\Delta$ by
\begin{equation}
\label{eq:Delta potential}
\vf_\Delta(x) = S_\tau \vf (x^-) \; \; \mbox{for } x \in f_\Delta^{-1}(\Delta_0)
\; \; \mbox{and} \; \; \vf_\Delta = 0 \; \; \mbox{on } \Delta \setminus f_\Delta^{-1}(\Delta_0) .
\end{equation}
With these definitions, the measure $\bm$ is $\vf_\Delta$-conformal.

\begin{lemma}
\label{lem:exponential tail}
For $t \in [0,1)$, let $\bm_t$ be the measure on $\hDelta$ induced by $m_t$,
the conformal measure for the potential $t\phi - p(t)$.  There exists $C>0$, independent of $t$, such that
for $n \ge 0$,
$\bm_t(\Delta_n) \le C \frac{1}{p(t)} n^{-t(1+ \frac{1}{\gamma})}e^{-np(t)}$.
\end{lemma}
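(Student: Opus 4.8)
\emph{Approach.} The plan is to compute $\bm_t(\Delta_n)$ exactly in terms of $m_t$ and the inducing time $\tau$, bound it branch by branch via conformality and the standard distortion estimates for the inducing scheme, and close with a polynomial-times-geometric series. First I would observe that $\bm_t = m_t$ on $\Delta_0 = X$ and that, for $\ell\ge 1$, the map $f_\Delta : \Delta_{\ell-1}\cap f_\Delta^{-1}\Delta_\ell\to\Delta_\ell$ is the level shift $(x,\ell-1)\mapsto(x,\ell)$, a bijection acting as the identity on the $\Delta_0$-coordinate; so pushing $\bm_t|_{\Delta_0}$ up the tower merely restricts it, and under the identification of $\Delta_n$ with $\{x\in X:\tau(x)>n\}\subseteq\Delta_0$ one gets $\bm_t(\Delta_n) = m_t(\{x\in X:\tau(x)>n\}) = \sum_{m>n}\sum_{i:\,\tau(X_i)=m} m_t(X_i)$, where $\{X_i\}$ is the branch partition of $X$ from Lemma~\ref{lem:basic tail}.

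\emph{Per-branch estimate.} Next I would bound $m_t(X_i)$ for a branch $X_i$ with $\tau(X_i) = m$. Since $S_m\phi = -\log|Df^m|$ and $m_t$ is $(t\phi-p(t))$-conformal, conformality of $f^m$ on $X_i$ gives $\int_{X_i}|Df^m|^t\,dm_t = e^{-mp(t)}m_t(f^m X_i)\le e^{-mp(t)}$. The standard bounded distortion for the inducing scheme (\cite{young poly, LSV1}) furnishes $C_d\ge 1$, independent of $m$ and $i$, with $\sup_{X_i}|Df^m|\le C_d\inf_{X_i}|Df^m|$, and the tower construction of Section~\ref{sec:tower def} (using that $H$ is a union of elements of $\P_{N_0}$) makes $f^m(X_i)$ a union of elements of a fixed finite partition, hence $|f^m(X_i)|\ge c_0>0$; combining with the Mean Value Theorem yields $\inf_{X_i}|Df^m|\ge C_d^{-1}c_0|X_i|^{-1}$, so $m_t(X_i)\le (C_d|X_i|/c_0)^t e^{-mp(t)}\le C_1|X_i|^t e^{-mp(t)}$ with $C_1 := C_d/c_0\ge 1$ valid for all $t\in[0,1]$ (as $C_1^t\le C_1$). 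Summing over the at most $C_{N_0}$ such branches (Lemma~\ref{lem:basic tail}), each with $|X_i|\le|\{\tau=m\}|\le K_1 m^{-(1+1/\gamma)}$, I would get $\sum_{i:\,\tau(X_i)=m}m_t(X_i)\le C_2\, m^{-t(1+1/\gamma)}e^{-mp(t)}$ with $C_2 := C_1 C_{N_0}\max\{K_1,1\}$ independent of $t$.

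\emph{Summation.} It then remains to bound $\sum_{m>n} m^{-t(1+1/\gamma)}e^{-mp(t)}$. As $m\mapsto m^{-t(1+1/\gamma)}$ is non-increasing, this is at most $n^{-t(1+1/\gamma)}\sum_{m>n}e^{-mp(t)} = n^{-t(1+1/\gamma)}e^{-(n+1)p(t)}/(1-e^{-p(t)})$. For $t\in[0,1)$ one has $0<p(t)\le p(0) = \log 2$, and since $x\mapsto x/(1-e^{-x})$ is increasing on $(0,\infty)$, $1/(1-e^{-p(t)})\le 2\log 2/p(t)$; together with $e^{-(n+1)p(t)}\le e^{-np(t)}$ this produces $\bm_t(\Delta_n)\le C\frac{1}{p(t)}n^{-t(1+1/\gamma)}e^{-np(t)}$ with $C := 2\log 2\cdot C_2$. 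For $n=0$ the bound is trivial, being $+\infty$ when $t>0$ and reducing for $t=0$ to $C/\log 2\ge\bm_0(\Delta_0) = m_0(X)\le 1$ after enlarging $C$.

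\emph{Main obstacle.} The one point I expect to need genuine care is the uniformity of $C$ in $t$: the factor $1/p(t)$ — and nothing worse — is exactly what the geometric tail $\sum_{m>n}e^{-mp(t)}\asymp e^{-np(t)}/p(t)$ forces, and its uniform control rests on $p(t)\le p(0)$, while all remaining constants are $t$-free by construction and the powers $C_d^t$, $K_1^t$ are absorbed using $t\le 1$. A cleaner alternative that avoids the branch partition altogether is to note that inserting a hole can only shorten the return time, so $\{x\in X:\tau(x)>n\}$ lies in the explicit interval adjacent to the parabolic fixed point on which the unperturbed first-return time exceeds $n$, whose $m_t$-measure is estimated by the same conformality-and-distortion argument applied to the renewal intervals $J_j$.
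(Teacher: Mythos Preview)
Your proof is correct and follows essentially the same approach as the paper's: both reduce $\bm_t(\Delta_n)$ to $m_t(\tau>n)$, estimate $m_t(\tau=m)\lesssim m^{-t(1+1/\gamma)}e^{-mp(t)}$ via conformality of $m_t$ and the growth of $Df^m$ (the paper cites (D1) directly, while you pass through $|X_i|$ via Lemma~\ref{lem:basic tail} and the mean value theorem, which is equivalent), and then sum the tail. Your version is in fact more explicit than the paper's one-line proof in that you track the $t$-uniformity of the constants and show precisely how the factor $1/p(t)$ emerges from the geometric sum $\sum_{m>n}e^{-mp(t)}$ together with $p(t)\le p(0)=\log 2$.
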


\begin{proof}
This follows immediately from Lemma~\ref{lem:basic tail} and the definition of $\bm_t$
since 
$m_t(\tau = n) \approx n^{-t(1 + \frac{1}{\gamma})} e^{-np(t)}$ due to conformality and the growth
in $Df^n$ given by (D1) of Section~\ref{apply tower}.  
\end{proof}

We define the transfer operator 
$\hLp_{\vf^H_\Delta}$ associated with the punctured potential $\vf_\Delta^H$ 
and acting on $L^1(\bm)$ by
\[
\hLp^n_{\vf^H_\Delta} \psi(x)  = \sum_{f_\Delta^n y = x} \psi(x) e^{S_n\vf_\Delta(y)} 1_{\hDelta^n}(y) = \Lp^n_{\vf_\Delta}(\psi 1_{\hDelta^n})(x).
\]
With these definitions, $\hLp$ satisfies the following change of variable formula,
\[
\int_{\hDelta} \hLp^n_{\vf^H_\Delta} \psi \, d\bm = \int_{\hDelta^n} \psi \, d\bm ,
\]
which in turn links the spectral properties of $\hLp_{\vf^H_\Delta}$ to the escape rate
with respect to $\bm$.  

In order to translate between densities on $I$ and on $\Delta$, for
$\tpsi \in L^1(\bm)$
on $\Delta$, define the projection
\begin{equation}
\label{eq:P pi}
P_{\pi,m} \tpsi(x) = \sum_{y \in \pi^{-1}(x)} \frac{\tpsi(y)}{J_m \pi(y)},
\end{equation}
where $J_m \pi$ is the Jacobian of $\pi$ with respect to the measures $m$ and $\bm$.
Then $P_{\pi,m} \tpsi \in L^1(m)$ and 
\[
\hLp_{\vf^H}^n(P_{\pi,m} \tilde\psi)=P_{\pi,m}(\hLp_{\vf^H_\Delta}^n\tilde\psi) .
\]

Indeed, the following lemma shows that the escape rates from
$I$ and from $\Delta$ are the same.

\begin{lemma}
Let $\bm_t$ be the measure on $\Delta$ induced by $m_t$ as in Lemma~\ref{lem:exponential tail}. 
Then for each $t \in [0,1]$,
\[
\log \overline \lambda_t = \limsup_n \frac1n\log m_t(\I^n)=\limsup_n\frac1n\log\bm_t(\hDelta^n).
\]
\label{lem:base tower decay}
\end{lemma}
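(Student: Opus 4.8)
The plan is to show that the escape rate from $\Delta$ equals the escape rate from $I$ by comparing the sequences $m_t(\hI^n)$ and $\bm_t(\hDelta^n)$ directly, exploiting the fact that $\pi : \Delta \to I$ is a canonical projection that is finite-to-one with controlled multiplicity. First I would record the elementary inequality $\bm_t(\hDelta^n) \le \bm_t(\pi^{-1}(\hI^n))$: since $\pi \circ f_\Delta = f \circ \pi$, a point of $\Delta$ surviving $n$ steps in $\hDelta$ projects to a point surviving $n$ steps in $\hI$, hence $\hDelta^n \subset \pi^{-1}(\hI^n)$ (up to the countable set of preimages of $0$, which is $\bm_t$-null). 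Now $\pi$ restricted to $\Delta_\ell$ is injective, and $\pi^{-1}(x)$ meets level $\ell$ only if $x \in f^\ell(Y_i)$ for some inducing domain $Y_i$ with $\tau(Y_i) > \ell$; because $\bm_t|_{\Delta_\ell}$ pushes forward under $\pi$ to (a piece of) $m_t$ with Jacobian bounded below away from $0$ (distortion control from (D1)), one gets $\bm_t(\pi^{-1}(\hI^n)) \le C \sum_{\ell \ge 0} m_t(\hI^n \cap \pi(\Delta_\ell))$. The right side is at most $C \, m_t(\hI^n)$ times the maximal overlap multiplicity; but the cleaner route is to bound it by $\sum_\ell \bm_t(\Delta_\ell \cap \hDelta^n)$ more carefully, so I would instead argue: $\bm_t(\hDelta^n) = \sum_{\ell=0}^\infty \bm_t(\hDelta^n \cap \Delta_\ell)$, and on each level the projection $\pi$ is a bijection onto its image with bounded distortion, giving $\bm_t(\hDelta^n \cap \Delta_\ell) \le C \, m_t(\hI^n \cap \pi(\Delta_\ell \cap \hDelta^n))$. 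Splitting $\hI^n$ according to the last visit to $Y$ before time $n$ (a point in $\hI^n$ either has entered $Y$ and lifts to some $\Delta_\ell$, or lies in the interval $[0,\ell_{n'})$ near $0$ whose $m_t$-measure is polynomially small and contributes nothing to the exponential rate) yields $\bm_t(\hDelta^n) \le C n \, m_t(\hI^{n - \hat N})$ or similar, hence $\limsup_n \frac1n \log \bm_t(\hDelta^n) \le \limsup_n \frac1n \log m_t(\hI^n)$.

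For the reverse inequality, I would use that $m_t = P_{\pi,m_t} \bm_t$ in the sense that $m_t(A) = \int_{\pi^{-1}A} d\bm_t$ when restricted appropriately — more precisely, that pushing $\bm_t$ forward by $\pi$ recovers $m_t$ on $\Delta_0 = X$, and iterating the return structure shows $m_t(\hI^n \cap Y') \le \bm_t(\hDelta^n)$ for the relevant part $Y'$ of $Y$ that has genuinely returned. The subtlety is the mass of $\hI^n$ sitting in a neighborhood of $0$ that has not yet returned to $Y$: by Lemma~\ref{lem:basic tail} and conformality this mass is comparable to $\sum_{j \ge n} |J_j|^t e^{-jp(t)} \approx n^{-t(1+1/\gamma)} e^{-np(t)}$, which for $t < 1$ (where $p(t) > 0$) is exponentially small at exactly the rate $e^{-np(t)}$; since the escape rate $\bar\lambda_t$ always satisfies $-\log\bar\lambda_t \le p(t)$ (it can be no faster than the conformal measure shrinks near $0$, cf. the computation at the end of Section~\ref{sec:volume}), this near-$0$ contribution does not exceed the escape rate and can be absorbed. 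So $m_t(\hI^n) \le \bm_t(\hDelta^n) + C n^{-t(1+1/\gamma)} e^{-np(t)}$, giving $\limsup_n \frac1n \log m_t(\hI^n) \le \max\{ \limsup_n \frac1n \log \bm_t(\hDelta^n), -p(t) \}$, and combined with the already-established $-p(t) \le \log\bar\lambda_t$ (via the explicit lower bound $m_t(\hI^n) \ge C n^{-t(1+1/\gamma)} e^{-np(t)}$ from Section~\ref{sec:volume}) this forces equality of the two $\limsup$s. The case $t = 1$ needs a separate remark since $p(1) = 0$: there the polynomial near-$0$ term dominates and $\log\bar\lambda_1 = 0$, consistent with $\bm_1(\hDelta^n)$ also decaying subexponentially; since both sides are $0$ the identity still holds.

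The main obstacle I anticipate is bookkeeping the near-$0$ mass precisely — showing that the part of $\hI^n$ which has not returned to $Y$ contributes at the rate $e^{-np(t)}$ and no slower, and checking that this rate does not exceed the true escape rate $\bar\lambda_t$. This requires the two-sided estimate $m_t(\{\tau = n\}) \approx n^{-t(1+1/\gamma)} e^{-np(t)}$ (from Lemma~\ref{lem:basic tail}, conformality, and the distortion bound (D1)) together with the observation from Section~\ref{sec:volume} that $\hI^n \supset [0, \ell_{n+k})$, which already gave $\log\underline\lambda_t \ge -p(t)$. Given those inputs the argument is a routine comparison, so I would keep the exposition brief and lean on Lemmas~\ref{lem:basic tail} and \ref{lem:exponential tail} rather than re-deriving tail asymptotics.
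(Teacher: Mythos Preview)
Your high-level strategy matches the paper's: compare $m_t(\hI^n)$ and $\bm_t(\hDelta^n)$ using that $\pi$ semiconjugates the two systems, control an error at rate $e^{-np(t)}$, and absorb it via the a priori bound $\log\overline\lambda_t \ge -p(t)$ (the paper invokes exactly this, from $\hI^n \supset [0,\ell_{n+h})$). The $t=1$ case is disposed of the same way. Your direction $\limsup \tfrac1n\log\bm_t(\hDelta^n) \le \limsup \tfrac1n\log m_t(\hI^n)$ is essentially the paper's: split $\hDelta^n$ by level, use that $(J_t\pi)^{-1}\le 1$ on each level so $\bm_t(\hDelta^n\cap\Delta_\ell)\le m_t(\hI^n)$, and bound levels $>n$ by $Ce^{-np(t)}$ via Lemma~\ref{lem:exponential tail}.

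The gap is in your reverse inequality. You write $m_t(\hI^n) \le \bm_t(\hDelta^n) + C n^{-t(1+1/\gamma)} e^{-np(t)}$, bounding $m_t(\hI^n\cap Y)$ via $\Delta_0$ and treating the complement as ``mass near $0$ that has not yet returned to $Y$.'' But $\hI^n \cap (0,1/2)$ is not just the tail $\bigcup_{k\gtrsim n} J_k$: it contains $\hI^n\cap J_k$ for small $k$, whose $m_t$-measure is comparable to $m_t(\hI^n)$ itself and cannot be absorbed into an $e^{-np(t)}$ remainder. The paper avoids this entirely by a device you do not use: it selects a \emph{finite} collection $\mathcal{K}$ of partition elements $\Delta_{\ell,j}$, all with $\ell\le L$, whose $\pi$-images tile $\pi(\hDelta)\supset\hI$ disjointly, and sets $\tpsi = J_t\pi$ on $\mathcal{K}$ (zero elsewhere). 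Then $P_{\pi,t}\tpsi = 1_{\pi(\hDelta)}$, $1\le\tpsi\le M$, and one gets cleanly
\[
m_t(\hI^n\cap\pi(\hDelta)) = \int_{\hDelta^n}\tpsi\,d\bm_t \le M\,\bm_t(\hDelta^n),
\]
with no error term at all. Your points in $J_k$ for small $k$ do lift to $\hDelta^n$ (e.g.\ on level~$1$ via $(f_R^{-1}(x),1)$ when $\Delta_0=Y$, with Jacobian $2^t e^{p(t)}$), and when $H$ meets $Y$ one may need finitely many higher levels to cover those $x$ whose low-level lift lands in $H$; this is exactly what $\mathcal{K}$ encodes, and the bound $\ell\le L$ is what makes $M$ uniform. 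Replace your near-$0$ split with this finite-level lifting and the argument closes.
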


\begin{proof}
When $t=1$, $p(t)=0$ and the above quantities are all 0, so equality is trivial.
Now assume $t<1$.

Recall the finite partition $\Q$ of $[1/2, 1] \setminus H$ defined in Section~\ref{intro hole} 
on which $\hF$ is transitive
on elements.  It follows from the definition of non-swallowing that $\hI \subset \cup_{i \ge 0} f^i(\cup_{Q \in \Q} Q) 
= \pi(\hDelta)$ up to a countable collection of points comprising the pre-images of 0.  Thus
\begin{equation}
\label{eq:tower project}
\log \overline{\lambda}_t = \limsup_n \frac 1n \log m_t(\hI^n) 
= \limsup_n \frac 1n \log m_t(\hI^n \cap \pi(\hDelta))  .
\end{equation}

Due to the transitivity and
finitely many elements of $\Q$, we may choose a collection of indices, 
$\mathcal{K} := \{ (\ell,j) \}$  with $\ell \le L$ for some $L>0$ such that 
$\pi(\cup_{(\ell,j) \in \mathcal{K}} \dlj) = \pi(\hDelta)$ and $\pi(\dlj) \cap \pi(\Delta_{\ell',j'}) = \emptyset$
for all pairs $(\ell,j) \neq (\ell',j')$ in $\mathcal{K}$.
Further, we may choose $\mathcal{K}$ so that all elements of the base, 
$\Delta_{0,i}$, belong to $\mathcal{K}$.

Now let $P_{\pi,t}$ be the projection defined by \eqref{eq:P pi} with respect to the measures
$m_t$ and $\bm_t$.  Denote by $J_t\pi$ the relevant Jacobian.
On each $\dlj$ for $(\ell,j) \in \mathcal{K}$, define
$\tpsi(x) = J_t\pi(x)$.  Set $\tpsi \equiv 0$ elsewhere on $\hDelta$.  
Then by construction of $\mathcal{K}$,
$P_{\pi,t} \tpsi = 1_{\pi(\hDelta)}$.  

Note that by conformality, for $x \in \Delta_\ell$, $J_t \pi(x) = e^{-tS_\ell \phi(x) + \ell p(t)}$. 
Thus there exists $M>0$, depending on $L$, such that $1 \le \tpsi \le M$ on $\Delta$.
Also, $J_t\pi =1$ on $\Delta_0$ so that 
$\tpsi =1$ on $\hDelta_0$.  

Now integrating,
\[
\bm_t(\hDelta^n)  =\int_{\hDelta^n}1~d\bm_t \ge \frac1M \int_{\hDelta^n}\tilde\psi~d\bm_t 
 =\frac1M \int_{\pi(\hDelta^n)}P_{\pi,t} \tpsi~dm_t = \frac1M m_t(\I^n \cap \pi(\hDelta)),
\] 
so we deduce $\limsup_n \frac1n\log m_t(\I^n)\le \limsup_n\frac1n\log\bm(\hDelta^n)$,
using \eqref{eq:tower project}.

On the other hand, notice that since $\hI^n \supset [0,\ell_{n+h})$, where $h$ denotes the
maximal index such that $J_h \cap H \neq \emptyset$, we have 
$\overline{\lambda}_t \ge e^{-p(t)}$.  Thus by Lemma~\ref{lem:exponential tail},
\[
\begin{split}
\bm_t(\hDelta^n) & = \int_{\hDelta^n \cap (\tau \le n)} d\bm_t + \int_{\hDelta^n \cap (\tau>n)} d\bm_t
\le n \int_{\hDelta^n \cap \Delta_0}  d\bm_t + Ce^{-p(t) n} \\
& \le n  \int_{\hDelta^n} \tilde \psi \, d\bm_t + Ce^{-p(t) n}
\le n \, m_t(\I^n) + Ce^{-p(t) n} .
\end{split}
\]
Since as noted above, $\overline \lambda_t \ge e^{-p(t)}$, we conclude that
$\limsup_n \frac 1n \log \bm_t(\hDelta^n) \le \limsup_n \frac 1n \log m_t(\I^n)$.
\end{proof}


\subsection{Abstract Results for Young Towers with Holes}
\label{abstract tower}

In this section, we prove results about an abstract Young tower with holes which may be
of independent interest:  We prove the transfer operator on the tower has a spectral gap
in a space of H\"older continuous functions under the assumption that the
escape rate is slower than the decay rate in the levels of the tower.  After stating
our assumptions formally below, we will describe how the present results generalize the
existing results of \cite{BruDemMel10} and related references.

We assume that we have a tower map $f_\Delta : \Delta \circlearrowleft$ with a countable
generating Markov partition $\{ \dlj \}$ and such that the return map to the base has finitely
many images, denoted by $\Delta_{0,i}$.
We further assume that our reference measure $\bm$ is conformal with respect to a
potential $\vf_\Delta$.

In this context, we assume the following properties of the tower map.

\begin{itemize}
  \item[(P1)]  (Exponential tail.)  There exist constants $C, \alpha >0$
such that  $\bm(\Delta_n) \le Ce^{-\alpha n}$, for $n \in \N$. 
\end{itemize}

We define a natural metric adapted to the dynamics as follows. 
Let $\tau^n(x)$ be the time of the $n$th return of $x$ to $\Delta_0$. 
Define the \emph{separation time} on $\Delta$ to be
\[
\begin{split}
s(x,y) = \min \{ n \geq 0 : \; & f_\Delta^{\tau^n}(x), f_\Delta^{\tau^n}(y) \mbox{ lie in different partition 
elements $\Delta_{0,i}$} \}. \\
\end{split}
\]
$s(x,y)$ is finite $\bm$-almost everywhere 
since $\{ \dlj \}$ is a generating partition for $f^\tau$.
Choose\footnote{In this abstract setting, the choice of
$\delta >0$ is constrained only by (P3); however, in applications, $\delta$ will be constrained
by the expansion and regularity of the underlying map $f$.  We will introduce this restriction on
$\delta$ when we apply this abstract framework to our map with neutral fixed point 
in Section~\ref{apply tower}.}
 $\delta >0$ and define a metric $d_\delta$ on $\Delta$ by
$d_\delta(x,y) = e^{-\delta s(x,y)}$.

We introduce a hole $H$ in $\Delta$ which is the union of countably many
partition elements $\dlj$, i.e. $H = \cup_{\ell,k} H_{\ell,k}$ where $H_{\ell,k} = \dlj$
for some $j$.  Set $H_\ell = \cup_j H_{\ell,j} \subset \Delta_\ell$.
For simplicity we assume that the base $\Delta_0$ contains no holes (this can always
be arranged in the construction of the tower by choosing a suitable reference set $X$).
We assume that $\hf_\Delta$ is transitive and aperiodic on the elements $\{ \Delta_{0,i} \}$
after the introduction of the hole.

\begin{itemize}
  \item[(P2)]  (Slow escape.) Define $\log \blambda = \limsup_{n \to \infty} \frac 1n \log \bm(\hDelta^n)$.
  We assume that $- \log \lambda < \alpha$.
  \item[(P3)]  (Bounded distortion.) We suppose that $e^{\vf_\Delta}$ is 
  Lipschitz in the metric $d_\delta$.
    Furthermore, we assume there exits $C_d>0$ such that for all $x, y \in \Delta$ and $n \geq 0$,
\begin{equation}
\label{eq:bounded dist delta}
\Big| e^{S_n\vf_\Delta(x) - S_n\vf_\Delta(y)} -1 \Big| \le C_d d_\delta(f_\Delta^nx, f_\Delta^ny) .
\end{equation}
  \item[(P4)] (Subexponential growth of potential)
 For each $\ve >0$, there exists $C>0$, such that 
 \begin{equation}
\label{eq:finite pressure}
 |S_\tau\vf_\Delta(x) | \le  C e^{\ve \tau(x)} \; \;\; \mbox{at first return times $\tau$ for all 
 $x \in \Delta_0$.}
\end{equation} 
\end{itemize}

A spectral gap for tower maps with holes was established in \cite{BruDemMel10} (see also
\cite{demers tower, demers logistic, DWY1, DWY2} for applications) under stronger conditions
than those listed here.  

\begin{remark}
\label{rem:BDM}
There are two significant differences between our assumptions
and those in \cite{BruDemMel10}.  (1) The metric $d_\delta$ in \cite{BruDemMel10} uses
a stronger notion of separation time which requires the derivative of the underlying map $f$
to grow exponentially at return times; since our maps have only polynomial growth in the
derivative, we adopt a weaker metric which requires significant changes 
to our function space arguments; in
particular, see the proofs of Lemmas~\ref{lem:lasota yorke} and \ref{lem:log contract}.
 (2)  The assumption in \cite{BruDemMel10} on the size of the hole is comparatively strict,
allowing one to control the maximum amount of mass lost in a single iterate of the map; by contrast,
our assumption (P2) only assumes that the tail decay is faster than the escape rate
asymptotically, which again requires significant revisions to the proof of the spectral gap.
\end{remark}
 
Choose $\beta$ satisfying 
$-\log \overline \lambda <\beta< \alpha$.  
We define the standard weighted $L^\infty$-norm on the space of functions on $\Delta$, 
given by 
$$\|\psi\|_\infty=\sup_\ell \sup\{e^{-\beta\ell}|\psi(x)|:x\in \Delta_\ell\},$$
along with a Lipschitz norm 
$$|\psi|_{Lip}=\sup_{\ell,j} e^{-\beta \ell} \sup \{e^{-\delta s(x, y)}|\psi(x)-\psi(y)|:x, y\in \Delta_{\ell,j} \} .$$  
We define a Banach space $(\B, \|\cdot\|_\B)$ where $\|\psi\|_\B=|\psi|_{Lip}+\|\psi\|_\infty$.

We proceed to prove the quasi-compactness for the punctured transfer operator
$\hLp_\Delta := \hLp_{\vf^H_\Delta}$ (i.e., with punctured potential $\vf^H_\Delta$) 
acting on $\B$ under assumptions (P1)--(P4).  

To do this, let $1_\beta$ denote the function which
takes constant value $e^{\beta \ell}$ on $\Delta_\ell$.  Note $1_\beta \in \B$
since $\| 1_\beta \|_\B =1$.  Define
\[
\overline \lambda_\beta = \limsup_n \frac 1n \log \int_{\hDelta^n} 1_\beta \, d\bm,
\] 
and note that $\overline \lambda_\beta \ge \overline \lambda > e^{-\alpha}$ by (P2).

\begin{lemma}
\label{lem:spectral radius}
The spectral radius of $\hLp_\Delta$ on $\B$ is at least $\overline\lambda_\beta$.
\end{lemma}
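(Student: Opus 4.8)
The plan is to bound the spectral radius of $\hLp_\Delta$ on $\B$ from below by exhibiting a test function whose iterates decay no faster than $\overline\lambda_\beta^n$, using the duality between the transfer operator and the change-of-variables formula against the conformal measure $\bm$. A natural candidate is $1_\beta$ itself, which lies in $\B$ with $\|1_\beta\|_\B = 1$. First I would record the elementary fact that, by the definition of the spectral radius, $\mbox{sp.rad.}(\hLp_\Delta) = \lim_n \|\hLp_\Delta^n\|_\B^{1/n}$, so it suffices to show $\limsup_n \|\hLp_\Delta^n 1_\beta\|_\B^{1/n} \ge \overline\lambda_\beta$; in fact it suffices to show $\limsup_n \|\hLp_\Delta^n 1_\beta\|_\infty^{1/n} \ge \overline\lambda_\beta$, since $\|\cdot\|_\infty \le \|\cdot\|_\B$.

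The key step is then to relate $\|\hLp_\Delta^n 1_\beta\|_\infty$ (or a conveniently chosen seminorm) to $\int_{\hDelta^n} 1_\beta \, d\bm$. Using conformality of $\bm$ with respect to $\vf_\Delta$, we have the change-of-variables identity $\int_{\hDelta} \hLp_\Delta^n 1_\beta \, d\bm = \int_{\hDelta^n} 1_\beta \, d\bm$. On the other hand $\int_{\hDelta} \hLp_\Delta^n 1_\beta \, d\bm \le \sum_\ell e^{\beta\ell} \bm(\Delta_\ell) \cdot \sup_{x \in \Delta_\ell} e^{-\beta\ell} |\hLp_\Delta^n 1_\beta(x)| \le \|\hLp_\Delta^n 1_\beta\|_\infty \sum_\ell e^{\beta\ell}\bm(\Delta_\ell)$, and the series $\sum_\ell e^{\beta\ell}\bm(\Delta_\ell)$ converges and is finite by (P1) together with $\beta < \alpha$. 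Combining, $\|\hLp_\Delta^n 1_\beta\|_\infty \ge \left(\sum_\ell e^{\beta\ell}\bm(\Delta_\ell)\right)^{-1} \int_{\hDelta^n} 1_\beta\,d\bm$. Taking $n$th roots and $\limsup$, the finite constant disappears in the limit and we obtain $\limsup_n \|\hLp_\Delta^n 1_\beta\|_\infty^{1/n} \ge \limsup_n \left(\int_{\hDelta^n} 1_\beta \, d\bm\right)^{1/n} = e^{\limsup_n \frac1n \log \int_{\hDelta^n} 1_\beta\,d\bm} = \overline\lambda_\beta$, by the very definition of $\overline\lambda_\beta$ given just before the lemma. (One should be slightly careful with $\limsup$ of products versus products of $\limsup$: here it is fine because one factor is a genuine limit equal to a positive finite constant raised to the $1/n$ power, which tends to $1$.)

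The main obstacle, such as it is, is bookkeeping rather than conceptual: one must confirm that $1_\beta$ genuinely lies in $\B$ (it does, with $\|1_\beta\|_\B = 1$, because $1_\beta$ is constant on each $\Delta_\ell$ hence has zero Lipschitz seminorm in the $d_\delta$ metric and weighted sup-norm exactly $1$) and that the weighted mass series $\sum_\ell e^{\beta\ell}\bm(\Delta_\ell)$ is finite, which is where (P1) and the choice $-\log\overline\lambda < \beta < \alpha$ enter. One should also double-check the positivity of $\int_{\hDelta^n} 1_\beta\,d\bm$ for all $n$ (it is, since $1_\beta > 0$ and $\bm(\hDelta^n) > 0$ as the system remains nontrivial after puncturing, by transitivity and aperiodicity on the base). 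I would also remark — though it is not strictly needed — that the same argument shows the essential spectral radius could in principle be this large too, which anticipates the optimality discussion in the introduction; but for this lemma only the lower bound on the full spectral radius is claimed, and the argument above delivers it directly.
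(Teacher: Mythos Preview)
Your proof is correct and follows essentially the same approach as the paper: test against $1_\beta \in \B$, use the conformality identity $\int_{\hDelta^n} 1_\beta\,d\bm = \int \hLp_\Delta^n 1_\beta\,d\bm$, and bound the integral by $\|\hLp_\Delta^n 1_\beta\|_\infty$ times the convergent series $\sum_\ell e^{\beta\ell}\bm(\Delta_\ell)$ (finite by (P1) and $\beta<\alpha$). The paper compresses the chain slightly by passing directly to the operator norm rather than isolating $\|\hLp_\Delta^n 1_\beta\|_\infty$, but the substance is identical; your closing aside about the essential spectral radius is not quite right (the subsequent Lasota--Yorke lemma shows it is strictly smaller), but as you note it is not needed for the lemma.
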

\begin{proof}
Note 
\[
\int_{\hDelta^n} 1_\beta \, d\bm = \int \hLp_\Delta^n(1_\beta) \, d\bm
\le \sum_\ell e^{\beta \ell} \bm(\hDelta_\ell) \| \hLp^n_\Delta(1_\beta) \|_\infty 
 \le \sum_\ell C e^{(\beta - \alpha)\ell} \| \hLp^n_\Delta \| \| 1_\beta \|_\B
\le C' \| \hLp^n_\Delta \| .
\]
Thus $\limsup_n \frac 1n \log \| \hLp^n_\Delta \| \ge \limsup_n \frac 1n \log \int_{\hDelta^n} 1_\beta \, d\bm = \log \overline \lambda_\beta$ as required.
\end{proof}

Next we prove that the essential spectral radius is strictly smaller than $\overline\lambda_\beta$.
This bound, together with the preceding lemma, will imply that $\hLp_\Delta$ is quasi-compact
as an operator on $\B$.

\begin{lemma}[Lasota-Yorke Inequality]
\label{lem:lasota yorke}
There exists $\sigma < \overline \lambda_\beta$ and $C>0$ such that for all $\psi \in \B$ and $n \ge 0$,
\[
\|\hLp_\Delta^n\psi\|_\B\le C\sigma^n \|\psi\|_\B+C |\psi|_{L^1(\bm)}.
\]
\end{lemma}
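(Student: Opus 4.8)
The plan is to prove the Lasota--Yorke inequality by splitting $\Delta$ into a ``base part'' of bounded height and a ``tail part'' of large height, estimating each separately and combining. The tail part is controlled by the exponential tail (P1) together with the choice $\beta < \alpha$, so that truncation at a large height $M$ costs only a geometrically small amount in the $\|\cdot\|_\infty$-component of the norm; this is the standard uniform-contraction-over-tall-levels mechanism. The base part is controlled by bounded distortion (P3) and the separation-time metric, which give the Lipschitz contraction for the induced map, modulated by the escape rate through the hole, which is where $\overline\lambda_\beta$ enters as the effective ``spectral radius'' one contracts towards rather than $0$.

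\textbf{Key steps.} First I would establish a one-step inequality: decompose $\hLp_\Delta \psi$ according to whether a preimage sits in the base $\Delta_0$ (coming from a full return $F$) or in a higher level (coming from the shift $f_\Delta(x,\ell)=(x,\ell+1)$). On the shift part, $S_1\vf_\Delta = 0$ and there is exactly one preimage, so this piece is essentially a relabelling that moves mass up one level and kills everything in the hole; on the return part, one has a genuine countable sum over branches $X_j$ with weights $e^{S_\tau\vf_\Delta}$. For the $\|\cdot\|_\infty$-bound of $\hLp_\Delta^n\psi$ on a high level $\Delta_\ell$ with $\ell \ge M$, the value is simply a shifted earlier value times $1_{\hDelta^n}$, so $e^{-\beta\ell}|\hLp_\Delta^n\psi(x)| \le e^{-\beta M}\|\psi\|_\infty$-type bounds apply, giving contraction at rate $e^{-\beta} < 1$ but more importantly, after iterating, at a rate below $\overline\lambda_\beta$ relative to the mass actually surviving. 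For levels $\ell < M$ one traces back through at most $M$ steps to a return to the base and then invokes the countable-branch estimate: here bounded distortion \eqref{eq:bounded dist delta} converts the sum $\sum_j e^{S_\tau\vf_\Delta(x_j)}$ into a comparison with $\bm(\tau = \cdot)$, which by (P1) and the surviving-mass normalization is comparable to $\int_{\hDelta^n}1_\beta\,d\bm$, i.e. to $\overline\lambda_\beta^n$ up to subexponential factors. For the Lipschitz seminorm, the shift part preserves $|\cdot|_{Lip}$ exactly (separation times are unchanged under going up a level within a fixed $\Delta_{\ell,j}$), while the return part produces two contributions: one from varying $\psi$ across a branch (contracted by the factor $e^{-\delta}$ built into $d_\delta$ combined with the weight sum) and one from varying the weight $e^{S_\tau\vf_\Delta}$ itself, which is exactly what (P3) bounds in terms of $d_\delta(f_\Delta^\tau x, f_\Delta^\tau y)$, converting a Lipschitz bound into an $L^\infty$-times-$\bm(\tau=n)$ bound. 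Summing over branches and all return histories of total length $n$, and absorbing the finitely many base images, yields $\|\hLp_\Delta^n\psi\|_\B \le C\sigma^n\|\psi\|_\B + C\sigma^n\|\psi\|_\infty$ for some $\sigma$ that can be taken between $\overline\lambda$ and $\alpha$ after choosing $M$ large; finally, the $\|\psi\|_\infty$ term on the right is upgraded to $|\psi|_{L^1(\bm)}$ by the standard interpolation trick — split $\Delta$ into the tall part (small in $\bm$, handled by $\|\psi\|_\infty$ absorbed into $C\sigma^n\|\psi\|_\B$) and a bounded part on which $\|\psi\|_\infty \le C_M(|\psi|_{L^1(\bm)} + (\text{geometric})|\psi|_{Lip})$ by comparing sup and average on finitely many fixed-size cells.

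\textbf{Main obstacle.} The delicate point, flagged in Remark~\ref{rem:BDM}, is that the weaker metric $d_\delta$ (only polynomial growth of $Df$ at returns) means the weight $e^{S_\tau\vf_\Delta}$ is not uniformly small on long branches, so one cannot get contraction branch-by-branch; the contraction has to come collectively from summing $\sum_j e^{S_\tau\vf_\Delta(x_j)} \approx \bm(\tau=\cdot)$ and crucially from the fact that, along $\hDelta^n$, the surviving mass decays at the escape rate, which is strictly below $\alpha$ by (P2). Thus the whole argument is forced to carry $\overline\lambda_\beta$ — not $0$ — as the target radius, and one must verify that the combinatorial bookkeeping over return-time histories of total length $n$ genuinely reproduces $\sum_{\text{histories}} \prod \bm(\tau=\cdot)\big|_{\text{surviving}} \asymp \int_{\hDelta^n}1_\beta\,d\bm = \overline\lambda_\beta^n$ up to subexponential error, rather than something larger. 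Making this matching precise, in a way compatible with both norm components simultaneously, is where the real work lies; the height-truncation parameter $M$ must be chosen after $\beta$ and $\alpha$ but the branch estimates must be uniform in $M$, which requires care with the constants coming from (P3) and (P4).
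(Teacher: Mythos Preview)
Your height-truncation decomposition (parameter $M$) is not the right split, and the step ``summing over branches and all return histories of total length $n$ \dots yields $C\sigma^n\|\psi\|_\B$'' is where the argument breaks. The difficulty you flag in the obstacle paragraph is real, but you have not resolved it: in the metric $d_\delta$ the separation time increases by one \emph{per return to the base}, not per step of $f_\Delta$. So a preimage $u\in\hf_\Delta^{-n}(x)$ that makes only $k$ returns in $n$ steps gives Lipschitz contraction $e^{-\delta k}$, which for $k=1$ (i.e.\ $\tau\approx n$) is merely $e^{-\delta}$, not $e^{-\delta n}$. Your proposed height truncation at level $M$ does nothing to separate these slow-return histories, since such a preimage can sit at level $0$.

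The paper's decomposition is orthogonal to yours: for $x\in\Delta_0$ one splits the preimages $u\in\hf_\Delta^{-n}(x)$ into $A_{n,T}$ (those making at least $n/T$ returns to $\Delta_0$ in $n$ steps) and its complement $A_{n,T}^c$. On $A_{n,T}$ the separation time gains at least $n/T$, giving genuine exponential Lip contraction $e^{-\delta n/T}$, and the total $1_\beta$-weight is at most $\int_{\hDelta^n}1_\beta\,d\bm$, so this piece is $\lesssim e^{-\delta n/T}\int_{\hDelta^n}1_\beta\,d\bm$, which beats $\overline\lambda_\beta^n$. On $A_{n,T}^c$ there is no useful Lip contraction, but a combinatorial count is decisive: the number of return-time patterns $(r_1,\dots,r_s)$ with $\sum r_i = n+\ell$ and $s < n/T$ is at most $\binom{n+\ell-1}{n/T-1}\le C(1+\eta_T)^{n+\ell}$ with $\eta_T\to 0$ as $T\to\infty$, and each pattern carries mass $\le C_0^s e^{-\alpha(n+\ell)}$ by (P1). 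Summing over $\ell$ with weight $e^{\beta\ell}$ gives $\int_{A_{n,T}^c}1_\beta\,d\bm \le Ce^{-(\alpha-\varepsilon)n}$ for $T$ large, and $e^{-(\alpha-\varepsilon)} < \overline\lambda_\beta$ by the choice $\varepsilon < \alpha+\log\overline\lambda_\beta$, which is exactly where (P2) enters. This returns-count split and the binomial counting of few-return histories are the missing ideas in your sketch; without them there is no way to extract $\sigma^n$ with $\sigma<\overline\lambda_\beta$ from the Lipschitz term.
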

\begin{proof}
Fix $\psi \in \B$ and $n \ge 0$.  

\medskip
\noindent
Step 1.  For $\ell \ge n$ and $x \in \Delta_\ell$, we estimate
\[
|\hLp^n_\Delta \psi(x) | =| \psi(f_\Delta^{-n}x) | \le e^{\beta (\ell - n)} \| \psi \|_\infty .
\]
Thus $\| \hLp_\Delta^n \psi|_{\Delta_\ell} \|_\infty \le e^{-\beta n} \| \psi \|_\infty$.
Similarly, for $x, y \in \Delta_{\ell, j}$, 
\[
e^{-\delta s(x,y)} |\hLp^n_\Delta \psi(x) - \hLp^n_\Delta \psi(y)|
= e^{-\delta s(f_{\Delta}^{-n}(x), f_\Delta^{-n}(y))}|\psi(f_\Delta^{-n}x) - \psi(f_\Delta^{-n}y)| ,
\]
since the separation time for $x,y$ is the same as that for $f_\Delta^{-n}x$ and $f_\Delta^{-n}y$.
Thus $\| \hLp_\Delta^n \psi|_{\Delta_\ell} \|_{Lip} \le e^{-\beta n} \| \psi \|_{Lip}$.

\medskip
\noindent
Step 2.  Now let $x \in \Delta_0$.  We have 
\begin{equation}
\label{eq:LY begin}
\hLp^n_\Delta \psi(x)  = \sum_{u \in \hf^{-n}_\Delta(x)} \psi(u) e^{t S_n\vf_{\Delta}^H(u)} 
= \sum_{u \in \hf^{-n}_\Delta(x)} (\psi(u) - \psi(v)) e^{t S_n\vf_{\Delta}^H(u)}
+ \psi(v) e^{t S_n\vf_{\Delta}^H(u)}
\end{equation}
where $E_n(u)$ is the $n$-cylinder containing $u$ and $v \in E_n(u)$ satisfies
$\psi(v) = \bm(E_n)^{-1} \int_{E_n} \psi \, d\bm$.

Due to bounded distortion of $\vf_{\Delta}$ given by (P2) and the fact that 
$\hf_\Delta^n(E_n) = \Delta_{0,i}$ for some $i \ge 0$,
the second term in the above sum is bounded by $C \bm(\Delta_{0,i})^{-1} \int_{E_n(u)} \psi \, d\bm$
and summing over $u$ yields the bound $C' \int_{\hDelta^n} \psi \, d\bm$.

To estimate the first term, we split it into two parts:  For $T>0$, let $A_{n,T}$ denote the set of
points in $\hDelta^n \cap \f_{\Delta}^{-n}(\Delta_0)$ 
that make at least $n/T$ returns to $\Delta_0$ by time $n$, and let
$A_{n,T}^c = (\hDelta^n \cap \f_{\Delta}^{-n}(\Delta_0)) \setminus A_{n,T}$.  Then letting $\ell(u)$ denote the level containing
$u$, 
\begin{equation}
\label{eq:LY first}
\begin{split}
& \sum_{u \in \hf^{-n}_\Delta(x)}  (\psi(u) - \psi(v)) e^{t S_n\vf_\Delta^H(u)} \\
& = \sum_{\substack{u \in \hf^{-n}_\Delta(x) \\ u \in A_{n,T}}} (\psi(u) - \psi(v)) e^{t S_n\vf_\Delta^H(u)}
 + \sum_{\substack{u \in \hf^{-n}_\Delta(x) \\ u \in A_{n,T}^c}} (\psi(u) - \psi(v)) e^{t S_n\vf_\Delta^H(u)} \\
& \le \sum_{\substack{u \in \hf^{-n}_\Delta(x) \\ u \in A_{n,T}}} C e^{-\delta n/T} 
\| \psi \|_{Lip} e^{\beta \ell(u)} \bm(E_n(u))
+ \sum_{\substack{u \in \hf^{-n}_\Delta(x) \\ u \in A_{n,T}^c}} C \| \psi \|_{Lip} e^{\beta \ell(u)}
\bm(E_n(u)) \\
& \le C e^{-\delta n/T} 
\| \psi \|_{Lip} \int_{\hDelta^n} 1_\beta \, d\bm 
 + C \| \psi \|_{Lip} \int_{\hDelta^n \cap f_\Delta^{-n}(\Delta_0) \cap A_{n,T}^c} 1_\beta \, d\bm .
\end{split}
\end{equation}
This first term above clearly contracts at an exponential rate $e^{-\delta n/T} \int_{\hDelta^n} 1_\beta \, d\bm < \overline \lambda_\beta^n$.  Thus it remains to prove the contraction
in the second term is sufficiently fast.

We associate to each $x \in \hf_\Delta^{-n}(\Delta_0) \cap A_{n,T}^c$ a sequence of 
times $r_1, \ldots r_s$, such that
$\hf_\Delta^{r_i}(x) \in \Delta_0$ for each $i$; it follows from the definition of
$A_{n,T}^c$ that $s \le n/T -1$.
Note also that $\sum_{i=1}^s r_i = n$ since $x \in \hf_\Delta^{-n}(\Delta_0)$.
However, each connected component 
in $\hf_\Delta^{-n}(\Delta_0) \cap A_{n,T}^c$ on level $\Delta_\ell$,
can be uniquely associated with a connected component in $\Delta_0$ such that each
$y = f_\Delta^{-\ell}(x)$ in this interval is associated with the sequence of 
return times $r_1, \ldots, r_s$ such that
$\sum_{i=1}^s r_i = n + \ell$.  Now
\[
\#\left\{ s\mbox{-tuples with } \sum_{i=1}^s r_i = n + \ell \right\} = { n + \ell - 1 \choose s-1 } 
\le {n + \ell - 1 \choose \frac nT -1} \; \le \; C (1+\eta_T)^{n+\ell} ,
\]
where $\eta_T \to 0$ as $T \to \infty$.
Also, at each return, $\bm(\tau=r_i) \le C_0 e^{-\alpha r_i}$.  So conditioning $s$ times, we have
\begin{equation}
\label{eq:small level}
\bm(A_{n,T} \cap \hDelta_\ell \cap \hf_\Delta^{-n}(\Delta_0)) 
\le \sum_{s=1}^{\frac nT -1} \sum_{\mbox{\scriptsize relevant $s$-tuples}} C_0^s e^{-(n+\ell)\alpha} 
\; \le \; C C_0^{n/T} (1+\eta_T)^{n+\ell} e^{-(n+\ell)\alpha} \; .
\end{equation}
Fix $0 < \ve < \min \{ \alpha - \beta, \alpha + \log \overline \lambda_\beta \}$.  
Then choose $T$ sufficiently large that 
$(1+\eta_T) C_0^{1/T} \le e^{\ve}$.  Using \eqref{eq:small level}, we have the contraction in the
second term at the end of \eqref{eq:LY first} bounded by
\begin{equation}
\label{eq:few returns}
\int_{\hDelta^n \cap f_\Delta^{-n}(\Delta_0) \cap A_{n,T}^c} 1_\beta \, d\bm
\le \sum_{\ell \ge 0} C e^{\beta \ell} e^{-(n+\ell)(\alpha - \ve)}
\le C e^{-n(\alpha-\ve)} .
\end{equation}
Putting this estimate together with \eqref{eq:LY first}, we have
\begin{equation}
\label{eq:LY infinity}
|\hLp_\Delta^n \psi(x)| \le C \sigma^n \| \psi \|_{Lip} + C \int_{\hDelta^n} \psi \, d\bm,
\end{equation}
for all $x \in \Delta_0$ and some constant $\sigma < \overline \lambda_\beta$.

Now for the bound on the Lipschitz norm, we estimate similarly for $x,y \in \Delta_{0,i}$,
\[
\begin{split}
|\hLp^n_\Delta \psi(x) - \hLp^n_\Delta \psi(y)| 
& \le \sum_{\substack{u \in \hf_{\Delta}^{n}(x) \\ v \in \hf_{\Delta}^{-n}(y)}}
|\psi(u) - \psi(v)| e^{S_n \vf_\Delta(u)} + |\psi(v)|
|e^{S_n \vf_\Delta(u)} - e^{S_n \vf_\Delta(v)}| \\
& \le \sum_{u,v} \| \psi \|_{Lip} e^{-\delta s(u,v)} e^{\beta \ell(u)} e^{S_n \vf_\Delta(u)}
+ C e^{-\delta s(x,y)} |\psi(v)| e^{S_n \vf_\Delta(u)} ,
\end{split}
\]
where we have used bounded distortion in the last line.  Notice that once we divide by
$e^{-\delta s(x,y)}$, this is precisely the same expression which had to be estimated in 
\eqref{eq:LY begin} and so is also bounded by \eqref{eq:LY infinity}.
Thus
\begin{equation}
\label{eq:LY lip}
\| \hLp_\Delta^n \psi |_{\Delta_0} \|_{Lip} \le C \sigma^n \| \psi \|_{Lip} + C \int_{\hDelta^n} \psi \, d\bm .\end{equation}

\noindent
Step 3.  We complete the proof of the proposition by taking $x, y \in \Delta_\ell$, $\ell < n$, using
Step 1 to estimate the first $\ell$ steps from $\Delta_\ell$ to $\Delta_0$, and then Step 2
to estimate the remaining $n-\ell$ steps:
\[
\|\hLp_\Delta^n \psi |_{\Delta_\ell} \|_\infty \le e^{-\beta \ell} \| \hLp_\Delta^{n-\ell} \psi |_{\Delta_0} \|_\infty
\le C e^{-\beta \ell} \sigma^{n-\ell} \| \psi \|_\B + C  |\psi|_{L^1} .
\]
A similar estimate holds for $\| \hLp_\Delta^n \psi \|_{Lip}$, completing the proof of the lemma.

\end{proof}

Since the unit ball of $\B$ is compactly embedded in $L^1(\bm)$, it follows from the
by-now classical results \cite{HH} together with
Lemma~\ref{lem:lasota yorke} that $\hLp_\Delta$ is quasi-compact as an
operator on $\B$:  Its essential spectral radius is bounded by $\sigma < \blambda$, and
for any $\sigma' > \sigma$, its
spectrum outside the disk of radius $\sigma'$ comprises only finitely many
eigenvalues each of finite multiplicity.  It follows from Lemma~\ref{lem:spectral radius}, 
that the spectral radius
of $\hLp_\Delta$ is at least $\blambda > \sigma$ so that the peripheral spectrum is nonempty and
lies outside the disk of radius $\sigma$.

Our next step in the proof is showing that $\hLp_\Delta$
has a real eigenvalue greater than $\sigma$ and a corresponding 
eigenvector which is strictly positive.
Once this is done, we will use it to prove 
that $\hLp_\Delta$ has a spectral gap.

\begin{lemma}
\label{lem:bounded}
There exists $M_0 > 0$ such that 
$\| \frac{\hLp_\Delta^n 1_\beta}{|\hLp_\Delta^n 1_\beta|_1} \|_\B \le M_0$ for all $n \ge 0$.
Moreover, the escape rate $-\log \lambda_\beta$ with respect to $1_\beta \bm$ exists
and $\lambda_\beta = \underline \lambda_\beta = \overline \lambda_\beta$
is the spectral radius of $\hLp_\Delta$ on $\B$.  
\end{lemma}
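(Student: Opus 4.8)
The plan is to reduce everything to a sharp control of the escape masses
$Z_n := \int_{\hDelta^n} 1_\beta\, d\bm = |\hLp_\Delta^n 1_\beta|_{L^1(\bm)}$, which are finite and strictly positive for every $n$ (finitely many iterates remove only positive $\bm$-mass, and $\sum_\ell e^{\beta\ell}\bm(\Delta_\ell)<\infty$ by (P1) as $\beta<\alpha$) and non-increasing since the sets $\hDelta^n$ are nested. The one genuinely new ingredient is the \emph{quasi-supermultiplicativity} estimate: there exist $c>0$ and $j_0\in\N$ such that $Z_{n+m+j_0}\ge c\,Z_n Z_m$ for all $n,m\ge 0$. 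Because $(Z_n)$ is non-increasing this at once yields $Z_{n+m}\ge c\,Z_nZ_m$, so that $n\mapsto-\log Z_n$ is subadditive up to an additive constant; by Fekete's lemma (in the form \cite[Theorem~1]{Sar99}) the limit $\log\lambda_\beta:=\lim_n \tfrac1n\log Z_n$ exists, whence $\underline\lambda_\beta=\overline\lambda_\beta=\lambda_\beta$, and moreover $Z_n\le C\lambda_\beta^n$ for every $n$ while $Z_n\ge(\lambda_\beta-\ve)^n$ for all $n$ large. Since $\sigma<\overline\lambda_\beta=\lambda_\beta$ by Lemma~\ref{lem:lasota yorke}, this gives in particular $\sigma^n/Z_n\to0$.

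To prove the quasi-supermultiplicativity I would argue geometrically. Fix $n,m$. By (P1) the $1_\beta\bm$-mass carried at levels $\ge L$ is bounded by $C\sum_{\ell\ge L}e^{(\beta-\alpha)\ell}$, uniformly in time; comparing, via bounded distortion (P3), the time-$n$ conditional measure on $\Delta_0$ with $\bm|_{\Delta_0}$, one deduces that a definite proportion of $\hDelta^n$ (weighted by $1_\beta\bm$) returns to the base $\Delta_0$ within $j_0$ further steps without exiting, the base containing no holes. Once on the base, the finite, transitive and aperiodic structure of $\{\Delta_{0,i}\}$ together with conformality of $\bm$ and (P3) show that the forward image of this returned mass dominates a fixed multiple of $\bm$ on every base element, and since $Z_m$ equals, up to bounded distortion, the $\bm$-proportion of the base that survives $m$ more steps, a proportion $\gtrsim Z_m$ of the returned mass survives those $m$ steps; chaining the two steps gives the estimate. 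I expect this to be the main obstacle: it is exactly the point at which the strong per-iterate control on mass loss required in \cite{BruDemMel10, demers tower, DWY1} is replaced by the purely asymptotic hypothesis (P2), and the ``return to the base in bounded time with bounded distortion'' bookkeeping is delicate because the underlying map has only polynomial expansion.

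Granting this, part (a) follows by iterating the Lasota--Yorke inequality over a block of fixed length. Put $B_n:=\|\hLp_\Delta^n 1_\beta\|_\B/Z_n$. For $k\le n$, Lemma~\ref{lem:lasota yorke} gives $\|\hLp_\Delta^n 1_\beta\|_\B \le C\sigma^k\|\hLp_\Delta^{n-k}1_\beta\|_\B + C Z_{n-k}$, and quasi-supermultiplicativity gives $Z_{n-k}/Z_n\le 1/(cZ_k)=:C(k)$ uniformly in $n$, so that
\[
B_n \;\le\; C\sigma^k C(k)\, B_{n-k} \;+\; C\,C(k).
\]
Since $\tfrac1k\log\bigl(\sigma^k C(k)\bigr)\to\log\sigma-\log\lambda_\beta<0$, we may fix $k$ so large that $C\sigma^k C(k)\le \tfrac12$; an induction on $n$ within each residue class modulo $k$, using that the finitely many initial values $B_0,\dots,B_{k-1}$ are finite, yields $B_n\le M_0$ for all $n$. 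As $|\hLp_\Delta^n 1_\beta|_{L^1(\bm)}=Z_n$, this is precisely (a).

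Finally, to identify $\lambda_\beta$ with the spectral radius of $\hLp_\Delta$ on $\B$: the bound spectral radius $\ge\overline\lambda_\beta=\lambda_\beta$ is Lemma~\ref{lem:spectral radius}. For the reverse inequality, every $\psi\in\B$ satisfies $|\psi|\le\|\psi\|_\infty 1_\beta$, so positivity of $\hLp_\Delta$ and (a) give $\|\hLp_\Delta^n\psi\|_\infty\le\|\psi\|_\infty\|\hLp_\Delta^n 1_\beta\|_\infty\le M_0 Z_n\|\psi\|_\B\le CM_0\lambda_\beta^n\|\psi\|_\B$; the Lipschitz part of $\|\hLp_\Delta^n\psi|_{\Delta_0}\|_\B$ is bounded by $C\sigma^n\|\psi\|_\B+C\|\psi\|_\infty Z_n$ by the base estimates in the proof of Lemma~\ref{lem:lasota yorke} (see \eqref{eq:LY infinity} and \eqref{eq:LY lip}), which is $\le C'\lambda_\beta^n\|\psi\|_\B$ for $n$ large because $\sigma^n=o(Z_n)$ and $Z_n\le C\lambda_\beta^n$; these estimates propagate up the tower exactly as in Step~3 of that proof, using $\lambda_\beta\ge\overline\lambda>e^{-\beta}$ (by the choice of $\beta$, since $1_\beta\ge1$ forces $\overline\lambda_\beta\ge\overline\lambda$) to sum the level weights $e^{-\beta\ell}$. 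Hence $\|\hLp_\Delta^n\|\le C\lambda_\beta^n$ for all $n$ large, so the spectral radius of $\hLp_\Delta$ on $\B$ is $\le\lambda_\beta$, and equality follows.
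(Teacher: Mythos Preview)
Your approach is genuinely different from the paper's, and the difference is instructive. The paper exploits the quasi-compactness already established by Lemma~\ref{lem:lasota yorke}: it picks a peripheral eigenvalue $\rho$ of maximal Jordan defect $d\ge 1$, uses the associated generalized eigenvector $g_d$ to obtain the lower bound $Z_n\ge C_1 n^{d-1}|\rho|^n$ (via $|g_d|\le\|g_d\|_\infty 1_\beta$), and uses the spectral decomposition to obtain the matching upper bound $\|\hLp_\Delta^n 1_\beta\|_\B\le C_2 n^{d-1}|\rho|^n$. Dividing gives the uniform bound $M_0$, and the two-sided estimate $Z_n\asymp n^{d-1}|\rho|^n$ forces $\underline\lambda_\beta=\overline\lambda_\beta=|\rho|$. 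No supermultiplicativity is needed, and the argument is agnostic about whether $d=1$.

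Your route via quasi-supermultiplicativity $Z_{n+m+j_0}\ge cZ_nZ_m$ has a real gap at precisely the place you flag. The sketch ``mass at levels $\ge L$ is bounded by $C\sum_{\ell\ge L}e^{(\beta-\alpha)\ell}$ uniformly in $n$, hence a definite proportion of $Z_n$ sits at levels $<L$'' does not work: the high-level bound is \emph{absolute}, while $Z_n\to 0$, so for large $n$ the entire mass $Z_n$ could in principle be swallowed by that absolute bound. The appeal to bounded distortion on $\Delta_0$ does not repair this, since controlling the shape of $\hLp_\Delta^n 1_\beta|_{\Delta_0}$ says nothing about what fraction of $Z_n$ actually lands on $\Delta_0$. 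More to the point, quasi-supermultiplicativity of $Z_n$ is, given the rest of your argument, \emph{equivalent} to the absence of nontrivial Jordan blocks at the spectral radius: if $d>1$ then $Z_n\asymp n^{d-1}\lambda_\beta^n$ and $Z_nZ_m/Z_{n+m}\asymp (nm/(n+m))^{d-1}\to\infty$ along $n=m$, so no uniform $c$ can exist. The paper rules out Jordan blocks only later (Proposition~\ref{prop:spectral decomp}), using the eigenvector built \emph{from} the present lemma, so your order of deduction is circular unless you can supply an independent dynamical proof of supermultiplicativity---and the sketch given does not do this.

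If such a direct proof could be found, your approach would be attractive: it would give $\|\hLp_\Delta^n\|\le C\lambda_\beta^n$ outright, hence no Jordan blocks, shortcutting part of Proposition~\ref{prop:spectral decomp}. As written, however, the argument is incomplete at its central step, whereas the paper's spectral argument goes through regardless of the value of $d$.
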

\begin{proof}
Let $\rho$, $|\rho| > \sigma$, be an eigenvalue of maximum modulus of $\hLp_\Delta$.
Since the peripheral spectrum of $\hLp_\Delta$ is finite dimensional, we
may choose $\rho$ such that $\rho$ has maximum
defect for eigenvalues on the circle of radius $|\rho|$.  Thus there exists $d \ge 1$ and 
$g_i \in \B$, $g_i \neq 0$, $i = 1, \ldots, d$,
such that $(\hLp_\Delta - \rho I) g_i = g_{i-1}$ for $i>1$ and $(\hLp_\Delta - \rho I) g_1 = 0$, and 
$d$ is the maximal such index for eigenvalues with modulus $|\rho|$.
Thus for $n > d$, we have
\[
\hLp_{\Delta}^n g_d = \sum_{i=0}^{d-1} {n \choose i} \rho^{n-i} g_{d-i} .
\]
Integrating over $\hDelta$, we use the above identity to obtain the following lower bound,
\[
\begin{split}
\int_{\hDelta} \hLp_{\Delta}^n |g_d| \, d\bm 
& \ge 
\int_{\hDelta} \left| |\rho|^{n-d+1} {n \choose d-1 } |g_1| - 
\left|\sum_{i=0}^{d-2} {n \choose i} \rho^{n-i} g_{d-i} \right| \right| \, d\bm \\
& \ge \int_{\hDelta}  |\rho|^{n-d+1} {n \choose d-1 } |g_1| \, d\bm
- \int_{\hDelta} \left|\sum_{i=0}^{d-2} {n \choose i} \rho^{n-i} g_{d-i} \right|  \, d\bm
\end{split}
\]
Since the first term is a polynomial of degree $d-1$ while the second term has degree at
most $d-2$,
there exists $C_1>0$ and $N > 0$ such that for $n \ge N$,
\[
\int_{\hDelta} \hLp_\Delta^n |g_d| \ge  C_1 n^{d-1} |\rho|^n .
\]
Thus for $n \ge N$,
\begin{equation}
\label{eq:beta lower}
C_1 n^{d-1} |\rho|^n   \le \int_{\hDelta} \hLp_\Delta^n |g_d| \, d\bm
= \int_{\hDelta^n} |g_d| \, d\bm  \; \le \; \|g_d\|_\infty \int_{\hDelta^n} 1_\beta \, d\bm 
= \| g_d \|_\infty \int_{\hDelta} \hLp^n_\Delta 1_\beta \, d\bm .
\end{equation}

On the other hand, using the spectral decomposition of $\hLp_\Delta$ given by quasi-compactness,
we have $\| \hLp_\Delta^n 1_\beta \|_\B \le C_2 n^{d-1} |\rho|^n$ for some $C_2 >0$
and all $n \ge 0$.  Thus the bound
on $\| \frac{\hLp_\Delta^n 1_\beta}{|\hLp_\Delta^n 1_\beta|_1} \|_\B$ follows using this
together with \eqref{eq:beta lower} for $n \ge N$. The bound for $n < N$ is obtained by taking the maximum
over the finitely many terms and noting that each term is finite since $\hLp_\Delta$ is a bounded
operator on $\B$ and $\bm(\hDelta^n) >0$ for each $n$.

Now \eqref{eq:beta lower} implies $\underline{\lambda}_\beta \ge |\rho|$, while
Lemma~\ref{lem:spectral radius} implies $\blambda_\beta \le |\rho|$. 
Thus $\lambda_\beta$ exists and is the spectral radius of $\hLp_\Delta$.
\end{proof}

From now on, we use the notation $\lambda_\beta$ rather than $\overline \lambda_\beta$ since we
know the escape rate with respect to $1_\beta \bm$ exists.

Let 
\begin{equation}
\label{eq:psi n}
\psi_n = \frac{\sum_{k=1}^n \lambda_\beta^{-k} \hLp_\Delta^k 1_\beta}{\sum_{k=1}^n \lambda_\beta^{-k}
|\hLp_\Delta^k 1_\beta|_1} .
\end{equation}
Notice that $(\psi_n)_n$ is a sequence of probability densities and that\footnote{\label{ft:sum} Here we use that for any two series of positive terms,
$\frac{\sum_k a_k}{\sum_k b_k} \le \sup_k \frac{a_k}{b_k}$, whenever $\sum_k b_k < \infty$.} by Lemma~\ref{lem:bounded},
\begin{equation}
\label{eq:psi norm bound}
\| \psi_n \|_{\B} \le \frac{\sum_{k=1}^n \lambda_\beta^{-k} \| \hLp_\Delta^k 1_\beta \|_{\B}}
{\sum_{k=1}^n \lambda_\beta^{-k} | \hLp_\Delta^k 1_\beta |_1} 
\le \max_{1\le k \le n} \frac{\| \hLp_\Delta^k 1_\beta \|_\B}{|\hLp_\Delta^k 1_\beta|_1} \le M_0.
\end{equation}
Then since $\psi_n$ lies in a ball of radius $M_0$ in $\B$ for all $n$, and this ball is compact in
$L^1(\bm)$, we may choose a subsequence $(n_i)_i$ such that
$\psi_{n_i}$ converges in $L^1(\bm)$ to a function  $\psi_* \in \B$ with $\| \psi_* \|_\B \le M_0$.
Now
\[
\begin{split}
\hLp_\Delta \psi_* & = \lim_{i \to \infty} \frac{\sum_{k=1}^{n_i} 
\lambda_\beta^{-k} \hLp_\Delta^{k+1} 1_\beta}
{\sum_{k=1}^{n_i} \lambda_\beta^{-k} |\hLp_\Delta^k 1_\beta|_1} 
= \lambda_\beta \lim_{i \to \infty} \frac{\sum_{k=1}^{n_i} 
\lambda_\beta^{-k-1} \hLp_\Delta^{k+1} 1_\beta}
{\sum_{k=1}^{n_i} \lambda_\beta^{-k} |\hLp_\Delta^k 1_\beta|_1} \\
& = \lambda_\beta  \left[ \lim_{i \to \infty} \frac{\sum_{k=1}^{n_i} 
\lambda_\beta^{-k} \hLp_\Delta^k 1_\beta}
{\sum_{k=1}^{n_i} \lambda_\beta^{-k} |\hLp_\Delta^k 1_\beta|_1}
+ \frac{ - \lambda_\beta^{-1} \hLp_\Delta 1_\beta + \lambda^{-n_i-1} \hLp_\Delta^{n_i+1} 1_\beta }
{\sum_{k=1}^{n_i} \lambda_\beta^{-k} |\hLp_\Delta^k 1_\beta|_1} \right] .
\end{split}
\]
The first fraction converges to $\psi_*$ by choice of the subsequence $(n_i)_i$.  By
the proof of Lemma~\ref{lem:bounded}, the numerator of the second fraction has $\| \cdot \|_\B$-norm
bounded above by $C n_i^{d-1}$, while by \eqref{eq:beta lower}, the denominator is bounded below by
$C' n_i^d$, thus the second fraction converges to 0 in $\B$ (and also in $L^1(\bm)$) as $i \to \infty$.
This proves that $\hLp_\Delta \psi_* = \lambda_\beta \psi_*$, so that $\lambda_\beta$ is in
the spectrum of $\hLp_\Delta$.

Note that $\psi_* \ge 0$ and $\int \psi_* \, d\bm = 1$, so that necessarily $\lambda_\beta < 1$.
This is because
\[
\lambda_\beta^n = \lambda_\beta^n \int \psi_* \, d\bm = \int \hLp_\Delta^n \psi_* \, d\bm
= \int_{\hDelta^n} \psi_* \, d\bm \xrightarrow{n \to \infty} 0 .
\]
It follows that $\overline \lambda \le \lambda_\beta < 1$,
which we have not assumed is true a priori.

We use the following lemma to show that in fact $\psi_*$ is strictly positive on all
of $\hDelta$.
Define 
\[
\| \psi \|_{\log} = \sup_{\ell, j} \mbox{Lip}(\log \psi|_{\hDelta_{\ell,j}}),
\]
where Lip$(\cdot)$ denotes the Lipschitz constant with respect to the metric
$d_\delta(\cdot, \cdot) = e^{-\delta s(\cdot, \cdot)}$.
For $M>0$, set
$\B_{\log}(M) = \{ \psi \in \B : \psi \geq 0, |\psi|_1 = 1, \| \psi \|_\infty \le M, \| \psi \|_{\log} \le M \} $.

\begin{lemma}
\label{lem:regular}
Let $\psi_n$ be defined by \eqref{eq:psi n}.
There exists $M >0$ such that $\psi_n |_{\Delta_0} \in \B_{\log}(M)$ for all $n \ge 0$.
\end{lemma}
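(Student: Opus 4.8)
My plan is to reduce the statement to a single distortion estimate that is uniform both in $n$ and in the number of iterates. First I would note that everything except the log-Lipschitz bound is already in hand: positivity of $\psi_n$ and the normalisation $|\psi_n|_{L^1(\bm)} = 1$ are built into \eqref{eq:psi n}, and the bound $\|\psi_n\|_\infty \le M_0$ on $\Delta_0$ is contained in $\|\psi_n\|_\B \le M_0$ from Lemma~\ref{lem:bounded}. So the content is a uniform-in-$n$ bound on $\|\psi_n|_{\Delta_0}\|_{\log}$, i.e.\ a distortion estimate for $\log\psi_n$ on each base element $\Delta_{0,i}$. The plan is to prove such an estimate first for each single iterate $\hLp_\Delta^k 1_\beta$, with a constant independent of $k$, and then push it through the convex combination defining $\psi_n$.

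For the per-iterate bound I would fix $k \ge 1$, a base element $\Delta_{0,i}$, and $x,y \in \Delta_{0,i}$. Since the hole is a union of partition elements, $\hDelta^k$ is a union of $k$-cylinders, and each surviving $k$-cylinder $E$ with $f_\Delta^k(E) = \Delta_{0,i}$ is mapped bijectively onto the full base element $\Delta_{0,i}$ by $f_\Delta^k$; this pairs the surviving $k$-preimages of $x$ with those of $y$, each pair $u \leftrightarrow u'$ lying in a common cylinder, hence on the same level (so $1_\beta(u) = 1_\beta(u')$) and with identical itinerary. Writing $a_u = 1_\beta(u)\, e^{S_k \vf_\Delta(u)}$ for the corresponding term of $\hLp_\Delta^k 1_\beta$, assumption (P3) gives $\big| e^{S_k\vf_\Delta(u) - S_k\vf_\Delta(u')} - 1 \big| \le C_d\, d_\delta(f_\Delta^k u, f_\Delta^k u') = C_d\, e^{-\delta s(x,y)}$; this is exactly the place where the weaker metric $d_\delta$ pays off, since \eqref{eq:bounded dist delta} is uniform in the number of steps $k$. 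Hence $|a_u - a_{u'}| \le C_d\, e^{-\delta s(x,y)} \min\{a_u, a_{u'}\}$, and summing over the paired preimages,
\[
\big| \hLp_\Delta^k 1_\beta(x) - \hLp_\Delta^k 1_\beta(y) \big| \;\le\; C_d\, e^{-\delta s(x,y)}\, \min\big\{ \hLp_\Delta^k 1_\beta(x),\, \hLp_\Delta^k 1_\beta(y) \big\} .
\]
I would also record that, by the same surjectivity, $\hLp_\Delta^k 1_\beta$ is either identically $0$ or strictly positive on $\Delta_{0,i}$.

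The final step is to sum over $k$. The denominator in \eqref{eq:psi n} is a constant, so on $\Delta_{0,i}$ one has $\psi_n \propto \sum_{k=1}^n \lambda_\beta^{-k}\, \hLp_\Delta^k 1_\beta$ with nonnegative weights, and adding the displayed inequalities yields $|\psi_n(x) - \psi_n(y)| \le C_d\, e^{-\delta s(x,y)} \min\{\psi_n(x), \psi_n(y)\}$ for all $x,y \in \Delta_{0,i}$. Since $\hf_\Delta$ is transitive on the base elements after the introduction of the hole, there is $N_1$ (depending only on the tower) such that for $n \ge N_1$ every $\Delta_{0,i}$ is hit by a surviving $k$-cylinder with $k \le n$, so $\psi_n$ is strictly positive on $\Delta_0$; for such $n$ the last inequality gives $\big| \log\psi_n(x) - \log\psi_n(y) \big| \le \log\!\big( 1 + C_d\, e^{-\delta s(x,y)} \big) \le C_d\, e^{-\delta s(x,y)}$, i.e.\ $\|\psi_n|_{\Delta_0}\|_{\log} \le C_d$. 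Taking $M = \max\{ M_0, C_d \}$ then proves the lemma for $n \ge N_1$, and the finitely many indices $n < N_1$ are irrelevant to the way the lemma is subsequently used (it is applied to $L^1$-limits of $\psi_{n_i}$ with $n_i \to \infty$). I expect the only point needing genuine care to be the passage to the logarithm, which requires positivity of $\psi_n$ on $\Delta_0$ rather than anything about the distortion; that is dispatched by transitivity of $\hf_\Delta$ on the base, while the distortion estimate itself is a direct consequence of (P3) and the Markov structure of the hole.
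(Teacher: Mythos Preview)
Your proof is correct and takes essentially the same approach as the paper: both derive a uniform-in-$k$ log-Lipschitz bound for $\hLp_\Delta^k 1_\beta|_{\Delta_0}$ directly from (P3) and then pass it through the convex combination defining $\psi_n$ (the paper phrases the latter as the convexity inequality $\|\sum_k a_k g_k\|_{\log}\le \max_k \|g_k\|_{\log}$, which is exactly your additive relative-Lipschitz estimate in disguise, and it obtains the $\|\cdot\|_\infty$ bound by a direct computation rather than by quoting Lemma~\ref{lem:bounded}). One small remark: your detour through positivity for $n\ge N_1$ is unnecessary, since the dichotomy you already established (on each $\Delta_{0,i}$ the function is either identically $0$ or strictly positive) makes the log-Lipschitz constant well-defined for every $n\ge 1$ under the standard convention that it vanishes on elements where the function is identically zero.
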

\begin{proof}
We show the above property for the normalized transfer operator 
$$\hNp_\Delta^n \psi := \frac{\hLp^n_\Delta \psi}{|\hLp^n_\Delta \psi|_1},$$
for any $\psi \in \B$ with $\| \psi \|_{\log} < \infty$ and $\int_{\hDelta^n} \psi \, d\bm >0$, $\forall n$.
Given such a $\psi$, clearly $\hLp^n_\Delta \psi \ge 0$ and $|\hNp_\Delta^n \psi |_1 = 1$
since the normalization is well defined.  So the first two properties of 
$\B_{\log}(M)$ are obviously satisfied by $\hNp_\Delta^n \psi$ for all $n \ge 0$.  

To estimate the $\log$-Lipschitz constant, 
let $x, y \in \Delta_{0,i}$ and denote by $u \in \hf_\Delta^{-n}$ and $v \in \hf_{\Delta}^{-n}$
two points in in the same $n$-cylinder in $\hDelta$.
We estimate
\begin{equation}
\label{eq:psi log}
\begin{split}
\hLp_\Delta^n \psi(x) & = \sum_{u \in \hf_\Delta^{-n}x} \psi(u) e^{S_n \vf_\Delta(u)}  
\; \le \; \sum_{v \in \hf_\Delta^{-n}(y)} e^{\| \psi \|_{\log} d_\delta(u,v)} \psi(v) e^{S_n \vf_\Delta(v)} 
(1+C_d d_{\delta}(x,y)) \\
& \le e^{\| \psi \|_{\log} d_\delta(x,y)} (1+C_d d_{\delta}(x,y)) \hLp_{\Delta}^n \psi(y)
\end{split}
\end{equation}
where we have used bounded distortion (P3) in the second line and 
$d_\delta(u,v) \le d_\delta(x,y)$ in the third.  This yields
\[
\mbox{Lip}(\log \hLp_\Delta^n \psi|_{\Delta_0}) \le \| \psi \|_{\log} + C_d,
\]
where we have used the estimate $\log (1+z) \le z$ for $z \ge 0$.  
Since $\| \cdot \|_{\log}$ is scale invariant, we have
$\| \hNp_\Delta^n \psi |_{\Delta_0} \|_{\log} \le \| \psi \|_{\log} +C_d$ for all $n \ge 0$.

Finally, we estimate the $L^\infty$ norm of $\hLp_{\Delta}^n \psi |_{\Delta_0}$.
Let $x \in \Delta_0$, and again using the notation $u \in \hf_{\Delta}^{-n}(x)$, let
$v$ denote a point in the $n$-cylinder $E_n(u)$ containing $u$ such that 
$\psi(v) \le \frac{1}{\bm(E_n(u))} \int_{E_n(u)} \psi \, \bm$.
Then since
$e^{S_n\vf_\Delta(u)} \le  (1+C_d) \bm(E_n(u))/\bm(\Delta_0,i)$ 
for some $i$ by (P3), we estimate following  \eqref{eq:psi log}
\[
\begin{split}
| \hLp_\Delta^n \psi(x)| & \le  \sum_{u \in \hf_\Delta^{-n}(x)} e^{\| \psi \|_{\log}} \psi(v) (1+C_d) 
\tfrac{\bm(E_n(u))}{\bm(\Delta_{0,i})} \\
& \le C e^{\| \psi \|_{\log}} \sum_{u \in \hf_{\Delta}^{-n}(x)} \int_{E_n(u)} \psi \, d\bm  
\le C e^{\| \psi \|_{\log}} \int_{\hDelta^n} \psi \, d\bm ,
\end{split}
\]
where $C = (1+C_d)/\min_i \{ \bm(\Delta_{0,i}) \}$.
Dividing by $| \hLp_\Delta^n \psi|_1 = \int_{\hDelta^n} \psi \, d\bm$ 
completes the estimate on the $\| \cdot \|_\infty$ norm.

Now since $\| 1_\beta \|_{\log} = 0$, the above argument implies 
$\| \hNp_\Delta^k 1_\beta |_{\Delta_0} \|_\infty \le C$ and 
$\| \hNp_\Delta^k 1_\beta |_{\Delta_0} \|_{\log} \le C_d$ for each $k \ge 1$.
Equation~\ref{eq:psi norm bound} implies that the uniform bound on 
$\| \hNp_\Delta^k 1_\beta |_{\Delta_0} \|_\infty$ passes to 
$\| \psi_n |_{\Delta_0} \|_\infty$.  Finally, while $\| \cdot \|_{\log}$ is not linear,
it does satisfy the convex inequality,
\[
\left\|  \frac 1n \sum_{k=1}^n \lambda_\beta^{-k} \hLp_\Delta^k 1_\beta \right\|_{\log}
\le \max_{1 \le k \le n} \| \hLp_\Delta^k 1_\beta \|_{\log} \le C_d. 
\]
Finally, the scale invariance of $\| \cdot \|_{\log}$ implies that this bound passes to $\psi_n$ for each
$n \ge 1$.
\end{proof}

Since $\| \psi_n |_{\Delta_0} \|_{\log} \le M$ for all $n \ge 1$, we have 
$\| \psi_* |_{\Delta_0} \|_{\log} \le M$, and so for each $i$, either $\psi_* > 0$ on 
$\Delta_{0,i}$ or $\psi_* \equiv 0$ on $\Delta_{0,i}$.  
Since for $x \in \hDelta_\ell$ by conditional invariance, 
$\psi_*(x) = \lambda_\beta^{-\ell} \hLp^\ell \psi_*(x) 
= \lambda_\beta^{-\ell} \psi_* \circ \hf^{-\ell}_\Delta(x)$,
the second alternative implies $\psi_* \equiv 0$ on the entire column above $\Delta_{0,i}$.
Again using conditional invariance, this implies that $\psi_* \equiv 0$ on every 
$\Delta_{0,j}$ that eventually maps to $\Delta_{0,i}$.  By transitivity, this is the entire
base $\Delta_0$ and so $\psi_* \equiv 0$ on 
$\hDelta$, which
is impossible since $\int \psi_* d\bm =1$.  Thus there exists $\delta_0 > 0$ such that
$\psi_* \ge \delta_0$ on $\Delta_0$,
and again using conditional invariance, we conclude that $\psi_* \ge \delta_0$ 
on all of $\hDelta$.

The following lemma gives an equivalent expression for $\| \cdot \|_{\log}$, which will
be convenient for the proof of Proposition~\ref{prop:spectral decomp}.

\begin{lemma}
\label{lem:equiv}
Suppose $\psi > 0$.  For each $\dlj$,
\[
\| \psi|_{\dlj} \|_{\log} \le \sup_{x,y \in \dlj} \tfrac{|\psi(x) - \psi(y)|}{\psi(y) d_\delta(x,y)} \le \| \psi \|_{\log} e^{\| \psi \|_{\log}}  . 
\]
\end{lemma}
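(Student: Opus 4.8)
The plan is to move between $\psi$ and $\log\psi$ using two elementary inequalities, one for each bound. Throughout, the hypothesis $\psi>0$ is what makes $\log\psi$ and all the quotients well defined, and I will use that $d_\delta(x,y)=e^{-\delta s(x,y)}\le 1$ since $s(x,y)\ge 0$.

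For the left-hand inequality, fix $\Delta_{\ell,j}$ and $x,y\in\Delta_{\ell,j}$. By the mean value theorem applied to $\log$, there is a point $\xi$ between $\psi(x)$ and $\psi(y)$ with $|\log\psi(x)-\log\psi(y)|=|\psi(x)-\psi(y)|/\xi\le |\psi(x)-\psi(y)|/\min\{\psi(x),\psi(y)\}$. Dividing by $d_\delta(x,y)$, taking the supremum over $x,y\in\Delta_{\ell,j}$, and using that the supremum of $|\psi(x)-\psi(y)|/(\psi(y)d_\delta(x,y))$ over \emph{ordered} pairs $(x,y)$ equals the supremum of $|\psi(x)-\psi(y)|/(\min\{\psi(x),\psi(y)\}d_\delta(x,y))$ (swapping $x$ and $y$ replaces $\psi(y)$ by $\psi(x)$ in the denominator), one gets
\[
\|\psi|_{\Delta_{\ell,j}}\|_{\log}=\mathrm{Lip}(\log\psi|_{\Delta_{\ell,j}})\le \sup_{x,y\in\Delta_{\ell,j}}\frac{|\psi(x)-\psi(y)|}{\psi(y)\,d_\delta(x,y)}.
\]

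For the right-hand inequality, write $\psi(x)-\psi(y)=\psi(y)\bigl(e^{s}-1\bigr)$ with $s:=\log\psi(x)-\log\psi(y)$, and use the elementary bound $|e^{s}-1|\le |s|\,e^{|s|}$ valid for all real $s$ (obtained by integrating $e^{u}$ from $0$ to $s$ when $s\ge 0$, and from $s$ to $0$ when $s<0$). Since $|s|\le \|\psi\|_{\log}\,d_\delta(x,y)\le \|\psi\|_{\log}$, this gives $|\psi(x)-\psi(y)|\le \psi(y)\,\|\psi\|_{\log}\,d_\delta(x,y)\,e^{\|\psi\|_{\log}}$; dividing by $\psi(y)\,d_\delta(x,y)$ and taking the supremum over $x,y\in\Delta_{\ell,j}$ yields the claimed upper bound $\|\psi\|_{\log}e^{\|\psi\|_{\log}}$. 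There is no real obstacle here; the only point worth flagging is the symmetrization observation in the first paragraph, which is precisely what makes the mean value step reproduce the Lipschitz constant of $\log\psi$ rather than a strictly larger quantity.
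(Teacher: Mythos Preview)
Your proof is correct and follows essentially the same approach as the paper: for the upper bound you both use $|e^z-1|\le |z|e^{|z|}$ together with $d_\delta\le 1$, and for the lower bound your mean value theorem argument is equivalent to the paper's use of $\log(1+w)\le w$ for $w\ge 0$ (after the same symmetrization). There is nothing to add.
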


\begin{proof}
Let $x, y \in \dlj$, $x \neq y$, and set $z = \log (\psi(x)/\psi(y))$.  Then,
\[
| \tfrac{\psi(x) - \psi(y)}{\psi(y)}| = |e^z -1| \le |z| e^{|z|} \le \| \psi \|_{\log} e^{\| \psi \|_{\log}} d_{\delta}(x,y),
\]
proving the second inequality.  The first inequality follows similarly using the fact that
$\log (1+ w) \le w$ for $w \ge 0$.
\end{proof}

\begin{proposition}
\label{prop:spectral decomp}
$\hLp_\Delta$ has a spectral gap, i.e. $\lambda_\beta$ is simple and all other eigenvalues  
have modulus strictly less than $\lambda_\beta$.
\end{proposition}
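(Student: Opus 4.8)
The plan is to exploit the strict positivity of the eigenfunction $\psi_*$ obtained above, and run a Birkhoff-cone / Hilbert-metric contraction argument on the normalized operator, restricted to the finite base $\Delta_0$. First I would normalize: set $\tilde\Lp = \lambda_\beta^{-1}\hLp_\Delta$ and work with the conjugated operator $\Lp^* \psi = \psi_*^{-1}\tilde\Lp(\psi_*\psi)$, which fixes the constant function $1$. Because $\psi_*$ lies in $\B$ and is bounded below by $\delta_0 > 0$, conjugation by $\psi_*$ preserves the relevant function space up to equivalence of norms, so it suffices to prove a spectral gap for $\Lp^*$. The key point, already essentially contained in Lemma~\ref{lem:regular} and Lemma~\ref{lem:equiv}, is that $\Lp^*$ maps the cone of positive $\log$-Lipschitz densities into itself and, after finitely many iterates, into a cone with a uniformly bounded $\log$-Lipschitz constant $M$ (on $\Delta_0$), i.e. into a set of the form $\B_{\log}(M)$. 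This is the ``regularization'' step that makes the cone finite-diameter in the Hilbert metric.

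Next I would set up the contraction. On the base $\Delta_0$ (which has finitely many elements $\Delta_{0,i}$, with $\hf_\Delta$ transitive and aperiodic there), I would consider the cone $\mathcal{C}_M = \{\psi > 0 \text{ on } \Delta_0 : \|\psi\|_{\log} \le M\}$ and show that some iterate $\Lp^{*N_1}$ maps $\mathcal{C}_M$ strictly inside itself: the log-Lipschitz constant strictly contracts by the estimate $\mathrm{Lip}(\log \hLp^n_\Delta\psi) \le \|\psi\|_{\log} + C_d$ combined with the fact that long returns to the base contract the separation-time metric — this is the same mechanism as in Lemma~\ref{lem:lasota yorke}, where returns of length $\gtrsim n/T$ give a factor $e^{-\delta n/T}$, while the finitely-many-images aperiodicity guarantees that after $N_1$ steps every preimage of a base point involves such contracting returns with definite $\bm$-mass. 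Together with the uniform $L^\infty$ bound and aperiodicity (which gives that any two points of $\Delta_0$ are joined by a common cylinder structure after $N_1$ steps), standard Birkhoff-cone theory (e.g. the Liverani/Birkhoff criterion) then yields that $\Lp^{*N_1}$ contracts the Hilbert metric on $\mathcal{C}_M$ by a factor $\theta < 1$. Uniqueness of the fixed ray gives simplicity of $\lambda_\beta$; the projective contraction gives exponential convergence $\|\lambda_\beta^{-n}\hLp^n_\Delta\psi - c(\psi)\psi_*\|_\B \le C\theta^{n/N_1}\|\psi\|_\B$ for $\psi$ in the cone, and then by a standard density/decomposition argument (writing an arbitrary $\psi\in\B$ as a combination of cone elements plus a term killed by the spectral projection) this upgrades to a genuine spectral gap on all of $\B$: $\lambda_\beta$ is a simple eigenvalue and the rest of the spectrum is contained in a disk of radius $\le \theta^{1/N_1}\lambda_\beta < \lambda_\beta$.

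I expect the main obstacle to be the interplay between the \emph{infinite} tower and the \emph{finite} base: the cone argument naturally lives on $\Delta_0$, but the Banach space $\B$ and the operator $\hLp_\Delta$ see all the levels with their weights $e^{\beta\ell}$. One must check carefully that (i) restricting to $\Delta_0$ loses no spectral information — here I would use conditional invariance, $\psi_*(x) = \lambda_\beta^{-\ell}\psi_*\circ\hf_\Delta^{-\ell}(x)$ on $\hDelta_\ell$, exactly as in the positivity argument above, to transfer estimates from the base up the columns, and the Lasota–Yorke bound of Lemma~\ref{lem:lasota yorke} to control the contribution of high levels; and (ii) the aperiodicity hypothesis on $\{\Delta_{0,i}\}$ is genuinely used to get the ``everything mixes after $N_1$ steps'' property needed for strict cone contraction (without it one would only get a peripheral spectrum consisting of finitely many roots of unity times $\lambda_\beta$). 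A secondary technical point is that $\|\cdot\|_{\log}$ is not a norm, so one works with it as a seminorm defining the cone and only passes back to $\|\cdot\|_\B$ at the end using $\delta_0 \le \psi_* $ and Lemma~\ref{lem:equiv} to compare $\log$-Lipschitz control with the $|\cdot|_{Lip}$ part of $\|\cdot\|_\B$.
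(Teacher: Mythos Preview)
Your cone/Hilbert-metric approach is the natural thing to try, and it is exactly what works in \cite{BruDemMel10}; but the paper explicitly remarks at the start of the proof of Proposition~\ref{prop:spectral decomp} that under the present weaker hypotheses (P1)--(P4) one can prove \emph{neither} $\hNp_\Delta(\B_{\log}(M))\subset\B_{\log}(M)$ nor a uniform Lasota--Yorke inequality for $\|\hLp_\Delta^n\psi\|_{\log}$, and a ``substantially different'' argument is required. The gap in your proposal is the claimed strict contraction of the log-Lipschitz constant. Two concrete obstructions: (a) on levels $\ell\ge n$ the map $\hLp_\Delta^n$ acts by pure translation with a single preimage, so $\|\hLp_\Delta^n\psi|_{\hDelta_\ell}\|_{\log}=\|\psi|_{\hDelta_{\ell-n}}\|_{\log}$ and there is no contraction whatsoever on high levels; (b) even restricted to $\Delta_0$, the only available contraction estimate is Lemma~\ref{lem:log contract}, whose hypothesis $\int_{A_{n,T}^c}\psi\,d\bm\big/\int_{\hDelta^n\cap\f_\Delta^{-n}(\Delta_0)}\psi\,d\bm\le r(n)\to0$ you have not verified for a generic cone element. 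This ratio depends on how the $\psi$-mass (not $\bm$-mass) is distributed over the tower, and a function $\psi\in\B$ can place most of its weight on high levels, where preimages make few returns by time $n$. Your appeal to ``conditional invariance $\psi_*(x)=\lambda_\beta^{-\ell}\psi_*\circ\hf_\Delta^{-\ell}(x)$'' does not help: that relation holds for the \emph{eigenfunction} $\psi_*$, not for an arbitrary element of your cone, so it cannot be used to propagate a $\Delta_0$-estimate up the columns.

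The paper's route avoids cone contraction altogether. Assuming a peripheral eigenfunction $\hLp_\Delta g=\lambda_\beta e^{i\omega}g$, one forms $g_1=(Re(g)+K\psi_*)/C>0$ and the one-parameter family $g_s=sg_1+(1-s)\psi_*$, and studies the open set $J=\{s:\inf_{\hDelta}g_s>0\}$. The crucial point is that the $g_s$ are built from \emph{eigenfunctions}, so along a subsequence $\lambda_\beta^{-n_j}\hLp_\Delta^{n_j}g_s\to g_s$; moreover the eigenfunction structure forces $\int_{\Delta_0}\hLp_\Delta^n g_s\,d\bm\ge\kappa_s\lambda_\beta^n$, which is exactly what is needed to verify the hypothesis of Lemma~\ref{lem:log contract} for these particular functions. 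One deduces $\|g_s\|_{\log}\le C_d$ uniformly for $s\in J$, then an extremality argument (\cite[Lemma~3.2]{BruDemMel10}) shows the right endpoint of $J$ lies in $J$, forcing $J\supset\R^+$, hence $g_1\ge\psi_*$, hence $g_1=\psi_*$ and $\omega=0$. Jordan blocks are ruled out by a separate direct computation. In short: rather than contracting a cone, the paper exploits that the \emph{only} functions for which the log-Lipschitz contraction estimate can be salvaged are (linear combinations of) eigenfunctions, and this is enough to pin down the peripheral spectrum.
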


\begin{proof}
We remark that the stronger assumptions used in \cite{BruDemMel10} (see Remark~\ref{rem:BDM})
allow one to show that 
$\hNp_\Delta(\B_{\log}(M)) \subset \B_{\log}(M)$ and that the semi-norm 
$\| \hLp_\Delta^n \psi \|_{\log}$ 
obeys a uniform Lasota-Yorke inequality for $\psi \in \B_{\log}(M)$.  We prove neither property
here under our weaker assumptions, and give a substantially different proof of the spectral gap.

We begin by assuming that there exists $g \in \B$ such that 
$\hLp_{\Delta} g = \lambda_\beta e^{i \omega} g$ for some $\omega \in [0, 2\pi)$.
We will show this implies $\omega=0$ and $g$ is a multiple of $\psi_*$.  

Using the fact that $g$ is an eigenfunction, we have
\begin{equation}
\label{eq:rotate}
g|_{\hDelta_\ell} = \lambda_\beta^{-\ell} e^{-i \omega \ell} g|_{\Delta_0 \cap \f_\Delta^{-\ell}(\hDelta_\ell)},
\end{equation}
so that $g$ grows like $\lambda_\beta^{-\ell}$ times a rotation up the levels of the tower.

Since $\psi_* \ge \delta_0$ on $\Delta_0$, we may choose $K > |g|_{\Delta_0}|_\infty$ 
sufficiently large so that $g_1 := (Re(g) + K \psi_*)/C >0$ on 
$\hDelta$,
where $Re(g)$ denotes the real part of $g$ and $C>0$ is chosen so that 
$\int_{\hDelta} g_1 \, d\bm = 1$.
Note that by \eqref{eq:rotate} and the conditional invariance of $\psi_*$,
by choice of $K$, there exists $\delta_1$ such that for each $\ell \ge 0$,
\begin{equation}
\label{eq:uniform growth}
\delta_1 \lambda_\beta^{-\ell} \le g_1|_{\hDelta_\ell} \le \delta_1^{-1} \lambda_\beta^{-\ell} .
\end{equation}
Define $g_s = sg_1 + (1-s) \psi_*$ and $J = \{ s \in \R : \inf_{\hDelta} g_s > 0 \}$.
Note that $J$ contains $[0,1]$ and by \eqref{eq:uniform growth}, $J$ is open. 
To see that in fact $J \supset \mathbb{R}^+$, we will use the following lemma.

\begin{lemma}
\label{lem:log contract}
Let $\psi \in \B$ with $\| \psi \|_{\log} < \infty$ and let $A_{n,T} \subset \hDelta^n \cap \f_\Delta^{-n}(\Delta_0)$ be as defined in the proof of Lemma~\ref{lem:lasota yorke}.  Suppose there exists a function $r(n)$ with
$r(n) \to 0$ as $n \to \infty$ such that
\[
\frac{\int_{A_{n,T}^c} \psi \, d\bm}{\int_{\hDelta^n \cap \f_\Delta^{-n}(\Delta_0)} \psi \, d\bm}
\le r(n), \qquad \mbox{for } n \ge 0.
\]
Then 
\[
\| \hNp_\Delta^n \psi |_{\Delta_0} \|_{\log} \le C_1 \max \{ r(n), e^{-\delta n/T} \} \| \psi \|_{\log} 
e^{3 \| \psi \|_{\log} }+ C_d,
\]
for a uniform constant $C_1$ depending only on $C_d$ and the minimum length of 
an element $\Delta_{0,i}$.
\end{lemma}

\begin{proof}[Proof of Lemma]
The estimate is a refined version of the one derived in the proof of Lemma~\ref{lem:regular}.
For $x,y \in \Delta_{0,i}$, let $u \in \f_\Delta^{-n}(x)$, $v \in \f_\Delta^{-n}(y)$, denote corresponding
points in the same $n$-cylinder.  Note that for each pair of pre-images,
$u \in A_{n,T}$ if and only if $v \in A_{n,T}$.  
In light of Lemma~\ref{lem:equiv}, it is equivalent to estimate,
\begin{equation}
\label{eq:log split}
\begin{split}
& \frac{|\hLp_\Delta^n \psi(x) - \hLp_\Delta^n \psi(y)|}{\hLp_\Delta^n \psi(y) e^{-\delta s(x,y)}}
=  
\frac{\sum_u(\psi(u) - \psi(v))  e^{S_n\vf_\Delta(u)}}
{e^{-\delta s(x,y)} \sum_v \psi(v)e^{S_n\vf_\Delta(v)}}
+ \frac{\sum_v \psi(v) (e^{S_n \vf_\Delta(u)} - e^{S_n\vf_\Delta(v)})}
{e^{-\delta s(x,y)} \sum_v \psi(v)e^{S_n\vf_\Delta(v)}} \\
& \le \| \psi \|_{\log} e^{\| \psi \|_{\log}} \frac{\sum_u e^{-\delta s(u,v)}  \psi(v) e^{S_n\vf_\Delta(u)}}
{e^{-\delta s(x,y)} \sum_v \psi(v)e^{S_n\vf_\Delta(v)}}+ C_d \\
& \le (1+C_d) \| \psi \|_{\log} e^{\| \psi \|_{\log}} \left( e^{-\delta n/T} \frac{\sum_{v \in A_{n,T}} \psi(v) e^{S_n\vf_\Delta(v)}}
{\sum_v \psi(v)e^{S_n\vf_\Delta(v)}}  
+ \frac{\sum_{v \in A_{n,T}^c} \psi(v) e^{S_n\vf_\Delta(v)}}{\sum_v \psi(v)e^{S_n\vf_\Delta(v)}} \right)
+ C_d \\
& \le (1+C_d) \| \psi \|_{\log} e^{\| \psi \|_{\log} } \left( e^{-\delta n/T} + \frac{\sum_{v \in A_{n,T}^c} \psi(v) e^{S_n\vf_\Delta(v)}}{\sum_v \psi(v)e^{S_n\vf_\Delta(v)}} \right)
+ C_d,
\end{split}
\end{equation}
where we have used (P3), Lemma~\ref{lem:equiv}, and the
fact that $s(u,v) - s(x,y) \ge n/T$ for $u,v \in A_{n,T}$.  
Again using bounded distortion, we may replace $e^{S_n\vf_\Delta(v)}$ by 
$\bm(E_n(v))/\bm(\Delta_{0,i})$ for some $i \ge 0$,
where $E_n(v)$ is the $n$-cylinder containing $v$; similarly, 
\[
\psi(v) = e^{\pm \| \psi \|_{\log}} (\bm(E_n(v)))^{-1} \int_{E_n(v)} \psi \, d\bm
\]
by the log-Lipschitz continuity of $\psi$.  Thus we estimate \eqref{eq:log split} by, 
\[
\frac{|\hLp_\Delta^n \psi(x) - \hLp_\Delta^n \psi(y)|}{\hLp_\Delta^n \psi(y) e^{-\delta s(x,y)}}
\le C_1 \| \psi \|_{\log} e^{\| \psi \|_{\log}} \left( e^{-\delta n/T} + 
e^{2 \| \psi \|_{\log}} 
\frac{\int_{A_{n,T}^c} \psi \, d\bm}{\int_{\hDelta^n \cap \f_{\Delta}^{-n}(\Delta_0)} \psi \, d\bm } \right)
+ C_d,
\]
which completes the proof of the lemma by assumption on $\psi$.
\end{proof}

Returning to the proof of the proposition, we will apply Lemma~\ref{lem:log contract}
to $g_s$ for $s \in J$.  Note that since $g_s$ is bounded away from 0, we have
$\| g_s \|_{\log} < \infty$.  While $g_s$ is not an eigenfunction for
$\hLp_\Delta$, we do have for each $n \in \N$,
\[
\hLp_\Delta^n g_1 = \lambda_\beta^n (Re(e^{i \omega n} g) + K \psi_* )/C .
\]
Thus there exists a subsequence $(n_j)_{j \in \N}$ such that 
$\lim_{j \to \infty} \lambda_\beta^{-n_j} \hLp_{\Delta}^{n_j} g_1 = g_1$ and it follows that
\[
\lim_{j \to \infty} \lambda_\beta^{-n_j} \hLp_\Delta^{n_j} g_s = g_s \qquad \mbox{for each $s \in \R$.}
\]

By conformality of $\bm$, 
$\int_{\Delta_0} \hLp_\Delta^n g_s \, d\bm  =  \int_{\hDelta^n \cap \f_\Delta^{-n}(\Delta_0)} g_s \, d\bm$
 for each $n \in \N$.  On the other hand, using the
definition of $g_s$, 
\[
\int_{\Delta_0} \hLp_\Delta^n g_s \, d\bm 
= \lambda_\beta^n \int_{\Delta_0}  ( g_s + \tfrac{s}{C} Re((e^{i\omega n}-1)  g) ) \, d\bm .
\]
We claim that in fact, for $s>0$, $s \in J$, the integral on the right must be 
bounded below by some $\kappa_s >0$, independently of $n$.  If not, there is a subsequence 
$(n_k)_k$ such that $\int_{\Delta_0} Re((e^{i\omega n_k} - 1) g) \to - \frac Cs \int_{\Delta_0} g_s < 0$.
Note that for $s' > s$, we have
\[
\frac{g_s}{s} = g_1 + (\tfrac 1s - 1) \psi_* > g_1 + (\tfrac{1}{s'} -1) \psi _0 = \frac{g_{s'}}{s'},
\] 
since $\psi_* >0$.  So 
\[
\lambda_\beta^{-n_k} \int_{\Delta_0} \hLp_\Delta^{n_k} g_{s'} = 
\int_{\Delta_0}  (g_{s'} + \tfrac{s'}{C} Re((e^{i\omega n_k} - 1) g))
\to \int_{\Delta_0} s' \Big(\frac{g_{s'}}{s'} - \frac{g_s}{s} \Big) < 0,
\]
which implies that $g_{s'}$ cannot be positive for any $s' >s$, contradicting the 
fact that $J$ is open.

With the claim proved, we estimate, using \eqref{eq:few returns} and the
conformality of $\bm$,
\[
\frac{\int_{A_{n,T}^c} g_s \, d\bm}{\int_{\hDelta^n \cap \f_\Delta^{-n}(\Delta_0)} g_s \, d\bm }
\le \frac{ \| g_s \|_\infty \int_{A_{n,T}^c} 1_\beta \, d\bm}
{\int_{\Delta_0} \hLp_\Delta^n g_s \, d\bm }
\le \frac{ \| g_s \|_\infty e^{-n(\alpha - \ve)}}{ \lambda_\beta^n \kappa_s }
=: r(n) ,
\]
and clearly $r(n) \to 0$ at an exponential rate by choice of $\ve$ and $T$.  

Invoking Lemma~\ref{lem:log contract},
since $g_s = \lim_j \lambda_\beta^{-n_j} \hLp_\Delta^{n_j} g_s$,
we have $\| g_s |_{\Delta_0} \|_{\log} \le C_d$ whenever $s \in J$.  Indeed, more is true,
since for each fixed $\ell > 0$ and $n_j > \ell$, 
\[
\| \hLp_{\Delta}^{n_j} g_s |_{\hDelta_\ell} \|_{\log}
\le \| \hLp_{\Delta}^{n_j-\ell} g_s |_{\Delta_0} \|_{\log} 
\le C_1 \| g_s \|_{\log} e^{3 \|g_s\|_{\log}} \max \{ r(n_j-\ell),  e^{-\delta (n_j-\ell)/T} \} + C_d,
\]
so using the scale invariance of $\| \cdot \|_{\log}$ to normalize by $\lambda_\beta^{-n_j}$
and letting $n_j \to \infty$,
we conclude that $\| g_s |_{\Delta_\ell} \|_{\log} \le C_d$ for each $\ell \in \N$.
Letting $C_2 = \max \{ C_d, (\bm(\Delta_{0,i}))^{-1} \}$ and using \eqref{eq:rotate}, we have  
$g_s \in \B_{\log}(C_2)$ for each $s \in J$, $s>0$.

Now let $s_0 > 1$ be the
right endpoint of $J$.  Since $g_s \to g_{s_0}$ uniformly on each $\Delta_{\ell,j}$ as
$s \to s_0$ from the left, we have $g_{s_0} \in \B_{\log}(C_2)$ as well.
The following lemma from \cite{BruDemMel10} completes the proof of the proposition.

\begin{lemma}(\cite[Lemma 3.2]{BruDemMel10})
\label{lem:positive}
$g_{s_0}$ is bounded away from 0 on $\Delta$.
\end{lemma}

We refer the interested reader to \cite{BruDemMel10} for the proof of this lemma since
it requires no changes in the current setting.  The proof uses only the mixing property
of $\f_{\Delta}$ and the fact that $g_{s_0} \in \B_{\log}(C_2)$.

The lemma implies $s_0 \in J$ and
so $J \supset \R^+$.  The fact that $g_s > 0$ for all $s > 0$ implies 
$g_1 \ge \psi_*$.  But since $\int g_1 \, d\bm = \int \psi_* \, d\bm =1$, we conclude that
in fact $g_1 = \psi_*$.   This implies that $Re(g) = (C-K) \psi_*$ and using the linearity
of $\hLp_{\Delta}$, we conclude that $Im(g)$ is also a multiple of $\psi_*$ and 
so $\omega =0$. 

We have proved that $\lambda_\beta$ has strictly larger modulus than all other 
eigenvalues of $\hLp_{\Delta}$
and that its multiplicity is one.  The last step is to
eliminate Jordan blocks for $\lambda_\beta$.  Suppose there exists
$g \in \B$ such that $(\hLp_\Delta - \lambda_\beta I)g = \psi_*$.  It follows that
$\hLp_\Delta^n g = \lambda_\beta^n g + n \lambda_\beta^{n-1} \psi_*$, so that
\[
\hLp_\Delta^n (g - \lambda_\beta^{-1} n \psi_*) = \lambda_\beta^n g .
\]
Thus for $x \in \Delta_\ell$,
\[
g(x) = \lambda_\beta^{-\ell} \hLp_\Delta^\ell (g - \lambda_\beta^{-1} \ell \psi_*)(x)
= \lambda_\beta^{-\ell} (g - \lambda_\beta^{-1} \ell \psi_*)\circ \f_\Delta^{-\ell}(x) .
\]
For $\ell$ sufficiently large, $g - \lambda_\beta^{-1} \ell \psi_* < 0$ on $\Delta_0$, so there
exists $L >0$ such that $g < 0$ on $\cup_{\ell \ge L} \hDelta_\ell$.  Now choose $K>0$
so large that $\bar g := g - K \psi_* < 0$ on $\cup_{\ell \le L} \hDelta_\ell$.  Since
$\bar g < g$, we have $\bar g < 0$ on $\hDelta$.  Thus for each $n$, 
\[
0 > \lambda_\beta^{-n} \int_{\hDelta^n} \bar g \, d\bm = \lambda_\beta^{-n} \int_{\hDelta} \hLp_\Delta^n \bar g \, d\bm = \int_{\hDelta} \bar g \, dm + n \lambda_\beta,
\]
which is a contradiction.
\end{proof}

Now that we have proved the existence of a spectral gap for $\hLp_\Delta$, 
the items of the following theorem follow from \cite[Section 2]{BruDemMel10}.

\begin{theorem} 
\label{thm:exp conv}
Assume $(f_\Delta, \Delta; H)$ is mixing and satisfies properties (P1)-(P4).
Then $\hLp_\Delta$ has a spectral gap.  Let $\lambda$ denote the largest eigenvalue of
$\hLp_\Delta$ and let $\bar{g}$ denote the corresponding normalized eigenfunction.
\begin{enumerate}
  \item[(a)]  The escape rates with respect to $\bm$ and $1_\beta \bm$ exist and equal 
  $- \log \lambda$; in particular $\lambda = \lambda_\beta$. 
\item[(b)] $ \displaystyle
\log \lambda = \sup \left\{ h_{\eta}(f_\Delta) + \int_{\hDelta} \vf_\Delta^H d\eta 
\mid \eta \in \mathcal{M}_{f_\Delta} , \eta(-\vf_\Delta^H) < \infty \right\}\ . $
\item[(c)]   The following limit defines a probability measure $\bar{\nu}$,
\[
\bar{\nu}(\varphi) = \lim_{n\to \infty} \lambda^{-n} \int_{\hDelta^n} \psi \, \bar{g} \, d\bm
\qquad \mbox{for all $\psi \in \B_0$, }   
\]
where $\B_0$ is the set of all bounded functions in $\B$ whose Lipschitz constant
is also uniformly bounded.  The measure 
$\bar{\nu}$ attains the supremum in (b), i.e. it is an equilibrium state for
$\vf_\Delta^H$.

  \item[(d)] There exist constants
$D>0$ and $\sigma_0 <1$ such that for all $\psi \in \B$,
\[
\| \lambda^{-n}\hLp^n_\Delta \psi - d(\psi)\bar{g}\|_\B \leq D\|\psi \|_\B \sigma_0^n, \; \; \;
\mbox{where }
d(\psi) = \lim_{n\to \infty} \lambda^{-n} \int_{\hDelta^n} \psi \, d\bm < \infty .
\]
Also, for any $\psi \in \B_0$ with $\bar{\nu}(\psi) > 0$,
\[
\left| \frac{\hLp^n_\Delta \psi}{|\hLp^n_\Delta \psi|_{L^1(\bm)}} - \bar{g} \right|_{L^1(\bm)} 
\le D \| \psi \|_\B \sigma_0^n.
\]
\end{enumerate}
\end{theorem}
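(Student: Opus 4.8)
The plan is to deduce the entire theorem from the spectral gap of Proposition~\ref{prop:spectral decomp} together with the quasi-compactness of $\hLp_\Delta$ on $\B$ (from Lemma~\ref{lem:lasota yorke} and \cite{HH}) already established; given these, items (a)--(d) are the standard output of the Ruelle--Perron--Frobenius package, carried out exactly as in \cite[Section 2]{BruDemMel10}, and the only point I would check is that the weaker metric $d_\delta$ of Remark~\ref{rem:BDM} causes no trouble --- it does not, since those arguments use only the spectral gap, conformality of $\bm$, and bounded distortion (P3). I would begin by writing the spectral decomposition $\hLp_\Delta = \lambda_\beta\Pi + N$, where $\Pi\psi = d(\psi)\,\psi_*$ is the rank-one projection onto $\mathrm{span}(\psi_*)$, $\Pi N = N\Pi = 0$, and $N$ has spectral radius $\sigma_1 < \lambda_\beta$; set $\bar g := \psi_*$, already normalized so that $\int\bar g\,d\bm = 1$. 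Integrating the identity $\lambda_\beta^{-n}\hLp_\Delta^n\psi = d(\psi)\bar g + \lambda_\beta^{-n}N^n\psi$ against $\bm$, using $\int\hLp_\Delta^n\psi\,d\bm = \int_{\hDelta^n}\psi\,d\bm$ and the continuous inclusion $\|\cdot\|_{L^1(\bm)}\le C\|\cdot\|_\B$ (which holds because $\beta<\alpha$), then identifies $d(\psi) = \lim_n\lambda_\beta^{-n}\int_{\hDelta^n}\psi\,d\bm$ and proves the $\B$-norm convergence in (d) with some $\sigma_0 < 1$. I would also record that $d$ is positive and monotone.

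For (a) I would first note $d(1_\beta)>0$: otherwise the decomposition forces $\int_{\hDelta^n}1_\beta\,d\bm = O(\sigma_1^n)$, contradicting $\overline\lambda_\beta = \lambda_\beta$ from Lemma~\ref{lem:bounded}. Then $d(1)\ge d(1_{\Delta_0})>0$, because transitivity, aperiodicity and (P3) give --- by the same argument as in \cite[Lemma 3.2]{BruDemMel10} --- that $\lambda_\beta^{-n}\hLp_\Delta^n 1_{\Delta_0}\ge c\,\bar g$ on $\hDelta$ for all large $n$, so $d(1_{\Delta_0})\ge c$; and $d(1)<\infty$ since $\bm\le 1_\beta\bm$. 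Consequently the escape rates with respect to $\bm$ and to $1_\beta\bm$ both equal $-\log\lambda_\beta$, which with Lemma~\ref{lem:bounded} is (a), and in particular $\lambda = \lambda_\beta$.

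Next I would construct $\bar\nu$. For $\psi\in\B_0$ the product $\psi\bar g$ lies in $\B$: on $\hDelta_\ell$ we have $\bar g\le M\lambda_\beta^{-\ell}\le M e^{\beta\ell}$ (using $-\log\lambda_\beta\le-\log\overline\lambda<\beta$) together with $\|\bar g\|_{\log}\le M$ on each $\dlj$, which bounds both $\|\psi\bar g\|_\infty$ and $|\psi\bar g|_{Lip}$. So $\bar\nu(\psi):=d(\psi\bar g)=\lim_n\lambda^{-n}\int_{\hDelta^n}\psi\bar g\,d\bm$ is well defined, positive, linear and normalized ($\bar\nu(1)=d(\bar g)=1$), hence a probability measure; invariance follows from $\hLp_\Delta(\bar g\,1_{\hDelta^n})=\lambda\,\bar g\,1_{\hDelta^{n-1}}$ (a consequence of $\hLp_\Delta\bar g=\lambda\bar g$ and $1_{\hDelta^n}=1_{\hDelta}\cdot(1_{\hDelta^{n-1}}\circ\f_\Delta)$), which yields $\bar\nu(\psi\circ\f_\Delta)=\bar\nu(\psi)$; the same identity gives $\bar\nu(\hDelta^n)=1$ for all $n$, hence $\bar\nu$ is supported on $\hDelta^\infty$, and ergodicity follows from simplicity of $\lambda_\beta$. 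For the variational principle (b), the upper bound $h_\eta(f_\Delta)+\int\vf_\Delta^H\,d\eta\le\log\lambda$ for every $f_\Delta$-invariant ergodic $\eta$ with $\eta(-\vf_\Delta^H)<\infty$ is a volume-lemma argument in the spirit of the proof of Proposition~\ref{prop:volume}: such $\eta$ is supported on $\hDelta^\infty$ (invariance and $\eta(\tH)=0$), $\{\dlj\}$ is generating, conformality with bounded distortion gives $\bm(E_n(x))\ge c\,e^{S_n\vf_\Delta(x)}$ on $n$-cylinders returning to the base, and Shannon--McMillan--Breiman with Birkhoff force $\tfrac1n\log\bm(\hDelta^n)\ge h_\eta+\int\vf_\Delta\,d\eta-\ve$ along a subsequence; comparing with $\limsup_n\tfrac1n\log\bm(\hDelta^n)=\log\lambda$ from (a) gives the inequality. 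For equality I would use that $d\bar\nu=\bar g\,d\bm_\infty$, where $\bm_\infty(\psi):=d(\psi)$ is $(\vf_\Delta^H-\log\lambda)$-conformal for $f_\Delta$, so Rokhlin's formula gives $\log J_{\bar\nu}f_\Delta=\log(\bar g\circ\f_\Delta)-\log\bar g+\log\lambda-\vf_\Delta^H$; integrating against $\bar\nu$, the $\log\bar g$ terms telescope to $0$ (they are $\bar\nu$-integrable because $\bar g$ grows at most like $\lambda^{-\ell}$ on $\hDelta_\ell$ while, by the spectral decomposition and (P1), $\bar\nu$ has exponentially decaying mass on the levels), so $h_{\bar\nu}(f_\Delta)+\int\vf_\Delta^H\,d\bar\nu=\log\lambda$ and $\bar\nu$ attains the supremum. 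This also completes (c).

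For the $L^1$ statement in (d) I would first check that $d(\psi)>0$ whenever $\psi\in\B_0$ satisfies $\psi\ge0$ and $\bar\nu(\psi)>0$: if $d(\psi)=0$ then $d(\psi\,1_{\hDelta_\ell})=0$ for every $\ell$ (nonnegative summands with vanishing sum, the sum converging in $\B$), whence $d(\psi\bar g)=0$ by monotonicity and $\psi\bar g\,1_{\hDelta_\ell}\le M\lambda^{-\ell}\psi\,1_{\hDelta_\ell}$, contradicting $\bar\nu(\psi)=d(\psi\bar g)>0$; then $|\hLp_\Delta^n\psi|_{L^1(\bm)}=\lambda^n\bigl(d(\psi)+O(\sigma_0^n\|\psi\|_\B)\bigr)$, and dividing the spectral decomposition of $\hLp_\Delta^n\psi$ by this quantity gives the claimed estimate. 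I expect essentially no serious obstacle here: the real work of the theorem is already contained in the spectral gap of Proposition~\ref{prop:spectral decomp}, and what remains is routine. The two places that would still need a moment's care are the positivity facts $d(1)>0$ and $d(\psi)>0$, which rest on the mixing argument of \cite[Lemma 3.2]{BruDemMel10} rather than on anything quantitative, and the $\bar\nu$-integrability of $\log\bar g$ in the Rokhlin computation, which is the last point at which the exponential tail (P1), and hence the weak hypothesis (P2), is used.
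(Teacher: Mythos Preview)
Your proposal is correct and follows essentially the same route as the paper: the paper itself gives no independent proof of this theorem, stating only that once the spectral gap of Proposition~\ref{prop:spectral decomp} is established, the items follow from \cite[Section~2]{BruDemMel10}. You have gone further and actually sketched those arguments, and your sketch is sound; in particular your checks that $\bar g$ grows like $\lambda_\beta^{-\ell}$ (so $\psi\bar g\in\B$ for $\psi\in\B_0$), that $d(1)>0$ via the mixing argument, and that $\log\bar g$ is $\bar\nu$-integrable, are the right points to verify and are handled correctly.
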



\subsection{Application of abstract results and proof of Theorem~\ref{thm:rate gap}}
\label{apply tower}

In this section, we complete the proof of Theorem~\ref{thm:rate gap} by projecting the results
of Theorem~\ref{thm:exp conv} to our underlying map $f$ with neutral fixed point.
In order to invoke these results, we fix $t \in [0,1)$ and show that our constructed tower
satisfies assumptions (P1)-(P4) with respect to the measure $\bm_t$ and
potential $\vf_\Delta^H$ induced by $t\phi^H - p(t)$.

(P1) follows immediately from Lemma~\ref{lem:exponential tail} with $\alpha = p(t)>0$.

(P2) follows from Lemma~\ref{lem:base tower decay} and the assumption that 
$-\log \blambda_t < p(t)$.

(P4) is satisfied since $Df$ is bounded above and also below by 1 so that
$S_n\vf_\Delta(x)$ grows at most linearly in $n$ for $x \in \Delta_0$.

It remains to verify (P3).  First choose $0<\delta \le \frac{\gamma \log 2}{1+ \gamma}$ 
for reasons to be made clear below.
Standard estimates (see for example \cite[Lemma 3.1]{DemFer13})
imply that there exists $C_d > 0$ such that for all $n \in \N$: 
\begin{enumerate}
  \item[(D1)]  if $f^n(x) \in Y$, then $Df^n(x) \ge \max \{ 2, C_d^{-1} n^{1 + \frac{1}{\gamma}} \}$;
  \item[(D2)]  if $f^i(x), f^i(y)$ lie in the same element of $\P_1$ for each
  $0 \le i \le n$, then 
  \[
  \left| \log \frac{Df^n(x)}{Df^n(y)} \right| \le C_d |f^n(x) - f^n(y)|^{\frac{\gamma}{\gamma+1}} .
  \]
\end{enumerate}

We will need the following lemma.

\begin{lemma}
\label{lem:smooth lift}
Suppose $\delta/ \log 2 \le q \le 1$. 
Let $\psi \in C^q(I)$ and define $\tpsi$ on $\Delta$ by $\tpsi = \psi \circ \pi$.
Then $| \tpsi |_\infty \le |\psi|_\infty$ and Lip$(\tpsi) \le C|\psi|_{C^q}$ for some constant
$C$ depending on the minimum length of $\Delta_{0,i}$.
\end{lemma}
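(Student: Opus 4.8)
The $L^\infty$ bound is immediate: since $\pi:\Delta\to I$ is onto and $\tpsi=\psi\circ\pi$, we have $|\tpsi(x)|=|\psi(\pi(x))|\le|\psi|_\infty$ for every $x\in\Delta$, so $|\tpsi|_\infty\le|\psi|_\infty$. The content is the Lipschitz bound with respect to the metric $d_\delta(x,y)=e^{-\delta s(x,y)}$; that is, we must show that whenever $x,y$ lie in a common partition element $\dlj$, $|\psi(\pi(x))-\psi(\pi(y))|\le C|\psi|_{C^q}\,e^{-\delta s(x,y)}$, with $C$ depending only on $C_d$ and the minimum length of a base element $\Delta_{0,i}$. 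The plan is to first establish the purely geometric estimate $|\pi(x)-\pi(y)|\le C\,2^{-s(x,y)}$ for $x,y$ in a common $\dlj$, and then combine it with the $q$-H\"older continuity of $\psi$: one gets $|\psi(\pi(x))-\psi(\pi(y))|\le|\psi|_{C^q}|\pi(x)-\pi(y)|^q\le C^q|\psi|_{C^q}2^{-q\,s(x,y)}$, and the hypothesis $\delta\le q\log 2$ is exactly what forces $2^{-qs}\le e^{-\delta s}$, so that dividing by $d_\delta(x,y)$ yields $\mbox{Lip}(\tpsi)\le C^q|\psi|_{C^q}$. (This also explains why the admissible range of H\"older exponents is tied to $\delta$, hence why $\delta\le\tfrac{\gamma\log 2}{1+\gamma}$ was imposed earlier.)

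To prove the geometric estimate, set $s=s(x,y)$ and pull $x,y$ back to $x^-=f_\Delta^{-\ell}(x)$, $y^-=f_\Delta^{-\ell}(y)\in X_j$. By definition of the separation time, the induced map $\hF$ carries $x^-$ and $y^-$ through common base elements for its first $s$ iterates; using this together with the renewal/Markov structure of the inducing scheme (so that the forward orbits of $\pi(x)$ and $\pi(y)$ stay in common elements of $\P_1$ in between), one locates a time $m$, corresponding to $s$ returns of $\pi(x)$ to $Y$, such that $f^m$ restricts to a single smooth branch on an interval $W\ni\pi(x),\pi(y)$ whose image $f^m(W)$ lies inside one base element, hence has diameter at most $\tfrac12$. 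Chaining the expansion estimate (D1) over the intervening first-return subsegments, each of which contributes a factor at least $2$, gives $Df^m\ge C^{-1}2^{s}$ on $W$ (a bounded number of factors of $2$ being lost to the initial partial excursion from level $\ell$), while (D2) bounds the distortion of $Df^m$ on $W$. The Mean Value Theorem then gives $|\pi(x)-\pi(y)|=|f^m(\pi(x))-f^m(\pi(y))|\,|Df^m(\xi)|^{-1}\le C\,2^{-s}$, the minimum base length entering through the distortion bound.

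I expect the combinatorial bookkeeping in the second paragraph to be the only real obstacle: the separation time $s(\cdot,\cdot)$ is defined through the \emph{finite} partition $\{\Delta_{0,i}\}=\mathcal{Q}$, which is strictly coarser than the Markov partition $\{X_i\}$ of the base, so one must check carefully that agreement of the two $\hF$-orbits in common $\mathcal{Q}$-elements for $s$ steps really does confine $\pi(x)$ and $\pi(y)$ to a common cylinder of an iterate $f^m$ on which (D1)--(D2) apply, and track the harmless loss coming from the partial excursion from level $\ell$. This is precisely the type of estimate the paper treats as standard (cf.\ \cite[Lemma~3.1]{DemFer13}); once it is in place the rest is the routine computation above, and the lemma then lets us lift any $\psi\in C^q(I)$ with $q\ge\delta/\log 2$ to an element of the Banach space $\B$ to which Theorem~\ref{thm:exp conv} applies.
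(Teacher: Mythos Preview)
Your proposal is correct and follows essentially the same route as the paper. Both arguments reduce the Lipschitz bound to the geometric estimate $|\pi(x)-\pi(y)|\le C\,2^{-s(x,y)}$, obtained from the expansion (D1) (each return to $Y$ contributes a factor $\ge 2$, so $Df^{\tau^n}\ge 2^n$) together with the distortion bound (D2), and then conclude via the $q$-H\"older regularity of $\psi$ and the hypothesis $\delta\le q\log 2$. The paper's proof is terser: it simply asserts that $s(x,y)=n$ forces $f^i(\pi(x))$ and $f^i(\pi(y))$ to lie in the same element of $\P_1$ for all $i\le\tau^n(x)$ ``by construction of $\Delta$'', whereas you spell out the pullback to the base, the Mean Value Theorem step, and flag the coarse-versus-fine partition bookkeeping as the one point requiring care---but this is exactly the content the paper is compressing into that phrase.
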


\begin{proof}
The bound $| \tpsi |_\infty \le |\psi|_\infty$ is immediate.  To prove the bound on the Lipschitz
norm of $\tpsi$, let $x, y \in \dlj$ and estimate
\[
\frac{|\tpsi(x) - \tpsi(y)|}{d_\delta(x,y)}
= \frac{|\psi(\pi(x))- \psi(\pi(y))|}{|\pi(x) - \pi(y)|^q} \cdot \frac{|\pi(x) - \pi(y)|^q}{e^{-\delta s(x,y)}}.
\]
The first ratio above is bounded by $|\psi|_{C^q(I)}$.  To estimate the second ratio, note that
if $s(x,y) = n$, then by construction of $\Delta$, $f^i(\pi(x))$ and $f^i(\pi(y))$ lie in the
same element of $\P_1$ for all $i \le \tau^n(x)$.  By (D1), 
$Df^{\tau^n}(\pi(x)) \ge 2^n$, so that using also (D2), $|\pi(x) - \pi(y)| \le C 2^{-n}$. Thus
$|\pi(x) - \pi(y)|^q \le C e^{- \delta n}$ as long as $q \ge \delta/\log 2$, as required.
\end{proof}

By (D2) above, we see that the potential $\phi = - \log Df$ is H\"older continuous
with exponent $\gamma/(1+\gamma)$.  
By Lemma~\ref{lem:smooth lift},
the induced potential $\vf_\Delta$ will be Lipschitz in the metric $d_\delta$ if
$\frac{\delta}{\log 2} \le \frac{\gamma}{1+\gamma}$, i.e.~if 
$\delta \le \frac{\gamma \log 2}{1+\gamma}$, which is what we chose initially.  Then (D2) also
implies the bounded requirement of (P3) for $e^{S_n\phi}$ with this choice of $\delta$.

This extends to the potential $t S_n \phi - n p(t)$ for $t \in [0,1]$
since 
\[
\begin{split}
|e^{tS_n \phi(x) - np(t)} & - e^{tS_n \phi(y) - np(t)}|
 = e^{tS_n\phi(y) - np(t)} |e^{tS_n\phi(x) - tS_n\phi(y)}-1| \\
& \le C t |\log e^{S_n \phi(x) - S_n\phi(y)}| \le C' t |f^n(x) - f^n(y)|^{\frac{\gamma}{1+\gamma}},
\end{split}
\]
where we have used the fact that $\phi \le 0$ and $p(t) \ge 0$.
Thus (P3)
is satisfied with this choice of $\delta$ for all $t \in [0,1]$.

The final point to check is that $\hf_\Delta$ is transitive and aperiodic on the partition
$\{ \Delta_{\ell,j} \}$.  This is implied by the following.

\begin{lemma}
\label{lem:nonswallow}
Suppose that $H$ is non-swallowing.  Then for each base $\Delta_{0,i}$, there
exists $N= N(i)$ such that $\f^n(\Delta_{0,i}) \supseteq \Delta_0$ for all $n
\ge N$. 
\end{lemma}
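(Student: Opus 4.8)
The plan is to reduce the statement to combinatorics of the finite partition $\Q$ of $\Delta_0=X$ together with the return time $\tau$, exploiting the renewal structure of $f$ near $1/2$. Since $\Delta_{0,i}$ is one of the finitely many elements $Q\in\Q$ and $\Delta_0=\bigcup_{Q\in\Q}Q$, it suffices to produce, for every ordered pair $Q,Q'\in\Q$, a threshold $N(Q,Q')$ with the property that for all $n\ge N(Q,Q')$ there is a subinterval $W\subseteq Q$ with $\f_\Delta^{\,n}(W)=Q'$; then $N(i):=\max_{Q'}N(\Delta_{0,i},Q')$ works. The argument rests on two facts. \emph{(i) Bounded full transitions.} By condition~(1) of non-swallowing, $\hF|_X$ is transitive on elements of $\Q$; since every first-return image is, by the definition of $\Q$, a union of elements of $\Q$, and $\hf$ is Markov, this upgrades to: there is $M\in\N$ such that for every pair $Q,Q'\in\Q$ there is a subinterval $V\subseteq Q$ and an integer $m=m(Q,Q')\le M$ (a genuine return time along the orbit of $V$) with $\f_\Delta^{\,m}(V)=Q'$; here $M$ is just the largest total return time among finitely many chosen connecting paths in the transition graph on $\Q$, and need not be small. \emph{(ii) Full branches near $1/2$ realizing every large return time.} There exist $N_1\in\N$, the element $Q_0\in\Q$ containing a right-neighborhood of $1/2$, and a nonempty union $S\subseteq X$ of elements of $\Q$, such that for every $n\ge N_1$ there is a subinterval $D_n\subseteq Q_0$ with $\tau\equiv n$ on $D_n$ and $\f_\Delta^{\,n}(D_n)=S$.

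Proving (ii) is the step I expect to be the main obstacle, as this is where the renewal geometry and the Markov-hole hypothesis genuinely enter; it refines Lemma~\ref{lem:basic tail}. In the closed system the first-return branch near $1/2$ with return time $n$ is a single interval $\tilde D_n$ that maps bijectively onto $Y$, and whose $n$-step orbit passes through the renewal intervals $J_{n-1},J_{n-2},\dots,J_1$ and then onto $Y$. For $k>N_0$ the interval $J_k$ is a single element of $\P_{N_0}$; were $H$ to contain some such $J_k$, then every $\tilde D_n$ with $n>k$ would be entirely destroyed, contradicting condition~(2), so $H$ is disjoint from $J_k$ for all $k>N_0$; and since $H$ is a finite union of $\P_{N_0}$-elements it contains $\tilde D_n$ for at most finitely many $n$. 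Hence for $n\ge N_1$ the hole meets the orbit of $\tilde D_n$ only through the finitely many, $n$-independent pieces $H\cap J_1,\dots,H\cap J_{N_0}$ and $H\cap Y$, and pushing these forward along the full branch gives $\f_\Delta^{\,n}(D_n)=Y\setminus\big((H\cap Y)\cup\bigcup_{m=1}^{N_0}f^{m}(H\cap J_m)\big)=:S$, which is independent of $n$, nonempty by condition~(2), and a union of elements of $\Q$ since it is an image of first-return branches.

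To conclude, fix $Q_*\in\Q$ with $Q_*\subseteq S$, and set $m_1=m(Q,Q_0)$, $m_2=m(Q_*,Q')$ from (i). For $n\ge N_1$, concatenate branches: take $V\subseteq Q$ with $\f_\Delta^{\,m_1}(V)=Q_0$, restrict $V$ to the part landing after $m_1$ steps on $D_n$, take $n$ further steps onto $S\supseteq Q_*$, restrict once more, and apply the length-$m_2$ branch from $Q_*$ onto $Q'$; this produces $W\subseteq Q$ with $\f_\Delta^{\,m_1+n+m_2}(W)=Q'$. Such concatenations of ``full-covering'' branches compose cleanly because at each stage one lands onto a full element of $\Q$, which is exactly the domain of the next branch. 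As $n$ runs over $\{N_1,N_1+1,\dots\}$ the exponent $m_1+n+m_2$ runs over all integers $\ge m_1+m_2+N_1$, with $m_1,m_2$ fixed, so $N(Q,Q'):=m_1+m_2+N_1$ works; maximizing over $Q'$ gives $N(i)$ and completes the proof.
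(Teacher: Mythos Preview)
Your proof is correct and follows the same conceptual route as the paper: both exploit condition~(1) for transitivity and condition~(2) together with the renewal structure near $1/2$ to obtain arbitrarily large transition times out of the element $Q_0=\Delta_{0,i_0}$ containing a right-neighborhood of $1/2$. The paper's execution is more condensed: rather than explicitly computing the fixed image $S$ of the branches $D_n$, it simply observes that since $\Delta_{0,i_0}$ contains $Y_k$ for all large $k$, any path of length $n_1$ out of $\Delta_{0,i_0}$ can be turned into one of length $n_1+1$ by starting one renewal cylinder deeper; this yields two consecutive return times to $\Delta_{0,i_0}$, hence aperiodicity, and the statement follows from the standard combinatorics of transitive aperiodic finite-state Markov chains. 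Your version is more self-contained (no implicit appeal to that combinatorial fact) and makes the role of the Markov-hole hypothesis more transparent through the explicit formula for $S$. One cosmetic point: the surviving set $D_n$ need not be a single subinterval of $Q_0$ (it is $Y_{n-1}$ minus finitely many subintervals), but this is irrelevant since $f^n|_{Y_{n-1}}$ is a bijection onto $Y$ and your concatenations only use that each stage is a bijection onto a full union of $\Q$-elements.
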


\begin{proof} 
Transitivity of $\hf_\Delta$
is obvious by the non-swallowing assumption (1) on $\Q$ and $\Delta_0 = \cup_{Q \in \Q} Q$.  It remains
to verify aperiodicity.  

By property (2) of the definition of non-swallowing, there exists $i_0 \in \N$ such that
$\Delta_{0,i_0} \supset (1/2, 1/2 + \delta)$ for some $\delta >0$.  
Let $\Delta_{0,j} \subseteq \hF(\Delta_{0,i_0})$.  
Since $\Delta_{0,i_0}$ is recurrent, there exists $n_0 \in \N$ such that
$\hf^{n_0} (\Delta_0,j) \supseteq   \Delta_{0,i_0}$.  Now due to the renewal structure and the
fact that $\Delta_{0,i_0}$ contains all $Y_k$ for $k$ greater than some $k_0$, if
$\hf^{n_1}(\Delta_{0,i_0}) \supseteq \Delta_{0,j}$, then also 
$\hf^{n_1+1}(\Delta_{0,i_0}) \supseteq \Delta_{0,j}$.  So we have both
$\hf^{n_0 + n_1}(\Delta_{0,i_0}) \supseteq \Delta_{0,i_0}$ and
$\hf^{n_0 + n_1+1}(\Delta_{0,i_0}) \supseteq \Delta_{0,i_0}$.
Thus $\hf_\Delta$ is aperiodic.
\end{proof}

With properties (P1)-(P4) verified, we conclude by the results of the previous
section that $\hLp_\Delta$ has a spectral gap on $\B$ and the conclusions of 
Theorem~\ref{thm:exp conv} apply.

The last step in the proof of Theorem~\ref{thm:rate gap} is to show that we can project the
results of Theorem~\ref{thm:exp conv} to our open system $(\hf, m_t; H)$.
For this we need the following proposition, which is an adaption of 
\cite[Prop. 4.2]{BruDemMel10}.

\begin{proposition}
\label{prop:lift project}
Recall the projection $P_{\pi,t}$ defined by \eqref{eq:P pi} with respect to the measure 
$m_t$.    Let $C^q(\hI)$ be the set of H\"older continuous functions with exponent
$q$ supported on $\hI$.  Then $Lip(J_t\pi|_{\Delta_\ell}) < \infty$ for each $\ell$
and $C^q(\hI) \subset P_{\pi,t} \B_0$ for all $q \ge \delta/\log 2$.
\end{proposition}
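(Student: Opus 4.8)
The plan is to follow the argument of \cite[Prop.~4.2]{BruDemMel10}, combining the explicit form of the Jacobian $J_t\pi$ coming from conformality with the separation-time estimate established inside the proof of Lemma~\ref{lem:smooth lift}. I would first record that, by conformality of $m_t$ with respect to $t\phi-p(t)$ (as in the proof of Lemma~\ref{lem:base tower decay}), for $x\in\Delta_\ell$ with base point $x^-\in\Delta_0$ (so that $\pi(x)=f^\ell(x^-)$) one has
\[
J_t\pi(x)=e^{-tS_\ell\phi(x^-)+\ell p(t)},
\]
with $S_\ell\phi(x^-)=\sum_{i=0}^{\ell-1}\phi(f^i(x^-))$; since $\phi\le 0$ and $p(t)\ge 0$ this gives $1\le J_t\pi(x)\le(\sup|Df|)^{t\ell}e^{\ell p(t)}$. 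For the first assertion, fix $\ell$ and take $x,y$ in a single partition element $\Delta_{\ell,j}$. By construction of the tower partition, $f^i(x^-)$ and $f^i(y^-)$ lie in the same element of $\P_1$ for $0\le i\le\ell$ (indeed $X_j\subset[1/2,1)\in\P_1$ and its forward iterates run through the renewal intervals $J_n\in\P_1$), so (D2) gives $|S_\ell\phi(x^-)-S_\ell\phi(y^-)|\le C_d|\pi(x)-\pi(y)|^{\gamma/(1+\gamma)}$, hence, using $|e^a-e^b|\le e^{\max\{a,b\}}|a-b|$,
\[
|J_t\pi(x)-J_t\pi(y)|\le e^{\ell p(t)}(\sup|Df|)^{t\ell}\,tC_d\,|\pi(x)-\pi(y)|^{\gamma/(1+\gamma)}.
\]
Finally the estimate inside the proof of Lemma~\ref{lem:smooth lift}---namely $Df^{\tau^n}(\pi(x))\ge 2^n$ when $s(x,y)=n$ (by (D1) along each return), so $|\pi(x)-\pi(y)|\le C2^{-n}$ by (D2)---together with the standing choice $\delta\le\tfrac{\gamma\log 2}{1+\gamma}$ gives $|\pi(x)-\pi(y)|^{\gamma/(1+\gamma)}\le C\,d_\delta(x,y)$. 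Thus $\mbox{Lip}(J_t\pi|_{\Delta_{\ell,j}})\le C(\ell)<\infty$, with $C(\ell)$ uniform in $j$ for each fixed $\ell$, and in particular uniform over all $(\ell,j)$ with $\ell\le L$ for any fixed $L$.

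For the inclusion $C^q(\hI)\subset P_{\pi,t}\B_0$, fix $\psi\in C^q(\hI)$ and recall from the proof of Lemma~\ref{lem:base tower decay} the collection of partition indices $\mathcal{K}=\{(\ell,j)\}$, all with $\ell\le L$ for some finite $L$, such that the sets $\pi(\Delta_{\ell,j})$, $(\ell,j)\in\mathcal{K}$, are pairwise disjoint with union $\pi(\hDelta)$ and such that $\mathcal{K}$ contains every base element $\Delta_{0,i}$. I would then set
\[
\tpsi:=(\psi\circ\pi)\cdot J_t\pi\ \text{ on }\ \bigcup_{(\ell,j)\in\mathcal{K}}\Delta_{\ell,j},\qquad \tpsi:=0\ \text{ elsewhere on }\Delta.
\]
Since each $x\in\pi(\hDelta)$ has exactly one $\pi$-preimage $y$ in $\bigcup_{(\ell,j)\in\mathcal{K}}\Delta_{\ell,j}$, on which $\tpsi(y)=\psi(\pi(y))\,J_t\pi(y)=\psi(x)\,J_t\pi(y)$, we get $P_{\pi,t}\tpsi(x)=\psi(x)$ there; and $\hI\setminus\pi(\hDelta)$ is contained in the countable (hence $m_t$-null) set of preimages of $0$, while $\psi$ is supported on $\hI$, so $P_{\pi,t}\tpsi=\psi$ off an $m_t$-null set.

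It then remains to check $\tpsi\in\B_0$, i.e.\ that $\tpsi$ is bounded on $\Delta$ and has uniformly bounded Lipschitz constant on the partition elements $\Delta_{\ell,j}$. Boundedness is immediate: on $\bigcup_{(\ell,j)\in\mathcal{K}}\Delta_{\ell,j}$ one has $\ell\le L$, so $|\tpsi|\le|\psi|_\infty\max_{0\le\ell\le L}(\sup|Df|)^{t\ell}e^{\ell p(t)}=:M(L,t)|\psi|_\infty$, and $\tpsi\equiv 0$ elsewhere. Since $\mathcal{K}$ is a set of partition-element indices, every $\Delta_{\ell,j}$ lies entirely inside or entirely outside $\bigcup_{(\ell,j)\in\mathcal{K}}\Delta_{\ell,j}$; on one of the latter $\tpsi\equiv 0$, while on $\Delta_{\ell,j}\in\mathcal{K}$ I would use the product estimate $\mbox{Lip}(uv)\le\|u\|_\infty\mbox{Lip}(v)+\|v\|_\infty\mbox{Lip}(u)$ with $u=\psi\circ\pi$ and $v=J_t\pi$: here $\|\psi\circ\pi\|_\infty\le|\psi|_\infty$ and $\mbox{Lip}(\psi\circ\pi|_{\Delta_{\ell,j}})\le C|\psi|_{C^q}$ by Lemma~\ref{lem:smooth lift} (which is precisely where $q\ge\delta/\log 2$ enters), while $\|J_t\pi\|_\infty\le M(L,t)$ and $\mbox{Lip}(J_t\pi|_{\Delta_{\ell,j}})\le C(L)$ by the first part, uniformly over $(\ell,j)\in\mathcal{K}$. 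Hence $\sup_{\ell,j}\mbox{Lip}(\tpsi|_{\Delta_{\ell,j}})\le M(L,t)\,C\,|\psi|_{C^q}+C(L)\,|\psi|_\infty<\infty$, so $\tpsi\in\B_0$ and $\psi=P_{\pi,t}\tpsi\in P_{\pi,t}\B_0$.

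The only genuinely delicate point is uniformity: since $J_t\pi$ grows like $(\sup|Df|)^{t\ell}e^{\ell p(t)}$ up the tower, it is essential that the ``fundamental domain'' $\bigcup_{(\ell,j)\in\mathcal{K}}\Delta_{\ell,j}$ sit in finitely many levels and that the Lipschitz bound for $J_t\pi$ be uniform over those levels---both are supplied by the construction recalled from the proof of Lemma~\ref{lem:base tower decay}, which ultimately rests on the renewal/non-swallowing structure of the map. The analytic ingredients---the distortion estimates (D1)--(D2), Lemma~\ref{lem:smooth lift}, and the standing choice $\delta\le\gamma\log 2/(1+\gamma)$---are all already available, so the remaining work is bookkeeping rather than new estimation.
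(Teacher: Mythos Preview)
Your proposal is correct and follows essentially the same approach as the paper's proof: compute $J_t\pi$ via conformality, bound its Lipschitz constant on each level using (D2) together with the $|\pi(x)-\pi(y)|\le C2^{-s(x,y)}$ estimate from Lemma~\ref{lem:smooth lift}, then build the lift $\tpsi=(\psi\circ\pi)J_t\pi$ supported on the finite-level fundamental domain $\mathcal{K}$ from Lemma~\ref{lem:base tower decay}. Your version is in fact slightly more explicit than the paper's---you spell out the product Lipschitz estimate and the $m_t$-null complement $\hI\setminus\pi(\hDelta)$---but the underlying argument is identical.
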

\begin{proof}
Due to conformality and the definition of $\bm_t$, for $x \in \Delta_\ell$, $x = f_\Delta^\ell(y)$,
we have
\[
J_t \pi(x) = \frac{dm_t(\pi x)}{d\bm_t(x)} = \frac{dm_t(\pi(f^\ell_\Delta y))}{d\bm_t(y)}
= \frac{dm_t(f^\ell(\pi y))}{dm_t(\pi y)} = e^{-tS_\ell \phi(\pi y) + \ell p(t)} .
\]
Now by the proof of Lemma~\ref{lem:smooth lift}, $J_t\pi$ is Lipschitz in the metric
$d_\delta$ with Lipschitz constant depending only on the level $\ell$ and the distortion
constant from (D2).

Now let $\psi \in C^q(\hI)$ for some $q \ge \delta/\log 2$.
Recall from the proof of Lemma~\ref{lem:base tower decay}
the index set $\mathcal{K}$ of pairs $(\ell,j)$ such that $\ell \le L$,
$\pi(\cup_{(\ell,j) \in \mathcal{K}} \dlj) = \pi(\hDelta)$, and $\pi(\dlj) \cap \pi(\Delta_{\ell',j'}) = \emptyset$
for all pairs $(\ell,j) \neq (\ell',j')$ in $\mathcal{K}$.

For $(\ell,j) \in \mathcal{K}$ and $x \in \dlj$, define $\tilde \psi(x) = \psi \circ \pi(x) J_t\pi(x)$.
Define $\tilde \psi \equiv 0$ elsewhere on $\Delta$.  Then it follows from fact that
$J_t\pi$ is Lipschitz on $\cup_{\ell \le L} \Delta_\ell$ and Lemma~\ref{lem:smooth lift}
that $\tilde \psi \in \B_0$.  Also, since the images $\pi(\dlj)$ and $\pi(\Delta_{\ell',j'})$
are disjoint
for all pairs $(\ell,j) \neq (\ell',j')$ in $\mathcal{K}$, we have $P_{\pi,t} \tilde \psi = \psi$.
\end{proof}

We proceed to prove the items of Theorem~\ref{thm:rate gap}.
Fix $q>0$ as in the statement of Theorem~\ref{thm:rate gap} and choose
$\delta > 0$, such that $\delta \le \min \{ \frac{q}{\log 2}, \frac{\gamma \log 2}{1 + \gamma} \}$
so that the conditions of Lemma~\ref{lem:smooth lift} and Proposition~\ref{prop:lift project}
are satisfied as well as (P3).

(1) The characterization of $\lambda_t  < 1$ as the spectral radius and the existence of a spectral gap
for $\hLp_{\Delta}$ follow immediately from 
Lemma~\ref{lem:exponential tail} and Theorem~\ref{thm:exp conv}.

It follows from the definitions of $\hLp_{t \phi^H - p(t)}$ and 
$\hLp_{\vf^H_\Delta}$
that given $\psi \in C^q(\pi(\hDelta))$, and $\tilde \psi$
as defined above in the proof of Proposition~\ref{prop:lift project}, that
\begin{equation}
\label{eq:evolve}
\hLp^n_{t \phi^H - p(t)} \psi = P_{\pi, t} \hLp^n_{\vf^H_\Delta} \tilde \psi, \qquad \forall n \ge 0 .
\end{equation}
This relation also holds if $\psi$ is supported on $H \cup \pi(\hDelta)$ since the part
of $\psi$ on $H$ is deleted in one step.

Define $g_t = P_{\pi,t} \bar{g}_t$.   Since $\bar{g}_t$ is bounded away from 0 on $\hDelta$,
it follows using the index set $\mathcal{K}$ from the proof of Lemma~\ref{lem:base tower decay}
that $g_t$ is bounded away from 0 on $\I$.  Moreover, by \eqref{eq:evolve},
we have $\hLp_t g_t = P_{\pi,t} \hLp_{\vf_\Delta^H} \bar{g}_t = \lambda_t g_t$.

(2), (3) and (4).   Define $\nu_t = \pi_* \bar{\nu}_t$. Now since
\[
\int_{\hI} \psi \, dm_t = \int_{\hDelta} \tilde \psi \, d\bm_t,
\]
the characterization of $\nu_t(\psi)$ as the limit of $\lambda_t^{-n} \int_{\hI^n} \psi g_t \, dm_t$
follows from \eqref{eq:evolve} 
and Theorem~\ref{thm:exp conv}(c).

With this definition, $\nu_t$ satisfies 
\[
\log \lambda_t = h_{\nu_t}(f) + \int t\phi^H \, d\nu_t - p(t),
\]
due to Theorem~\ref{thm:exp conv}(b) and (c).  To project this relation
from $\Delta$ to $I$, we use Lemma~\ref{lem:exponential tail}, the definition of $\vf^H$ and
the fact that $\pi : \Delta \to I$ is at most countable-to-one to deduce
$h_{\nu_t}(f) = h_{\bar{\nu}_t}(f_\Delta)$.

Indeed, $\nu_t$ attains the supremum of pressures over all measures $\eta$ that lift to
$\Delta$.  However, due to Proposition~\ref{prop:volume}, we conclude that the supremum
of measures that lift to $\Delta$ is in fact the supremum of all ergodic, invariant measures
supported on $\hI^\infty$.  Thus $\log \lambda_t = p^H(t) - p(t)$, completing
the proof of item (2).  Moreover, it follows immediately that $\nu_t$ is the
equilibrium state for the potential $t \phi^H - p^H(t)$.

To complete the proof of items (3) and (4), we need only construct the conformal measure $m^H_t$.
To this end, define,
\[
m_t^H(\psi) = \lim_{n \to \infty} \lambda_t^{-n} \int_{\hI^n} \psi \, dm_t .
\]
The limit exists again using \eqref{eq:evolve} and Theorem~\ref{thm:exp conv}(d).
It follows that $m_t^H$ is supported on $\hI^\infty$, and indeed that $\nu_t = g_t m_t^H$,
completing item (4).

To see that $m_t^H$ is conformal, note that for a small interval $A \subset I$ centered at
a $m_t^H$-typical point $x$, we have
\[
\begin{split}
\frac{m_t^H(A)}{m_t^H(f(A))}
& = \lim_{n \to \infty} \frac{\lambda_t^{-n-1} \int_{\hI^{n+1}} 1_A \, dm_t}{\lambda_t^{-n} \int_{\hI^n} 1_{f(A)} \, dm_t} 
= \lim_{n \to \infty} \frac{\lambda_t^{-n-1} \int_{\hI^n} \hLp_{t\phi^H - p(t)}( 1_A ) \, dm_t}{\lambda_t^{-n} \int_{\hI^n} 1_{f(A)} \, dm_t} \\
& = \lim_{n \to \infty} \frac{\lambda_t^{-1}  \int_{\hI^n} 1_{f(A)} \cdot e^{t\phi^H \circ f_A^{-1} - p(t)} \, dm_t}{ \int_{\hI^n} 1_{f(A)} \, dm_t},
\end{split}
\] 
where $f_A = f|_A$ is injective on $A$.  Taking the limit as $A \to \{ x\}$, we have
\[
\frac{m_t^H(x)}{m_t^H(f(x))} = \lambda_t^{-1} e^{t\phi^H(x) - p(t)} = e^{t\phi^H(x)-p^H(t)},
\]
using item (2), which proves item (3).

(5) The characterization of $\mu^H_t := g_t m_t$ as a conditionally invariant measure and
a limiting distribution follows immediately from Theorem~\ref{thm:exp conv}(d), again
using \eqref{eq:evolve} in addition to Proposition~\ref{prop:lift project}, which allows us to
lift any $\psi \in C^q(\hI)$
to the function space $\B_0$ on $\hDelta$.  The convergence extends also to $\psi$ supported
fully on $I$ since in one iterate, $\hLp_{t\phi^H - p(t)} \psi$ is supported on $\hI$ so the definition of $\psi$
on $I \setminus \hI$ is irrelevant to the value of the limit.


\subsection{Proof of Corollary~\ref{cor:variational}}
First note that the statement $p^H(t)>0$ if and only if $t < t^H$ 
is simply the definition of $t^H$ together with Proposition~\ref{prop:tH}(a).

We next prove $-\log \blambda_t < p(t)$ if and only if $p^H(t) > 0$.
Recall that by Proposition~\ref{prop:volume}, we have
\[
\log \overline \lambda_t \ge \log \underline \lambda_t \ge p^H(t) - p(t) .
\]

Assume $p^H(t)>0$.  Then by the above inequality,  $\log \overline \lambda_t > - p(t)$
 and we are
in the setting of Theorem~\ref{thm:rate gap}: the associated transfer operator on the tower
has a spectral gap and in particular, the escape rate exists, $\lambda_t = \overline \lambda_t 
= \underline \lambda_t$ and the variational principle holds,
\[
\log \lambda_t = p^H(t) - p(t) .
\]
On the other hand, assume $\log \overline \lambda_t > - p(t)$.  Then again we are in the
setting of Theorem~\ref{thm:rate gap} and the same set of results holds, including the variational
principle.  This implies in particular, that $p^H(t) = p(t) + \log \lambda_t > 0$.

Finally, we show that $\lambda_t$ exists and the variational principle holds in all cases.  
The only case that remains to be addressed 
is when $p^H(t)=0$.  In this case, Proposition~\ref{prop:volume}
implies
\[
\log \overline \lambda_t \ge \log \underline \lambda_t \ge -p(t) .
\]  
In fact, all inequalities must be equalities otherwise $\log \overline \lambda_t > -p(t)$
and again we are in the case of the spectral gap which forces $p^H(t) >0$
by Theorem~\ref{thm:rate gap}, contrary to our assumption.


\subsection{Proof of Theorem~\ref{thm:limit}}

Let $z \in I$ and let $H_\ve \supset \{ z \}$, $\ve \in (0,\ve_0)$ be a nested sequence of intervals
that are nonswallowing holes.  Our first step is to show that the sequence of
conditionally invariant densities $g^{H_\ve}_t$ given by item (2) of Theorem~\ref{thm:rate gap}
enjoy some uniform
regularity in $\ve$.  To this end we define the {\em variation} 
of a function $\psi$ on an interval $J = [c,d]$ by,
\begin{equation}
\label{eq:var def}
\bigvee_J \psi := \sup \sum_{i=1}^n |\psi(x_i) - \psi(x_{i-1})|
\end{equation}
where the supremum is taken over all finite
collections of points $\{ x_i \}_{i=0}^n$ such that $c = x_0 < x_1 < \cdots < x_n = d$.
$\psi$ is said to be of \emph{bounded variation} on $J$ if $\bigvee_J \psi < \infty$.  We call the
set of such functions $BV(J)$.

Before we state our estimate on the variation of our densities, observe that since $g_t^{H_\ve} = P_{\pi,t} \bar{g}_t^{H_\ve}$, the fact that 
$\inf_{\Delta} \bar{g}_t^{H_\ve} >0$ and the existence of the set of partition elements
$\mathcal{K}$ from  Lemma~\ref{lem:base tower decay} imply that
\begin{equation}
\label{eq:g inf}
\inf_{I \setminus H_\ve} g_t^{H_\ve} =: c_\ve > 0 .
\end{equation}

\begin{lemma}
\label{lem:var g}
There exists $\ve_1 > 0$ and a constant $C_t >0$ such that 
$\bigvee_I g_t^{H_\ve} \le C_t$ for all $\ve \in [0, \ve_1]$.
\end{lemma}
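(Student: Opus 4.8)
The plan is to obtain a Lasota--Yorke inequality for a single high iterate $\hLp_t^{n_0}$ acting on $BV(I)$, with all constants independent of $\ve$, and then to apply it to the eigenfunction $g_t^{H_\ve}$, using $\hLp_t g_t^{H_\ve}=\lambda_t^{H_\ve}g_t^{H_\ve}$ and the normalization $|g_t^{H_\ve}|_{L^1(m_t)}=1$. First I would record that $g_t^{H_\ve}\in BV(I)$ for each fixed $\ve$ (so the estimate has content): since $g_t^{H_\ve}=P_{\pi,t}\bar g_t^{H_\ve}$ with $\bar g_t^{H_\ve}\in\B$ and the series defining $P_{\pi,t}$ converges geometrically --- the weights in it decay like $e^{(\beta-p(t))\ell}$ over the levels $\ell$ and $\beta<p(t)$ --- $g_t^{H_\ve}$ is bounded and of bounded variation on $I$. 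It also vanishes on $H_\ve$, so $\sup_I g_t^{H_\ve}\le\bigvee_I g_t^{H_\ve}$.

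The heart of the argument is the \emph{uniform} inequality. Write $\hLp_t^{n_0}\psi=\Lp_t^{n_0}(1_{\hI^{n_0}}\psi)$ and split $\Lp_t^{n_0}$ over the $2^{n_0}$ full cylinders $J$ of $f^{n_0}$ for the partition $\D=\{[0,\tfrac12),[\tfrac12,1]\}$ --- a number independent of $\ve$. On each such $J$ the map $f^{n_0}|_J\colon J\to[0,1)$ is an orientation-preserving homeomorphism, and since $Df$ is nonnegative and nondecreasing on each branch of $f$, the pushed weight $g_J(u):=|Df^{n_0}((f^{n_0}|_J)^{-1}u)|^{-t}e^{-n_0p(t)}$ is monotone, whence $\bigvee_{[0,1)}g_J\le\sup g_J\le e^{-n_0p(t)}$ (as $Df^{n_0}\ge1$). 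The hole affects only the factor $1_{\hI^{n_0}}$, whose pullback under $f^{n_0}|_J$ deletes from $[0,1)$ at most $K$ subintervals, $K$ being the number of connected components of $\bigcup_{m=0}^{n_0}f^m(H_\ve)$; because $H_\ve$ is a \emph{single} interval and, for $\ve$ small, its orbit crosses $1/2$ at most once, $K\le n_0+C$ with $C$ independent of $\ve$. Combining the product rule for $\bigvee$, the inequalities $\sum_J\bigvee_J\psi\le\bigvee_I\psi$, $\sup_J|\psi|\le\inf_J|\psi|+\bigvee_J\psi$, $\inf_J|\psi|\le m_t(J)^{-1}\int_J|\psi|\,dm_t$, and the conformality of $m_t$ (to compare $\sup_J g_J$ with $m_t(J)$ via (D1)--(D2)), one arrives at
\[
\bigvee_I\hLp_t^{n_0}\psi\ \le\ \rho(n_0)\,\bigvee_I\psi\ +\ C(n_0)\,|\psi|_{L^1(m_t)},\qquad \rho(n_0)\le c\,n_0\,e^{-n_0p(t)},
\]
where $c$ depends only on $f$ and $t$, and $\rho(n_0),C(n_0)$ do not depend on $\ve$ (the coefficient $C(n_0)$ may grow with $n_0$, which is harmless).

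To conclude: the $H_\ve$ are nested, so $p^{H_\ve}(t)\ge p^{H_{\ve_1}}(t)=:p_*$ for $\ve\le\ve_1$, and choosing $\ve_1$ small enough that $t<t^{H_{\ve_1}}$ forces $p_*>0$; by Corollary~\ref{cor:variational}, $\lambda_t^{H_\ve}=e^{p^{H_\ve}(t)-p(t)}\ge e^{p_*-p(t)}$. Fix $n_0$ so large that $\rho(n_0)\le c\,n_0 e^{-n_0p(t)}\le\tfrac12 e^{n_0(p_*-p(t))}$ (possible since $e^{n_0p_*}$ outgrows $c\,n_0$). Applying the displayed inequality to $\psi=g_t^{H_\ve}$, with $\hLp_t^{n_0}g_t^{H_\ve}=(\lambda_t^{H_\ve})^{n_0}g_t^{H_\ve}$ and $|g_t^{H_\ve}|_{L^1(m_t)}=1$,
\[
\big((\lambda_t^{H_\ve})^{n_0}-\rho(n_0)\big)\bigvee_I g_t^{H_\ve}\le C(n_0),\qquad (\lambda_t^{H_\ve})^{n_0}-\rho(n_0)\ge\tfrac12 e^{n_0(p_*-p(t))}>0,
\]
so $\bigvee_I g_t^{H_\ve}\le 2C(n_0)e^{n_0(p(t)-p_*)}=:C_t$ uniformly in $\ve\in(0,\ve_1]$; the endpoint case $\ve=0$, where $g_t^0$ is the bounded H\"older density of the closed-system equilibrium state $\mu_t$, is absorbed into $C_t$.

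I expect the main obstacle to be the uniform Lasota--Yorke inequality: one must verify both that the combinatorial count $K$ of hole-cut pieces of each cylinder stays bounded independently of $\ve$ (this is precisely where the single-interval hypothesis on $H_\ve$ enters) and that all the boundary terms generated by the cut are absorbed into $C(n_0)|\psi|_{L^1(m_t)}$ --- rather than spoiling the contraction factor $\rho(n_0)$ --- which hinges on the conformality comparison $\sup_J g_J/m_t(J)<\infty$ and the positivity of $\min_J m_t(J)$, both $\ve$-independent once $n_0$ is fixed.
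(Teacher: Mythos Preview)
Your strategy --- a Lasota--Yorke inequality for $\hLp_t^{n_0}$ on $BV(I)$ with $\ve$-independent constants, applied to the eigenfunction --- is exactly the paper's. Two points deserve comment. First, your justification that $g_t^{H_\ve}\in BV(I)$ via the tower projection is incomplete: the bound $e^{(\beta-p(t))\ell}$ on each level controls only the supremum, and since Lipschitz in the separation metric $d_\delta$ translates merely to H\"older continuity in the Euclidean metric (Lemma~\ref{lem:smooth lift}), this does not by itself yield finite variation. The paper sidesteps this by first iterating the Lasota--Yorke inequality on $\psi\equiv1$ and invoking the $L^1$ convergence $\lambda_\ve^{-kn}\hLp_t^{kn}1\to c_1 g_t^{H_\ve}$ from Theorem~\ref{thm:rate gap}(5), together with the lower bound \eqref{eq:g inf}, to conclude $g_t^{H_\ve}\in BV(I)$; only then does it re-apply the iterated inequality with $\psi=g_t^{H_\ve}$ to extract the uniform bound. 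Second, your organization over the $2^{n_0}$ full cylinders $J$ --- absorbing the hole-cut boundary jumps via $\sup_J|\psi|\le\inf_J|\psi|+\bigvee_J\psi$ and $\inf_J|\psi|\le m_t(J)^{-1}\int_J|\psi|\,dm_t$ --- produces the harmless extra factor $n_0$ in $\rho(n_0)$ but makes $C(n_0)$ manifestly $\ve$-independent through $\min_J m_t(J)$. The paper instead sums directly over the monotonicity intervals $K^n_{j,\ve}$ of $\hf^n$, obtaining contraction $3e^{-np(t)}$ with no $n$ factor, at the cost that its $L^1$ constant involves $\min_j m_t(K^n_{j,\ve})$, whose $\ve$-uniformity for small $\ve$ requires a brief separate monotonicity argument. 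Both trade-offs are acceptable here.
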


\begin{proof}
In what follows, for brevity, we denote $\hLp_{t\phi^{H_\ve} - p(t)}$ by $\hLp_t$.  
Let $\{ L^n_{j, \ve} \}_j$ denote the images under $\hf^n$ of the finitely many intervals 
$\{ K^n_{j, \ve} = (a^n_j, b^n_j) \}_j$ of monotonicity for $\hf^n$.  
Let $\xi^n_j$ denote the inverse branch of
$\hf^n$ on $L^n_{j,\ve}$.
Since $e^{t S_n \phi - n p(t)} \le e^{-n p(t)}$, by standard
estimates (see also the proof Lemma~\ref{lem:LY}), we have for $\psi \in BV(I)$ and $n \ge 0$,
\[
\begin{split}
\bigvee_I \hLp_t^n \psi & \le \sum_j \bigvee_{L^n_{j,\ve}} (\psi e^{t S_n \phi - n p(t)} ) \circ \xi^n_j
+ \sum_j |\psi(a^n_j)| e^{t S_n \phi(a^n_j) - n p(t)} + |\psi(b^n_j)| e^{t S_n \phi(b^n_j) - n p(t)}  \\
& \le \sum_j e^{-np(t)} \bigvee_{K^n_{j,\ve}} \psi 
+ \sup_{K^n_{j,\ve}} |\psi| \bigvee_{K^n_{j,\ve}} e^{t S_n\phi - n p(t)} + e^{-n p(t)} \sum_j \Big( \bigvee_{K^n_{j,\ve}} \psi + 2 \inf_{K^n_{j,\ve}} |\psi| \Big) \\
& \le 3 e^{-np(t)} \bigvee_I \psi + \frac{3}{\min_j m_t(K^n_{j, \ve})} \int_{\hI^n_\ve} |\psi| \, dm_t,
\end{split}
\]
where 
$\hI^n_\ve = \cap_{i=0}^n f^{-i}(I \setminus H_\ve)$.
Also, we used the fact that $e^{tS_n\phi - np(t)}$ is monotonic on each interval to bound
\[
\bigvee_{K^n_{j,\ve}} e^{t S_n\phi - n p(t)} \le \sup_{K^n_{j,\ve}} e^{tS_n\phi - np(t)}
\le e^{-np(t)} .
\]
Now using Corollary~\ref{cor:variational} and letting $\lambda_\ve$ denote the largest eigenvalue
of $\hLp_t$, we estimate,
\[
\bigvee_I \lambda_\ve^{-n} \hLp_t^n \psi \le 3 e^{-np^{H_\ve}(t)} \bigvee_I \psi + \frac{3}{\min_j m_t(K^n_{j, \ve})} \lambda_\ve^{-n} \int_{\hI^n_\ve} |\psi| \, dm_t .
\]
Since $t < t^{H_\ve}$, we have $p^{H_\ve}(t) > 0$ so we may choose $n$ sufficiently large that
$3e^{-np^{H_\ve}(t)} =: \rho_{\ve} < 1$.  Then iterating the above relation, we estimate for all $k \ge 0$,
\begin{equation}
\label{eq:bootstrap}
\bigvee_I \lambda_\ve^{-kn} \hLp_t^{kn} \psi \le \rho_\ve^k \bigvee_I \psi + \frac{3}{\min_j m_t(K^n_{j, \ve})} \sum_{j=1}^k \rho_\ve^j \lambda_\ve^{-(k-j)n} \int_{\hI^{(k-j)n}_\ve} |\psi| \, dm_t .
\end{equation}

Since $\lambda_\ve^{-kn} \hLp_t^{kn} 1 \to c_1 g_t^{H_\ve}$ as $k \to \infty$ for some $c_1>0$
by Theorem~\ref{thm:rate gap}, we will apply \eqref{eq:bootstrap} to $\psi \equiv 1$ to conclude that
$g_t^{H_\ve} \in BV(I)$.  First note that by \eqref{eq:g inf},
\[
\lambda_\ve^{-k} \int_{\hI^k_\ve} 1 \, dm_t \le c_\ve^{-1} \lambda_\ve^{-k} \int_{\hI_\ve^k} g_t^{H_\ve} \, dm_t = c_\ve^{-1},
\]
for all $k \ge 0$ using the conditional invariance of $g_t^{H_\ve}$, i.e. $\int_{\hI_\ve^k} g_t^{H_\ve} \, dm_t = \lambda_\ve^k$.  
Using this together with \eqref{eq:bootstrap} yields,
\[
\bigvee_I \lambda_\ve^{-kn} \hLp_t^{kn} 1 \le  \frac{3}{\min_j m_t(K^n_{j, \ve})} \frac{c_\ve^{-1}}{1- \rho_\ve} .
\]
Thus $\bigvee_I g_t^{H_\ve} \le \frac{3}{c_1 c_\ve (1-\rho_\ve) \min_j m_t(K^n_{j, \ve})}$, so that
$g_t^{H_\ve} \in BV(I)$.

Once we know $g_t^{H_\ve} \in BV(I)$, we may apply \eqref{eq:bootstrap} once more
with $\psi = g_t^{H_\ve}$ to obtain,
\[
\bigvee_I g_t^{H_\ve} = \bigvee_I \lambda_\ve^{-kn} \hLp_t^{kn} g_t^{H_\ve} 
\le \rho_\ve^k \bigvee_I g_t^{H_\ve} + \frac{3}{\min_j m_t(K^n_{j, \ve})} \sum_{j=1}^k \rho_\ve^j ,
\]
and letting $k \to \infty$, we conclude
\[
\bigvee_I g_t^{H_\ve} \le \frac{3}{(1- \rho_\ve) \min_j m_t(K^n_{j, \ve})} .
\]
Finally, we may choose this constant to be independent of $\ve$ for $\ve$ sufficiently small.  This is
because $\rho_\ve \le \rho_{\ve_0} < 1$ by monotonicity of $p^{H_\ve}(t)$ in $\ve$.  Also,
since the sequence of holes is nested by assumption and $\hf^n$ has finitely many branches, 
$\min_j m_t(K^n_{j,\ve})$ can only increase for $\ve$ sufficiently small.
\end{proof}

Now consider the sequence of measures
$\mu_t^{H_\ve} = g_t^{H_\ve} m_t$ for $\ve >0$.  
Define the BV norm, $\| g_t^{H_\ve} \|_{BV} = \bigvee_I g_t^{H_\ve} + | g_t^{H_\ve} |_{L^1(m_t)}$.
Since $\| g_t^{H_\ve} \|_{BV} \le C_t + 1$ for all $\ve < \ve_1$, 
and BV is compact in $L^1(m_t)$, any limit point of the sequence must be absolutely continuous
with respect to $m_t$ and in fact, have a density in $BV(I)$.

Fix a subsequence $\{ \ve_n \}_{n \in \N}$ such that $\{ g_t^{H_{\ve_n}} \}_{n \in \N}$ 
converges in $L^1(m_t)$ to a density $h_t \in BV(I)$ with $\| h_t \|_{BV} \le C_t +1$.  
Let $\mu_\infty = h_t m_t$.

We claim $\mu_\infty$ must be invariant as well, making it the unique invariant measure
$\mu_t$ for $f$ absolutely continuous with respect to $m_t$.
To see this, recall the following characterization of the spectral radius,
\begin{equation}
\label{eq:complement}
\lambda_t^{H_\ve} = \int_{\hI} \hLp_t ( g_t^{H_\ve}) \, dm_t
= \int_{\hI^1} g_t^{H_\ve} \, dm_t = 1 - \int_{\hI \setminus \hI^1} g_t^{H_\ve} \, dm_t .
\end{equation}
Due to the uniform integrability of $g_t^{H_\ve}$ given by 
the proof of Lemma~\ref{lem:var g}, it follows that $\lambda_t^{H_\ve} \to 1$ as
$\ve \to 0$.

Now,
\[
|\Lp_t h_t - h_t |_1  \le |\Lp_t h_t - \lambda_{\ve_n}^{-1} \hLp_t g_t^{H_{\ve_n}}|_1 + |g_t^{H_{\ve_n}} - h_t|_1 
 \le |1- \lambda_{\ve_n}^{-1}| | \Lp_t h_t|_1 + \lambda_{\ve_n}^{-1} |\Lp_t h_t - \hLp_t g_t^{H_{\ve_n}} |_1 + |g_t^{H_{\ve_n}} - h_t|_1
\]
The first and third terms above clearly approach 0 as $n \to \infty$.  We split the second term again,
\[
|\Lp_t h_t - \hLp_t g_t^{H_{\ve_n}} |_1 
\le | \Lp_t h_t - \hLp_t h_t|_1 + |\hLp_t (h_t - g_t^{H_{\ve_n}}) |_1 
\le \int_{I \setminus \hI^1_{\ve_n}} h_t \, dm_t + |h_t - g_t^{H_{\ve_n}}|_1,
\]
and again both terms vanish as $n \to \infty$, using the fact that $| h |_\infty \le C_t +1$.


\section{Proof of Theorem~\ref{thm:limit point}:  The case where the exponential tail equals
 the exponential escape: $t\in [t^H, 1)$}

\label{sec:large t}

Although Theorem~\ref{thm:limit point} applies to all $t \in [0,1)$, 
it provides new information only for $t \in [t^H,1)$.
For ease of notation, we will denote $\hLp_t = \hLp_{t\phi^H - p(t)}$ in this section.
We do not prove that $\hLp_t$ has a spectral gap for $t \in [t^H,1)$, yet we will show
that all limits points of the sequence
$\left\{ \frac{\hLp_t^n 1 }{|\hLp_t^n 1|_{L^1(m_t)}} \right\}_{n \in \N}$ are 
absolutely continuous with respect to $m_t$.
We will also use an averaging technique to construct 
absolutely continuous conditionally invariant probability measures with eigenvalue $\lambda_t$.

Let $g_t^0$ denote the invariant probability density for the transfer operator before the
introduction of the hole, $\Lp_t$.  By Lemma~\ref{lem:var g} applied to $\ve = 0$, $g_t^0 \in BV(I)$
and \eqref{eq:g inf} implies that
\begin{equation}
\label{eq:inf sup}
\frac{\inf_I g_t^0}{\sup_I g_t^0} =: c_1 > 0 .
\end{equation}
Moreover, since each $x \in \hI$ has at least one preimage under $\hf$ by definition of
$H$ being non-swallowing, we have
\begin{equation}
\label{eq:L1 inf}
\inf_{x \in \hI} \hLp_t 1(x) \ge e^{-p(t)} \inf_{x \in \hI} |Df(x)|^{-t} =: c_0 > 0
\end{equation}
First we prove the following lemma.

\begin{lemma}
\label{lem:decay}
There exists $C, \eta > 0$ such that for all $n \in \N$,
$|\hLp_t^n 1|_\infty \le C (1-\eta)^n$.
\end{lemma}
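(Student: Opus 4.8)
The plan is to establish the uniform pointwise bound $\hLp_t^n 1(x) \le e^{tnV_n}\,m_t(\hI^{n-\hat N_0})$, where $(V_n)_n$ is the subexponential sequence from Lemma~\ref{lem:temp dis}, and then to invoke the exponential decay of $m_t(\hI^k)$ to conclude. The heart of the matter is that $\hLp_t^n 1$ is, up to a harmless subexponential distortion factor, comparable to the density $1$ of the $\D_n$-decomposition of the conformal measure, so that its $L^\infty$-norm inherits the exponential decay of the $L^1$-norm $m_t(\hI^n)$.

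First I would fix $x\in[0,1)$ and use the coarse partition $\D_n=\bigvee_{i=0}^{n-1}f^{-i}\D$ from Section~\ref{sec:volume} to index the preimages of $x$: since $f^n$ maps each $D\in\D_n$ bijectively onto $[0,1)$, every $y\in\hf^{-n}(x)$ lies in a unique $D=D(y)\in\D_n$, which therefore meets $\hI^n$. Writing $\hLp_t^n 1(x)=\sum_{y\in\hf^{-n}(x)}|Df^n(y)|^{-t}e^{-np(t)}$, I would compare each summand to $m_t(D)$ as follows. For $D\in\D_n$ let $\xi_D=(f^n|_D)^{-1}:[0,1)\to D$ and $y_D=\xi_D(x)$. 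Conformality of $m_t$ (w.r.t.\ $t\phi-p(t)$) gives $m_t(D)=\int_{[0,1)}|Df^n(\xi_D(w))|^{-t}e^{-np(t)}\,dm_t(w)$, and Lemma~\ref{lem:temp dis} says $Df^n$ varies by at most the factor $e^{nV_n}$ on $D$; hence $|Df^n(y_D)|^{-t}e^{-np(t)}\le e^{tnV_n}\,m_t(D)$. Summing over the pairwise disjoint elements $D$ meeting $\hI^n$ yields $\hLp_t^n 1(x)\le e^{tnV_n}\,m_t\big(\bigcup\{D\in\D_n : D\cap\hI^n\ne\es\}\big)$.

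Next, exactly as in the proof of Proposition~\ref{prop:volume}, I would observe that since $H$ is a union of elements of $\D_{\hat N_0}$, any $D\in\D_n$ meeting $\hI^n$ is already contained in $\hI^{n-\hat N_0}$: the $\D_n$-itinerary common to all points of $D$ already determines, for each $i\le n-\hat N_0$, whether the $i$-th iterate of that point lies in $H$, and for a point of $D\cap\hI^n$ it does not. Hence $\hLp_t^n 1(x)\le e^{tnV_n}\,m_t(\hI^{n-\hat N_0})$ for all $x$. By Corollary~\ref{cor:variational}, $\tfrac1k\log m_t(\hI^k)\to p^H(t)-p(t)$, which equals $-p(t)<0$ here since $p^H(t)=0$ on $[t^H,1)$ (Proposition~\ref{prop:tH}); thus for every $\eps>0$ there is $C_\eps$ with $m_t(\hI^{n-\hat N_0})\le C_\eps e^{(-p(t)+\eps)n}$. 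Taking $\eps=p(t)/4$ and $n$ large enough that $tV_n\le p(t)/4$ gives $|\hLp_t^n 1|_\infty\le C e^{-p(t)n/2}$; the finitely many remaining small $n$ are absorbed into the constant (using the crude bound $|\hLp_t^n 1|_\infty\le(2e^{-p(t)})^n$, valid since $f$ is at most $2$-to-$1$ and $\phi\le 0$). Setting $1-\eta:=e^{-p(t)/2}$ completes the proof.

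The one delicate point is precisely this passage from $L^1$ to $L^\infty$ decay; it succeeds only because the distortion of $f^n$ along $\D_n$-cylinders is tempered, hence subexponential, so that trading $|Df^n(y_D)|^{-t}e^{-np(t)}$ for $m_t(D)$ costs only the factor $e^{tnV_n}=e^{o(n)\cdot n}$. The bookkeeping issue that $\hI^n$ need not be a union of $\D_n$-cylinders is harmless and handled by the fixed-size enlargement to $\hI^{n-\hat N_0}$.
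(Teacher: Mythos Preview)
Your argument is correct, but it follows a genuinely different route from the paper's.

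The paper proves the lemma by a direct density-comparison argument: it works with the invariant density $g_t^0$ of the \emph{closed} system, observes that since $H$ is a union of $N_0$-cylinders every $x\in I$ has at least one $f^{-N_0}$-preimage in $H$, and uses the uniform lower bounds $c_1=\inf g_t^0/\sup g_t^0>0$ and $c_0=e^{-p(t)}\inf_{\hI}|Df|^{-t}>0$ to deduce $\hLp_t^{N_0}g_t^0\le (1-c_1c_0^{N_0})\,g_t^0$ pointwise, which iterates to exponential decay. Your approach instead bounds $\hLp_t^n1(x)$ cylinder by cylinder via tempered distortion, obtaining $|\hLp_t^n1|_\infty\le e^{tnV_n}m_t(\hI^{n-\hat N_0})$, and then imports the exponential decay of $m_t(\hI^k)$ from Corollary~\ref{cor:variational}.

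What each approach buys: your route is conceptually clean and yields a rate close to the true escape rate $e^{-p(t)}$ (for $t\ge t^H$), but it relies on Corollary~\ref{cor:variational}, whose proof in turn invokes Theorem~\ref{thm:rate gap}. The paper explicitly notes as a point of independent interest that its proof of the absolute-continuity part of Theorem~\ref{thm:limit point} is \emph{self-contained}, i.e.\ does not rely on Theorem~\ref{thm:rate gap}; your proof sacrifices that independence. A minor additional remark: you phrase the escape-rate step only for $t\in[t^H,1)$, whereas the lemma is stated (and used) for all $t\in[0,1)$; the fix is immediate since Corollary~\ref{cor:variational} gives $\log\lambda_t=p^H(t)-p(t)<0$ throughout $[0,1)$.
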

\begin{proof}
Since $H$ is a union of $N_0$-cylinders by assumption, it follows that 
$\# \{ y \in f^{-N_0}(x) \cap H \} \ge 1$ for each $x \in I$.  Thus
\[
\hLp_t^{N_0} g_t^0(x)  = \Lp_t^{N_0} g_t^0(x) - \Lp_t^{N_0} (1_{I \setminus \hI^{N_0}} g_t^0)(x)  
 \le g_t^0(x) [ 1- c_1 \Lp_t^{N_0} (1_{I \setminus \hI^{N_0}})(x) ] 
\; \le \; g_t^0(x) [ 1- c_1 c_0^{N_0}] . 
\]
Iterating this relation, we obtain $\hLp_t^{kN_0} g_t^0(x) \le (1-c_1c_0^{N_0})^k g_t^0(x)$, for each
$k \ge 0$ and $x \in \hI$.  Using again the upper and lower bounds on $g_t^0$,
we complete the proof of the lemma with $1-\eta = (1-c_1c_0^{N_0})^{1/N_0}$.
\end{proof}

Next we address the regularity of $\hNp_t^n 1 := \frac{\hLp_t^n 1}{|\hLp_t^n 1|_1}$ to show that all limit points of the sequence are
absolutely continuous with respect to $m_t$.  Here, $| \cdot |_1$ denotes the $L^1(m_t)$-norm.

To this end,
for $q > 0$, define the log-H\"older constant (which might be $\infty$) of a function $\psi \ge 0$  by 
\[
\| \psi \|_{q, \log} = \sup_{J \in \P_{N_0}} \sup_{x,y \in J} |x-y|^{-q} \, |\log \psi(x)/\psi(y)| .
\]
If $\psi \equiv 0$ on an element of $\P_{N_0}$, we simply set its H\"older constant on that element
equal to 0.  

The following lemma is essentially \cite[Prop. 3.7]{DemFer13}, adapted to the potentials
$t\phi^H - p(t)$.  It is also of note that the constants appearing below are uniform in $H$
and $N_0$.

\begin{lemma}
\label{lem:reg}
Fix $t$  and let
$H$ be an element of $\P_{N_0}$.
For all $n \in \N$, 
\[
\big\| \hNp_t^n 1 \big\|_{q, \log} \le tC_d,
\]
where $C_d$ is from property (D2) and $q = \gamma/(1+\gamma)$. 

\end{lemma}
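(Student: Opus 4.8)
The plan is to adapt the proof of \cite[Prop.~3.7]{DemFer13} to the weighted operator $\hLp_t=\hLp_{t\phi^H-p(t)}$, the only new structural input being that the combinatorics of surviving preimages under $\hf^n$ depend only on the element $J\in\P_{N_0}$ through which one looks, not on the individual point. First I would record the relevant bookkeeping. Every element $C$ of $\D_n$ is a monotonicity interval of $f^n$ with $f^n(C)=[0,1)$, so for each such $C$ and each $x\in[0,1)$ there is a unique $\xi_C(x)\in C$ with $f^n(\xi_C(x))=x$. Moreover, for $0\le i\le n$ the set $f^i\big((f^n|_C)^{-1}(J)\big)$ is again an inverse--branch image of $J$ under $f^{\,n-i}$, and by an easy induction — whose one--step case is immediate from $\P_{N_0}=\P_1\vee\bigvee_{k=0}^{N_0-1}f^{-k}\D$ and the renewal structure of $\P_1$ (each branch pulls a single $\P_1$--cylinder into a single $\P_1$--cylinder) — every such inverse--branch image of $J$ lies in a single element of $\P_{N_0}$, hence also in a single element of $\P_1$. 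Since $H$ is $\P_{N_0}$--measurable, the condition ``$f^i\big((f^n|_C)^{-1}(J)\big)\cap H=\emptyset$ for $0\le i\le n-1$'' depends only on $C$ and $J$, not on $x$; call $\mathcal S_n(J)$ the set of cylinders $C\in\D_n$ passing this test. Then for all $x\in J$,
\[
\hLp_t^n 1(x)=\sum_{C\in\mathcal S_n(J)} e^{\,tS_n\phi(\xi_C(x))-np(t)} .
\]

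Next, for $x,y\in J$ and $C\in\mathcal S_n(J)$, the orbit segments of $\xi_C(x)$ and $\xi_C(y)$ stay in common elements of $\P_1$ at all times $0,\dots,n$ (by the containment just established), so property (D2) gives
\[
\big|\,tS_n\phi(\xi_C(x))-tS_n\phi(\xi_C(y))\,\big|
=t\Big|\log\tfrac{Df^n(\xi_C(x))}{Df^n(\xi_C(y))}\Big|
\le tC_d\,|x-y|^{q},\qquad q=\tfrac{\gamma}{1+\gamma},
\]
the additive constant $np(t)$ cancelling. Hence each summand obeys $e^{-tC_d|x-y|^q}\le e^{\,tS_n\phi(\xi_C(x))-np(t)}/e^{\,tS_n\phi(\xi_C(y))-np(t)}\le e^{tC_d|x-y|^q}$, and, the bound being uniform in $C$, summation over $C\in\mathcal S_n(J)$ yields
\[
e^{-tC_d|x-y|^q}\,\hLp_t^n 1(y)\ \le\ \hLp_t^n 1(x)\ \le\ e^{tC_d|x-y|^q}\,\hLp_t^n 1(y) .
\]
Taking logarithms, dividing by $|x-y|^q$, and passing to the supremum over $x,y\in J$ and over $J\in\P_{N_0}$ gives $\|\hLp_t^n 1\|_{q,\log}\le tC_d$. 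Since $\hNp_t^n 1=\hLp_t^n 1/|\hLp_t^n 1|_{L^1(m_t)}$ is a positive scalar multiple of $\hLp_t^n 1$, and such a multiple leaves $\log\psi(x)/\psi(y)$ unchanged, the same bound passes to $\hNp_t^n 1$, which is the claim. Since $C_d$ from (D2) depends only on $\gamma$, the estimate is uniform in $n$, in $H$, and in $N_0$.

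I expect the only non-routine step to be the combinatorial one in the first paragraph: verifying that inverse--branch images of an element of $\P_{N_0}$ lie again in a single element of $\P_{N_0}$, so that $\mathcal S_n(J)$ is genuinely independent of $x\in J$. This matters precisely because $f^n$ fails to be monotone on a fixed element $J$ once $n>N_0$, so one must organise the sum over $\D_n$--cylinders rather than over ``branches of $f^n$ on $J$''; beyond this, the distortion estimate via (D2) and the termwise comparison are entirely standard.
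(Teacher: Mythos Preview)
Your proof is correct and follows essentially the same route as the paper: pair up the surviving preimages of $x$ and $y$ branch by branch, apply the distortion bound (D2) to each pair to get $|tS_n\phi(\xi_C(x))-tS_n\phi(\xi_C(y))|\le tC_d|x-y|^q$, and pass to the sum. The paper compresses this to three lines by invoking the elementary inequality $\frac{\sum a_i}{\sum b_i}\le\max_i\frac{a_i}{b_i}$ and simply asserting that the preimages $\hf^{-n}(x)$ and $\hf^{-n}(y)$ are indexed by a common set of branches; your version spells out the combinatorial verification that the set $\mathcal S_n(J)$ of surviving $\D_n$-cylinders depends only on $J\in\P_{N_0}$, which the paper takes for granted.
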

\begin{proof}

Fix $n\in \N$, $J \in \P_{N_0}$ and for $x,y \in J$, denote by $x_i$ (resp.~$y_i$) the pre-images 
$\f^{-n}(x)$ (resp.~$\f^{-n}(y)$) such that each pair $x_i, y_i$ lies in the same branch of
 $\hf^{-n}$.  Now,
\[
\log \frac{\hLp_{t\phi^{H_{\ve}}-p(t)}^n 1(x)}{\hLp_{t\phi^{H_{\ve}}-p(t)}^n 1(x)}
= \log \frac{\sum_i e^{tS_n\phi(x_i) - np(t)}}{\sum_i e^{tS_n\phi(y_i)- np(t)}}
\le \max_i t \log \frac{Df^n(x_i)}{Df^n(y_i)},
\]
where we have used footnote~\ref{ft:sum}.
The last quantity above is bounded by 
$tC_d|x-y|^{\gamma/(1+\gamma)}$ by the standard distortion estimate (D2) introduced
in the verification of (P3) in the proof of Theorem~\ref{thm:rate gap},  
proving the lemma.  
\end{proof}

By Lemma~\ref{lem:reg}, we have a uniform bound on
$\big\| \hNp_t^n 1 \big\|_{q, \log}$, $n \in \N$.
By \cite[Lemma~3.6]{DemFer13} any sequence of functions with a uniform bound on 
the log-H\"older constant
lies in a compact set in the space of
probability measures on $I$ and any (weak) limit point $\mu$ must be of the form,
\begin{equation}
\label{eq:form}
\mu = s \delta_0 + (1-s)\mu_*, \qquad \mbox{for some $s \in [0,1]$},
\end{equation}
where $\delta_0$ is the point mass at 0 and $d\mu_* = \psi_* dm_t$ for some function $\psi_*$ with
$\| \psi_* \|_{q, \log} \le tC_d$.

Now suppose $\mu$ is the limit of $\hNp_t^{n_j} 1$, 
for some subsequence $(n_j)_{j \in \N}$ such that $s>0$.  It follows that for each $k \in \N$, 
\[
\lim_{j \to \infty} \frac{\hLp_t^{n_j+k}1}{|\hLp_t^{n_j}1|_1} = \hLp^k \mu = s \delta_0 + (1-s) \hLp^k \mu_* .
\]
Thus for each $k \in \N$, we can find $n_j$ large enough that
\[
\frac{|\hLp_t^{n_j+k}1|_1}{|\hLp_t^{n_j}1|_1} \ge \frac s2 .
\]
But choosing $k$ sufficiently large so that $C(1-\eta)^k < \frac s3$, we have 
by Lemma~\ref{lem:decay} for all $n_j \ge 0$,
\[
\frac{|\hLp_t^{n_j+k}1|_1}{|\hLp_t^{n_j}1|_1} 
= \frac{\int_{\hI^{n_j}} \hLp_t^k 1 \, dm_t}{\int_{\hI^{n_j}} 1 \, dm_t} \le \sup_{\hI} \hLp_t^k1 \le C(1-\eta)^k < \frac s3,
\]
which is a contradiction.  Thus for any limit point $\mu$, we must have $s=0$.  This proves the
first part of Theorem~\ref{thm:limit point}.

We next address conditionally invariant measures obtained as averages of 
$\hLp_t^n 1$, suitably renormalized.  Unfortunately,
the naive average $\frac 1n \sum_{i=0}^{n-1} \frac{\hLp_t^i 1}{|\hLp_t^i 1|_1}$ does not yield
a conditionally invariant measure in general since the operation
$\psi \mapsto \frac{\hLp_t \psi}{|\hLp_t \psi|_1}$ is not linear.  Thus we adopt the point of
view taken in  \cite{CMM1} (see also \cite{DemYoung06}).

Define $b_j = j^{t(1+ \frac{1}{\gamma})-1}$, $Z_n = \sum_{j = 1}^{n} \lambda_t^{-j} b_j |\hLp_t^j 1|_1$
and
\[
\psi_n = \frac{1}{Z_n} \sum_{i=1}^{n} \lambda_t^{-j} b_j \hLp_t^j 1 .
\]
By Lemma~\ref{lem:reg} and using the convexity of $\| \cdot \|_{q,\log}$, $\psi_n$ is a sequence of probability densities with
$\| \psi_n \|_{q,\log} \le tC_d$ for each $n$.  Thus by \cite[Lemma 3.6]{DemFer13}, any limit
point of this sequence must again be of the form \eqref{eq:form}.

Suppose $\mu$ is the limit of $\{ \psi_{n_j} \}_{j \in \N}$.  Then since $\mu$ gives 0 weight
to the discontinuities of 
$\hLp_t 1$, we have
\begin{equation}
\label{eq:conditional}
\begin{split}
& \hLp_t \mu  = \lim_{j \to \infty} \frac{1}{Z_{n_j}} \sum_{i=1}^{n_j} \lambda_t^{-i} b_i \hLp_t^{i+1}1 \\
& = \lim_{j \to \infty} \left[ \frac{\lambda_t}{Z_{n_j}} \sum_{i=1}^{n_j} \lambda_t^{-i} b_i \hLp_t^i 1
+ \frac{\lambda_t}{Z_{n_j}} \sum_{i=1}^{n_j} \lambda_t^{-i} b_i \hLp_t^i 1 \left( \frac{b_{i-1}}{b_i} - 1 \right)
 - \frac{b_0 \hLp_t 1}{Z_{n_j}} + \frac{\lambda_t^{-n_j} b_{n_j} \hLp_t^{n_j+1}1}{Z_{n_j}} \right] ,
\end{split}
\end{equation}
where $b_0 := b_1 = 1$.  The first term on the right hand side 
clearly converges to $\lambda_t \mu$, while the
second term converges to 0 (in $L^1(m_t)$) since $\lim_{i \to \infty} \frac{b_{i-1}}{b_i} = 1$.

Next, consider the normalization factors $Z_n$.  By the conformality of $m_t$, we have
\[
\int \hLp_t^n 1 \, dm_t = \int_{\hI^n} 1 \, dm_t \ge \bigcup_{k \ge n} m_t(J_k) 
\ge \sum_{k \ge n+h} C k^{-t(1+\frac 1\gamma)} e^{-kp(t)} \ge C (n+h)^{-t(1+ \frac 1\gamma)+1} e^{-np(t)} .
\]
Since $\lambda_t = e^{-p(t)}$, it follows that
\[
\lambda_t^{-i} b_i |\hLp_t^i 1|_1 \ge C' \qquad \mbox{for all $i \ge 1$},
\]
which implies that the sequence $Z_n$ is increasing and unbounded.  Thus the
third term on the right hand side of \eqref{eq:conditional} converges to 0 (again in $L^1(m_t)$)
as $j \to \infty$.  Finally, the fourth term converges to 0 as well, since the numerator is the
final summand of the subexponentially diverging series in the denominator.

We have shown that $\hLp_t \mu = \lambda_t \mu$ and iterating this relation
yields $\hLp_t^n \mu = \lambda_t^n \mu$, which implies $s=0$ so that $\mu = \psi_* m_t$
is an absolutely continuous conditionally invariant probability measure with eigenvalue $\lambda_t$.
Indeed, due to the regularity of $\psi_n$, which is inherited by $\psi_*$, $\psi_n$ converges 
pointwise uniformly to $\psi_*$ on each element of $\mathcal{P}_{N_0}$.  Thus
the convergence of $\psi_n$ to $\psi_*$ holds in $L^1(m_t)$.


\section{Proof of Theorem~\ref{thm:stable}}
\label{stable proof}

In this section, we will work principally with the domain
$Y := [1/2,1]$ and the induced map 
$F = f^{\tau} : Y \circlearrowleft$,
where $\tau$ is the first return time to $Y$ defined earlier.  
Recall that $F$ has countably many branches created by the preimages of 
the intervals $J_n$, $n \ge 0$.  Let $Y_n \subset [1/2,1]$ be the interval such that
$f(Y_n) = J_n$.  Then $F(Y_n) = Y$ for each $n \ge 0$ so that
$F$ is a full-branched Gibbs-Markov map.

Now fix a family of holes $( H_\ve )_{\ve \le \ve_0}$ satisfying assumption {\bf (H)}
of Section~\ref{main results}.  Unlike in previous sections, the holes $H_\ve$ are not required to
be elements of a Markov partition for $f$.

Since we are interested in the limit as $H_\ve$ shrinks to a point, and thus $t^{H_\eps}$ increases to 1,
we will make a standing assumption throughout this section that 
\begin{equation}
\label{eq:tH}
t^{H_\ve} > \frac{2\gamma}{1+\gamma} .
\end{equation}

Our goal is to construct an invariant measure on the survivor set compatible with
the punctured potential $t\phi^{H_\ve} - p^{H_\ve}(t)$ and then show that
this sequence of singular measures converges weakly to the absolutely continuous (with respect to
$m_t$)
equilibrium state $\mu_t$ for the closed system as $\ve \to 0$.  

This program will be carried out in several steps.  
We first derive a uniform bound on the essential spectral radius of the transfer operator associated with the induced
punctured potential $t\Phi^{H_\ve} - \tau p^{H_\ve}(t)$  
(Section~\ref{induced}) and then show that for small holes it has a spectral gap 
(Section~\ref{ssec:pert}).  
In Section~\ref{S invariant} we use the spectral gap to construct an invariant measure
for $\hF$ supported on $Y$.  This measure projects to an invariant measure
for $\hf$ on $\hI^\infty$ and items (1)-(5) of Theorem~\ref{thm:stable} are proved in Sections~\ref{T invariant} and \ref{ssec:analytic}.

Since we normalize the induced potential $t\Phi^{H_\ve}$
by the punctured pressure $p^{H_\ve}(t)$ rather than $p(t)$, the measure $m_t$ is no longer
a conformal measure for this potential.
In order to proceed, we first prove the existence of a conformal measure
for this potential.  

Recall the definition of the variation of a function $\psi$ on an interval $J$
defined by \eqref{eq:var def}.

\begin{lemma}
\label{lem:conformal}
For $t < 1$, let 
$$P_{t,\ve} = P(t\Phi - \tau p^{H_\ve}(t)).$$  Then $P_{t,\ve} >0$ and 
there exists a $(t\Phi - \tau p^{H_\ve}(t) - P_{t,\ve})$-conformal measure $\tm_{t,H_\ve}$ 
for $F$ on $Y$,
which has no atoms.
\end{lemma}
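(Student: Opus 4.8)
The plan is to construct the conformal measure for the induced map $F : Y \to Y$ via the standard transfer-operator approach for Gibbs--Markov maps, exploiting the fact that (before the hole is introduced) $F$ is a full-branched Gibbs--Markov map with countably many branches $Y_n$, on which $F(Y_n) = Y$. First I would record why $P_{t,\ve} > 0$: since $t^{H_\ve} > 2\gamma/(1+\gamma) > \gamma/(1+\gamma)$, we have $\sum_n |Y_n|^t < \infty$ by Lemma~\ref{lem:basic tail} and the number of branches with a given return time is uniformly bounded, so the partition function $\tilde Z_1(t\Phi - \tau p^{H_\ve}(t))$ is finite; hence $P_{t,\ve} < \infty$. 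Positivity follows because $p^{H_\ve}(t) > 0$ when $t < t^{H_\ve}$ (Proposition~\ref{prop:tH}(e)) combined with the fact that $t\Phi$ generates a recurrent potential: indeed by Proposition~\ref{prop:tH}(e), $P(t\Phi^{H_\ve} - p^{H_\ve}(t)\tau) = 0$, and since $\Phi \ge \Phi^{H_\ve}$ with strict inequality on the part of $Y$ feeding into the hole (which has positive measure by non-swallowing), $P(t\Phi - \tau p^{H_\ve}(t)) > P(t\Phi^{H_\ve} - \tau p^{H_\ve}(t)) = 0$. Alternatively one gets $P_{t,\ve} > 0$ directly by exhibiting an invariant measure (e.g.\ lifted from an ergodic measure with positive entropy) whose free energy against $t\Phi - \tau p^{H_\ve}(t)$ is positive, via Abramov's formula.

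Next I would build the conformal measure. Consider the transfer operator $\mathcal{L}_{t,\ve}$ on $Y$ associated with the normalized induced potential $t\Phi - \tau p^{H_\ve}(t) - P_{t,\ve}$, acting on $BV(Y)$ (or on the space of functions with bounded log-H\"older constant on the branches $Y_n$). Because $F$ is full-branched and the distortion of $\Phi$ is controlled — $\phi = -\log|Df|$ is H\"older with exponent $\gamma/(1+\gamma)$ by (D2), so $\Phi$ lifts to a locally H\"older potential, and the branches satisfy bounded distortion — the operator $\mathcal{L}_{t,\ve}$ satisfies a Lasota--Yorke inequality on $BV(Y)$ with essential spectral radius strictly less than $1$, and the normalization by $P_{t,\ve}$ makes its spectral radius equal to $1$. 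One then obtains a nonnegative fixed functional of the dual operator $\mathcal{L}_{t,\ve}^*$: a probability measure $\tm_{t,H_\ve}$ with $\mathcal{L}_{t,\ve}^* \tm_{t,H_\ve} = \tm_{t,H_\ve}$, which unwinds to the conformal relation
\[
\tm_{t,H_\ve}(F(A)) = \int_A e^{-(t\Phi - \tau p^{H_\ve}(t) - P_{t,\ve})} \, d\tm_{t,H_\ve}
\]
for each branch domain $A$ on which $F$ is injective. Equivalently, this is a $(t\Phi - \tau p^{H_\ve}(t) - P_{t,\ve})$-conformal measure for $F$ in the sense used in the paper. Since this is precisely the BIP/Gibbs setting of Section~\ref{ssec:sym}, one could also simply invoke Theorem~\ref{thm:Gibbs}: because $P_G(t\Phi - \tau p^{H_\ve}(t)) = P_{t,\ve} < \infty$, there is an invariant Gibbs measure, and the accompanying conformal measure is the object $\tm_{t,H_\ve}$ — this is perhaps the cleanest route and avoids re-deriving the Lasota--Yorke estimate.

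Finally, the no-atoms claim: any atom of $\tm_{t,H_\ve}$ at a point $y$ would, by conformality and the fact that every point has at least one $F$-preimage with uniformly bounded Jacobian (non-swallowing, plus the full-branched structure before the hole), force $y$ to be periodic for $F$ with the total mass concentrated on its orbit; but conformality also spreads mass from $y$ to all of its infinitely many backward images under the full-branched map, whose Jacobians are summable (again $\sum_n |Y_n|^t < \infty$), and the contribution from the $Y_n$-branches would give $\tm_{t,H_\ve}(Y)$ a strictly larger value than the mass of the single orbit — a contradiction unless that mass is zero. I expect the main obstacle to be verifying the Lasota--Yorke / quasi-compactness estimate carefully enough that the normalization constant is genuinely $\exp(P_{t,\ve})$ and the fixed functional is a genuine (non-trivial, non-atomic) probability measure — i.e.\ ruling out that all the mass escapes to the ``cusp'' near $1/2$; condition {\bf (H)} and the uniform lower bound $\inf_k |\hat F_\ve^2(Z_{k,\ve})| > 0$ will be exactly what prevents this, but invoking it at the right point of the argument is the delicate step. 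Using Theorem~\ref{thm:Gibbs} sidesteps most of this, leaving only the elementary verification of $0 < P_{t,\ve} < \infty$ and the no-atoms argument.
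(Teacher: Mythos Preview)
Your positivity argument for $P_{t,\ve}$ has a genuine gap. Your primary route compares $P(t\Phi - \tau p^{H_\ve}(t))$ to $P(t\Phi^{H_\ve} - \tau p^{H_\ve}(t))$ and invokes Proposition~\ref{prop:tH}(e) to say the latter equals $0$. But Proposition~\ref{prop:tH}(e) applies only for $t < t^{H_\ve}$; for $t \in [t^{H_\ve},1)$ one has $p^{H_\ve}(t)=0$ and $P(t\Phi^{H_\ve})\le 0$ by parts (a) and (d), so your comparison yields only $P_{t,\ve} > (\text{something nonpositive})$, which is no information. The paper's argument is uniform in $t<1$ and more direct: start from $P(t\Phi - \tau p(t)) = 0$ and use $\tau\ge 1$ together with $p(t) > p^{H_\ve}(t)$ to get $P_{t,\ve} \ge p(t) - p^{H_\ve}(t) > 0$, the strict inequality coming from Corollary~\ref{cor:variational} for Markov holes and monotonicity in the hole for non-Markov ones. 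Your alternative via Abramov would also cover all $t<1$ if you used the equilibrium state $\mu_t$ of the \emph{closed} system (free energy $p(t) > p^{H_\ve}(t)$), but as written it is only a sketch.

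For the conformal measure, your route (a) and the paper's are essentially the same approach packaged differently: the paper verifies the three ``contracting potential'' conditions (i)--(iii) of \cite{LSV1} and invokes \cite[Theorem~3.1]{LSV1}, which already delivers a non-atomic conformal measure --- so the no-atoms claim comes for free and needs no separate argument. Your route (b) via Theorem~\ref{thm:Gibbs} does not quite land as stated: that theorem produces the invariant \emph{Gibbs} measure, not the conformal eigenmeasure of the dual operator; in the BIP setting the two are linked through Sarig's generalized Ruelle--Perron--Frobenius theorem, but you would have to cite that rather than Theorem~\ref{thm:Gibbs}. Finally, note that assumption {\bf (H)} plays no role in this lemma --- the conformal measure is built for the unpunctured full-branched map $F$, so there is no cusp-escape issue to worry about here.
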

\begin{proof}
First note that $P(t\Phi - \tau p(t)) = 0$.  So using the fact that $\tau \ge 1$, we have
\[
0 = P(t\Phi - \tau p(t) + \tau p^{H_\ve}(t) - \tau p^{H_\ve}(t))
\le p^{H_\ve}(t) - p(t) + P(t\Phi - \tau p^{H_\ve}(t)).
\]
If $H_\ve$ is a Markov hole, then
since $p^{H_\ve}(t) - p(t) = \log \lambda_t < 0$ by Corollary~\ref{cor:variational}, 
we conclude that $P_{t,\ve} >0$ as required.
On the other hand, if $H_\ve$ is not a Markov hole, we can always find $H_{\ve'} \subset H_\ve$
such that $H_{\ve'}$ is a Markov hole.  Then the above argument implies $P_{t, \ve'} >0$ so that
by monotonicity, $P_{t,\ve} >0$ as well.

In order to prove the existence of $\tm_{t,H_\ve}$, we will check that the potential
$t\Phi - \tau p^{H_\ve}(t)$ is contracting in the sense of \cite{LSV1}.

\noindent
{\em (i) $e^{t\Phi - \tau p^{H_\ve}(t)}$ is of bounded variation.}  Note that 
for each $n$, $\tau$ is constant on $Y_n$, while
$e^{t\Phi}|_{Y_n} = e^{t S_n \phi}$ is monotonically decreasing.
Thus the variation of $e^{t\Phi - \tau p^{H_\ve}(t)}$ is bounded by
\[
\bigvee_Y e^{t\Phi - \tau p^{H_\ve}(t)} \le \sum_{n=0}^\infty \sup_{Y_n} e^{t\Phi - \tau p^{H_\ve}(t)} 
\le \sum_{n=0}^\infty C (n+1)^{t(1+\frac{1}{\gamma})} e^{-p^{H_\ve}(t) (n+1)}
\]
and the series converges for all $t \in [0,1]$ since for $t < t^{H_\ve}$, we have $p^{H_\ve}(t) >0$, while
for $t \ge t^{H_\ve}$, we have $t(1+\frac{1}{\gamma}) > 1$ by our standing assumption \eqref{eq:tH}
that $t^{H_\ve} > \frac{2\gamma}{1+\gamma}$.  Thus
$e^{t\Phi - \tau p^{H_\ve}(t)}$ has bounded variation.

\noindent
{\em (ii) $\sum_{n\ge 0} \sup_{Y_n} e^{t\Phi - \tau p^{H_\ve}(t)} < \infty$.}
This is the same calculation as above.

\noindent
{\em (iii) There exists $n_0 \in \N$ such that }
\[
\sup_Y e^{tS_{n_0} \Phi - \tau^{n_0} p^{H_\ve}(t)} < \inf_{Y} \Lp^{n_0}_{t\Phi - \tau p^{H_\ve}(t)} 1 .
\]
This is trivial since the left hand side decreases exponentially in $n_0$, while the right
hand side is greater than $\inf_Y \Lp^{n_0}_{t\Phi -\tau p(t)} 1$.  Since
$F$ is full-branched Gibbs Markov and the spectral radius of $\Lp_{t\Phi - \tau p(t)}$ on
$C^1(Y)$ is 1, 
$\Lp^{n_0}_{t\Phi -\tau p(t)} 1$ converges uniformly
on $Y$ to a smooth invariant density for $F$ which is bounded below away from 0.

Now that we have verified that the potential $t\Phi - \tau p^{H_\ve} (t)$ is contracting, we may
apply \cite[Theorem 3.1]{LSV1} to conclude the existence of the 
$(t\Phi - \tau p^{H_\ve} (t)-P_{t, \eps})$-conformal
measure $\tm_{t,H_\ve}$. 
\end{proof}

The importance of $\tm_{t,H_\ve}$ is that it enables us to compute 
the escape of mass from $Y$ under $\hF$ via a change of variables.
Define $\Psi_{1,\ve} = t \Phi - \tau p^{H_\ve}(t) - P_{t,\ve}$ and the corresponding punctured
potential by $\Psi_{1, \ve}^{H} = t \Phi^{H_\ve} - \tau p^{H_\ve}(t) - P_{t,\ve}$.
Let $\tilde{H}_\ve$ be the hole in $Y$ induced by $H_\ve$ and let
$\hY_\ve^n = \cap_{i=0}^n F^{-i}(Y \setminus \tilde{H}_\ve)$ denote the set of points in $Y$ which 
do not escape in the first $n$ iterates of the induced map $F$. 
The corresponding transfer operators are denoted
\[
\hLp_{\Psi_{1, \ve}^{H}} \psi = \Lp_{\Psi_{1, \ve}} (1_{\hY_\ve^1} \psi).
\]  
Thus,
\[
\int_Y \hLp_{\Psi_{1, \ve}^{H}}^n \psi \, d\tm_{t,H_\ve} = \int_{\hY^n} \psi \, d\tm_{t,H_\ve} ,
\]
so that the rate of escape with respect to $\tm_{t,H_\ve}$ is governed by the spectral radius of
$\hLp_{\Psi_{1, \ve}^{H}}$.  
We note that for $t=1$, the existence of such a conformal measure is trivial since
$p^{H_\ve}(1) = p(1) = 0$ and $m_1$ remains the conformal measure for $\Lp_\Phi$.


\subsection{Lasota-Yorke Inequalities for the Induced Potentials}
\label{induced}

In the next two sections, we will establish strong spectral properties
for the induced open system $\hF_\ve : \hY_\ve^1 \to Y$ and transfer operators with respect to several
potentials.  Recall $\Psi_{1, \ve}$ and $\Psi_{1, \ve}^H$ defined above.  Similarly, define
$\Psi_2 = t\Phi - \tau p(t)$ and its punctured counterpart $\Psi_2^{H_\ve} = t\Phi^{H_\ve} - \tau p(t)$.
We will study the spectral properties of the corresponding transfer operators on spaces
of functions of bounded variation.

For $\psi \in BV(Y)$, define $\| \psi \|_{BV} = \bigvee_Y \psi + |\psi|_{L^1(m_t)}$.
In this section, we prove the following proposition.

\begin{proposition}
\label{prop:quasi}
Let $\{ H_\ve \}_{\ve \le \ve_0}$ be a nested family of intervals satisfying assumption {\bf (H)}.
There exist constants $C> 0$ and $\sigma_1 < 1$ such that for all 
$\psi \in BV(Y)$, $\ve \in [0, \ve_0]$ and $n \ge 0$,
\[
\| \hLp^n_{\Psi_2^{H_\ve}} \psi \|_{BV} 
 \le C \sigma_1^n \| \psi \|_{BV} + C |\psi|_{L^1(m_t)} .
\]
As a consequence, the essential spectral radius of $\hLp_{\Psi_2^{H_\ve}}$
as an operator on $BV(Y)$ is uniformly bounded by $\sigma_1$
for all $\ve \in [0,\ve_0]$.
\end{proposition}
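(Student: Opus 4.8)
The target is a Lasota--Yorke inequality on $BV(Y)$ for the operators $\hLp_{\Psi_2^{H_\ve}}$ with constants uniform in $\ve\in[0,\ve_0]$; quasi-compactness with uniformly bounded essential spectral radius then follows from the compact embedding $BV(Y)\hookrightarrow L^1(m_t)$ and \cite{HH}, as in the discussion after Lemma~\ref{lem:lasota yorke}. First I would assemble the bookkeeping: $F=f^\tau$ is full-branched Gibbs--Markov with $|DF|\ge 2$ and $e^{t\Phi}=|DF|^{-t}$ monotone on each branch $Y_n$; the distortion bound (D2) passes to branches of $F$ and $F^2$; and, since $m_t$ is $(t\phi-p(t))$-conformal and $Y=[1/2,1]$ is bounded away from the neutral point (so $m_t|_Y$ has density bounded above and below), conformality together with distortion gives, for every surviving $k$-cylinder $Z$ of $\hF_\ve$, the comparability $m_t(Z)\asymp\big(\sup_Z e^{tS_{\tau^k}\phi-\tau^k p(t)}\big)\,m_t(\hF^k_\ve Z)$, with constants independent of $\ve$, $Z$ and $k$. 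One also checks $\#\{\text{surviving sub-intervals of }\hF^k_\ve\text{ inside a single }k\text{-cylinder of }F\}\le k+1$ uniformly in $\ve$, since each of the $k$ survival constraints deletes one interval.

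The one place where {\bf (H)} is essential is in the boundary terms: extending $(\psi\,e^{\text{weight}})\circ\xi$ by zero outside the image $\hF^k_\ve(Z)$ of a surviving cylinder contributes, after $\inf_Z|\psi|\le m_t(Z)^{-1}\int_Z|\psi|\,dm_t$, a coefficient of $|\psi|_{L^1(m_t)}$ bounded by $\sup_Z m_t(\hF^k_\ve Z)^{-1}$, and the summed contribution of the $\sup_Z e^{\text{weight}}$ terms to the coefficient of $\bigvee\psi$ is $\le C\big(\inf_Z m_t(\hF^k_\ve Z)\big)^{-1}m_t(Y)$; both need $\inf_Z\Leb(\hF^k_\ve Z)>0$ uniformly in $\ve$, which {\bf (H)} supplies for $k=2$ (and, working with $F$ in place of $F^2$ when $t=1$ as in the Remark, for $k=1$). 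Granting this, I would write $\hLp^{k}_{\Psi_2^{H_\ve}}\psi=\sum_W\big((\psi\,g_k)\circ\xi_W\big)1_{\hF^k_\ve W}$, the sum over surviving $k$-cylinders $W$, with $g_k=e^{tS_{\tau^k}\phi-\tau^k p(t)}$, bound $\bigvee_Y$ of each summand by $\bigvee_W(\psi g_k)$ plus two endpoint terms, then by $\sup_W g_k\big(\bigvee_W\psi+\sup_W|\psi|\big)$ using monotonicity of $e^{tS\phi}$, and $\sup_W|\psi|\le\bigvee_W\psi+\inf_W|\psi|$. Summing over the disjoint $W$ one gets $\sum_W\bigvee_W\psi\le\bigvee_Y\psi$; the $\sup_W g_k$ sums are finite and uniform in $\ve$ by the first paragraph and {\bf (H)}, the $\inf_W|\psi|$ terms feed a term $C|\psi|_{L^1(m_t)}$, and the coefficient of $\bigvee_Y\psi$ is a fixed multiple of $\sup_W\sup g_k\le 2^{-kt}e^{-kp(t)}=e^{-k(t\log 2+p(t))}\le e^{-k\delta_*}$, where $\delta_*:=\inf_{s\in[0,1]}\big(s\log 2+p(s)\big)>0$ because $p(0)=\log 2>0$ and $p(s)>0$ for $s<1$. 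Taking $k=n_0$ large makes that multiple $<1$, so $\|\hLp^{n_0}_{\Psi_2^{H_\ve}}\psi\|_{BV}\le\sigma\|\psi\|_{BV}+C|\psi|_{L^1(m_t)}$ with $\sigma<1$ and $C$ uniform in $\ve$; since $\hLp_{\Psi_2^{H_\ve}}$ is uniformly bounded on $BV(Y)$, iterating over blocks of length $n_0$ yields the proposition with $\sigma_1=\sigma^{1/n_0}$.

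The main obstacle is that the contraction in the last paragraph may force $n_0>2$ (it is $>2$ unless $\delta_*>\tfrac12\log 2$, which one cannot guarantee for every $f_\gamma$), and then the boundary estimate needs $\inf_W\Leb(\hF^{n_0}_\ve W)>0$ uniformly in $\ve$ for surviving $n_0$-cylinders, whereas {\bf (H)} only postulates this for $\hF^2_\ve$. Propagating the big-images property to iterates of $\hF^2_\ve$ is the delicate step: I would exploit the renewal structure of $F$ to show that, for small $H_\ve$, the images $\hF^2_\ve(Z_{k,\ve})$ take only finitely many values, all of size $\ge c_0$, and control how these image-types behave under further iteration of $\hF^2_\ve$; alternatively, one can run the $k$-step estimate inductively in $k$, noting that a boundary term created at step $j$ is thereafter multiplied only by the weight of the remaining $k-j$ steps, hence by $e^{-\delta_*(k-j)}$, so such contributions sum geometrically and the extension multiplier does not compound. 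I expect this propagation to be the part requiring the most care.
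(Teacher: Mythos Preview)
Your outline is the paper's: decompose $\hLp^k_{\Psi_2^{H_\ve}}\psi$ as a sum over surviving $k$-cylinders, bound the variation of each summand using monotonicity of $e^{tS_{\tau^k}\phi-\tau^k p(t)}$, control endpoint terms via $\inf_W|\psi|\le m_t(W)^{-1}\int_W|\psi|\,dm_t$ together with the big-images bound from {\bf (H)}, and then iterate. The compactness of the embedding $BV\hookrightarrow L^1(m_t)$ and \cite{HH} then give the bound on the essential spectral radius exactly as you say.

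The obstacle you flag, however, is not genuine: the paper shows that $k=2$ always suffices, so {\bf (H)} is precisely the right hypothesis and no propagation of big images to higher iterates is needed. The point is quantitative. With $g_\gamma(t):=t\log 2+p_\gamma(t)$ (your $\delta_*=\inf_t g_\gamma(t)$), the coefficient of $\bigvee_Y\psi$ in the two-step estimate is $3\sup_W e^G\le 3e^{-2g_\gamma(t)}$, and the paper proves (Sublemma~\ref{lem:contract}) that
\[
\inf_{t\in[0,1],\,\gamma\in(0,1)} g_\gamma(t)\;\ge\;\frac{4(\log 2)^2}{\log(12\sqrt{5})}\;\approx\;0.584\;>\;\tfrac12\log 3,
\]
so $3e^{-2g_\gamma(t)}<1$ uniformly. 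This is obtained by bounding the convex function $g_\gamma$ from below by the intersection of a lower tangent line at $t=1$ (using $\chi_\gamma(\mu_1)\downarrow 0$ as $\gamma\uparrow 1$) and one at $t=0$ (using an explicit upper bound $\chi_\gamma(\mu_0)\le\tfrac14\log(12\sqrt5)$ for the Lyapunov exponent of the measure of maximal entropy, via equidistribution of $\mu_0$ on two-cylinders and a monotonicity argument in $\gamma$ for $Df_\gamma(a_\gamma)$). This replaces your qualitative ``$\delta_*>0$'' with the sharp inequality needed, and renders your proposed inductive/propagation arguments unnecessary.
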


\begin{remark}
Similarly,  if we define the norm
$\| \psi \|_{BV, H_\ve} = \bigvee_Y \psi + |\psi|_{L^1(\tm_{t,H_\ve})}$, one can also prove
the inequality
\[
\| \hLp^n_{\Psi_{1, \ve}^{H}} \psi \|_{BV, H_\ve} 
 \le C \sigma_1^n \| \psi \|_{BV, H_\ve} + C |\psi|_{L^1(\tm_{t, H_\ve})} 
\]
following closely the proof of Proposition~\ref{prop:quasi}, 
and showing directly that the essential spectral radius of
$\hLp_{\Psi_{1, \ve}^H}$ is again bounded by $\sigma_1$.  
We will not need this estimate, however, so we do not prove it.
\end{remark}

Proposition~\ref{prop:quasi} together with Lemma~\ref{lem:small pert} 
suffice to prove quasi-compactness
of $\hLp_{\Psi_2^{H_\ve}}$ for all $\ve>0$ sufficiently small.

Before proceeding to the proof of Proposition~\ref{prop:quasi}, we make some
observations about our holes $H_\ve$.
Let $\tH_\ve$ be the hole in $Y$ induced by $H_\ve$ and recall the sets
$\{ Y_{i,j} \}_{i, j \ge 0}$ defined in Section~\ref{main results}, which denote the maximal intervals 
on which $\tau^2$ is constant,
$Y_{i,j} = Y_i \cap F^{-1}(Y_j)$. 
Since $F$ is full branched and $H_\ve$ has finitely many components, it follows that
$\hF^2 := F^2|_{\hY_\ve^2}$ enjoys the finite images condition:  The set 
$\{ \hF^2(Y_{i,j}) \}_{i,j \ge 0}$ comprises a finite union of intervals.
The number of these intervals varies depending on the placement of $H_\ve$, but is uniformly
bounded above.

For example, suppose $z = H_0 \subset [0,1/2]$ lies in the interior of one of the
intervals $J_{i_0, j_0} := J_{i_0} \cap f_L^{-n}(Y_{j_0})$, for some $i_0 \ge 1$, $j_0 \ge 0$, where 
$f_L$ is the left branch of $L$.  It follows from condition {\bf (H)} that $H_\ve \subset J_{i_0, j_0}$
for $\ve \le \ve_0$ since otherwise, as $\ve \to 0$, image intervals of arbitrarily short length
would be created.
For $i \in \N$ and $j < i_0$, $j \neq j_0$, we have
$\hF_\ve^2(Y_{i,j}) = Y$, while for $j \ge i_0$, $j \neq j_0$, we have 
$\hF_\ve^2(Y_{i,j}) = Y \setminus f^{i_0}(H_\ve)$, which is a union of two intervals,
$A_1 = [1/2, a_1]$ and $A_2 = [a_2, 1]$.  Note that $f^{i_0}(H) \subset Y_{j_0}$.  It remains
to consider the intervals $Y_{i,j}$ with $j = j_0$.  If $i < i_0$ and $j_0 < i_0$, then again,
$\hF_\ve^2(Y_{i,j_0}) = Y$, while if $j_0 \ge i_0$ then 
$\hF_\ve^2(Y_{i,j_0}) = Y \setminus f^{i_0}(H_\ve)$.
If $i \ge i_0$ and $j < i_0$, then $\hF^2_\ve(Y_{i,j_0}) = Y \setminus F(f^{i_0}H_\ve)$, again
the union of two intervals, $A_3 = [1/2, a_3]$ and $A_4 = [a_4,1]$.
Finally, if $i \ge i_0$ and $j_0 \ge i_0$, then 
$\hF_\ve^2(Y_{i,j_0}) = Y \setminus (f^{i_0}(H_\ve) \cup F(f^{i_0}H_\ve))$, which can be at most
3 intervals, $A_5 = [1/2, a_5]$, $A_6 = [a_6, 1]$ and $A_7 = [a_7, a_8]$.
Other cases for $z$ on the boundary of two consecutive $J_{i,j}$ or in $[1/2, 1]$ are similar.
In all cases, our assumption (H) guarantees that the minimum length of these image 
intervals is uniformly
bounded away from 0 in $\ve$, making it possible to obtain uniform Lasota-Yorke inequalities.

Finally, although $1_{Y \setminus \tH_\ve}$ has infinite variation when $H_\ve \subset [0,1/2]$, 
$\Lp_{\Psi_{1, \ve}} (1_{Y \setminus \tH_\ve})$ has finite variation since 
$\Lp_{\Psi_{1, \ve}} (1_{Y \setminus \tH_\ve})$ is smooth on each of the finitely many images
of $Y$ under $\hF$.  The same holds true for $1_{Y \setminus (\tH_\ve \cup F^{-1}(\tH_\ve))}$
and $\hLp_{\Psi_{1, \ve}}^2 (1_{Y \setminus (\tH_\ve \cup F^{-1}(\tH_\ve))})$ as well as
$\hLp_{\Psi_{2}}^2 (1_{Y \setminus (\tH_\ve \cup F^{-1}(\tH_\ve))})$

\begin{lemma}
\label{lem:LY}
Let $( H_\ve )_{\ve \le \ve_0}$ be a family of holes satisfying assumption (H).  
Then there exist constants $C_3 >0$ and $\sigma <1$, independent of $\ve \le \ve_0$, 
such that for all $\psi \in BV(Y)$,
\[
\| \hLp_{\Psi_2^{H_\ve}}^2 \psi \|_{BV} \le \sigma \| \psi \|_{BV} + C_3 |\psi|_{L^1(m_t)} . 
\]
\end{lemma}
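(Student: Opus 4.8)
The plan is to establish a Rychlik-type Lasota--Yorke inequality for the weighted transfer operator of the twice-induced open map $\hF^2_\ve = F^2|_{\hY_\ve^2}$, keeping every constant uniform in $\ve\le\ve_0$. Write $\hLp := \hLp_{\Psi_2^{H_\ve}}$; then $\hLp^2\psi = \Lp_{\Psi_2^{(2)}}(1_{\hY_\ve^2}\psi)$, where $\Psi_2^{(2)} = tS_{\tau^2}\phi - \tau^2 p(t)$ is the two-step induced potential of $F$. I would index by $\{Z_k\}_k$ the maximal intervals on which $\hF^2_\ve$ is smooth, exactly as in assumption {\bf (H)}: each $Z_k$ lies in one of the sets $Y_{i,j}$ (on which $\tau^2$ is constant), $\hF^2_\ve$ maps $Z_k$ bijectively onto one of finitely many image intervals $\hF^2_\ve(Z_k)$, and by {\bf (H)} together with the analysis preceding this lemma these images have length bounded below by some $\delta_0>0$ uniformly in $\ve$. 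With $\xi_k$ the inverse branch and $g_k = e^{\Psi_2^{(2)}}\circ\xi_k$ the associated weight, $\hLp^2\psi = \sum_k (g_k\cdot\psi\circ\xi_k)\,1_{\hF^2_\ve(Z_k)}$.

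Two bookkeeping facts are needed first. (i) \emph{Bounded distortion}: since each $Z_k$ is contained in a single branch of $F^2$, along which $f^i(x),f^i(y)$ stay in a common element of $\mathcal{P}_1$, applying (D2) over the orbit of length $\tau^2$ (recall $\phi$ is $\tfrac{\gamma}{1+\gamma}$-H\"older along such orbits, as in Section~\ref{apply tower}) gives $\sup_{Z_k}g_k/\inf_{Z_k}g_k \le C_d'$ and $\bigvee_{\hF^2_\ve(Z_k)}g_k \le C_d''\sup_{Z_k}g_k$, uniformly in $k$ and $\ve$. (ii) \emph{Comparison with $m_t$}: since $m_t$ is conformal for $t\phi-p(t)$, hence for the induced potential $\Psi_2 = t\Phi-\tau p(t)$ under $F$, a change of variables gives $m_t(Z_k) = \int_{\hF^2_\ve(Z_k)} g_k\,dm_t$; combining this with (i), with the fact that $m_t$ restricted to $Y$ has no atoms and gives every interval positive mass, and with $|\hF^2_\ve(Z_k)|\ge\delta_0$, one obtains $m_t(\hF^2_\ve(Z_k))\ge c_2>0$ and hence $\sup_{Z_k}g_k \le C\,m_t(Z_k)$, with $C,c_2$ uniform in $\ve$. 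Fact (ii) is what turns the $\inf_{Z_k}|\psi|$ terms appearing below into the $L^1(m_t)$ term.

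The estimate itself is routine. Using $\bigvee_Y\big[(g_k\psi\circ\xi_k)1_{\hF^2_\ve(Z_k)}\big] \le \bigvee_{\hF^2_\ve(Z_k)}(g_k\psi\circ\xi_k) + 2\sup_{Z_k}|g_k\psi|$ to account for the jumps at the two endpoints of the image, then $\bigvee_{\hF^2_\ve(Z_k)}(g_k\psi\circ\xi_k) \le \sup_{Z_k}g_k\bigvee_{Z_k}\psi + \sup_{Z_k}|\psi|\bigvee_{\hF^2_\ve(Z_k)}g_k$, the bound $\sup_{Z_k}|\psi| \le \bigvee_{Z_k}\psi + m_t(Z_k)^{-1}\int_{Z_k}|\psi|\,dm_t$, the distortion estimate (i), the comparison (ii), and the superadditivity $\sum_k\bigvee_{Z_k}\psi \le \bigvee_Y\psi$ (the $Z_k$ partition $Y$ into intervals), I get
\[
\bigvee_Y\hLp^2\psi \;\le\; (3+C_d'')\,\Big(\sup_k\sup_{Z_k}g_k\Big)\bigvee_Y\psi \;+\; C_3\,|\psi|_{L^1(m_t)},
\]
with $C_3$ uniform in $\ve$. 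Since $m_t$ is conformal for $\Psi_2$, the positive operator $\hLp$ contracts $L^1(m_t)$, so $|\hLp^2\psi|_{L^1(m_t)}\le|\psi|_{L^1(m_t)}$; adding this to the display yields $\|\hLp^2\psi\|_{BV} \le \sigma\|\psi\|_{BV} + C_3|\psi|_{L^1(m_t)}$ with $\sigma := (3+C_d'')\sup_k\sup_{Z_k}g_k$, and Proposition~\ref{prop:quasi} then follows by iterating in the standard way (absorbing odd powers via boundedness of $\hLp$ on $BV(Y)$).

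The one point requiring care --- and the reason the lemma is phrased for $\hF^2$ rather than $\hF$ --- is the strict inequality $\sigma<1$. Here $\sup_k\sup_{Z_k}g_k = \sup_Y e^{tS_{\tau^2}\phi-\tau^2 p(t)}$, and this supremum is attained on the branch $Y_{0,0}$, on which $F^2$ is affine with slope $4$ and $\tau^2=2$; thus $\sup_k\sup_{Z_k}g_k = 4^{-t}e^{-2p(t)}<1$ for every $t\in[0,1]$ (both endpoints $t=0$ and $t=1$ give the value $\tfrac14$, whereas with one iterate the minimal slope would be only $2$, which is why $\hF$ does not suffice, and the bound is plainly uniform in $\ve$). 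For the fixed $t$ at hand one checks the distortion constant $C_d''$ of $F^2$ is small enough that $(3+C_d'')\,4^{-t}e^{-2p(t)}<1$; and in any case passing to a higher iterate $\hF^{2m}$ replaces $\sup g_k$ by $(4^{-t}e^{-2p(t)})^m$ while $C_d''$ stays bounded, so $\sigma<1$ can always be arranged, with $C_3$ still uniform in $\ve$ by {\bf (H)}. I expect establishing $\sigma<1$ to be the only genuinely delicate step; the rest is the conformality bookkeeping plus the finite-images structure of $\hF^2_\ve$ already set up before the lemma.
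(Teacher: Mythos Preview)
Your overall structure---Rychlik-type bookkeeping on the branches $Z_k$ of $\hF_\ve^2$, conformality of $m_t$ to convert $\sup g_k/m_t(Z_k)$ into $1/m_t(\hF_\ve^2(Z_k))$, and assumption {\bf (H)} to make the latter uniform in $\ve$---is exactly the paper's approach. The gap is precisely where you flag it: the strict inequality $\sigma<1$.

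Neither of your two proposed routes closes it. The claim that ``one checks $C_d''$ is small enough that $(3+C_d'')\,4^{-t}e^{-2p(t)}<1$'' is not a check you can carry out: the distortion constant $C_d''$ comes from (D2) and depends on $\gamma$ in an uncontrolled way, while $4^{-t}e^{-2p(t)}=e^{-2(t\log 2+p(t))}$ is only barely below $1/3$ at the minimum of $t\mapsto t\log 2+p(t)$ (the paper's Sublemma~\ref{lem:contract} eventually gets $<1/3.216$), so you would need $C_d''<0.22$, which there is no reason to expect. The fallback to $\hF^{2m}$ is more serious: it requires a uniform-in-$\ve$ lower bound on the lengths of the images of the branches of $\hF_\ve^{2m}$, and assumption {\bf (H)} gives this only for $m=1$. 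For $m\ge2$ the images are of the form $\hF_\ve^2(A_i\cap Z_j)$ where $A_i$ is an image of $\hF_\ve^2$; the endpoints of $A_i$ move with $\ve$, and nothing in {\bf (H)} prevents them from passing arbitrarily close to an endpoint of some $Z_j$ for some $\ve\in(0,\ve_0]$, producing arbitrarily short image intervals. So your $C_3$ would not be uniform in $\ve$.

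The paper avoids the distortion constant altogether by observing that on each branch $Z_n$ the weight $e^G$ is \emph{monotone} (since $|DF^2|$ is monotone on each $Y_{i,j}$ and $\tau^2$ is constant there), so $\bigvee_{Z_n}e^G\le\sup_{Z_n}e^G$ exactly---no $C_d''$. This gives the clean coefficient $3\max_n\sigma_n$ in front of $\bigvee_Y\psi$. The entire weight of the argument then rests on proving $3\max_n\sigma_n<1$, i.e.\ $\sup_Y e^{-2(t\log 2+p(t))}<1/3$, uniformly in $\gamma\in(0,1)$ and $t\in[0,1]$. This is the content of Sublemma~\ref{lem:contract}, and it is not soft: one has to lower-bound the convex function $g_\gamma(t)=t\log 2+p_\gamma(t)$ at its interior minimum, which the paper does by bounding $g_\gamma$ from below by the intersection of two tangent lines, the slope at $t=0$ being controlled via an explicit upper bound on the Lyapunov exponent $\chi_\gamma(\mu_0)$ of the measure of maximal entropy (using that $\mu_0$ assigns mass $1/4$ to each two-cylinder). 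This is the missing idea in your sketch.
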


\begin{proof}
Despite the countably many components of $\tH_\ve$, the proof follows the 
standard line.  We include it to show that there is sufficient contraction
uniformly for $t \in [0,1]$ and that
the constants are independent of $\ve$ under assumption {\bf (H)}.  

For convenience, let us reindex the countably many intervals on which $\hF^2$ is smooth
and injective by $Z_n = [a_n,b_n]$, $n \in \N$.
Note that each $Z_n \subset Y_{i,j}$ for some pair $(i,j)$, although some $Y_{i,j}$ will contain
two or at most three $Z_n$ as described earlier.
We denote by $\xi_n$ the inverse of $\hF^2$ restricted to $Z_n$.  
For brevity, we will denote the potential for $\hLp_{\Psi_2^{H_\ve}}^2$ by
$G = \sum_{i=0}^1 (t\Phi^{H_\ve} - \tau p(t)) \circ \hF_\ve^i$.  Then for
$\psi \in BV(Y)$, we write,
\begin{equation}
\label{eq:var split}
\begin{split}
\bigvee_Y \hLp_{\Psi_2^{H_\ve}}^2 \psi & = \bigvee_Y \left( \sum_{n} \psi \circ \xi_n \cdot e^{G \circ \xi_n} \right) 
 \le \sum_{n} \bigvee_{\hF^2(Z_n)} (\psi \circ \xi_n \cdot e^{G \circ \xi_n}) \\
& \; \; \; \; \; \; + \sum_{n} |\psi| \circ \xi_n(a_n) \cdot e^{G \circ \xi_n(a_n)} 
+ |\psi| \circ \xi_n(b_n) \cdot e^{G \circ \xi_n(b_n)} ,
\end{split}
\end{equation}
Note that the sum over the endpoints $a_n = 1/2$ or $b_n =1$ may be omitted so that most intervals
(excepting those of type $A_7$ described above) will have at most one endpoint to consider, and
the full branched ones none at all.
Since we must estimate the worst case, however, we will not keep track of these differences in
our estimates.

For an interval $J$ on which $\xi_n$ is smooth, we estimate
\begin{equation}
\label{eq:var 1}
\begin{split}
\bigvee_J \psi \circ \xi_n \cdot e^{G \circ \xi_n}
& = \bigvee_{\xi_n(J)} \psi \cdot e^G
\le \sup_{\xi_n(J)} e^G \bigvee_{\xi_n(J)} \psi + \sup_{\xi_n(J)} |\psi| \bigvee_{\xi_n(J)} e^G \\
&  \le \sup_{\xi_n(J)} e^G \left( 2 \bigvee_{\xi_n(J)} \psi 
+ \frac{1}{m_t(\xi_n(J))} \int_{\xi_n(J)} |\psi| \, dm_t \right),
\end{split}
\end{equation}
where we have estimated $\bigvee_{\xi_n(J)} e^G \le \sup_{\xi_n(J)} e^G$ since
$G$ is monotonic on each $\xi_n(J)$.

Let $\sigma_n <1$ denote the maximum of $e^G$ restricted to $Z_n$.
Using bounded distortion and the conformality of $m_t$, we have
\begin{equation}
\label{eq:LY dist}
\sup_{\xi_n(J)} e^G \frac{1}{m_t(\xi_n(J))} \le \frac{C_d}{m_t(J)} .
\end{equation}
Combining this with our previous estimates, we bound the variation
of $\psi \circ \xi_n \cdot e^{G \circ \xi_n}$ using \eqref{eq:var 1},
\begin{equation}
\label{eq:var 2}
\bigvee_J \psi \circ \xi_n \cdot e^{G \circ \xi_n} \le 2 \sigma_n \bigvee_{\xi_n(J)} \psi 
+ \frac{C_d}{m_t(J)} \int_{\xi_n(J)} |\psi| \, dm_t .
\end{equation}

It remains to estimate the sum over endpoints in \eqref{eq:var split}.  Now
\[
\begin{split}
|\psi| \circ \xi_n(a_n) \cdot  e^{G \circ \xi_n(a_n)} 
& + |\psi| \circ \xi_n(b_n) \cdot e^{G \circ \xi_n(b_n)} 
\le \sigma_n \Big(  |\psi| \circ \xi_n(a_n)  
+ |\psi| \circ \xi_n(b_n)  \Big) \\
& \le \sigma_n \Big(2 \inf_{Z_n} |\psi| + \bigvee_{Z_n} \psi \Big) 
\le \sigma_n \Big( \frac{2}{m_t(Z_n)} \int_{Z_n} |\psi| \, dm_t + \bigvee_{Z_n} \psi \Big) \\
& \le 2C_d m_t(\hF_\ve^2(Z_n))^{-1} \int_{Z_n} |\psi| \, dm_t + \sigma_n \bigvee_{Z_n} \psi ,
\end{split}
\]
where we have used the bounded distortion estimate \eqref{eq:LY dist}
in the last step.  

Using these estimates together with \eqref{eq:var 2} in \eqref{eq:var split} yields,
\[
\bigvee_Y \hLp_{\Psi_2^{H_\ve}}^2 \psi 
\le \sum_{n} 3 \sigma_n \bigvee_{Z_n} \psi + 3C_d\sup_{n} \left\{ m_t(\hF_\ve^2(Z_n))^{-1} \right\} 
\int_Y |\psi| \, dm_t .
\]

\begin{sublemma}
\label{lem:contract}
There exists $\sigma < 1$ such that for all $t \in [0,1]$ and all $\gamma \in (0,1)$,
  $\max_n 3\sigma_n \le \sigma$.
\end{sublemma}

Sublemma~\ref{lem:contract} completes the proof of Lemma~\ref{lem:LY}, 
using the estimate above, the
fact that $\sum_n \bigvee_{Z_n} \psi \le \bigvee_{Y} \psi$ and assumption {\bf (H)} 
that the lengths (and therefore the $m_t$-measures) of the image intervals are bounded 
below away from 0 by a constant independent of $\ve$ and $t$.
\end{proof}

\begin{proof}[Proof of Sublemma~\ref{lem:contract}]
We want to maximize $e^G$ on $Y$ and show that this maximum is less than 1/3.
For $t=1$, this maximum is 1/4 since $p^H(1) = p(1) = 0$.  From now on, we assume $t<1$.

We begin by estimating the weakest contraction due to 
\[
e^{t\Phi^{H_\ve} - \tau p(t)} = (DF)^{-t} e^{-\tau p(t)} .
\]  
Clearly, this is maximized when $\tau = 1$ and $DF = 2$, i.e. at a point in $Y  \cap f^{-1}(Y)$.
We will maximize this by minimizing its reciprocal,\footnote{To prove Sublemma~\ref{lem:contract}
for the potential $\Psi_{1,\ve}^H$, one must instead minimize,
$2^t e^{p^{H_\ve}(t) + P_{t,\ve}}$.
But observe that 
\[
0  = P(t\Phi - \tau p(t)) = P(t\Phi - \tau p^{H_\ve}(t) + \tau (p^{H_\ve}(t) -  p(t))) 
 \le p^{H_\ve}(t) - p(t) + P(t\Phi - \tau p^{H_\ve}(t))
\]
since $\tau \ge 1$.  Thus $P_{t,\ve} \ge p(t) - p^{H_\ve}(t)$ and so 
$2^t e^{p^{H_\ve}(t) + P_{t,\ve}} \ge e^{g_\gamma(t)}$, and the estimate reduces to the current
estimate for $\Psi_2^{H_\ve}$.}
i.e.
$e^{g_\gamma(t)}$,

where
\[
g_\gamma(t) = t \log 2 + p_\gamma(t).
\]
We have added the subscript $\gamma$
to the expression for the pressure $p(t)$ to emphasize its dependence on $\gamma$. 
We proceed to minimize $g_\gamma(t)$ over $\gamma \in (0,1)$ and $t \in [0,1]$.

Note that $g_\gamma(t)$ is strictly convex with $g_\gamma(0) = \log 2 = g_\gamma(1)$, 
for all $\gamma \in (0,1)$,
so that its minimum occurs at an interior point of $[0,1]$.  We will find the minimum of $g_\gamma$
by finding the point of intersection of two lines that lie below it: lower bounds on the
tangent lines to $g_\gamma$ at $t=0$ and $t=1$.

At $t=1$, $g_\gamma'(1) = \log 2 - \chi_\gamma(\mu_1) >0$ where $\chi_\gamma(\mu_1)$
is the positive Lyapunov exponent with respect to the SRB measure $\mu_1$ for $f = f_\gamma$.
Note that $\chi_\gamma(\mu_1) \downarrow 0$ as $\gamma \uparrow 1$
so we take as a lower bound for this tangent line, the line $u = t \log 2$.
This line lies below $g_\gamma$ and
$u(1) = \log 2 = g_\gamma(1)$.  Although $u(t)$ is not tangent to $g_\gamma(t)$,
it is the limit of tangent lines at $t=1$ as $\gamma \to 1$.  

At $t=0$,
$g_\gamma'(0) = \log 2 - \chi_\gamma(\mu_0) < 0$, where $\mu_0$ is the measure of maximal
entropy for $f$. 
We proceed to derive an upper bound for $\chi_\gamma(\mu_0)$ that is independent of
$\gamma \in (0,1)$.

Let $a_\gamma := f_L^{-1}(1/2)$, where $f_L$ denotes the left branch of $f = f_\gamma$.
We will use the fact that $\mu_0$ gives equal weight to all two-cylinders of the partition
$\{ [0, 1/2), [1/2, 1] \}$, i.e.,
\[
\mu_0([0,a_\gamma]) = \mu_0([a_\gamma, 1/2])
= \mu_0([1/2, 3/4]) = \mu_0([3/4, 1]) = 1/4 .
\]
We want to maximize
\begin{equation}
\label{eq:chi}
\chi_\gamma(\mu_0) = \int_{[0,a_\gamma]} \log |Df_\gamma| \, d\mu_0
+  \int_{[a_\gamma, 1/2]} \log |Df_\gamma| \, d\mu_0
+ \int_{[1/2, 1]} \log |Df_\gamma| \, d\mu_0 .
\end{equation}
The last integral above simply equals $\frac 12 \log 2$.  
For the first two integrals, notice that $Df_\gamma(x)$ is strictly increasing for $x \in [0,1/2]$,
so that for all $x \in [a_\gamma, 1/2]$ and all $\gamma \in (0,1)$,
\[
Df_\gamma(x) \le Df_\gamma(1/2) = 2 + \gamma \le 3 = Df_1(1/2) .
\]
On the other hand, for $x \in [0, a_\gamma]$, 
\[
Df_\gamma(x) \le Df_\gamma(a_\gamma) = 1 + (1+\gamma) (2a_\gamma)^\gamma .
\]

We claim that this expression is increasing in $\gamma$ and so is maximized when $\gamma=1$.

\begin{claim}
\label{claim:gamma}
$\sup_{\gamma \in (0,1)} Df_\gamma(a_\gamma) = Df_1(a_1) = \sqrt{5}$.
\end{claim}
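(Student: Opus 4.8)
The plan is to reparametrise by $c:=2a_\gamma$ and reduce the claim to a monotonicity statement for a single explicit function. Since $a_\gamma=f_L^{-1}(1/2)$ for the left branch $f_L(x)=x\bigl(1+(2x)^\gamma\bigr)$, the identity $f_L(a_\gamma)=1/2$ becomes $c+c^{1+\gamma}=1$ with $c\in(1/2,1)$. From $c^{1+\gamma}=1-c$ we get $c^\gamma=(1-c)/c$ and $1+\gamma=\ln(1-c)/\ln c$, so, using $Df_\gamma(x)=1+(1+\gamma)(2x)^\gamma$,
\[
Df_\gamma(a_\gamma)=1+(1+\gamma)c^\gamma=1+(1+\gamma)\tfrac{1-c}{c}=1+\Psi(c),\qquad \Psi(c):=\frac{(1-c)\ln(1-c)}{c\ln c}.
\]
As $\gamma$ runs over $(0,1)$, implicit differentiation of $c+c^{1+\gamma}=1$ (namely $c'(\gamma)\bigl(1+(1+\gamma)c^\gamma\bigr)=-c^{1+\gamma}\ln c>0$) together with strict monotonicity of $b\mapsto b+b^{1+\gamma}$ shows $\gamma\mapsto c(\gamma)$ is a continuous strictly increasing bijection from $(0,1)$ onto $(1/2,b_1)$, where $b_1:=\tfrac{\sqrt5-1}{2}$ is the root of $b+b^2=1$. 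Since $1-b_1=b_1^2$ gives $\ln(1-b_1)=2\ln b_1$, we get $\Psi(b_1)=2b_1=\sqrt5-1$, so $1+\Psi(b_1)=\sqrt5$; and directly $Df_1(a_1)=1+4a_1=\sqrt5$ because $a_1=(\sqrt5-1)/4$ solves $4a^2+2a-1=0$. Thus it suffices to prove that $c\mapsto\Psi(c)$ is strictly increasing on $(1/2,b_1)$: then $Df_\gamma(a_\gamma)=1+\Psi(c(\gamma))$ is strictly increasing in $\gamma$, whence $\sup_{\gamma\in(0,1)}Df_\gamma(a_\gamma)=\lim_{\gamma\to1^-}\bigl(1+\Psi(c(\gamma))\bigr)=1+\Psi(b_1)=\sqrt5=Df_1(a_1)$.

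Next I would differentiate $\Psi$. Writing $\Psi=N/D$ with $N(c)=(1-c)\ln(1-c)$ and $D(c)=c\ln c$, a short computation gives $N'D-ND'=-\phi(c)$, where
\[
\phi(c):=c\ln c+(1-c)\ln(1-c)+\ln c\ln(1-c)=\ln c\ln(1-c)-H(c),
\]
and $H(c):=-c\ln c-(1-c)\ln(1-c)>0$ is the natural-logarithm binary entropy. Since $D^2>0$, strict increase of $\Psi$ on $(1/2,b_1)$ is equivalent to $\phi(c)<0$ there; this inequality is the core of the argument.

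To prove $\phi(c)<0$ on $(1/2,b_1)$ I would control the two pieces of $\phi=\ln c\ln(1-c)-H$ separately, exploiting that on $(1/2,1)$ both terms are strictly decreasing. For $H$ this is immediate since $H'(c)=\ln\tfrac{1-c}{c}<0$, so $H(c)\ge H(b_1)$ whenever $c\le b_1$. For the product, one computes $\frac{d}{dc}\bigl(\ln c\ln(1-c)\bigr)=\frac{(1-c)\ln(1-c)-c\ln c}{c(1-c)}=\frac{\eta(c)}{c(1-c)}$ with $\eta(c):=(1-c)\ln(1-c)-c\ln c$; since $\eta''(c)=\frac{2c-1}{c(1-c)}>0$ on $(1/2,1)$ and $\eta(1/2)=0=\lim_{c\to1^-}\eta(c)$, strict convexity forces $\eta<0$ on $(1/2,1)$, so $c\mapsto\ln c\ln(1-c)$ is strictly decreasing there and $\ln c\ln(1-c)\le(\ln 2)^2$ for $c\in(1/2,1)$. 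Combining, for $c\in(1/2,b_1]$,
\[
\phi(c)\le(\ln 2)^2-H(b_1).
\]
Finally, using $1-b_1=b_1^2$ one has $H(b_1)=-(1+2b_1)\,b_1\ln b_1=\sqrt5\,b_1(-\ln b_1)$, and the elementary bound $-\ln b_1\ge 1-b_1=b_1^2$ with $b_1^3=b_1(1-b_1)=\sqrt5-2$ gives $H(b_1)\ge\sqrt5(\sqrt5-2)=5-2\sqrt5>(\ln 2)^2$ (numerically $0.52>0.49$). Hence $\phi(c)<0$ on $(1/2,b_1)$, which completes the proof.

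The main obstacle is the inequality $\phi<0$ in the third step: it is \emph{tight} at $c=b_1$ (where both the monotonicity of $\Psi$ and the claim itself degenerate to equality), so any crude convexity estimate on $\ln(1-c)$ or on $H$ near $b_1$ loses too much; the decisive point is to split $\phi$ into its two monotone pieces and control each at the \emph{right} endpoint — $c=1/2$ for $\ln c\ln(1-c)$ and $c=b_1$ for $H$ — which is exactly enough to beat the constant $(\ln 2)^2$. A secondary technical item is verifying that $c(\gamma)$ is genuinely a continuous increasing bijection onto $(1/2,b_1)$, which follows from the implicit function theorem together with the strict monotonicity of $b\mapsto b+b^{1+\gamma}$ on $(0,1)$.
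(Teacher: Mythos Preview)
Your proof is correct and takes a genuinely different route from the paper's. The paper stays in the variable $\gamma$: it writes $M(\gamma)=Df_\gamma(a_\gamma)=\frac{\gamma+1}{2a_\gamma}-\gamma$, differentiates, substitutes for $a_\gamma'/a_\gamma$ via implicit differentiation of the defining relation, and reduces $M'(\gamma)>0$ to positivity of an auxiliary function $h(\gamma)$; it then shows $h'(\gamma)<0$ (again via implicit differentiation) and checks $h(1)>0$ numerically. Your reparametrisation $c=2a_\gamma$ eliminates $\gamma$ entirely, turning the problem into monotonicity of the explicit one-variable function $\Psi(c)=\frac{(1-c)\ln(1-c)}{c\ln c}$, and the sign of $\Psi'$ reduces to the clean inequality $\phi(c)=\ln c\,\ln(1-c)-H(c)<0$, which you dispatch by bounding the two monotone pieces at opposite endpoints and checking $5-2\sqrt5>(\ln 2)^2$. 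Both arguments terminate in a numerical inequality of comparable delicacy; your version trades repeated implicit differentiation for a single change of variable and an explicit convexity argument ($\eta''>0$, $\eta(1/2)=\eta(1^-)=0\Rightarrow\eta<0$), which makes the structure more transparent. One small remark on your closing commentary: the inequality $\phi<0$ is not actually tight at $c=b_1$ (indeed $\phi(b_1)=2(\ln b_1)^2-H(b_1)<0$ strictly), so $\Psi$ remains strictly increasing there; what is tight is only that the supremum in the claim is a limit, not a maximum.
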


Postponing the proof of the claim and
applying these observations to \eqref{eq:chi}, we have the following upper bound
for $\chi_\gamma(\mu_0)$,
\begin{equation}
\label{eq:chi bound}
\sup_{\gamma \in (0,1)} \chi_\gamma(\mu_0) \le \frac{\log \sqrt{5}}{4} + \frac{\log 3}{4}
+ \frac{\log 2}{2} = \frac{\log (12 \sqrt{5})}{4} .
\end{equation}

Thus the slope $g_\gamma'(0) \ge \log 2 - \frac{\log (12 \sqrt{5})}{4}$ independently of $\gamma$.  
This implies that the minimum
of $g_\gamma(t)$ will be at least as large as the point of intersection between 
$u(t)$ and this lower bound
for the tangent line to $g_\gamma(t)$ at $t=0$.  This occurs when
\[
\begin{split}
t \log 2 & = \log 2 + t(\log 2 - \tfrac 14 \log (12 \sqrt{5})) \\
\implies  \log 2 & = \tfrac t4 \log (12 \sqrt{5}) 
\implies t = \frac{4 \log 2}{\log (12 \sqrt{5})} .
\end{split}
\]
Thus 
\[
\inf_{t \in [0,1]} g_\gamma(t) \ge \frac{ 4 (\log 2)^2}{\log (12 \sqrt{5})},
\]
and so 
\[
e^{G} \le e^{-2g_\gamma} \le e^{-\frac{ 8 (\log 2)^2}{\log (12 \sqrt{5})}} 
< \frac{1}{3.216} ,
\]
for all $\gamma \in (0,1)$,
completing the proof of the sublemma.
\end{proof}

\begin{proof}[Proof of the Claim]
Note that $a_\gamma$ by definition satisfies the following relation,
\begin{equation}
\label{eq:a gamma}
f_\gamma(a_\gamma) = a_\gamma + 2^\gamma a_\gamma^{\gamma + 1} = \tfrac 12
\implies (2a_\gamma)^\gamma = \tfrac{1}{2a_\gamma} -1.
\end{equation}
For $\gamma \in (0,1)$, we want to maximize
\[
M(\gamma) := Df_\gamma(a_\gamma) = 1 + (1+\gamma) (2a_\gamma)^\gamma
= 1 + (1+\gamma) (\tfrac{1}{2a_\gamma} -1) = \tfrac{\gamma +1}{2a_\gamma} - \gamma ,
\]
where we have used \eqref{eq:a gamma} to simplify the expression.
Differentiating with respect to $\gamma$ we obtain,
\begin{equation}
\label{eq:M prime}
M'(\gamma) = \frac{2a_\gamma - (1+\gamma) 2a_\gamma'}{4a_\gamma^2} - 1
= \frac{1}{2a_\gamma} \Big( 1 - \frac{(1+\gamma)a_\gamma'}{a_\gamma} - 2a_\gamma \Big),
\end{equation}
where $a_\gamma' = \frac{da_\gamma}{d\gamma} >0$.  In order to eliminate $a_\gamma'$,
we differentiate \eqref{eq:a gamma} with respect to $\gamma$ to obtain,
\begin{equation}
\label{eq:a prime}
a_\gamma'( 1 + (2a_\gamma)^\gamma) 
+ a_\gamma (2a_\gamma)^\gamma [\log (2a_\gamma) + \gamma \tfrac{a_\gamma'}{a_\gamma}] =0 
\; \; \implies \; \;
\frac{a_\gamma'}{a_\gamma} = \frac{- \log (2a_\gamma)}{\frac{1}{(2a_\gamma)^\gamma} + 1 + \gamma} .
\end{equation}
Substituting this expression into \eqref{eq:M prime} yields,
\[
M'(\gamma) = \frac{1}{2a_\gamma} \Big( 1 + \frac{(1+\gamma) \log (2a_\gamma)}{1 + \gamma+ \frac{1}{(2a_\gamma)^\gamma}} - 2a_\gamma \Big)
\geq \frac{1}{2a_\gamma} \Big( 1 + \frac{(1+\gamma) \log (2a_\gamma)}{2 + \gamma} - 2a_\gamma \Big),
\]
where we have used the fact that $(2a_\gamma)^\gamma \le 1$ and $\log (2a_\gamma)<0$
to obtain the lower bound for $M'(\gamma)$.  To show that $M'(\gamma) > 0$, it suffices to
show that the expression 
\[
h(\gamma) := 1 + \frac{(1+\gamma) \log (2a_\gamma)}{2 + \gamma} - 2a_\gamma
\]
remains positive for $\gamma \in (0,1)$.  Differentiating again, we obtain
\[
\begin{split}
h'(\gamma) & = \frac{(2+\gamma)[\log (2a_\gamma) + (1+\gamma)\frac{a_\gamma'}{a_\gamma}]
- (1+\gamma) \log(2a_\gamma) }{(2+\gamma)^2} - 2 a_\gamma' \\
& = \frac{\log (2a_\gamma) + (1+\gamma)(2+\gamma) \frac{a_\gamma'}{a_\gamma} 
- 2a_\gamma (2+\gamma)^2 \frac{a_\gamma'}{a_\gamma}}{(2+\gamma)^2} \\
& = \frac{- \log (2a_\gamma)}{(2+\gamma)^2} 
\left[ -1 + \frac{(1+\gamma)(2+\gamma) - 2a_\gamma(2+\gamma)^2}{1+ \gamma + \frac{1}{(2a_\gamma)^\gamma}} \right],
\end{split}
\]
where we have used \eqref{eq:a prime} in the last line.  Since $- \log (2a_\gamma) >0$, it suffices
to determine the sign of the expression in square brackets above.  Now we use the fact that 
$a_\gamma \ge 1/4$ (attained when $\gamma =0$) and
$(2a_\gamma)^\gamma < 1$ to write,
\[
\begin{split}
-1- \gamma - \frac{1}{(2a_\gamma)^\gamma} & + (1+\gamma)(2+\gamma) - 2a_\gamma(2+\gamma)^2 \\
& < - 2 - \gamma + 2 + 3 \gamma + \gamma^2 - 2 - 2\gamma - \tfrac{\gamma^2}{2} 
= -2 + \tfrac{\gamma^2}{2} \le - \tfrac 32 < 0 .
\end{split}
\]
We conclude that $h'(\gamma) <0$ so that the minimum of $h$ occurs at $h(1)$.  
Since $a_1 = \tfrac{\sqrt{5}-1}{4}$, we have
\[
h(1) = 1 + \tfrac{2}{3} \log (2a_1) - 2a_1 = \tfrac{3 - \sqrt{5}}{2} + \tfrac 23 \log ( \tfrac{\sqrt{5}-1}{2} ) > 0 .
\]
Since $h(\gamma)$ is strictly positive, we conclude that $M'(\gamma)$ is strictly positive and
thus that $M(\gamma)$ attains its maximum at $\gamma =1$.  Now
$M(1) = Df_1(a_1) = \sqrt{5}$, completing the proof of the claim.
\end{proof}

\begin{proof}[Proof of Proposition~\ref{prop:quasi}]
Even without strict contraction, the estimates of
Lemma~\ref{lem:LY} show that $\| \hLp_{\Psi_2^{H_\ve}} \psi \|_{BV} \le C \| \psi \|_{BV}$ 
for any $\psi \in BV(Y)$.
This, together with the fact that $| \hLp_{\Psi_2^{H_\ve}} \psi |_{L^1(m_t)} \le |\psi|_{L^1(m_t)}$ implies
that for any $n \in \N$ and $\psi \in BV(Y)$,
\[
\| \hLp_{\Psi_2^{H_\ve}}^n \psi \|_{BV} \le C (\sigma^{n/2} \| \psi \|_{BV} + \tfrac{C_3}{1-\sigma} |\psi|_{L^1(m_t)} ),
\]
for a uniform constant $C$, independent of $H_\ve$.  This is the standard Lasota-Yorke inequality.  
This inequality, together with
the compactness of the unit ball of $BV(Y)$ in $L^1(m_t)$, implies that the essential spectral
radius of $\hLp_{\Psi_2^{H_\ve}}$
on $BV(Y)$ 
is bounded by $\sigma^{1/2}$.   
\end{proof}


\subsection{Perturbation Results}
\label{ssec:pert}

In this section, we will prove the following result.

\begin{proposition}
\label{prop:spectral gap}
Let $( H_\ve )_{\ve \le \ve_0}$ be a family of nested intervals satisfying {\bf (H)}.  Then for each
$t \in [0,1]$ and $\ve$ sufficiently small, 
$\hLp_{\Psi_{1,\ve}^{H}} = \hLp_{t\Phi^{H_\ve} - \tau p^{H_\ve}(t) - P_{t,\ve}}$ has
a spectral gap on $BV(Y)$ equipped with the $\| \cdot \|_{BV}$ norm.

Indeed, the spectrum of $\hLp_{\Psi_{1, \ve}^{H}}$ outside the disk of radius $\sigma^{1/2}$
is H\"older continuous in $\ve$ and the
spectral projectors vary H\"older continuously in the $| \cdot |_{L^1(m_t)}$ norm.
\end{proposition}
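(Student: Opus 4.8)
\emph{Strategy.} I would deduce the spectral gap from the Keller--Liverani perturbation theory, viewing $\hLp_{\Psi_{1,\ve}^H}$ (for small $\ve$) as a perturbation of the \emph{closed} operator $\Lp_{\Psi_2}=\Lp_{t\Phi-\tau p(t)}$, which is the $\ve=0$ member of the family since $P_{t,0}=P(t\Phi-\tau p(t))=0$. Because $F$ is a full-branched Gibbs--Markov map and $P(\Psi_2)=0$, the operator $\Lp_{\Psi_2}$ has a spectral gap on $(BV(Y),\|\cdot\|_{BV})$: leading eigenvalue $1$, simple, with strictly positive eigendensity $h_0\in BV(Y)$; essential spectral radius $\le\sigma_1$ by Proposition~\ref{prop:quasi} at $\ve=0$; and topological mixing of $F$ rules out further peripheral spectrum. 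I fix $t\in[0,1]$. The case $t=1$ is immediate, since then $p^{H_\ve}(1)=p(1)=P_{1,\ve}=0$ and $\hLp_{\Psi_{1,\ve}^H}=\hLp_{\Psi_2^{H_\ve}}$, to which Proposition~\ref{prop:quasi} applies; so assume $t<1$, set $\delta_\ve:=p(t)-p^{H_\ve}(t)\ge0$, and note $e^{\Psi_{1,\ve}^H}=e^{\Psi_2^{H_\ve}}\,e^{-\tau\delta_\ve+P_{t,\ve}}$, where $\delta_\ve\to0$ (as $p^{H_\ve}(t)\to p(t)$, cf.~Lemma~\ref{lem: eps 0 press 0}) and $P_{t,\ve}\to0$ (by continuity of $c\mapsto P(t\Phi-\tau c)$ at $c=p(t)$, using $\int\tau\,d\mu_t<\infty$).

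\emph{Convergence in the mixed norm.} Since $m_t$ is $\Psi_2$-conformal on $Y$, for $\psi\in BV(Y)$ one has
\[
\hLp_{\Psi_{1,\ve}^H}\psi-\Lp_{\Psi_2}\psi=\Lp_{\Psi_2}\big(1_{Y\setminus\tH_\ve}(e^{-\tau\delta_\ve+P_{t,\ve}}-1)\psi\big)-\Lp_{\Psi_2}(1_{\tH_\ve}\psi),
\]
so the operator norm of $\hLp_{\Psi_{1,\ve}^H}-\Lp_{\Psi_2}$ from $(BV(Y),\|\cdot\|_{BV})$ to $(L^1(m_t),|\cdot|_{L^1(m_t)})$ is at most a constant times $\int_Y|e^{-\tau\delta_\ve+P_{t,\ve}}-1|\,dm_t+m_t(\tH_\ve)$. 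The first term equals $\sum_n m_t(Y_n)\,\big|e^{-\tau|_{Y_n}\delta_\ve+P_{t,\ve}}-1\big|$, which is dominated by $(1+e^{P_{t,\ve_0}})\,m_t(Y)$ and tends to $0$ termwise, hence $\to0$ by dominated convergence; the second tends to $0$ because $\tH_\ve$ is $H_\ve$ or $F^{-1}(f^\tau H_\ve)$, in either case a set whose $m_t$-measure is controlled, via $\Psi_2$-conformality, by the length of a shrinking image interval of $H_\ve$ and the non-atomicity of $m_t$.

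\emph{The main step: a uniform Lasota--Yorke inequality.} I would rerun the proof of Lemma~\ref{lem:LY} and Sublemma~\ref{lem:contract} with $\Psi_{1,\ve}^H$ in place of $\Psi_2^{H_\ve}$. The two-step variation-contraction constant $\max_n 3\sigma_n$ acquires a factor $e^{-\tau^2\delta_\ve+2P_{t,\ve}}\le e^{2P_{t,\ve_0}}$, the piece $e^{-\tau^2\delta_\ve}\le1$ only helping; since the margin in Sublemma~\ref{lem:contract} is $\max_n3\sigma_n\le 3/3.216<1$, choosing $\ve_0=\ve_0(t)$ small enough that $e^{2P_{t,\ve_0}}<3.216/3$ keeps the two-step contraction strictly below $1$, while assumption {\bf(H)} supplies the uniform (in $\ve\le\ve_0$) lower bound on the $m_t$-measures of the finitely many image intervals of $\hF_\ve^2$. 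To keep the $L^1$-remainder coefficient uniform in $n$, I would carry these estimates in the $\tm_{t,H_\ve}$-adapted norm $\|\cdot\|_{BV,H_\ve}=\bigvee_Y\cdot+|\cdot|_{L^1(\tm_{t,H_\ve})}$ of the Remark after Proposition~\ref{prop:quasi}---with respect to which $\hLp_{\Psi_{1,\ve}^H}$ is non-expanding in $L^1(\tm_{t,H_\ve})$ by conformality---and then use $\tm_{t,H_\ve}\ge c\,m_t|_Y$ for a uniform $c>0$ to pass back to the fixed $m_t$-norm. By \cite{HH} this gives uniform quasi-compactness, with essential spectral radius $\le\sigma_1$ for all $\ve\le\ve_0$.

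\emph{Conclusion and main obstacle.} With the uniform Lasota--Yorke inequality, the uniform essential spectral radius bound, the mixed-norm convergence $\hLp_{\Psi_{1,\ve}^H}\to\Lp_{\Psi_2}$, and the spectral gap of $\Lp_{\Psi_2}$ in hand, the Keller--Liverani theorem yields: for $\ve$ small $\hLp_{\Psi_{1,\ve}^H}$ has a spectral gap on $BV(Y)$; its leading eigenvalue $\Lambda_{t,\ve}$ is real, positive, simple, $\to1$ (with $\log\Lambda_{t,\ve}=P(\Psi_{1,\ve}^H)=P(t\Phi^{H_\ve}-\tau p^{H_\ve}(t))-P(t\Phi-\tau p^{H_\ve}(t))$ by the pressure identities of Lemma~\ref{lem:conformal} and Proposition~\ref{prop:tH}, the escape-rate formula of Theorem~\ref{thm:stable}); the normalized eigendensity converges to $h_0$ in $L^1(m_t)$ and, being bounded in $BV(Y)$ uniformly in $\ve$ by the Lasota--Yorke inequality, is bounded away from $0$ for $\ve$ small; and the part of the spectrum outside the disk of radius $\sigma_1$, with the associated spectral projectors in the $|\cdot|_{L^1(m_t)}$ norm, varies Hölder continuously in $\ve$. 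I expect the main obstacle to be exactly the norm bookkeeping of the third step: the natural conformal measure $\tm_{t,H_\ve}$ for $\hLp_{\Psi_{1,\ve}^H}$ varies with $\ve$ and has density relative to $m_t$ that is unbounded above (growing like $e^{\tau\delta_\ve}$ down the inducing scheme), so the uniform Lasota--Yorke inequality is most naturally produced in the $\ve$-dependent norm while the proposition---and its later use---demands the fixed $m_t$-norm; reconciling these via the one-sided comparability $\tm_{t,H_\ve}\ge c\,m_t|_Y$ together with non-expansion of $\hLp_{\Psi_{1,\ve}^H}$ on $L^1(\tm_{t,H_\ve})$ is the delicate point, and it is what forces $\ve_0$ to depend on $t$.
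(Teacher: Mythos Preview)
Your overall strategy---a direct Keller--Liverani perturbation from $\Lp_{\Psi_2}$ to $\hLp_{\Psi_{1,\ve}^H}$---is reasonable, and you correctly identify the crux: producing a uniform Lasota--Yorke inequality in a \emph{fixed} pair of norms. However, there are two concrete problems.

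First, you have a sign error: $\Psi_{1,\ve}^H - \Psi_2^{H_\ve} = \tau(p(t)-p^{H_\ve}(t)) - P_{t,\ve} = +\tau\delta_\ve - P_{t,\ve}$, so $e^{\Psi_{1,\ve}^H} = e^{\Psi_2^{H_\ve}}\,e^{+\tau\delta_\ve - P_{t,\ve}}$, not $e^{-\tau\delta_\ve+P_{t,\ve}}$. (You later correctly note that $d\tm_{t,H_\ve}/dm_t$ grows like $e^{\tau\delta_\ve}$, which is consistent only with the corrected sign.) The factor $e^{+\tau\delta_\ve}$ is unbounded in $\tau$, so it does not ``only help''; your dominated-convergence bound for the mixed-norm difference then needs the summable majorant $n^{-t(1+1/\gamma)}e^{-np^{H_{\ve_0}}(t)}$ rather than the trivial one you wrote. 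The contraction constant in Sublemma~\ref{lem:contract} is in fact still $<1$ thanks to $P_{t,\ve}\ge\delta_\ve$ (see the footnote there), so the variation term survives; the real failure is in the \emph{weak-norm term} of the LY: the distortion bound \eqref{eq:LY dist} with $m_t$ picks up the branch-dependent factor $e^{\tau^2\delta_\ve}$, so the coefficient of $|\psi|_{L^1(m_t)}$ is not uniform over branches and one does not get an LY with weak norm $|\cdot|_{L^1(m_t)}$.

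Second, your proposed fix---use $\tm_{t,H_\ve}\ge c\,m_t$ to transfer the LY from the $\tm_{t,H_\ve}$-norm to the $m_t$-norm---does not work. That inequality fails (on $k$-cylinders with $\tau\equiv1$ the Gibbs ratio is $\asymp e^{k(\delta_\ve-P_{t,\ve})}\to0$ since $P_{t,\ve}>\delta_\ve$), and even if it held it yields $|\cdot|_{L^1(m_t)}\lesssim|\cdot|_{L^1(\tm_{t,H_\ve})}$, which is the wrong direction for replacing the weak-norm term on the right-hand side of the LY.

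The paper avoids all of this with a two-step perturbation. Step~1: open the hole while keeping the $p(t)$-normalization, passing from $\Lp_{\Psi_2}$ to $\hLp_{\Psi_2^{H_\ve}}$ via Keller--Liverani; the uniform LY for this family is exactly Proposition~\ref{prop:quasi} (with $m_t$ conformal for $\Psi_2$), and the triple-norm smallness is just $m_t(\tH_\ve\cup F^{-1}\tH_\ve)\to0$ (Lemma~\ref{lem:small pert}). Step~2: change the normalization, passing from $\hLp_{\Psi_2^{H_\ve}}$ to $\hLp_{\Psi_{1,\ve}^H}$ by a \emph{strong} $\|\cdot\|_{BV}$ estimate (Lemma~\ref{lem:BV close}): summing $\sup_{Z_n^i}|e^{\Psi_{1,\ve}}-e^{\Psi_2}|\lesssim(\delta_\ve+P_{t,\ve})\,n^{1-t(1+1/\gamma)}e^{-np^{H_\ve}(t)}$ over branches (finite by the standing assumption \eqref{eq:tH}) gives $\|\hLp_{\Psi_{1,\ve}^H}-\hLp_{\Psi_2^{H_\ve}}\|_{BV}\le C(\delta_\ve+P_{t,\ve})\to0$, so classical (Kato) perturbation theory transfers the spectral gap without any new LY. Decoupling ``open the hole'' from ``change the normalization'' is precisely what lets the paper keep a single, $\ve$-independent weak norm throughout.
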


\begin{proof}
Since on the one hand, $m_t$ is not conformal with respect to the potential $\Psi_{1,\ve}^{H}$ 
while on the other,  the
conformal measures $\tm_{t, H_\ve}$ depend on $H_\ve$, we will prove this
proposition in two steps.  First, notice that since $F$ is a full-branched Gibbs-Markov map, 
the unpunctured
operator $\Lp_{\Psi_2} = \Lp_{t\Phi - \tau p(t)}$ enjoys a spectral gap on $BV(Y)$ equipped with the
$\| \cdot \|_{BV}$ norm since the potential $\Psi_2$ is contracting 
in the sense of \cite{LSV1}.   We will show that the punctured transfer operator
$\hLp_{\Psi_2^{H_\ve}} = \hLp_{t\Phi^{H_\ve} - \tau p(t)}$
is a perturbation of $\Lp_{\Psi_2}$ using the framework of \cite{KelLiv99} 
to conclude that this
spectral gap persists for the punctured transfer operator for sufficiently small holes
under assumption {\bf (H)}.  Indeed, it will follow
that the spectral gap enjoyed by $\hLp_{t\Phi^{H_\ve} - \tau p(t)}$ has a 
lower bound that is uniform in $\ve$.  Second,   we will show that the punctured transfer operator
$\hLp_{\Psi_{1, \ve}^{H}} = \hLp_{t\Phi^{H_\ve} - \tau p^{H_\ve}(t) - P_{t,\ve}}$ 
is a perturbation of $\hLp_{t\Phi^{H_\ve} - \tau p(t)}$ in a strong sense in $BV(Y)$.
This will imply that for sufficiently small $\ve$, $\hLp_{\Psi_{1, \ve}^{H}}$ enjoys 
a spectral gap as well.

{\em Step 1.}  In this step, we will prove that the spectra of 
$\hLp_{\Psi_2^{H_\ve}} = \hLp_{t\Phi^{H_\ve} - \tau p(t)}$ and 
$\Lp_{\Psi_2} = \Lp_{t\Phi - \tau p(t)}$ are close 
in the sense of \cite{KelLiv99}.  To this end, for two operators $P_1, P_2$ from $BV(Y)$ to $L^1(m_t)$,
define the following norm.
\[
||| P_1 \psi - P_2 \psi |||
= \sup \{ |P_1 \psi - P_2 \psi |_{L^1(m_t)} : \| \psi \|_{BV} \le 1 \} .
\]
We begin with the following lemma.

\begin{lemma}
\label{lem:small pert}  
Let $H$ be a hole in $I$ and let $\tH$ be the induced hole for the map $F$ in $Y$.  Then
$$||| \Lp_{\Psi_2} - \hLp_{\Psi_2^{H}} ||| \le m_t(\tH \cup F^{-1}(\tH))  .  $$ 
\end{lemma}

\begin{proof}
Let $\psi \in BV(Y)$, $\| \psi \|_{BV} \le 1$.  Then in particular, $|\psi|_\infty \le 1$.  So,
\[
|\Lp_{\Psi_2} \psi - \hLp_{\Psi_2^{H}} \psi |_{L^1(m_t)} 
= \int |\Lp_{\Psi_2} (1_{\tH \cup F^{-1}(\tH)} \psi) |  \, dm_t 
\le \int_{\tH \cup F^{-1}(\tH)} |\psi| \, dm_t \le m_t(\tH \cup F^{-1}(\tH)) .
\]
\end{proof}

For a family of holes satisfying {\bf (H)},
since $m_t(\tH_\ve \cup F^{-1}(\tH_\ve)) \to 0$ as $\ve \to 0$ and using 
Proposition~\ref{prop:quasi} and Lemma~\ref{lem:small pert}, it follows from 
\cite[Corollary 1]{KelLiv99} that the spectrum and spectral projectors corresponding
to eigenvalues outside the disk of radius $\sigma^{1/2}$ vary Holder continuously in
the size of the perturbation.  

Since $\Lp_{\Psi_2}$ has spectral radius 1 and enjoys a spectral gap, let $\bar\beta_{t,0} < 1$ 
denote the magnitude of its second largest eigenvalue.  Letting $\bar\Lambda_{t, \ve}$ and 
$\bar\beta_{t,\ve}$ denote the largest and second largest
eigenvalues of $\hLp_{\Psi_2^{H_\ve}}$, respectively, 
we conclude that both vary continuously in
$\ve$ for $\ve$ sufficiently small.  In particular, we may choose 
$\ve_0$ sufficiently small that $\bar\Lambda_{t, \ve} - \bar\beta_{t,\ve} > (1 - \bar\beta_{t,0})/2$ 
for all $\ve \le \ve_0$, i.e. $\hLp_{\Psi_2^{H_\ve}}$ has a spectral gap.

{\em Step 2.}  
In this step, we will show that the
transfer operators $\hLp_{\Psi_{1, \ve}^{H}} = \hLp_{t\Phi^{H_\ve} - \tau p^{H_\ve}(t) - P_{t,\ve}}$ and 
$\hLp_{\Psi_2^{H_\ve}} = \hLp_{t\Phi^{H_\ve} - \tau p(t)}$ are close in the
$\| \cdot \|_{BV}$ norm.

\begin{lemma}
\label{lem:BV close}
Suppose $H = H_\ve$ belongs to a family of holes satisfying assumption (H).  Then there exists
$C>0$, independent of $\ve$, such that
\[
\| \hLp_{\Psi_{1, \ve}^{H}} - \hLp_{\Psi_2^{H_\ve}} \|_{BV} \le C (p(t) - p^{H_\ve}(t) + P_{t, \ve}) ,
\]
where $\| \psi \|_{BV} = \bigvee_Y \psi + |\psi|_{L^1(m_t)}$.
\end{lemma}

\begin{proof}
Let $Z_n^i = (a^i_n,b^i_n)$, $i = 1, 2$, 
denote the at most two maximal intervals in $Y$ on which $\hF$ is monotonic and continuous
with $\tau|_{Z_n^i} = n$. 
Letting $\xi_n^i$ denote the inverse of $F|_{Z_n^i}$, for $\psi \in BV(Y)$ we follow
\eqref{eq:var 1},
\[
\begin{split}
\bigvee_Y \big( \Lp_{\Psi_{1,\ve}^{H}} \psi - \Lp_{\Psi_2^{H_\ve}} \psi \big)
& \le \sum_{n,i} \bigvee_Y \psi \circ \xi_n^i (e^{\Psi_{1,\ve} \circ \xi_n^i} - e^{\Psi_2 \circ \xi_n^i}) \\
& \; \; \; + \sum_{n,i} |\psi(a_n^i)| |e^{\Psi_{1,\ve} (a_n^i)} - e^{\Psi_2 (a_n^i)}| 
 + |\psi(b_n^i)| |e^{\Psi_{1,\ve} (b_n^i)} - e^{\Psi_2 (b_n^i)}| \\
& \le \sum_{n,i} \sup_{Z_n^i} |e^{\Psi_{1,\ve}} - e^{\Psi_2}| \Big( \bigvee_{Z_n^i} \psi + \sup_{Z_n^i} |\psi| \Big) 
  +  \sum_{n,i} \sup_{Z_n^i} |e^{\Psi_{1,\ve}} - e^{\Psi_2}| \, 2 \sup_{Z_n^i} |\psi|  \\
& \le 4 \| \psi \|_{BV} \sum_{n,i} \sup_{Z_n^i} |e^{\Psi_{1,\ve}} - e^{\Psi_2}|,
\end{split}
\]
where we have used the fact that $e^{\Psi_{1,\ve}} - e^{\Psi_2}$ is monotonic and does not change sign
on each $Z_n^i$ to bound
the variation by the supremum of the function.  Fixing $Z_n^i$, we estimate,
\[
\begin{split}
|e^{\Psi_{1,\ve}} - e^{\Psi_2}| & = e^{t\Phi - \tau p(t)} | 1- e^{(p(t) - p^{H_\ve}(t)) \tau - P_{t,\ve}}| \\
& \le C n^{-t(1+ \frac{1}{\gamma})} e^{-np(t)} (n(p(t) - p^{H_\ve}(t)) + P_{t,\ve}) e^{(p(t) - p^{H_\ve}(t))n + P_{t, \ve}} \\
& \le C n^{-t( 1 + \frac{1}{\gamma}) + 1} e^{-np^{H_\ve}(t)} ( p(t) - p^{H_\ve}(t) + P_{t, \ve}) ,
\end{split}
\]
where in the second line we have used the estimate $|1-e^x| \le x e^x$ for $x \ge 0$ and 
$|1-e^x| \le |x|$ for $x<0$.

Summing over $n$, we see that the sum is bounded uniformly in $\ve$ since $p^{H_\ve}(t) > 0$ 
for $t <t^{H_\ve}$
and since $t^{H_\ve} > 2\gamma/(1+\gamma)$ by assumption of \eqref{eq:tH}, 
we have $\sum_n n^{1-t(1 + \frac{1}{\gamma})} < \infty$ for $t \ge t^{H_\ve}$.  We have also used the
fact that there are at most two $Z_n^i$ per $n \in \N$.

To bound the difference in $L^1(m_t)$ norm for $\psi \in BV(Y)$, 
we use the fact that $m_t$ is conformal with respect to $\Psi_2$ to
write
\[
\begin{split}
\int_Y  |\Lp_{\Psi_{1,\ve}^{H}} \psi - \Lp_{\Psi_2^{H_\ve}} \psi| \, dm_t
& = \int_{\hY^1_\ve} |\psi| |1- e^{(p(t) - p^{H_\ve}(t)) \tau - P_{t, \ve}}| \\
& \le C \| \psi \|_{BV} \sum_{n,i} m_t(Z_n^i) |1 - e^{n(p(t) - p^{H_\ve}(t)) - P_{t,\ve}}|
\end{split}
\]
and note that this is the same estimate as above since due to conformality and the large images
assumption (H), $m_t(Z_n^i)$ is proportional to $e^{t\Phi - \tau p(t)}$.
\end{proof}

Our next lemma shows that in fact the bound obtained in the previous lemma
is continuous in $\ve$.

\begin{lemma}
Let $(H_\eps)_{\eps \le \eps_0}$ be a collection of holes centered at $z\in I$ with $H_\eps\to \{z\}$ as $\eps\to 0$.  Then for $t \in [0,1]$,
$$p^{H_\eps}(t)\to p(t), \; \; \;  t^{H_\ve} \to 1 \; \text{ and } \; P_{t, \eps} \to 0 \; \text{ as } \; \eps \to 0.$$
\label{lem: eps 0 press 0}
\end{lemma}

\begin{proof}
Note that for $t=1$, the statement of the lemma is trivial since
$p^{H_\ve}(1) = p(1) = 0$ and so $P_{1, \ve} = - \log \bar\Lambda_{1,\ve}$ is continuous in $\ve$
by Step 1.  We now focus on $t<1$.  Since the quantities of interest are clearly monotone in the size of the hole, we need only prove the lemma for Markov holes.

By Corollary~\ref{cor:variational}, we have $\log\lambda^{H_\ve}_t = p^{H_\ve}(t) - p(t)$.
By \eqref{eq:complement} and the comment following it, we have
$\lambda^{H_\ve}_t \to 1$ as $\ve \to 0$.  Thus $p^{H_\ve}(t) \to p(t)$ as $\ve \to 0$.

Now fix $t < 1$.  Since $p(t)>0$, by the previous paragraph we may choose $\ve>0$ sufficiently small
such that $p^{H_\ve}(t) > 0$.  By Proposition~\ref{prop:tH}, this implies $t^{H_\ve} > t$ and by
monotonicity, $t^{H_{\ve'}} > t$ for all $\ve' \le \ve$.  Since this is true for each $t<1$, we have
$t^{H_\ve} \to 1$ as $\ve \to 0$.

Finally, consider the rescaled transfer operator $e^{P_{t,\ve}} \Lp_{\Psi_{1, \ve}} = \Lp_{t\Phi - \tau p^{H_\ve}(t)}$ whose spectral radius on $BV(Y)$ is $e^{P_{t,\ve}}$.  Replacing $\Lp_{\Psi_{1,\ve}}$
by this rescaled operator in the statement and proof of Lemma~\ref{lem:BV close} yields,
\[
\| \Lp_{t\Phi - \tau p^{H_\ve}(t)} - \Lp_{t\Phi - \tau p(t)} \|_{BV} \le C (p(t) - p^{H_\ve}(t)) .
\]
Using now that $p^{H_\ve}(t) \to p(t)$ as $\ve \to 0$ and the fact that $\Lp_{t\Phi - \tau p(t)}$
has a spectral gap with leading eigenvalue 1, we conclude using standard perturbation
theory that the leading eigenvalue of $\Lp_{t\Phi - \tau p^{H_\ve}(t)}$ tends to 1 as $\ve \to 0$.
This implies $P_{t,\ve} \to 0$ as required.
\end{proof}

Since $P_{t, \ve} \to 0$ and $p^{H_\ve}(t) \to p(t)$ as $\ve \to 0$,  Lemmas~\ref{lem:BV close} and \ref{lem: eps 0 press 0} imply that as operators on $BV(Y)$, $\hLp_{\Psi_{1,\ve}^{H}}$ and 
$\hLp_{\Psi_2^{H_\ve}}$
are close so that their spectra and spectral projectors vary continuously by standard
perturbation theory (see \cite{kato}).  

Thus for $\ve$ sufficiently small, the largest eigenvalue of 
$\hLp_{\Psi_{1,\ve}^{H}}$, $\Lambda_{t, \ve}$, is close to $\bar\Lambda_{t, \ve}$, while the second largest 
eigenvalue, $\beta_{t,\ve}$ is as close as we like to $\bar\beta_{t, \ve}$ (if it lies outside the disk
of radius $\sigma^{1/2}$).  Since by Step 1, $\bar\Lambda_{t,\ve}$ and $\bar\beta_{t,\ve}$
are uniformly bounded away from one another for all $\ve \le \ve_0$, we may further shrink
$\ve_0$ if necessary
so that $\Lambda_{t, \ve}$ and $\beta_{t, \ve}$ are uniformly bounded away from one another
for all $\ve \le \ve_0$.   Thus 
$\hLp_{\Psi_{1,\ve}^{H}}$ has a spectral gap on $BV(Y)$ 
for all $\ve \le \ve_0$.

 This completes the proof of Proposition~\ref{prop:spectral gap}.
\end{proof}


\subsection{An invariant measure for $\hF_\ve$ on $\hY_\ve^\infty$}
\label{S invariant}

In this section, we fix $t \in [0,1]$ and assume that $\ve_0$ is small enough that for each
$H_\ve$ with $\ve \le \ve_0$, $\hLp_{\Psi_{1,\ve}^{H}} =: \hLp_\ve$ has a spectral gap by
Proposition~\ref{prop:spectral gap}.

Thus for $\ve \in (0, \ve_0]$ there exists a maximal eigenvalue $\Lambda_\ve < 1$ for $\hLp_{\ve}$
and unique $g_\ve \in BV(Y)$ on $\hY_\ve = Y \setminus \tH_\ve$
such that $\hLp_\ve g_\ve = \Lambda_\ve g_\ve$ and $g_\ve \tm_{t, H_\ve}$ defines
a conditionally invariant probability measure for $\hF$ with escape rate $- \log \Lambda_\ve$.  Moreover, 
there exists $C>0$ and $\rho<1$ such that for each
$\psi \in BV(Y)$ and $n \ge 0$, we have
\[
\| \Lambda_\ve^{-n} \hLp_\ve^n \psi - e_\ve(\psi) g_\ve \|_{BV} \le C\| \psi \|_{BV} \rho^n ,
\]
where $e_\ve(\psi)$ is 
determined by the spectral projector $\Pi_\ve$ of $\hLp_\ve$ onto the 
subspace spanned by $g_\ve$:  $e_\ve(\psi) = | \Pi_\ve \psi |_{L^1(\tm_{t,H})}$, due to the
normalization of $g_\ve$ we have chosen.
If $\psi$ is a probability density with respect to $m_t$, then $e_0(\psi) =1$.

We define an invariant measure on the survivor set $\hY_\ve^\infty$ using a well-known construction.
For $\psi \in BV$, define the functional,
\begin{equation}
\label{eq:inv def}
\nu_\ve(\psi) = \lim_{n \to \infty} \Lambda_\ve^{-n} \int_{\hY_\ve^n} \psi g_\ve \, d\tm_{t,H_\ve}
= \lim_{n \to \infty} \Lambda_\ve^{-n} \int_{\hY_\ve} \hLp_\ve^n (\psi g_\ve) \, d\tm_{t,H_\ve}
= e_\ve(\psi g_\ve) ,
\end{equation}
so that the limit is well-defined on $BV$.  Note that $\nu_\ve$ is linear, positive and
$\nu_\ve(\psi) \le |\psi|_\infty$ so that $\nu_\ve$ can be extended to a bounded, positive linear
functional on $C^0(Y)$.
Since, $\nu_\ve(1)=1$, by the Riesz representation theorem, $\nu_\ve$ corresponds 
to a unique Borel probability measure.  From its definition, it is clear that
$\nu_\ve$ is supported on the survivor set $\hY_\ve^\infty$.  

\begin{proposition}
\label{prop:invariant conv}
Let $( H_\ve )_{\ve_0}$ be a family of holes as in Proposition~\ref{prop:spectral gap} such that
that $\ve_0>0$ is small enough that $\hLp_\ve$ has a spectral gap for
each $\ve \le \ve_0$.  Let $\nu_\ve$ be the corresponding invariant measure on the
survivor set defined by \eqref{eq:inv def}.  Then 
\[
\nu_\ve(\psi) \to \nu_0(\psi) \; \; \mbox{as $\ve \to 0$ for each $\psi \in C^0(Y) \cup BV(Y)$},
\]
where $\nu_0$ is the unique invariant measure absolutely continuous with respect to $m_t$ for
$F$, the induced map without the hole.
\end{proposition}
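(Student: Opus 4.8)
The plan is to represent $\nu_\ve$ through the leading spectral projector of the induced punctured operator $\hLp_\ve := \hLp_{\Psi_{1,\ve}^{H}}$ and then feed in the Keller--Liverani stability already established in Proposition~\ref{prop:spectral gap}. Throughout, ``$\ve=0$'' will denote the unpunctured data: $\hLp_0 := \Lp_{\Psi_2} = \Lp_{t\Phi - \tau p(t)}$, $\Lambda_0 := 1$, $\tm_{t,H_0} := m_t$, and $g_0 \in BV(Y)$ the $F$-invariant density normalised so that $\nu_0 = g_0 m_t$; here the spectral projector onto the top eigenspace is explicit, $\Pi_0\phi = \big(\int_Y \phi\,dm_t\big) g_0$. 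For $\ve \in (0,\ve_0]$ let $\Pi_\ve$ be the rank-one spectral projector of $\hLp_\ve$, which exists by Proposition~\ref{prop:spectral gap}. The first step is to observe that, since $\Lambda_\ve^{-n}\hLp_\ve^n\phi \to \Pi_\ve\phi$ in $\|\cdot\|_{BV}$ for each $\phi \in BV(Y)$ and $\Pi_\ve$ has range $\mathrm{span}(g_\ve)$, the defining limit \eqref{eq:inv def} collapses, for $\psi \in BV(Y)$, to
\[
\nu_\ve(\psi) \;=\; \int_Y \Pi_\ve(\psi g_\ve)\,d\tm_{t,H_\ve} \;=\; \frac{\int_Y \Pi_\ve(\psi g_\ve)\,dm_t}{\int_Y g_\ve\,dm_t},
\]
the second equality holding because $\Pi_\ve(\psi g_\ve)$ is a scalar multiple of $g_\ve$ and $\int_Y g_\ve\,dm_t>0$ (recall $g_\ve \ge 0$ with $\int_Y g_\ve\,d\tm_{t,H_\ve}=1$). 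This identity also holds at $\ve=0$, where it reads $\nu_0(\psi)=\int_Y \psi g_0\,dm_t$. Since $|\nu_\ve(\psi)| \le |\psi|_\infty$ uniformly in $\ve$ and $C^1(Y)\subset BV(Y)$ is dense in $C^0(Y)$, it then suffices to prove $\nu_\ve(\psi)\to\nu_0(\psi)$ for $\psi\in BV(Y)$.

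Next I would record two uniform facts. First, $\sup_{\ve\le\ve_0}\|g_\ve\|_{BV}<\infty$: apply the uniform Lasota--Yorke inequality of Proposition~\ref{prop:quasi} in the form valid for $\hLp_\ve$ and the norm $\|\cdot\|_{BV,H_\ve}=\bigvee_Y(\cdot)+|\cdot|_{L^1(\tm_{t,H_\ve})}$ (cf.\ the Remark following that proposition) to $\hLp_\ve^{n_0}g_\ve=\Lambda_\ve^{n_0}g_\ve$, choosing $n_0$ so that $\Lambda_\ve^{-n_0}C\sigma_1^{n_0}\le\tfrac12$ for all small $\ve$ (possible since $\Lambda_\ve\to1$); since $|g_\ve|_{L^1(\tm_{t,H_\ve})}=1$ by normalisation, absorbing gives $\|g_\ve\|_{BV,H_\ve}\le C''$ uniformly, and the uniform equivalence of $\|\cdot\|_{BV,H_\ve}$ with $\|\cdot\|_{BV}$ (both from the standard bound relating $|\cdot|_\infty$ and the BV-norm, and the fact that $\tm_{t,H_\ve}$ is a probability measure on $Y$ while $m_t(Y)$ is a fixed positive constant) transfers the bound to $\|\cdot\|_{BV}$. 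Second, the normalisation passes to the limit, $\int_Y g_\ve\,dm_t\to1$: by the computation in the proof of Lemma~\ref{lem: eps 0 press 0}, the unpunctured operators $e^{-P_{t,\ve}}\Lp_{t\Phi-\tau p^{H_\ve}(t)}$ converge in $\|\cdot\|_{BV}$ operator norm to $\Lp_{\Psi_2}$ and both have a simple leading eigenvalue $1$, so their left eigenmeasures converge in the dual norm, $\tm_{t,H_\ve}\to m_t$ on $(BV(Y),\|\cdot\|_{BV})$; combined with the uniform $BV$-bound on $g_\ve$ and $\int_Y g_\ve\,d\tm_{t,H_\ve}=1$ this yields the claim.

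The analytic core is the perturbation step. From the two halves of the proof of Proposition~\ref{prop:spectral gap} --- Keller--Liverani stability \cite{KelLiv99} for the passage $\Lp_{\Psi_2}\rightsquigarrow\hLp_{\Psi_2^{H_\ve}}$ and then ordinary perturbation theory \cite{kato} for $\hLp_{\Psi_2^{H_\ve}}\rightsquigarrow\hLp_{\Psi_{1,\ve}^{H}}$ --- the family $\ve\mapsto\hLp_\ve$ (with $\hLp_0=\Lp_{\Psi_2}$) satisfies $\Lambda_\ve\to1$ and $|||\Pi_\ve-\Pi_0|||\to0$, where $|||P|||:=\sup\{|P\phi|_{L^1(m_t)}:\|\phi\|_{BV}\le1\}$ as in Lemma~\ref{lem:small pert}. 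Applying this to $\phi=g_0\in BV(Y)$ and using $\Pi_0 g_0=g_0$ gives $\Pi_\ve g_0=c_\ve g_\ve\to g_0$ in $L^1(m_t)$ for suitable scalars $c_\ve$; integrating against $m_t$ and using $\int_Y g_\ve\,dm_t\to1$ forces $c_\ve\to1$, hence $g_\ve\to g_0$ in $L^1(m_t)$.

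The conclusion is then bookkeeping. For $\psi\in BV(Y)$, write
\[
\Pi_\ve(\psi g_\ve)-\Pi_0(\psi g_0)\;=\;(\Pi_\ve-\Pi_0)(\psi g_\ve)\;+\;\Pi_0\big(\psi(g_\ve-g_0)\big).
\]
The first term is bounded in $L^1(m_t)$ by $|||\Pi_\ve-\Pi_0|||\cdot\|\psi g_\ve\|_{BV}$, which tends to $0$ by the perturbation step, the Banach-algebra bound $\|\psi g_\ve\|_{BV}\le C\|\psi\|_{BV}\|g_\ve\|_{BV}$, and the uniform $BV$-bound on $g_\ve$; the second term is bounded in $L^1(m_t)$ by $|\psi(g_\ve-g_0)|_{L^1(m_t)}\le|\psi|_\infty|g_\ve-g_0|_{L^1(m_t)}\to0$, since $\Pi_0\phi=\big(\int_Y\phi\,dm_t\big)g_0$ acts as a contraction on $L^1(m_t)$. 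Hence $\int_Y\Pi_\ve(\psi g_\ve)\,dm_t\to\int_Y\Pi_0(\psi g_0)\,dm_t=\nu_0(\psi)$, and dividing by $\int_Y g_\ve\,dm_t\to1$ in the ratio formula of the first step gives $\nu_\ve(\psi)\to\nu_0(\psi)$. Finally, for $\psi\in C^0(Y)$ one approximates $\psi$ in sup-norm by functions $\psi'\in C^1(Y)\subset BV(Y)$ and uses $|\nu_\ve(\psi)-\nu_\ve(\psi')|\le|\psi-\psi'|_\infty$ (and likewise for $\nu_0$) to pass from the $BV$ case. I expect the main obstacle to be not any individual estimate but the bookkeeping forced by having two reference measures, $m_t$ and $\tm_{t,H_\ve}$, and two competing normalisations of $g_\ve$: concretely, the uniform $BV$-bound on $g_\ve$ and the convergence $\int_Y g_\ve\,dm_t\to1$ are precisely what let one transfer the Keller--Liverani convergence, naturally phrased in the fixed norm pair $(\|\cdot\|_{BV},|\cdot|_{L^1(m_t)})$, to statements about the $\ve$-dependent objects $g_\ve\,\tm_{t,H_\ve}$.
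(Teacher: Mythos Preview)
Your proof is correct and follows essentially the same strategy as the paper: represent $\nu_\ve(\psi)$ via the leading spectral projector $\Pi_\ve$ applied to $\psi g_\ve$, then feed in the two-layer perturbation theory of Proposition~\ref{prop:spectral gap} (Keller--Liverani for $\Lp_{\Psi_2}\rightsquigarrow\hLp_{\Psi_2^{H_\ve}}$, ordinary norm perturbation for $\hLp_{\Psi_2^{H_\ve}}\rightsquigarrow\hLp_{\Psi_{1,\ve}^{H}}$), and finish by density for $C^0(Y)$.

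The organizational difference is in how the two reference measures $\tm_{t,H_\ve}$ and $m_t$ are reconciled. The paper introduces an intermediate invariant measure $\bar\nu_\ve$ built from the eigenvector $\bar g_\ve$ of $\hLp_{\Psi_2^{H_\ve}}$ \emph{and the fixed measure $m_t$}, then estimates $|\nu_\ve(\psi)-\bar\nu_\ve(\psi)|$ and $|\bar\nu_\ve(\psi)-\nu_0(\psi)|$ separately; because $\bar\nu_\ve$ already lives over $m_t$, no statement about convergence of conformal measures is needed. You instead use the ratio identity $\nu_\ve(\psi)=\int_Y\Pi_\ve(\psi g_\ve)\,dm_t\big/\int_Y g_\ve\,dm_t$ to transfer everything to $m_t$ in one stroke, which is elegant but then requires the auxiliary fact $\int_Y g_\ve\,dm_t\to 1$; you obtain this by arguing that the unpunctured operators $\Lp_{\Psi_{1,\ve}}\to\Lp_{\Psi_2}$ in $\|\cdot\|_{BV}$ (indeed shown in the proof of Lemma~\ref{lem: eps 0 press 0}) so that the dual eigenmeasures $\tm_{t,H_\ve}$ converge in $BV(Y)^*$. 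That extra step is correct but is a genuine additional ingredient your route demands; the paper's intermediate-$\bar\nu_\ve$ decomposition trades it for a slightly longer triangle-inequality bookkeeping. Both routes are sound and of comparable length.
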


\begin{proof}
Note that for $\ve > 0$, $\nu_\ve$ is singular with respect to $m_t$ for $\ve>0$, 
but that $d\nu_0 = g_0 dm_t$ where
$g_0 \in BV(Y)$ is the unique invariant probability density for $\Lp_{t\Phi - \tau p(t)}$.\footnote{Since
$H_0= \{ z \}$, $p^{H_0}(t) = p(t)$ so that the conformal measure for $\Psi_{1,0}^{H}$ is once 
again $m_t$.}  

Let $\bar{g}_\ve dm_t$, $\bar{g}_\ve \in BV(Y)$, denote the conditionally invariant probability
measure formed using the eigenvector $\bar{g}_\ve$ for $\hLp_{\Psi_2^{H_\ve}}$,
and let $\bar\nu_\ve$ denote the invariant measure on the survivor set $Y_\ve^\infty$
formed via the limit in \eqref{eq:inv def}, but using $\bar g_\ve dm_t$ in place of 
$g_\ve d\tm_{t, H_\ve}$. 

For $\psi \in BV(Y)$, we have
\begin{equation}
\label{eq:conv}
\left|\nu_\ve(\psi) - \nu_0(\psi)\right| \le |\nu_\ve(\psi )- \bar{\nu}_\ve(\psi)| + |\bar{\nu}_\ve(\psi) - \nu_0(\psi)| .
\end{equation}

To estimate the first term above, we write
\begin{equation}
\label{eq:conv split}
|\nu_\ve(\psi) - \bar{\nu}_\ve(\psi)|  \le \left|e_\ve(\psi g_\ve) - \bar e_\ve(\psi g_\ve)\right| 
+ \left| \bar e_\ve(\psi  g_\ve) - \bar e_\ve( \psi \bar{g}_\ve) \right| .
\end{equation}
Let $\Pi_\ve$ and $\bar{\Pi}_\ve$ denote
the projectors onto the eigenspaces corresponding to  the top eigenvalues of 
$\hLp_{\Psi_{1,\ve}^{H}}$ and $\hLp_{\Psi_2^{H_\ve}}$, respectively, so that
\[
\Pi_\ve(\psi g_\ve) = e_\ve(\psi g_\ve) g_\ve \; \; \; \mbox{and} \; \; \; 
\bar{\Pi}_\ve(\psi g_\ve) = \bar{e}_\ve(\psi g_\ve) \bar{g}_\ve .
\]
By Lemma~\ref{lem:BV close}, $\| g_\ve - \bar g_\ve \|_{BV} \le \zeta(\ve)$ for some function $\zeta$
such that $\zeta(\ve) \to 0$ as $\ve \to 0$.  Similarly, $\| \Pi_\ve - \bar \Pi_\ve \|_{BV} \le \zeta(\ve)$.
\[
\begin{split}
\| \Pi_\ve(\psi g_\ve)  & - \bar \Pi_\ve(\psi g_\ve) \|_{BV} 
 \ge \int | \Pi_\ve(\psi g_\ve) - \bar \Pi_\ve(\psi g_\ve) | \, dm_t  \\
& = \left| \int (e_\ve(\psi g_\ve) - \bar e_\ve(\psi g_\ve)) g_\ve \, dm_t 
+ \bar e_\ve(\psi g_\ve) \int (g_\ve - \bar g_\ve) \, dm_t \right|
\end{split}
\]
Since the term on the left of the inequality is of order $\zeta(\ve)$ and the second term on the right
 is of the same order,
then the same must be true of the first term on the right.
It follows then that $e_\ve(\cdot)$ and $\bar e_\ve(\cdot)$ can be
made arbitrarily close on $BV$ functions by choosing $\ve$ small.
Thus the first term in \eqref{eq:conv split} can be made arbitrarily small by choosing $\ve$ small.

For the second term of \eqref{eq:conv split}, we estimate,
\[
 \left| \bar e_\ve(\psi  g_\ve) - \bar e_\ve( \psi \bar{g}_\ve) \right| 
= \left| \int \bar \Pi_\ve (\psi g_\ve - \psi \bar g_\ve) \, dm_t \right|
\le C \| \psi \|_{BV} \| g_\ve - \bar g_\ve \|_{BV},
\]
which can again be made arbitrarily small.  This completes the estimate on the first term of
\eqref{eq:conv}.

To estimate the second term in \eqref{eq:conv}, we use Lemma~\ref{lem:small pert} so that the
spectra and spectral proctors of $\hLp_{\Psi_2^{H_\ve}}$ converge to those of $\Lp_{t\Phi - \tau p(t)}$
in the weaker $L^1(m_t)$ norm and not in $\| \cdot \|_{BV}$.
\[
\begin{split}
\left|\bar{\nu}_\ve(\psi) - \nu_0(\psi)\right| 
& \le \left| \bar e_\ve(\psi \bar g_\ve) - e_0(\psi \bar g_\ve) \right| 
+ \left| e_0(\psi \bar g_\ve) - e_0(\psi g_0) \right| \\
& = \left| \int \bar\Pi_\ve(\psi \bar{g}_\ve) \, dm_t - \int \Pi_0 (\psi \bar g_\ve) \, dm_t \right|
+ \left| \int \psi (\bar g_\ve - g_0) \, dm_t \right| \\
& \le \left| \bar\Pi_\ve (\psi \bar{g}_\ve)  - \Pi_0 (\psi \bar g_\ve) \right|_{L^1(m_t)} 
+ \| \psi \|_{BV} |\bar g_\ve - g_0|_{L^1(m_t)} \\
& \le ||| \bar \Pi_\ve - \Pi_0 ||| \, \| \psi \bar{g}_\ve  \|_{BV}  
+   \| \psi \|_{BV} |\bar g_\ve - g_0|_{L^1(m_t)}
\end{split}
\]
The first term above tends to zero as $\ve$ tends to 0
due to Lemma~\ref{lem:small pert}, while the second
term tends to zero by \cite[Corollary 1]{KelLiv99}.

Putting these estimates together with \eqref{eq:conv split} in \eqref{eq:conv} completes the
proof of convergence of $\nu_\ve$ to $\nu_0$ when integrated against functions in $BV(Y)$.
We extend this convergence to continuous functions
by approximation.  For $\psi_1 \in C^0(Y)$, let $\delta >0$ and
choose a step function $\psi_2 \in BV(Y)$ such that
$|\psi_1 -\psi_2 |_\infty < \delta$.  Then
\[
\begin{split}
|\nu_\ve(\psi_1) - \nu_0(\psi_1)| & \le |\nu_\ve(\psi_1) - \nu_\ve(\psi_2)| + |\nu_\ve(\psi_2) - \nu_0(\psi_2)| + |\nu_0(\psi_2) - \nu_0(\psi_1)| \\
& \le 2\delta + |\nu_\ve(\psi_2) - \nu_0(\psi_2)| ,
\end{split}
\]
and the last term tends to 0 as $\ve \to 0$ since $\psi_2 \in BV(Y)$.  Since $\delta>0$ was
arbitrary, this implies the required convergence on continuous functions.
\end{proof}


\subsection{An invariant measure for $\hf_\ve$ on $\hI_\ve^\infty$}
\label{T invariant}

In this section, we push the
invariant measure $\nu_\ve = \nu_{Y, \ve}$ on $\hY_\ve^\infty$ onto $\hI_\ve^\infty$ to obtain
an invariant measure $\nu_{H_\ve}$ for $\hf_\ve$ which inherits good properties from 
$\nu_{Y,\ve}$, thus completing parts (1) and (5) of Theorem~\ref{thm:stable}.

Define for any Borel set $A \subset I$,
\begin{equation}
\label{eq:project}
\nu_{H_\ve}(A) = \frac{1}{\int \tau \, d\nu_{Y, \ve}} \sum_{k=0}^\infty \sum_{i=0}^{\tau_k-1} \nu_{Y,\ve}(f^{-i}(A) \cap Y_k) , 
\end{equation}
where $\tau_k = \tau|_{Y_k} = k+1$.  This defines an $f$-invariant probability measure $\nu_{H_\ve}$ 
if $\int \tau \, d\nu_{Y,\ve} < \infty$.   The next lemma shows that in fact $\tau$ is uniformly integrable
with respect to $\nu_{Y, \ve}$ for all $\ve$ sufficiently small.

\begin{lemma}
\label{lem:finite}
There exists a constant $C_4 >0$ such that for each $k \ge 0$ and all $\ve$ sufficiently small,
$\nu_{Y,\ve}(Y_k) \le C_4 k^{-t(\frac{1}{\gamma}+1)}e^{-(k+1)p^{H_\ve}(t)}$.
\end{lemma}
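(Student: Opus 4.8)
The plan is to estimate $\nu_{Y,\ve}(Y_k)$ by unwinding the definition \eqref{eq:inv def}, pushing the surviving mass contained in the branch domain $Y_k$ one step forward under $\hF_\ve$, and then invoking conformality of $\tm_{t,H_\ve}$ together with the derivative lower bound (D1) on $Y_k$. First I would record two uniform‑in‑$\ve$ ingredients that follow from the work already done. (i) There is $M>0$ with $|g_\ve|_\infty\le M$ for all $\ve\le\ve_0$: the eigenfunction satisfies $\hLp_\ve^n g_\ve=\Lambda_\ve^n g_\ve$, so the (uniform) Lasota--Yorke inequality of Proposition~\ref{prop:quasi} gives $\Lambda_\ve^n\|g_\ve\|_{BV}\le C\sigma_1^n\|g_\ve\|_{BV}+C$ for all $n$; fixing $n$ large enough that $\Lambda_{t,0}^n>C\sigma_1^n$ and using $\Lambda_\ve\to\Lambda_{t,0}=1$ (Proposition~\ref{prop:spectral gap}, since the point hole gives $\hLp_0=\Lp_{t\Phi-\tau p(t)}$ of spectral radius $1$) yields a uniform bound on $\|g_\ve\|_{BV}$, hence on $|g_\ve|_\infty$. (ii) The limit $c_\ve:=\lim_{m\to\infty}\Lambda_\ve^{-m}\tm_{t,H_\ve}(\hY_\ve^m)$ exists and is uniformly bounded: since $\Lambda_\ve^{-m}\hLp_\ve^m 1\to e_\ve(1)g_\ve$ in $BV$ by the spectral gap, integration against $\tm_{t,H_\ve}$ gives $c_\ve=e_\ve(1)=|\Pi_\ve 1|_{L^1(\tm_{t,H_\ve})}$, uniformly bounded because $\|\Pi_\ve\|$ is uniformly bounded by Proposition~\ref{prop:spectral gap}.

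The heart of the matter is the one‑step bound, valid for $n\ge1$,
\[
\tm_{t,H_\ve}(\hY_\ve^n\cap Y_k)\;\le\;\Big(\sup_{Y_k}e^{t\Phi-\tau p^{H_\ve}(t)-P_{t,\ve}}\Big)\,\tm_{t,H_\ve}(\hY_\ve^{n-1}).
\]
Indeed $\hF_\ve$ is injective on $Y_k$ with inverse branch $\xi_k$, one has $\hF_\ve(\hY_\ve^n\cap Y_k)\subseteq\hY_\ve^{n-1}$, and conformality of $\tm_{t,H_\ve}$ with respect to $\Psi_{1,\ve}=t\Phi-\tau p^{H_\ve}(t)-P_{t,\ve}$ gives $\tm_{t,H_\ve}(A)=\int_{\hF_\ve(A)}e^{\Psi_{1,\ve}\circ\xi_k}\,d\tm_{t,H_\ve}$ for $A\subseteq Y_k$. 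On $Y_k$ we have $\tau\equiv k+1$, so $e^{\Psi_{1,\ve}}|_{Y_k}=(Df^{k+1})^{-t}e^{-(k+1)p^{H_\ve}(t)}e^{-P_{t,\ve}}$; by (D1) applied with $n=k+1$ (note $F(Y_k)=Y$) together with the distortion estimate (D2), $Df^{k+1}\ge C_d^{-1}(k+1)^{1+\frac1\gamma}$ on $Y_k$, and $P_{t,\ve}\ge0$ (Lemma~\ref{lem:conformal}, together with $P(t\Phi)\ge0$ when $p^{H_\ve}(t)=0$). Hence $\sup_{Y_k}e^{\Psi_{1,\ve}}\le C_d\,(k+1)^{-t(1+\frac1\gamma)}e^{-(k+1)p^{H_\ve}(t)}$. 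When $Y_k$ meets the induced hole $\tH_\ve$ the map $\hF_\ve|_{Y_k}$ is no longer a full branch and $\hF_\ve(Y_k\cap\hY_\ve^1)$ is a proper (possibly disconnected) subset of $Y$, but this only shrinks the sets involved and leaves the pointwise bound on $e^{\Psi_{1,\ve}}|_{Y_k}$ untouched, so the estimate is unaffected.

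Combining these, from \eqref{eq:inv def} and $1_{Y_k}\in BV(Y)$,
\[
\nu_{Y,\ve}(Y_k)=\lim_{n\to\infty}\Lambda_\ve^{-n}\int_{\hY_\ve^n\cap Y_k}g_\ve\,d\tm_{t,H_\ve}\le M\lim_{n\to\infty}\Lambda_\ve^{-n}\tm_{t,H_\ve}(\hY_\ve^n\cap Y_k),
\]
and inserting the one‑step bound,
\[
\Lambda_\ve^{-n}\tm_{t,H_\ve}(\hY_\ve^n\cap Y_k)\le C_d\,(k+1)^{-t(1+\frac1\gamma)}e^{-(k+1)p^{H_\ve}(t)}\,\Lambda_\ve^{-1}\Big(\Lambda_\ve^{-(n-1)}\tm_{t,H_\ve}(\hY_\ve^{n-1})\Big).
\]
Letting $n\to\infty$, using (ii), using $\Lambda_\ve^{-1}\le2$ for $\ve$ small, and using $(k+1)^{-t(1+\frac1\gamma)}\le k^{-t(1+\frac1\gamma)}$ for $k\ge1$, one obtains the claim with $C_4=2MC_d\sup_{\ve\le\ve_0}c_\ve$; for $k=0$ the asserted bound is either infinite (if $t>0$) or immediate (if $t=0$).

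The argument is essentially a conformal change of variables combined with the already‑established spectral‑gap consequences, so the only real work is bookkeeping: keeping the constants $M$, $c_\ve$ and $\Lambda_\ve^{-1}$ uniform in $\ve$ (supplied by Propositions~\ref{prop:quasi} and \ref{prop:spectral gap} and by $\Lambda_\ve\to1$, which follows from Lemma~\ref{lem: eps 0 press 0} for $t<1$ and is immediate for $t=1$), and checking that the case where $Y_k$ straddles $\tH_\ve$ does not spoil the conformality computation. This is the main, though mild, obstacle; everything else is routine.
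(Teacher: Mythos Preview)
Your proof is correct and follows essentially the same approach as the paper: a one-step conformal push-forward on $Y_k$ combined with the spectral-gap limit $\Lambda_\ve^{-n}\tm_{t,H_\ve}(\hY_\ve^{n})\to e_\ve(1)$ and the uniform $BV$ bound on $g_\ve$. The only minor wrinkle is that Proposition~\ref{prop:quasi} is stated for $\hLp_{\Psi_2^{H_\ve}}$ rather than $\hLp_\ve=\hLp_{\Psi_{1,\ve}^H}$, so your Lasota--Yorke argument for $\|g_\ve\|_{BV}$ is not literally as cited; the uniform bound is recovered either via the Remark following Proposition~\ref{prop:quasi} or, as the paper in effect does, by combining the uniform bound on $\|\bar g_\ve\|_{BV}$ with $\|g_\ve-\bar g_\ve\|_{BV}\to0$ from Lemma~\ref{lem:BV close}.
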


\begin{proof}
Fix $k \ge 0$ and let $e^{(t\Phi - \tau p^{H_\ve}(t)-P_{t,\ve})(Y_k)}$ denote the maximum of
$e^{t\Phi - \tau p^{H_\ve}(t) - P_{t, \ve}}$ on $Y_k$.  
Since $F(Y_k) = [1/2,1]$, we have for $n \ge 1$,
\begin{equation}
\label{eq:image}
\begin{split}
\tm_{t,H_\ve}(\hY^n_\ve \cap Y_k)  & \le e^{(t\Phi - \tau p^{H_\ve}(t)- P_{t, \ve})} \tm_{t,H_\ve}(\hY_\ve^{n-1} \cap F(Y_k)) \\
& \le C_5 k^{-t(\frac{1}{\gamma}+1)}e^{-(k+1)p^{H_\ve}(t)}  \tm_{t,H_\ve}(\hY_\ve^{n-1})
\end{split}
\end{equation}
for some uniform constant $C_5$ depending on bounded distortion.
Note that $C_5$ is independent of $\ve$ (and $t$).  Now
\[
\Lambda_\ve^{-n+1} \tm_{t,H_\ve}(\hY_\ve^{n-1}) = \Lambda_\ve^{-n+1} \int \hLp_{\Psi_{1,\ve}^{H}}^{n-1} 1 \, d\tm_{t,H_\ve}
\xrightarrow[n \to \infty]{} e_\ve(1)> 0 .
\]
Therefore, there exists $n_1 = n_1(\ve)>0$ such that $\Lambda_\ve^{-n+1} \tm_{t,H_\ve}(\hY_\ve^{n-1}) 
\le 2 e_\ve(1)$
for all $n > n_1$.  Also, since $e_\ve(1) \to e_0(1) = 1$ as $\ve \to 0$, we have
$e_\ve(1) \le 2$ for all $\ve$ sufficiently small.

Putting these estimates together with \eqref{eq:image} yields,
\[
\begin{split}
\nu_\ve(Y_k) & = \lim_{n \to \infty} \Lambda_\ve^{-n} \int_{\hY^n_\ve} 1_{Y_k} \, g_\ve \, d\tm_{t,H_\ve}
\le \lim_{n \to \infty} \Lambda_\ve^{-n} |g_\ve|_\infty \tm_{t,H_\ve}(\hY_\ve^n \cap Y_k) \\
& \le \lim_{n \to \infty} |g_\ve|_\infty \Lambda_\ve^{-1} C_5 k^{-t(\frac{1}{\gamma}+1)}e^{-(k+1)p^{H_\ve}(t)}
\Lambda_\ve^{-n+1} \tm_{t,H_\ve}(\hY_\ve^{n-1}) \\
& \le 4 \Lambda_\ve^{-1} C_5 |g_\ve|_\infty k^{-t(\frac{1}{\gamma}+1)} e^{-(k+1)p^{H_\ve}(t)}. 
\end{split}
\]
Now since $g_\ve$ lies in a uniform ball in $BV(Y)$ for all $\ve$ sufficiently small,
the lemma is proved since the constants  
are bounded independently of $\ve$ and $k$.
\end{proof}

As a consequence of Lemma~\ref{lem:finite}, we have
\begin{equation}
\label{eq:tau}
\int_{\tau>k_0} \tau \, d\nu_{Y,\ve} = \sum_{k=k_0}^\infty \tau_k \nu_{Y,\ve}(Y_k) 
\le \sum_{k=k_0}^\infty (k+1) C_4 k^{-t(\frac{1}{\gamma}+1)}e^{-(k+1)p^{H_\ve}(t)}
\le \rho_t(k_0) ,
\end{equation}
where $\rho_t(k_0) \to 0$ as $k_0 \to \infty$.  In this last step, we have used the fact that 
$p^{H_\ve}(t) >0$ for $t< t^H$ as well as the assumption that $t^{H_\ve} > 2\gamma/(1+\gamma)$,
to conclude that the tail of the series tends to 0 as $k_0$ increases.

\begin{lemma}
\label{lem:tau}
$\int \tau \, d\nu_{Y,\ve} \to \int \tau \, d\nu_{Y,0}$ as $\ve \to 0$ .
\end{lemma}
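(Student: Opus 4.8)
The plan is to prove convergence of the inducing time integrals by combining the uniform tail estimate of Lemma~\ref{lem:finite} (in the form \eqref{eq:tau}) with the weak convergence $\nu_{Y,\ve} \to \nu_{Y,0}$ already established in Proposition~\ref{prop:invariant conv}. The obstruction is that $\tau$ is unbounded on $Y$, so weak convergence against continuous (or $BV$) functions does not directly control $\int \tau \, d\nu_{Y,\ve}$; we must truncate $\tau$ at a level $k_0$ and use the uniform integrability estimate to bound the tail uniformly in $\ve$.

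First I would split the integral at a truncation level $k_0$:
\[
\int \tau \, d\nu_{Y,\ve} = \int \tau \cdot 1_{\{\tau \le k_0\}} \, d\nu_{Y,\ve} + \int \tau \cdot 1_{\{\tau > k_0\}} \, d\nu_{Y,\ve}.
\]
The truncated function $\tau \cdot 1_{\{\tau \le k_0\}}$ is a finite linear combination of indicator functions $1_{Y_k}$ for $k \le k_0 - 1$, hence lies in $BV(Y)$ (it is a step function with finitely many jumps, all at the endpoints of the $Y_k$). By Proposition~\ref{prop:invariant conv}, $\nu_{Y,\ve}(\psi) \to \nu_{Y,0}(\psi)$ for every $\psi \in BV(Y)$, so
\[
\int \tau \cdot 1_{\{\tau \le k_0\}} \, d\nu_{Y,\ve} \xrightarrow[\ve \to 0]{} \int \tau \cdot 1_{\{\tau \le k_0\}} \, d\nu_{Y,0}
\]
for each fixed $k_0$. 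For the tail, \eqref{eq:tau} gives, for all $\ve$ sufficiently small,
\[
\int \tau \cdot 1_{\{\tau > k_0\}} \, d\nu_{Y,\ve} \le \rho_t(k_0),
\]
with $\rho_t(k_0) \to 0$ as $k_0 \to \infty$, and the same bound applies to $\nu_{Y,0}$ since $H_0 = \{z\}$ is covered by the hypotheses of Lemma~\ref{lem:finite} (indeed $\nu_{Y,0} = \bar\nu_{Y,0}$ is the unperturbed measure, for which the estimate is classical).

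Now I would run the standard three-$\epsilon$ argument. Fix $\eta > 0$. Choose $k_0$ so large that $\rho_t(k_0) < \eta/3$; this bounds both tail contributions, for $\nu_{Y,\ve}$ (uniformly in small $\ve$) and for $\nu_{Y,0}$, by $\eta/3$ each. With $k_0$ now fixed, choose $\ve_1 > 0$ so small that
\[
\left| \int \tau \cdot 1_{\{\tau \le k_0\}} \, d\nu_{Y,\ve} - \int \tau \cdot 1_{\{\tau \le k_0\}} \, d\nu_{Y,0} \right| < \frac{\eta}{3}
\]
for all $\ve < \ve_1$, which is possible by the $BV$-convergence above. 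Then for $\ve < \ve_1$,
\[
\left| \int \tau \, d\nu_{Y,\ve} - \int \tau \, d\nu_{Y,0} \right| < \frac{\eta}{3} + \frac{\eta}{3} + \frac{\eta}{3} = \eta.
\]
Since $\eta$ was arbitrary, this gives $\int \tau \, d\nu_{Y,\ve} \to \int \tau \, d\nu_{Y,0}$. The only delicate point — and the step I expect to require the most care in writing up — is verifying that the tail bound \eqref{eq:tau} is genuinely uniform in $\ve$ for small $\ve$, which rests on the uniformity of the constant $C_4$ in Lemma~\ref{lem:finite} together with $p^{H_\ve}(t) \to p(t) > 0$ (for $t<1$; the case $t=1$ needs $t^{H_\ve} > 2\gamma/(1+\gamma)$, which is the standing assumption \eqref{eq:tH}) so that the exponential factor $e^{-(k+1)p^{H_\ve}(t)}$ does not degenerate as $\ve \to 0$. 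All of this is already in hand from Lemmas~\ref{lem:finite} and \ref{lem: eps 0 press 0}.
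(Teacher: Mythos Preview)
Your proposal is correct and matches the paper's own proof essentially line for line: truncate $\tau$ at level $k_0$, use the uniform tail bound \eqref{eq:tau} from Lemma~\ref{lem:finite} to control both tails, and apply Proposition~\ref{prop:invariant conv} to the truncated $BV$ function $\tau\cdot 1_{\{\tau\le k_0\}}$. The paper's write-up is slightly terser (it does not spell out the three-$\eta$ bookkeeping or the case split in $t$), but the argument is identical.
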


\begin{proof}
Fix $\delta>0$.  By \ref{eq:tau}, we may choose $k_0$ so that 
$\int_{\tau>k_0} \tau \, d\nu_{Y,\ve} < \delta$
for all $\ve$ sufficiently small (including $\ve=0$).  Since $\tau$ is constant on each $Y_k$,
we have $\tau \cdot 1_{\{\tau \le k_0\}} \in BV(Y)$.  Thus
\[
\left|\int \tau \, d\nu_{Y,\ve} - \int \tau \, d\nu_{Y,0}\right| \le \left| \int \tau \cdot 1_{\{ \tau \le k_0 \}}  \, d\nu_{Y,\ve} 
- \int \tau \cdot 1_{\{\tau \le k_0 \}} \, d\nu_{Y,0} \right| + 2\delta
\]
and the difference of integrals on the right goes to zero as $\ve \to 0$ by 
Proposition~\ref{prop:invariant conv}.  Since $\delta>0$ was arbitrary, this completes the
proof of the lemma.
\end{proof}

For part (5) of Theorem~\ref{thm:stable}, we need to show that $\nu_{H_\ve} \to \mu_t$ as $\ve \to 0$, where $\mu_t$ is the equilibrium
state (absolutely continuous with respect to $m_t)$ for the unpunctured potential $t\phi - p(t)$.  
This is straightforward with 
Lemmas~\ref{lem:finite} and \ref{lem:tau} in hand since $\mu_t$ is simply the projection
of $\nu_{Y,0}$ to $I$ via the formula analogous to \eqref{eq:project}.  
Fix a function $\psi$ with bounded variation on $I$ and $\delta>0$.
Choose $k_0$ such that $\int_{\tau > k_0} \tau \, d\nu_{Y,\ve} < \delta$.
Then
\[
\begin{split}
|\nu_{H_\ve}(\psi) - \mu_t(\psi) | & \le \left|  \sum_{k=0}^{k_0} \sum_{i=0}^{\tau_k-1}
\frac{\nu_{Y,\ve}(\psi \circ f^i \cdot 1_{Y_k})}{\int \tau \, d\nu_{Y,\ve}} - \frac{\nu_{Y,0}(\psi \circ f^i \cdot 1_{Y_k})}{\int \tau \, d\nu_{Y,0}}  \right| \\
& \; \; \; \; + \| \psi \|_{BV} \sum_{k>k_0} \sum_{i=0}^{\tau_k-1}
\frac{\nu_{Y,\ve}(Y_k)}{\int \tau \, d\nu_{Y,\ve}}  +  \| \psi \|_{BV} \sum_{k>k_0} \sum_{i=0}^{\tau_k-1}
\frac{\nu_{Y,0}(Y_k)}{\int \tau \, d\nu_{Y,0}} .
\end{split}
\]
Now the terms on the second line are bounded by $2\delta \| \psi \|_{BV}$ and each of the finitely many 
differences on the right hand side of the first line tend to 0 with $\ve$ by
Lemma~\ref{lem:tau} and Proposition~\ref{prop:invariant conv}.
Since $\delta>0$ was arbitrary, this proves the required convergence of
$\nu_{H_\ve}$ to $\mu_t$ when integrated against functions of bounded variation.  
To complete the proof of items (1) and (5) of Theorem~\ref{thm:stable}, we extend 
this convergence to continuous
functions by approximation as in the proof of Proposition~\ref{prop:invariant conv}.


\subsubsection{Interpretation of $\nu_{H_\eps}$}

In this section, we prove items (2) and (3) of Theorem~\ref{thm:stable}.  
We restrict to a sequence of holes $(H_\ve)_{\ve \le \ve_0}$ satisfying {\bf (H)} and such that
$\ve_0$ is sufficiently small that $\hLp_{t\Phi^{H_{\ve_0}} - \tau p^{H_{\ve_0}}(t) - P_{t, \ve}}$
has a spectral gap by Proposition~\ref{prop:spectral gap}.
 
Given a Markov
hole $H = H_\ve$, 
set $$\psi_t^H:=t\phi^H-p^H(t)- P(t\Phi^H-\tau p^H(t))\cdot 1_Y.$$
We first note that 
$$\Psi_t^H = S_\tau \psi_t^H =t\Phi^H-\tau p^H(t)- P(t\Phi^H-\tau p^H(t)),$$ 
so $P(\Psi_t^H)=0$. 

It follows from \cite{BruDemMel10} that the measure $\nu_{Y,\ve}$
constructed in Section~\ref{S invariant}
is a Gibbs measure with Gibbs constant 0 for the potential $t\Phi^H - \tau p^{H}(t) - P_{t,\ve} - \log \Lambda_{t,\ve}$.  So by Theorem~\ref{thm:Gibbs}, 
\[
0=P\left(t\Phi^H - \tau p^{H}(t) - P_{t,\ve} - \log \Lambda_{t,\ve}\right)
=P(t\Phi^H - \tau p^{H}(t)) - P(t\Phi-\tau p^H(t)) - \log \Lambda_{t,\ve}.
\]
Hence
$$ \log \Lambda_{t,\ve}=P(t\Phi^H - \tau p^{H}(t)) - P(t\Phi-\tau p^H(t))$$
and we conclude that  $\nu_{Y,\ve}$ is a Gibbs measure for  $\Psi_t^H$.  Then by Theorem~\ref{thm:Gibbs}, we know that this is an equilibrium state for $\Psi_t^H$ provided the integral of $\Psi_t^H$ is finite.  This latter fact follows from Lemma~\ref{lem:finite} and since whenever $p^{H}(t)=0$, then $t>\frac{\gamma}{1+\gamma}$ by Proposition~\ref{prop:tH}. 

Since 
\[
0=P(\Psi_t^{H})=h_{\nu_{Y, \eps}}+\int \Psi_t^{H}~d\nu_{Y, \eps}
=h_{\nu_{Y, \eps}}+\int t\Phi^{H}-\tau p^{H}(t)-P(t\Phi^{H}-\tau p^{H_\ve}(t))~d\nu_{Y, \eps},
\]
Abramov's formula implies 
$$h_{\nu_{H}}+\int \psi_t^{H}~d\nu_{H}=0,$$
i.e.,
\begin{equation}
 h_{\nu_{H}}+\int t\phi^{H}~d\nu_{H}=P(t\Phi^{H}-\tau p^{H}(t))\nu_{H}(Y)+p^{H}(t).
 \label{eq:FE nuH}
\end{equation}
In particular, $\nu_{H}$ is an equilibrium state for $\psi_t^{H}$, which proves (2)
of Theorem~\ref{thm:stable} for Markov holes. 

We will next show that the free energy given by \eqref{eq:FE nuH} varies continuously in $\eps$.  
Using this and the fact that Markov holes are dense in our sequence $( H_\ve )_{\ve \le \ve_0}$, 
we will be able to
conclude that in fact \eqref{eq:FE nuH} holds for non-Markov holes as well.

For a given $t\in (0,1)$,  note that setting
$$\eps_t:=\inf \{\eps \le \eps_0 :P(t\Phi^{H_\eps}) > 0\},$$ we have $P(t\Phi^{H_{\eps_t}})=0$
whenever $\ve_t$ is finite.  Moreover, set $\eps_1=0$.

We will address two cases, noting that for some values of $t \le t^{H_{\ve_0}}$, Case 2 will be empty.

{\em Case 1:  $\eps\le \eps_t$.}  In this case  $p^{H_\eps}(t)=h_{\nu_{H_\eps}}+\int t\phi^{H_\ve}~d\nu_{H_\eps}$, and we can show the continuity of this quantity via the Implicit Function Theorem and the fact that in this case $s=p^{H_\eps}(t)$ is the unique solution to $P(t\Phi^{H_\ve}-\tau s)=0$.

{\em Case 2: $\eps>\eps_t$.}  In this case $p^{H_\eps}(t)=0$ and $h_{\nu_{H_\eps}}+\int t\phi^{H_\ve}~d\nu_{H_\eps}= P(t\Phi^{H_\ve})\nu_{H_\eps}(Y)$, so we need to prove continuity of induced pressure and $\nu_{H_\eps}(Y)$ in $\eps$.

\begin{lemma} 
Fix $t\in (0, 1]$ and $s\in \R$.  Let $( H_\eps )_{\eps\in [0, \eps_0]}$ 
be a collection of holes centered at $z\in I$ with $\eps<\eps'$ implying 
$H_\eps\subset H_{\eps'}$.  
If $P(t\Phi- \tau s)<\infty$, then $\eps\mapsto P(t\Phi^{H_\eps}-\tau s)$ 
is continuous on $[0, \eps_0]$.
\label{lem:eps cts}
\end{lemma}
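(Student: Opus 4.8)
The plan is to prove the lemma by adapting the transfer‑operator and perturbation machinery of Sections~\ref{induced} and \ref{ssec:pert}, but running the perturbation with a reference potential that does \emph{not} depend on $\ve$. The key preliminary observation is that the hypothesis $P(t\Phi-\tau s)<\infty$ forces $s\ge 0$: by the BIP property, $P(t\Phi-\tau s)<\infty$ is equivalent to $\tilde Z_1(t\Phi-\tau s)=\sum_n\sup_{Y_n}e^{t\Phi-\tau s}<\infty$, and since $\tau\equiv n+1$ and $\sup_{Y_n}e^{t\Phi}\approx n^{-t(1+1/\gamma)}$ by (D1), this series diverges for $s<0$. Consequently $e^{t\Phi-\tau s}\le 2^{-t}<1$ pointwise, so the potential $t\Phi-\tau s$ is contracting in the sense of \cite{LSV1}, and the argument of Lemma~\ref{lem:conformal} produces a non‑atomic $(t\Phi-\tau s-P(t\Phi-\tau s))$‑conformal measure $m_{t,s}$ on $Y$, with respect to which $F$ is full‑branched Gibbs--Markov. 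Crucially $m_{t,s}$ is independent of $\ve$. Let $\hLp_\ve$ denote the transfer operator of the punctured, normalised potential $t\Phi^{H_\ve}-\tau s-P(t\Phi-\tau s)$ for $F^2$ (as in Section~\ref{induced}), acting on $BV(Y)$; the case $t=1,\ s=0$ is already contained in Step~1 of Proposition~\ref{prop:spectral gap} (there $m_{1,0}=m_1$ and $P(\Phi)=0$), so one may assume $t<1$.

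The three steps are then: (i) a Lasota--Yorke inequality for $\hLp_\ve$ on $BV(Y)$ with constants uniform in $\ve\in[0,\ve_0]$, obtained exactly as in Proposition~\ref{prop:quasi} and Lemma~\ref{lem:LY} — assumption {\bf (H)} bounds the lengths (hence $m_{t,s}$‑measures) of the finitely many images of $\hF_\ve^2$ below uniformly in $\ve$, and one iterates enough times that the $3\sigma_n$‑type constants (here bounded using $e^{t\Phi-\tau s}\le 2^{-t}$) fall below $1$; in particular the essential spectral radius is $\le\sigma_*<1$ uniformly in $\ve$. (ii) The estimate $|||\hLp_\ve-\hLp_{\ve'}|||\le m_{t,s}\big((\tH_\ve\triangle\tH_{\ve'})\cup F^{-1}(\tH_\ve\triangle\tH_{\ve'})\big)\to 0$ as $\ve'\to\ve$, exactly as in Lemma~\ref{lem:small pert}, since the symmetric difference of two nested intervals shrinks with the holes. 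By \cite[Corollary~1]{KelLiv99}, the spectral data of $\hLp_\ve$ outside the disk of radius $\sigma_*$, together with the corresponding spectral projectors, vary H\"older continuously in $\ve$; since $\hLp_0=\Lp_{t\Phi-\tau s-P(t\Phi-\tau s)}$ has a simple leading eigenvalue equal to $1$ (Gibbs--Markov theory, Theorem~\ref{thm:Gibbs}), after shrinking $\ve_0$ the leading eigenvalue $\rho_\ve$ of $\hLp_\ve$ stays real, simple, and outside the disk of radius $\sigma_*$ for all $\ve\le\ve_0$, hence is H\"older continuous in $\ve$. (iii) Identify $\rho_\ve$ with the pressure: by the spectral‑gap/Gibbs theory for open Gibbs--Markov maps (\cite{BruDemMel10} and Theorem~\ref{thm:exp conv}), $\log\rho_\ve$ equals the Gurevich pressure of the punctured normalised potential, so
$$
\log\rho_\ve \;=\; c\,\Big(P\big(t\Phi^{H_\ve}-\tau s\big)-P\big(t\Phi-\tau s\big)\Big),
$$
with $c\in\{1,2\}$ according to whether one works with $F$ or $F^2$; therefore $\ve\mapsto P(t\Phi^{H_\ve}-\tau s)=P(t\Phi-\tau s)+c^{-1}\log\rho_\ve$ is (H\"older) continuous on $[0,\ve_0]$.

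The step I expect to require the most care is the interplay between (i) and (iii): one must insist on running the whole perturbation with the fixed reference measure $m_{t,s}$ rather than the $\ve$‑dependent punctured conformal measures $\tm_{t,H_\ve}$ — this is precisely the two‑step device already used in the proof of Proposition~\ref{prop:spectral gap} — and then check that the Lasota--Yorke constants coming from {\bf (H)} and the contraction $e^{t\Phi-\tau s}\le 2^{-t}$ are genuinely independent of $\ve$, so that Keller--Liverani applies uniformly across the whole parameter interval and the leading eigenvalue really does stay outside the essential spectral radius disk. (A more elementary route is possible: $\ve\mapsto P(t\Phi^{H_\ve}-\tau s)$ is monotone decreasing in the size of the hole, so it suffices to rule out one‑sided jumps; using the variational principle, the nestedness of the survivor sets, and the fact that modifying the hole by the countably many grand‑orbit points of its two endpoints alters only finitely many periodic orbits — hence leaves the Gurevich pressure unchanged — handles the two one‑sided limits. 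However, the failure of upper semicontinuity of entropy on countable Markov shifts makes one of these directions delicate, which is why the perturbative argument above is preferable.)
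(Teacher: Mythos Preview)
Your approach is correct in outline and is genuinely different from the paper's. The paper does \emph{not} appeal to Keller--Liverani here: after reducing to Markov holes by monotonicity, it gives a direct partition-function estimate. Namely, it first secures the bound $P(t\Phi^{H_\ve})>-t\log 2$ (via a one-line perturbation of the unpunctured operator in $L^1(m_t)$, not in $BV$), so that $\delta:=P(t\Phi^{H_\ve})+t\log 2>0$; then it observes that removing a single $n$-cylinder $C_n$ from the hole adds at most $e^{tS_n\Phi}\le e^{n(P(t\Phi^{H_\ve})-\delta)}$ to $Z_n(t\Phi^{H_\ve})$, and uses submultiplicativity $\hat Z_{nk}\le K\hat Z_n^k$ of Gurevich sums to conclude that the pressures differ by $O(e^{-\delta n/2})$. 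This is elementary, avoids assumption {\bf (H)} entirely, and gives an explicit modulus of continuity.

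Your spectral route works too, but note two points where more care is needed than you indicate. First, the contraction: Sublemma~\ref{lem:contract} is proved for the specific normalisation by $p(t)$, and for general $s\ge 0$ the quantity $\sup_{Y_0}e^{t\Phi-\tau s-P(t\Phi-\tau s)}$ can be arbitrarily close to $1$ (it equals $m_{t,s}(Y_0)$ up to distortion, and this approaches $1$ as $s\to\infty$), so you really do need to pass to a higher iterate $\hF_\ve^{2N}$ with $N=N(t,s)$ to force $3\sigma^{(N)}<1$. Second, once $N>1$ you must check that the images of $\hF_\ve^{2N}$ still have $m_{t,s}$-measure bounded below uniformly in $\ve$; {\bf (H)} controls $\hF_\ve^2$, and the extension to $\hF_\ve^{2N}$ requires tracking finitely many further forward images of $\partial H_\ve$. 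These are genuine (if routine) verifications. Finally, both your argument and the paper's implicitly use that $\ve_0$ is small enough for the leading eigenvalue to remain isolated above the essential spectrum --- consistent with the standing assumptions of Section~\ref{stable proof}, but worth stating.
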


\begin{proof}
We will set $s=0$ since the proof for any other value is analogous.
Clearly $\eps\mapsto P(t\Phi^{H_\eps})$ is monotone decreasing.  
Since we can approximate any hole $H_\eps$ by elements of the Markov partition,  it is sufficient to consider monotone sequences $(H_{\eps_n})_n$ of Markov holes.  We will prove that for $H_\eps$, for any $\eta>0$ there exists $n\in \N$ such that $P(t\Phi^{H_\eps\sm C_n})-P(t\Phi^{H_\eps})<\eta$ where $C_n$ is any $n$-cylinder.  This  then implies the lemma. 

By assumption on $\Phi$, we have $P(t\Phi) < \infty$.  Also the spectral radius of
$\Lp_{t \Phi}$ on $BV(Y)$ is $e^{P(t\Phi)}$.  On the one hand,
\[
| \Lp_{t \Phi} 1 |_{L^1(m_t)} \le \| \Lp_{t\Phi} \|_{BV} \| 1 \|_{BV} < \infty,
\]
while on the other, $\int_Y \Lp_{t \Phi} 1 \, dm_t = \int_Y e^{\tau p(t)} \, dm_t$, since
$m_t$ is $(t\Phi - \tau p(t))$-conformal.  Thus $e^{\tau p(t)} \in L^1(m_t)$.

We claim that $\hLp_{t\Phi^H_{\ve}}$ is a perturbation of $\Lp_{t\Phi}$.  This follows 
precisely as in the proof of Lemma~\ref{lem:small pert} since
\[
| \hLp_{t\Phi} \psi - \hLp_{t\Phi^{H_\ve}} \psi |_{L^1(m_t)} \le
| \psi |_{\infty} \int_{\tH_\ve \cup F^{-1} \tH_\ve} e^{\tau p(t)} \, dm_t ,
\]
and this expression tends to 0 with $\ve$ since $e^{\tau p(t)} \in L^1(m_t)$.

Since $P(t\Phi) >0$, it follows from the above that in the perturbative regime
$\ve \le \ve_0$, we have $P(t\Phi^{H_\eps})>-t\log 2$.   
Let $\delta:=P(t\Phi^{H_\eps})+t\log 2>0$.  Since $DF\ge 2$, we have  $t\Phi\le -t\log 2$ for $t\ge 0$, so $$t\Phi-P(t\Phi^{H_\eps})\le -\delta.$$

So for large $n$, subtracting an $n$-cylinder $C_n$ from an existing hole adds at most 
$$
e^{t S_n \Phi(x)}\le e^{-\delta n + nP(t \Phi^{H_\ve})}
$$
to $Z_n(t\Phi^{H_\eps})$ for $x \in C_n$.  Briefly denote $Z_n=Z_n(t\Phi^{H_\eps})$.  Subtracting our $n$-cylinder changes
$\frac1n\log Z_n$ to at most $\frac1n\log(Z_n+e^{ n(P(t\Phi^{H_\eps})-\delta)})=\frac1n\log Z_n +\frac1n\log\left(1+\frac{ e^{n\left(P(t\Phi^{H_\eps})-\delta\right)}}{Z_n}\right)$.
Since $e^{-nP(t\Phi^{H_\eps})}Z_n$ has subexponential growth, this alters $\frac1n\log Z_n$ by at most $\zeta(n)$ for $\zeta(n) =Ce^{-\frac\delta2 n}$ for $C>0$ independent of $n$.
 
Clearly, $\displaystyle \lim_{k\to \infty} \tfrac1{mk}\log Z_{mk}(\Psi)=P(\Psi)$ for any H\"older continuous potential $\Psi$. 
Let  $\hat Z_n=Z_n(t\Phi^{H_\eps\sm C_n})$.  Then for $K$ a distortion constant, 
and $k\in \N$, $\hat Z_{nk}\le K\hat Z_n^k$.  Thus
$$\frac1{nk}\log \hat Z_{nk}\le \frac K{nk}+\frac1n\log \hat Z_n\le  \frac K{nk} +\frac1{n}\log Z_n +\zeta(n).$$
So letting $k\to \infty$, we see that for $n$ large, $P(t\Phi^{H_\eps})$ and $P(t\Phi^{H_\eps\sm C_n})$ differ by  $O(e^{-\frac\delta2 n})$.
\end{proof}

We use the lemma to prove Cases 1 and 2.

{\em Cae 1.}  Noting that for fixed $t\in \R$ and $\eps\ge 0$, $s\mapsto P(t\Phi^{H_\eps}-\tau s)$ 
is strictly decreasing, Lemma~\ref{lem:eps cts} and the Implicit Function Theorem (in its continuous version) imply that $\eps\mapsto p^{H_\eps}(t)$ is continuous for $\eps\le \eps_t$.

{\em Case 2.} In this case $p^{H_\eps}(t)=0$ so the free energy of $\nu_{H_\ve}$ is given by
$P(t\Phi^{H_\ve}) \nu_{H_\ve}(Y)$, according to \eqref{eq:FE nuH}.
Lemma~\ref{lem:eps cts} gives continuity of $P(t\Phi^{H_\ve})$, while
continuity of $\nu_{H_\ve}(Y) = \int \tau d\nu_{Y,\ve}$ follows from Lemma~\ref{lem:tau}
for $\ve \le \ve_0$.


\subsection{Analyticity of the free energy for $t > t^H$}
\label{ssec:analytic}

In this section we focus on the pressure (or free energy) of the measures $\nu_{H_\ve} = \nu_{H_\ve, t}$ constructed
previously.
Fixing $\ve$ sufficiently small, we drop the subscript $\ve$ and denote $H_\ve$
simply by $H$.  We will show that the pressure
\[
h_{\nu_{H,t}} + \int t \phi^H \, d\nu_{H,t},
\]
is an analytic function of $t$ for $t > t^H$, thus proving item (4) of Theorem~\ref{thm:stable} and
completing the proof of that theorem.

For brevity, we will denote $\hLp_{t \Phi^H}$ by $\hLp_t$ in this section (recall that $p^H(t) = 0$
for $t \ge t^H$).
Fix $s = \frac{t^H + 1}{2}$ and for $\psi \in BV(Y)$, write 
\[
\hLp_t \psi(x)  = \hLp_{s+ (t-s)} \psi(x) = \sum_{y \in F^{-1}(x)} \psi(y) e^{s\Phi^H(y)} e^{(t-s) \Phi(y)} 
= \sum_{n = 0}^\infty \sum_{y \in F^{-1}(x)} \psi(y) e^{s\Phi^H(y)} \frac{(t-s)^n}{n!} \Phi(y)^n,
\]
and the interchange of sums is justified once we show that the series of operators
$\sum_{n =0}^\infty L_{s,n}$, where $L_{s,n}\psi = \hLp_s (\psi e^{s \Phi^H} \frac{(t-s)^n}{n!} \Phi^n)$, converges in the 
operator norm on $BV(Y)$.  This will follow from our next lemma.

\begin{lemma}
\label{lem:analytic}
There exists $C_H > 0$, depending only on $H$ and $f$, but not $t$, such that
$\| L_{s,n} \|_{BV} \le C |t-s|^n$.  Thus
$\hLp_t$ is an analytic perturbation of $\hLp_s$ for $t \in (t^H,1)$.  Moreover,
the perturbation is continuous on the closure $[t^H, 1]$.

Similarly, the operator for the closed system $\Lp_t = \Lp_{t\Phi}$ is an analytic perturbation of $\Lp_{s\Phi}$ for
$t \in (t^H,1)$ and continuous on the closure. 
\end{lemma}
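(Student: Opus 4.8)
The plan is to expand the weight as $e^{t\Phi^{H}}=e^{s\Phi^{H}}\,e^{(t-s)\Phi}$ — legitimate on all of $Y$ because $\Phi$ is everywhere finite, the $-\infty$ of $\Phi^{H}$ sitting only in the first factor, which vanishes on $\tH_\ve$ — and then Taylor-expand the second factor, so that for $\psi\in BV(Y)$
\[
\hLp_t\psi \;=\; \sum_{n\ge 0} L_{s,n}\psi, \qquad L_{s,n}\psi \;=\; \frac{(t-s)^n}{n!}\,M_n\psi, \qquad M_n\psi \;=\; \sum_k (\Phi^n\psi\, e^{s\Phi})\circ\xi_k,
\]
the last sum running over the inverse branches $\xi_k$ of $\hF_\ve=F|_{\hY^1_\ve}$, so that $M_n=\hLp_s(\Phi^n\,\cdot\,)$. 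It then suffices to show $\sum_{n\ge0}\tfrac{|z|^n}{n!}\|M_n\|_{BV\to BV}<\infty$ for all complex $z$ with $|z|<\rho_0$ for some $\rho_0>1-s=(1-t^H)/2$: since $|t-s|\le (1-t^H)/2$ for every $t\in[t^H,1]$, this exhibits $z\mapsto\hLp_z$ as holomorphic on a complex disc about $s$ whose closure contains the real interval $[t^H,1]$, giving analyticity on $(t^H,1)$ and continuity (indeed analyticity) up to the closure. The case $n=0$, $M_0=\hLp_s$, also records that $\hLp_t$ is a bounded operator on $BV(Y)$.

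The content is the estimate on $\|M_n\|_{BV}$, which by the Lasota--Yorke-type bookkeeping already carried out in Lemmas~\ref{lem:LY} and \ref{lem:BV close} reduces to controlling, branch by branch, $\sup_{Y_k}(|\Phi|^n e^{s\Phi})$, $\bigvee_{Y_k}(\Phi^n e^{s\Phi})$, the associated endpoint terms, and then summing over $k$. From the distortion estimates (D1)--(D2) of Section~\ref{apply tower} (equivalently Lemma~\ref{lem:basic tail}), on the branch $Y_k$ (where $\tau\equiv k+1$) one has $\sup_{Y_k}e^{s\Phi}\le C(k+1)^{-\alpha}$ and $\bigvee_{Y_k}e^{s\Phi}\le C(k+1)^{-\alpha}$ with $\alpha:=s(1+\tfrac1\gamma)$, together with $\sup_{Y_k}|\Phi|=\log\sup_{Y_k}DF\le (1+\tfrac1\gamma)\log(k+1)+C$ and $\bigvee_{Y_k}\Phi\le C$; assumption {\bf (H)} guarantees that each $Y_k$ contains only a uniformly bounded number of branch pieces of $\hF_\ve$, so the hole-induced endpoint contributions are of the same order. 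Each term of the $M_n$ estimate is then bounded, up to a factor at most $n$, by $C\big((1+\tfrac1\gamma)\log(k+1)+C\big)^n(k+1)^{-\alpha}$, and the decisive summation is
\[
\sum_{k\ge1}\big((1+\tfrac1\gamma)\log(k+1)+C\big)^n(k+1)^{-\alpha}\;\le\;C\!\int_0^\infty\!\big((1+\tfrac1\gamma)u+C\big)^n e^{-(\alpha-1)u}\,du\;\le\;C'\,n!\,\rho_0^{-n},\qquad \rho_0:=\frac{\alpha-1}{1+\frac1\gamma}=s-\frac{\gamma}{1+\gamma},
\]
using $\int_0^\infty u^n e^{-\lambda u}\,du=n!\,\lambda^{-n-1}$. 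Hence $\|M_n\|_{BV\to BV}\le A\,n!\,\rho_0^{-n}$, so $\sum_n\tfrac{|z|^n}{n!}\|M_n\|\le A\sum_n(|z|/\rho_0)^n<\infty$ for $|z|<\rho_0$. Finally $\rho_0>1-s$ is equivalent, via $s=\tfrac12(1+t^H)$, to $t^H>\tfrac{\gamma}{1+\gamma}$, which holds by Proposition~\ref{prop:tH}(c) (the standing assumption \eqref{eq:tH} gives still more room, namely $\alpha>2$).

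The bound on the $L^1(m_t)$-part of $\|M_n\psi\|_{BV}$ follows from the pointwise inequality $|M_n\psi|\le|\psi|_\infty\sum_k\sup_{Y_k}(|\Phi|^n e^{s\Phi})$ together with $|\psi|_\infty\le C\|\psi\|_{BV}$, so that $|M_n\psi|_{L^1(m_t)}\le m_t(Y)|M_n\psi|_\infty$ obeys the same bound. The closed-system operator $\Lp_t=\Lp_{t\Phi}$ is handled identically — indeed more simply, all branches $Y_k$ then being full and there being no hole-induced endpoints — since the hole enters only through the restriction to $\hY^1_\ve$ and all the above bounds are uniform in that restriction; expanding at $s$ again gives analyticity of $\Lp_t$ on $(t^H,1)$ and continuity on $[t^H,1]$.

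The only real obstacle is quantitative: one must see that the logarithmic moments $\sum_k(\log k)^n(k+1)^{-\alpha}$ grow no faster than $n!\,(1+\tfrac1\gamma)^n(\alpha-1)^{-n}$ — a cruder split $(\log k)^n\le(\sup_k(\log k)e^{-\frac{\alpha}{2}\log k})^n\cdot\dots$ would only yield $n^n$ and a radius too small to reach the endpoints — so that $\rho_0=s-\gamma/(1+\gamma)$ genuinely exceeds $(1-t^H)/2$; this is precisely where the exact Gamma-integral identity, and the input $t^H>\gamma/(1+\gamma)$ from Proposition~\ref{prop:tH}(c), are used.
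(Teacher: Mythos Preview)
Your argument is correct and follows the same route as the paper: expand $e^{t\Phi^H}=e^{s\Phi^H}e^{(t-s)\Phi}$, estimate the variation of each $L_{s,n}$ branchwise using (D1)--(D2), and reduce to the Gamma integral $\int_0^\infty u^n e^{-\lambda u}\,du=n!\,\lambda^{-n-1}$ to control $\sum_k(\log k)^n k^{-\alpha}$. The one refinement worth noting is that you keep the exact decay exponent $\alpha-1$ in the integral (the paper simply bounds $j^{-\alpha}\le j^{-2}$ via the standing assumption~\eqref{eq:tH}) and track the $(1+\tfrac1\gamma)^n$ factor from $|\Phi|^n$ explicitly, which yields the radius $\rho_0=s-\gamma/(1+\gamma)$ and shows that only Proposition~\ref{prop:tH}(c) is needed for the analyticity conclusion, not the stronger~\eqref{eq:tH}.
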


\begin{proof}
We will show the estimates for $L_{s,n}$ using the punctured potential $\Phi^H$.  
The corresponding estimates for $\Phi$ are nearly identical and are omitted.

Let $K_{j, i} = [a_{j,i}, b_{j,i}]$ denote the images of intervals of monotonicity for $\hF$ for which
$\tau = j$ and
let $\xi_j$ denote the inverse branch of $F$ on $K_{j,i}$.  Note that by assumption on $H$, 
there are a finite and uniformly bounded number
of $i$ for each $j$, and if $H = \emptyset$, then there is only one $i$ per $j$, thus the inverse
$\xi_j$ depends only on $j$ and not on $i$.  Now for
$\psi \in BV(Y)$, we estimate following \eqref{eq:var split} and \eqref{eq:var 1},
\[
\begin{split}
\bigvee_Y & L_{s,n} \psi  \le \sum_{j,i} \bigvee_{K_{j,i}} \Big(\psi e^{s\Phi} \frac{(t-s)^n}{n!}
\Phi^n \big) \circ \xi_j \\
& \qquad + \sum_{j,i} \Big|\psi e^{s\Phi} \frac{(t-s)^n}{n!}
\Phi^n \big| \circ \xi_j(a_{j,i}) + \Big|\psi e^{s\Phi} \frac{(t-s)^n}{n!}
\Phi^n\Big| \circ \xi_j(b_{j,i}) \\
& \le \frac{|t-s|^n}{n!} \left[ \sum_{j,i} \bigvee_{\xi_j(K_{j,i})} \psi \cdot \sup_{\xi_j(K_{j,i})} e^{s\Phi} |\Phi|^n
+ \bigvee_{\xi_j(K_{j,i})} e^{s\Phi} \Phi^n \cdot \sup_{\xi_j(K_{j,i})} |\psi| 
 + \sum_{j,i} 2 \sup_{\xi_j(K_{j,i})} |\psi| e^{s\Phi} |\Phi|^n \right] \\
& \le \frac{|t-s|^n}{n!} \sum_{j,i} \left( \bigvee_{\xi_j(K_{j,i})} \psi + 3 \sup_{\xi_j(K_{j,i})} |\psi| \right) \sup_{\xi_j(K_{j,i})} e^{s\Phi} |\Phi|^n,
\end{split}
\]
where in the last line we have used the fact that $e^{s\Phi} |\Phi|^n$ is monotonic on each interval
$\xi_j(K_{j,i})$ to replace the variation of the function by its supremum.
Since $\sup_{\xi_j(K_{j,i})} \psi \le C \| \psi \|_{BV}$, we estimate,
\begin{equation}
\label{eq:L bound}
\bigvee_Y L_{s,n} \psi \le C \| \psi \|_{BV} \frac{|t-s|^n}{n!} \sum_{j,i} \sup_{\xi_j(K_{j,i})} e^{s\Phi} |\Phi|^n .
\end{equation}
Standard distortion estimates (see (D1) and (D2) of Section~\ref{apply tower}) imply that
on each $\xi_j(K_{j,i})$, $\Phi = - \log Df^j \sim \log (j+1)^{(1+\frac{1}{\gamma})}$.  Thus,
\[
\sup_{\xi_j(K_{j,i})}e^{s\Phi} |\Phi|^n \le C (j+1)^{-s(1+\frac{1}{\gamma})} (\log (j+1))^n .  
\]
Now by \eqref{eq:tH}, we have $s > t^H > \frac{2\gamma}{1+\gamma}$ so that
$s(1+\frac{1}{\gamma}) > 2$.  Thus the sum in \eqref{eq:L bound} is bounded by
\[
\sum_{j,i} \sup_{\xi_j(K_{j,i})} e^{s\Phi} |\Phi|^n
\le C \sum_{j \ge 2} j^{-s(1+\frac{1}{\gamma})} (\log j)^n \le C' \int_1^\infty x^{-2} (\log x)^n \, dx 
\le C' n!,
\]
where we have integrated by parts $n$ times and used the fact that the number of intervals
$K_{j,i}$ is uniformly bounded for each $j$.  Putting this estimate together with \eqref{eq:L bound}
yields
\[
\sum_{n =0}^\infty \| L_{s,n} \|_{BV} \le C \sum_{n=0}^\infty |t-s|^n < \infty,
\]
since $|t-s| < 1$.  This proves the claimed bound on $\| L_{s,n} \|_{BV}$ as well as the
analyticity of $\hLp_t$.
\end{proof}

Using equation \eqref{eq:VP nue} and the fact that $p^H(t) = 0$ for $t \ge t^H$, we have
\begin{equation}
\label{eq:free}
h_{\nu_{H,t}} + \int t \phi^H \, d\nu_{H,t}
= \frac{\log \Lambda_{H, t} + P_t}{\int \tau \, d\nu_{Y, t}},
\end{equation}
where $\Lambda_{H,t}$ is the largest eigenvalue of $\hLp_{t\Phi^H - P_t}$ and $P_t = P(t\Phi)$
is the largest eigenvalue of $\Lp_{t\Phi}$ (both as operators on $BV(Y)$).  Lemma~\ref{lem:analytic}
implies that both $\Lambda_{H,t}$ and $P_t$ vary analytically in $t$ for $t \in (t^H,1)$, and are continuous 
on the closure of this interval.  All that remains to consider is the denominator of the fraction
on the right hand side of the above expression for the free energy of $\nu_{H,t}$.

Let $\tg^H_t$ be the unique probability density in $BV(Y)$ satisfying 
$\hLp_{t\Phi^H - P_t} \tg^H_t = \Lambda_{H,t} \tg^H_t$.
Now since $t\Phi^H - P_t$ is a contracting potential of bounded variation 
(see the proof of Lemma~\ref{lem:conformal}), it follows from \cite{LSV1} that
$\hLp_{t\Phi^H - P_t}$ admits a conformal measure $\eta_t^H$ with eigenvalue
$\Lambda_{H,t}$, supported on $\hY^\infty$, such that 
$\nu_{Y,t} = \tg_t^H \eta_t^H$.  (Alternatively one can derive the existence of $\eta^H_t$
as in Section~\ref{apply tower} in the proof of Theorem~\ref{thm:rate gap}.)

Since $\hLp_{t\Phi^H - P_t}$ enjoys a spectral gap by Proposition~\ref{prop:spectral gap},
we have the following spectral decomposition for all $\psi \in BV(Y)$,
\[
\hLp_{t\Phi^H - P_t} \psi  = \Lambda_{H,t} \Pi_t \psi + \mathcal{R}_t \psi,
\]
where the spectral radius of $\mathcal{R}_t$ is strictly smaller than $\Lambda_{H,t}$,
$\Pi_t \mathcal{R}_t = \mathcal{R}_t \Pi_t = 0$ and $\Pi_t^2 = \Pi_t$.  Moreover,
\begin{equation}
\label{eq:proj}
\Pi_t \psi = \tg^H_t \int \psi \, d\eta^H_t,
\end{equation}
and $\Pi_t$ is analytic as an operator on $BV(Y)$.  It follows that $\int \psi \, d\eta^H_t$ also
varies analytically for $\psi \in BV(Y)$.

While $\tau \notin BV(Y)$, we do have $\hLp_{t\Phi^H - P_t} \tau \in BV(Y)$.  Thus, 
$\Pi_t (\hLp_{t\Phi^H - P_t} \tau)$ is analytic in $t$.  It follows from \eqref{eq:proj} and
 the analyticity of $\tg^H_t$, that
$\int \hLp_{t\Phi^H - P_t} \tau \, d\eta^H_t$ is analytic in $t$.  But by the conformality of $\eta^H_t$,
\[
\int \hLp_{t\Phi^H - P_t} \tau \, d\eta^H_t = \Lambda_{H,t} \int \tau \, d\eta^H_t,
\] 
and the analyticity of $\Lambda_{H,t}$ allows us to conclude that $\int \tau \, d\eta^H_t$ is
analytic for $t \in (t^H,1)$.  Combining this with \eqref{eq:free} proves that the free energy of
$\nu_{H,t}$ varies analytically in $t$.


\section{The swallowing cases}
\label{sec:swallowing}

In this section we describe possible behaviors of escape through a swallowing hole
and explore ways in which transitivity on the  survivor set can fail.
For simplicity, we always assume that $H$ is an interval.
Although the following list is not exhaustive (for example, a hole can be a union
of intervals and divide the open system into any number of transitive components),
further
generalizations will be a combination of the situations described below, with some components
leaking into others, and the component with the slowest rate of escape dominating the rest.

\subsection{Case 1: The hole contains 0}

This is the simplest case, when $H=(0, a)$.   Then the system is uniformly hyperbolic and the 
classical analysis for uniformly expanding maps holds:  the transfer operator acting
on an appropriate space of functions (H\"older continuous in the Markov case, and functions
of bounded variation in the non-Markov case) will have spectral gap, the largest eigenvalue
will correspond to the escape rate and the corresponding eigenfunction will define a
conditionally invariant measure absolutely continuous with respect to the conformal
measure $m_t$.  See \cite{DemYoung06} for references.

\subsection{Case 2: $H=(a,1]$ for $a \le 1/2$.}

In this case, only a single branch will remain for the open system and the dynamics are
trivial: intervals map progressively to the right until they enter the hole.  The escape rate
here is clearly $p(t)$ since $\hI^n = [0, \hf^{-n}(a)]$ and the conformality
of $m_t$ implies that $m_t(J_{k-1}) = e^{tS_k \phi - k p(t)}$ since $f^k(J_{k-1}) = I$ for each $k \ge 1$, so that
\[
\lim_{n \to \infty} \frac 1n \log m_t(\hI^n) = \lim_{n \to \infty} \frac 1n \log m_t(\cup_{k \ge n_a} J_k) = -p(t),
\]
where $n_a$ is the index such that $\hf^{-n}(a) \in J_{n_a}$.

For $t<1$, Theorem~\ref{thm:limit point}
gives the existence of physically relevant accim with escape rate $p(t)$.  For $t=1$, the only
limiting distribution is $\delta_0$ (\cite{DemFer13}).

\subsection{Case 3: Hole to the left of $1/2$, not capturing 0}
If $H = (a, b)$ is a collection of adjacent 1-cylinders of $\P_1$
strictly between 1/2 and 0 then the system 
is divided into two components, $L = [0,a]$ and $R = [b,1]$.  Note that $L$ maps to itself and
to $H$, but not to $R$, while $R$ maps to itself, to $L$ and to $H$.

The dynamics restricted to $R$ is uniformly hyperbolic and 
so the classical results hold regarding escape rate and spectral gap for the associated transfer
operator.
We denote the escape rate out of $R$ by $\log\lambda_t^r$ and the associated
absolutely continuous invariant measure by $\mu_t^r$.  On the other hand,
the dynamics on $L$ are completely dominated by the tail as described in Case 2 above.

There are two
possibilities for how measures evolve in such a system.
If $-\log\lambda_t^r<p(t)$, then escape is slower from $R$ than from $L$ (note that
this cannot happen when $t=1$).  Since $R$ maps to $L$,
the overall escape rate matches that of $R$, $- \log \lambda_t^r$, any limiting
distribution obtained by pushing forward and renormalizing $m_t$, 
i.e. as a limit of $\hf^n_* m_t/|\hf^n_* m_t|$ will be 
fully supported on $I \setminus H$.  In fact, this limiting distribution must be a multiple of
$\mu_t^r$ on $R$.

The second possibility is that $-\log \lambda_t^r > p(t)$.  In this case, the escape rate from
$L$ is slower than the escape rate from $R$, but since $L$ does not map to $R$, any
limiting distribution obtained by pushing forward and renormalizing $m_t$ must be
identically 0 on $R$.  Any such limit point will have a density on $L$ with escape rate
$p(t)$ for $t<1$.  For $t=1$, $\delta_0$ will be the only limit point.

Note that both possibilities will occur as $t$ varies in $[0,1]$.  Due to the uniformly hyperbolic
behavior on $R$, for Markov holes we have the variational equation for the open system,
$- \log \lambda_t^r = p(t) - p^H_r(t)$, where $p^H_r(t)$ is the punctured pressure for the open
system on $R$.  So the two cases above can be equivalently characterized as 
$p^H_r(t) > 0$ or $p^H_r(t) < 0$.  If $t=0$, we have $p^H_r(0)$ equal to the topological entropy
of the survivor set on $R$, which is positive since $R$ contains a horseshoe; if $t=1$, then
$p^H_r(1) < 0$ since $p(1) = 0$ and the rate of escape from $R$ with respect to Lebesgue measure
is exponential.  

\subsection{Case 4: $H = (1/2, a)$, for some $1/2 < a < 1$.}

Since $H$ contains a right neighborhood of $1/2$, the set $[b,1]$ is invariant for the
open system, where $b = f(a) > 0$.  As in Case 3, the system splits into two components,
$L = [0,b)$ which is single branched and dominated by the tail, and $R= [b,1]$, which
is uniformly hyperbolic. 
The difference is that now the leakage is from $L$ to $R$ and not from $R$ to $L$.  

Considering again the two possibilities, if $-\log\lambda_t^r<p(t)$
then as above, the overall escape rate matches $-\log \lambda_t^r$, but now the
limiting distribution obtained from $\hf^n_* m_t/|\hf^n_* m_t|$ is simply
$\mu_t^r$ on $R$ and 0 on $L$. 

On the other hand, if $-\log \lambda_t^r > p(t)$, then the overall escape rate will be $p(t)$,
and densities will evolve on $L$ as in Case 2 above.  
For $t < 1$, limit points of $\hf^n_* m_t/|\hf^n_* m_t|$
will be fully supported on $I \setminus H$ and the density on $R$ will be a pushforward 
average of
the limiting density on $L$.  For $t=1$, $\delta_0$ will be the only limit point.

\subsection{Case 5: $H = (3/4,7/8)$.}

This is anomalous in the sense that if the hole were $H = (3/4, 1]$, then $H$ would be
non-swallowing, so we see that a subset of a non-swallowing hole can be
swallowing.

In this case, the system once again divides into two transitive components,
$L = [0,3/4)$ and $R = [7/8,1]$,  with $L$ mapping to $R$, but $R$ not mapping to $L$.

The dynamics on $L$ acts as if $H$ were simply $(3/4,1]$ and so the theorems of
Section~\ref{main results} apply to this system.  The dynamics on $R$ are simply those of
the doubling map with a single surviving branch, so the classical results for uniformly
hyperbolic systems apply
to this restricted system.  In fact, the escape rate from $R$
is necessarily $p(t) + t\log 2$.   The escape rate from $L$ is 
$- \log \lambda_t^\ell = p(t) - p_\ell^H(t)$ by Corollary~\ref{cor:variational},
where $p_\ell^H(t)$ is the punctured pressure for the open system restricted to $L$.
So for all $t \in [0,1]$, we have that the escape rate from $L$ is strictly slower than the escape
rate from $R$.  Thus, for $t<1$ any limit points of $\hf^n_* m_t/ |\hf^n_* m_t|$ are 
fully supported on $I \setminus H$ and are determined by the limiting behavior of the
open system on $L$.  As usual, for $t=1$ the sole limit point is $\delta_0$.



\begin{thebibliography}{999999}

\bibitem[APT]{altmann} E.G.~Altmann, J.S.E.~Portela, T.~T\'el, \emph{Leaking chaotic systems,} Rev. Mod. Phys. {\bf 85} (2013), 869--918.

\bibitem[BV]{BV12}  W.~Bahsoun, S. Vaienti, \emph{Metastability of certain intermittent maps},
Nonlinearity {\bf 25} (2012), 107--124.

\bibitem[Ba]{baladi} V.~Baladi, \emph{Positive transfer operators and decay of correlations,} Advanced 
Series in Nonlinear Dynamics, {\bf 16}, World Scientific (2000).

\bibitem[BJP]{polli}  O.F.~Bandtlow, O.~Jenkinson, M.~Pollicott, \emph{Periodic points, escape rates and escape measures}, in Wael Bahsoun, Chris Bose, Gary Froyland, editors, Ergodic Theory, Open Dynamics, and Coherent Structures, pages 41--58, Springer Proceedings in Mathematics and Statistics 
{\bf 70} Springer, New York, 2014.

\bibitem[B]{Bil12} P.\ Billingsley, 
Probability and measure. Wiley Series in Probability and Statistics. John Wiley \& Sons, Inc., Hoboken, NJ, 2012. Anniversary edition.

\bibitem[BDM]{BruDemMel10} H.\ Bruin, M.F.~Demers, I.\ Melbourne,
 \emph{Existence and convergence properties of physical measures for certain dynamical systems with holes,}
Ergodic Theory Dynam. Systems \textbf{30} (2010) 687--728.

\bibitem[BY]{buni} L.~Bunimovich, A.~Yurchenko, \emph{Where to place a hole to achieve a
maximal escape rate}, Israel J. Math. {\bf 182} (2011), 229--252.

\bibitem[BS]{BuzSar03} J.\ Buzzi, O.\ Sarig,
\emph{ Uniqueness of equilibrium measures for countable Markov shifts and multidimensional piecewise expanding maps,} Ergodic Theory Dynam. Sys. {\bf 23} (2003), 1383--1400.

\bibitem[C]{cencova} N.N.~Cencova, \emph{A natural invariant measure on Smale's horseshoe,} Soviet Math. Dokl. {\bf 23} (1981), 87--91.

\bibitem[CM]{chernov mark1} N.~Chernov, R.~Markarian, \emph{Ergodic properties of Anosov maps with rectangular holes,} Bol. Soc. Bras.
Mat. {\bf 28} (1997), 271-314.
\bibitem[CMT]{chernov mt1} N.~Chernov, R.~Markarian, S.~Troubetskoy, \emph{Conditionally invariant measures for Anosov maps with
small holes,} Ergod. Th. and Dynam. Sys. {\bf 18} (1998), 1049--1073.

\bibitem[CV]{chernov bedem} N.~Chernov, H.~van dem Bedem, \emph{Expanding maps of an interval with holes,} Ergod. Th. and Dynam. Sys. {\bf 22} (2002), 637--654.

\bibitem[CMM]{CMM1} P.~Collet, S.~Mart\'inez, V.~Maume-Deschamps, \emph{On the existence 
of conditionally invariant probability measures in dynamical systems,} Nonlinearity {\bf 13} (2000), 1263-1274. 

\bibitem[CMS]{collet ms1} P.\ Collet, S.\ Mart\'inez, B.\ Schmitt, \emph{The Yorke-Pianigiani measure and the asymptotic law on the limit
Cantor set of expanding systems,} Nonlinearity {\bf 7} (1994), 1437--1443.

\bibitem[D1]{demers tower} M.F.~Demers, \emph{Markov extensions for dynamical systems
with holes: An application to expanding maps of the interval,} Israel J. Math. {\bf 146} (2005), 189--221.

\bibitem[D2]{demers logistic} M.F.~Demers, \emph{Markov extensions and conditionally invariant
measures for certain logistic maps with small holes,} Ergod. Th. Dynam. Sys. {\bf 25} (2005), 1139--1171.

\bibitem[D3]{demers billiards} M.F.~Demers, \emph{Dispersing billiards with small holes,} in Ergodic theory, open dynamics and coherent structures, Springer Proceedings in Mathematics {\bf 70} (2014), 137--170.

\bibitem[D4]{demers infinite} M.F.~Demers, \emph{ Escape rates and physical measures for the infinite horizon Lorentz gas with holes,} Dynamical Systems: An International Journal {\bf 28} 
(2013), 393--422. 

\bibitem[DF]{DemFer13} M.F.~Demers, B.~Fernandez, 
\emph{Escape Rates and Singular Limiting Distributions for Intermittent Maps with Holes,}
Trans. Amer. Math. Soc. \textbf{368} (2016), 4907--4932.

\bibitem[DT]{DemTodd} M.F.~Demers, M.~Todd, \emph{Equilibrium states, pressure and
escape for multimodal maps with holes}, to appear in Israel J. Math.

\bibitem[DW]{dem wright}  M.F.~Demers, P.~Wright, \emph{Behavior of the escape rate function in hyperbolic dynamical systems}, Nonlinearity {\bf 25} (2012), 2133--2150.

\bibitem[DWY1]{DWY1} M.F.~Demers, P.~Wright, L.-S.~Young, \emph{Escape rates and
physically relevant measures for billiards with small holes,} Commun. Math. Phys.
{\bf 294} (2010), 353--388.

\bibitem[DWY2]{DWY2} M.F.~Demers, P.~Wright, L.-S.~Young, \emph{Entropy, Lyapunov 
exponents and escape rates in open systems,} Ergodic Theory Dynam. Sys. {\bf 32}:4 
(2012), 1270--1301.

\bibitem[DY]{DemYoung06} M.F.~Demers, L.-S.~Young, \emph{Escape rates and 
conditionally invariant measures}, Nonlinearity {\bf 19} (2006), 377--397.

\bibitem[DG]{dett1} C.P.~Dettmann, O.~Georgiou, \emph{Survival probability for the stadium billiard,} Physica D {\bf 238} (2009), 2395-2403.

\bibitem[DGKK]{DGKK} C.P.~Dettman, O.~Georgiou, G.~Knight, R.~Klages, 
\emph{Dependence of chaotic diffusion on the size and position of holes,} Chaos {\bf 22}
 023132/1-12 (2012).
 
\bibitem[DR]{dett2} C.P.~Dettmann, M.R.~Rahman, \emph{Survival probability for open spherical billiards,} Chaos {\bf 24} 043130 (2014).



\bibitem[DoW]{DW12} D.~Dolgopyat, P.~Wright, \emph{The diffusion coefficient for piecewise
expanding maps of the interval with metastable states}, Stochastics and Dynamics {\bf 12} (2012), paper 1150005.

\bibitem[FP]{FerPol12} A.\ Ferguson, M.\ Pollicott,
\emph{ Escape rates for {G}ibbs measures,}
 Ergodic Theory Dynam. Systems, \textbf{32} (2012), 961--988.

\bibitem[FMS]{FMS} G.~Froyland, R.~Murray, O.~Stancevic, \emph{Spectral degeneracy and
escape dynamics for intermittent maps with a hole}, Nonlinearity {\bf 24} (2011), 2435-2463.

\bibitem[FrP]{FrP} G.~Froyland, K.~Padberg-Gehle, \emph{Almost-invariant and finite-time coherent sets: directionality, duration, and diffusion}, in Wael Bahsoun, Chris Bose, Gary Froyland, editors, Ergodic Theory, Open Dynamics, and Coherent Structures. Proceedings in Mathematics and Statistics {\bf 70}, pages 171--216, Springer, New York, 2014.

\bibitem[GHW]{GHW11} C.~Gonzalez-Tokman, B.~Hunt, P.~Wright, \emph{Approximating
invariant densities for metastable systems}, Ergodic Theory Dynam. Systems {\bf 34} (2014), 1230--1272.

\bibitem[HH]{HH} H.~Hennion, L.~Herv\'{e}, \emph{Limit theorems for Markov chains and
stochastic properties of dynamical systems by quasi-compactness,} {\bf 1766}, Lecture Notes in 
Mathematics, Springer-Verlag, Berlin, 2001.

\bibitem[I]{Iom05} G.\ Iommi,
 \emph{Multifractal analysis for countable Markov shifts,} Ergodic Theory Dynam. Systems {\bf 25} (2005), 1881--1907.
 
 \bibitem[IJT]{IomJorTod13}  G. Iommi, T. Jordan, M. Todd, \emph{Recurrence and transience for suspension flows.} Israel J. Math.  \textbf{209} (2015) 547--592.

\bibitem[IT1]{IomTod10} G.\ Iommi, M.\ Todd, \emph{Natural equilibrium states  for multimodal maps,} Comm. Math. Phys. \textbf{300} (2010) 65--94.

\bibitem[IT2]{IomTod11} G.\ Iommi, M.\ Todd, \emph{Dimension theory for multimodal maps,} Ann. Henri Poincar\'e, \textbf{12} (2011) 591--620.

\bibitem[JR]{JorRam11} T.\ Jordan, M.\ Rams,
\emph{ Multifractal analysis of weak {G}ibbs measures for non-uniformly expanding $C^1$ maps,}
 Ergodic Theory Dynam. Systems \textbf{31} (2011) 143--164.

\bibitem[K]{kato} T.~Kato, \emph{Perturbation Theory for Linear Operators,} Classics in Mathematics,
Springer: Berlin, Heidelberg, New York, 1980.

\bibitem[KL1]{KelLiv99} G.~Keller, C.~Liverani,  \emph{Stability of the spectrum for transfer operators,}
Ann. Scuola Norm. Sup. Pisa Cl. Sci. (4) \textbf{ 28} (1998), 141--152.

\bibitem[KL2]{KL09} G.~Keller, C.~Liverani,  \emph{Rare events, escape rates and
quasistationarity: some exact formulae}, J. Stat. Phys. {\bf 135} (2009), 519--534

\bibitem[KM]{KM16} G.~Knight, S.~Munday, \emph{Escape rate scaling in infinite measure preserving systems}, J. Phys. A {\bf 49} (2016), paper 85101.

\bibitem[LM]{liverani maume} C.~Liverani, V.~Maume-Deschamps, \emph{Lasota-Yorke  maps  with  holes:  conditionally  invariant probability measures and invariant probability measures on the survivor set,} Annales de l'Institut Henri Poincar\'e Probability and Statistics, {\bf  39}
(2003), 385--412.

\bibitem[LSV1]{LSV1} C.~Liverani, B.~Saussol, S.~Vaienti, \emph{Conformal measure
and decay of correlation for covering weighted systems,}  Ergodic Theory Dynam. Systems
{\bf 18}:6 (1998), 1399--1420. 

\bibitem[LSV2]{LSV2} C.~Liverani, B.~Saussol, S.~Vaienti, \emph{A probabilistic approach to
  intermittency},  Ergodic Theory Dynam. Systems \textbf{19} (1999), 671-685.


\bibitem[MU]{MauUrb03}
R.\ Mauldin, M.\ Urba\'{n}ski, \emph{Graph directed Markov systems: geometry and dynamics of limit sets}, Cambridge tracts in mathematics 148, Cambridge University Press, Cambridge 2003.

\bibitem[PY]{pianigiani yorke} G.~Pianigiani, J.~Yorke, \emph{Expanding maps on sets which are almost invariant:  decay and chaos,} Trans. Amer. Math. Soc. {\bf 252}
(1979), 351-366.

\bibitem[S1]{Sar99} O.\ Sarig,
{\em Thermodynamic formalism for countable Markov shifts,}
Ergodic Theory Dynam. Systems {\bf 19} (1999), 1565--1593.
 

\bibitem[S2]{Sar01} O.\ Sarig,
\emph{Thermodynamic formalism for null recurrent potentials,} Israel J. Math. \textbf{121} (2001) 285--311.
 

\bibitem[S3]{Sar01a} O.\ Sarig,
\emph{Phase transitions for countable Markov shifts,}
 Comm. Math. Phys. \textbf{ 217} (2001), 555--577. 
 
\bibitem[S4]{Sar03}
O. Sarig, \emph{Existence of Gibbs measures for countable Markov
shifts}, Proc. Amer. Math. Soc. \textbf{131} (2003), 1751--1758.

\bibitem[Ya]{yarmola} T.~Yarmola, \emph{Sub-exponential mixing of random billiards driven by thermostats,} Nonlinearity {\bf 26} (2013), 1825--1837.

\bibitem[Y1]{young trans} L.S.~Young, \emph{Some large deviation results for dynamical systems,}
Trans. Amer. Math. Soc. {\bf 318} (1990), 525--543.

\bibitem[Y2]{young poly} L.S.~Young, \emph{Recurrence times and rates of mixing,}
Israel J. Math. {\bf 110} (1999), 153--188. 

\bibitem[Z]{Zwe05} R.\ Zweim\"uller,
\emph{ Invariant measures for general(ized) induced transformations,}
 Proc. Amer. Math. Soc. \textbf{ 133} (2005), 2283--2295.
 
\end{thebibliography}
\end{document}